\newtheorem{theorem}{Theorem}[section]
\newtheorem{corollary}[theorem]{Corollary}
\newtheorem{proposition}[theorem]{Proposition}
\newtheorem{lemma}[theorem]{Lemma}
\newtheorem{maintheorem}{Theorem}
\newtheorem{maincorollary}[maintheorem]{Corollary}
\theoremstyle{definition}
\newtheorem{definition}[theorem]{Definition}
\newtheorem{example}[theorem]{Example}
\newtheorem*{remark*}{Remark}
\numberwithin{equation}{section}
\newcommand\mL{\v{L}}
\newcommand{\eps}{\varepsilon}
\newcommand{\RRR}{{\mathbb R}}
\newcommand{\ZZZ}{{\mathbb Z}}
\newcommand{\NNN}{{\mathbb N}}
\newcommand{\AAa}{{\mathcal A}}
\newcommand{\BBb}{{\mathcal B}}
\newcommand{\CCc}{{\mathcal C}}
\newcommand{\HHh}{{\mathcal H}}
\newcommand{\SSs}{{\mathscr{S}}}
\newcommand{\XXx}{{\mathcal{X}}}
\newcommand{\diam}{\mathop{\rm diam}}
\newcommand{\Bd}{\mathop{\rm Bd}}
\newcommand{\card}{\mathop{\rm card}}
\newcommand{\proj}{\operatorname{proj}}
\newcommand{\End}{\operatorname{End}}
\newcommand{\Cut}{\operatorname{Cut}}
\newcommand{\Branch}{\operatorname{B}}
\newcommand{\Fix}{\operatorname{Fix}}
\newcommand{\Per}{\operatorname{Per}}
\newcommand{\Orb}{\operatorname{Orb}}
\newcommand{\Prox}{\mathop{\rm Prox}}
\newcommand{\nAs}{\mathop{\rm nAs}}
\newcommand{\LY}{\mathop{\rm LY}}
\newcommand{\nLY}{\mathop{\rm nLY}}
\newcommand{\abs}[1]{\lvert#1\rvert}
\newcommand{\ord}{\operatorname{ord}}
\begin{document}
\begin{large}

\title[Generic chaos on dendrites]{Generic chaos on dendrites}

\author{\mL ubom\'\i r Snoha}
\author{Vladim\'\i r \v Spitalsk\'y}
\author{Michal Tak\'acs}
\address{Department of Mathematics, Faculty of Natural Sciences, Matej Bel University,
Tajovsk\'{e}ho 40, 974 01 Bansk\'{a} Bystrica, Slovakia}

\email{lubomir.snoha@umb.sk, vladimir.spitalsky@umb.sk, 
michal.takacs.math@gmail.com}

\thanks{This work was supported by the Slovak Research and Development Agency 
under contract No.~APVV-15-0439 and by VEGA grant 1/0158/20.}

%\subjclass[2020]{Primary 37B05; Secondary 37B45, 37E99}
\subjclass[2010]{Primary 37B05; Secondary 37B45, 37E99}

\keywords{Generic chaos, scrambled pair, dendrite, completely regular continuum.}

\dedicatory{Dedicated to the memory of Sylvie Ruette}

\begin{abstract} 
We characterize dendrites $D$ such that a continuous selfmap of $D$ is 
generically chaotic (in the sense of Lasota) if and only if it is generically $\eps$-chaotic for some 
$\eps>0$. In other words, we characterize dendrites on which
generic chaos of a continuous map can be described in terms of the behaviour of
subdendrites with nonempty interiors under iterates of the map.
A dendrite $D$ belongs to this class if and only if it is 
completely regular, with all points of finite order
(that is, if and only if $D$ contains neither a copy of the Riemann dendrite nor a copy of the $\omega$-star).
\end{abstract}

\maketitle

%\setcounter{tocdepth}{1}
%\tableofcontents

%%%%%%%%%%%%%%%%%%%%%%%%%%%%%%%%%%%%%%%%%%%%%%%%%%%%%%%%%%%%%%%%%%%%%%%%%%%%%%%
%%%%%%%%%%%%%%%%%%%%%%%%%%%%%%%%%%%%%%%%%%%%%%%%%%%%%%%%%%%%%%%%%%%%%%%%%%%%%%%
%%%%%%%%%%%%%%%%%%%%%%%%%%%%%%%%%%%%%%%%%%%%%%%%%%%%%%%%%%%%%%%%%%%%%%%%%%%%%%%
%%%%%%%%%%%%%%%%%%%%%%%%%%%%%%%%%%%%%%%%%%%%%%%%%%%%%%%%%%%%%%%%%%%%%%%%%%%%%%%
%%%%%%%%%%%%%%%%%%%%%%%%%%%%%%%%%%%%%%%%%%%%%%%%%%%%%%%%%%%%%%%%%%%%%%%%%%%%%%%

\section{Introduction and main results}\label{S:intro}

During the last decades many interesting connections between dynamical systems and continuum theory have been studied. To illustrate this, we mention a few results.

Handel \cite{Ha82} has constructed a $C^\infty$ area preserving diffeomorphism 
of the plane with the pseudocircle as a minimal set.

Many authors have been investigating the problem whether various classes of curves
admit positive entropy homeomorphisms. One of the first results was
that Knaster continua \cite{Barge87} and the pseudoarc \cite{Kennedy91} have this property.  
On the other hand, every homeomorphism of a regular continuum has zero entropy \cite{Seidler90}, and even all group actions on regular continua are null \cite{GM19}. By
\cite{Mo11}, also every homeomorphism of a chainable hereditarily decomposable continuum has zero entropy.

The question whether a given class of curves admits an expansive homeomorphism has also attracted significant interest.
The dyadic solenoid \cite{W55} and Plykin's attractors \cite{Ply84}
are examples of continua that do admit expansive homeomorphisms,
while tree-like continua \cite{Mo02} and hereditarily indecomposable continua \cite{KM08} do not. 
For related results concerning continuum-wise expansive homeomorphisms see \cite{Ka19} and references therein.

The topology of curves appearing as inverse limit spaces of interval maps is a very active area of research; as a general reference see \cite{IM12}. 
As an example of a deep result let us mention 
the proof of Ingram's conjecture stating that inverse limit spaces of tent maps with different slopes are nonhomeomorphic \cite{BBS12}.

One of the important classes of one-dimensional continua in dynamics are dendrites.
Dendrites have long been studied in topology \cite{Why1942,Kur68}
and it is of interest that they appear also in complex dynamics as Julia sets (see e.g.~\cite{LS98}).
For us it is important that new dynamical phenomena,  which are not possible on graphs, appear on dendrites.
For instance, Wa\.{z}ewski's universal dendrite
admits a weakly mixing, nonmixing system \cite{HM14} which is proximal \cite[Theorem~5.2]{AHNO17} 
and has zero entropy \cite{BFK17}; such an example is not possible
on dendrites having nondense branch points \cite{DSS2013}. 
The topology of dendrites admits $\omega$-limit sets
and minimal sets that are more complex than in simpler spaces;
see \cite{Spi08} and \cite{BDHSS09}, respectively, 
for their topological characterizations.

Since there are many dynamical properties satisfied by all tree maps but not by all dendrite maps, it is natural to try to characterize the class of dendrites $X$ such that every dynamical system on $X$ possesses a given property. 
For example, in \cite{Il98} it is proved that a dendrite
has the $PR$-property (i.e., the closure of the set of periodic points equals the closure of the set of recurrent points for every continuous selfmap) if and only if
it has (at most) countably many endpoints.
By \cite{Ma07},
a dendrite has the $\Omega EP$-property (i.e., the set of nonwandering points
is contained in the closure of the set of eventually periodic points for every continuous selfmap) if and only if it does not contain a null comb.
(A \emph{null comb} is any dendrite homeomorphic to the subgraph of
the map $f\colon [0,1]\to [0,1]$ given by $f(x)=x$ for $x=1/n$, $n\in\NNN$, and
$f(x)=0$ otherwise; 
the dendrite in Figure~\ref{fig:comb} 
is the union of a null comb and two arcs.)

The present paper provides another link between point set topology (in particular continuum theory) and topological dynamics (in particular topological chaos). Namely,
we characterize dendrites on which generic chaos is equivalent with generic $\eps$-chaos, see Theorem~\ref{T:dendrite_gch=gech} and the comment below it.

The notion of chaos in~connection with a map was first introduced by Li and 
Yorke in~\cite{LY75}, although they did not give a formal definition. As 
of today, Li-Yorke chaos is understood in~the following way: for a 
dynamical system $(X,f)$, $X$ being a compact metric space with metric $d$ and 
$f$ being a continuous map $X\rightarrow X$, a pair $(x,y)$ of points 
in~$X$ is called a \emph{scrambled pair} if $\liminf_{n\rightarrow 
	\infty}d(f^n(x),f^n(y))=0$ and $\limsup_{n\rightarrow 
	\infty}d(f^n(x),f^n(y))>0$. For every scrambled pair $(x,y)$ there is an $\varepsilon >0$ such that $\limsup_{n\rightarrow 
		\infty}d(f^n(x),f^n(y))> \varepsilon$ and then it is called an $\varepsilon$-scrambled pair. A set $S\subseteq X$ is \emph{scrambled} or $\varepsilon$-scrambled if every 
	pair $(x,y)$ of distinct points in~$S$ is scrambled or $\varepsilon$-scrambled, respectively. The system $(X,f)$ is \emph{Li-Yorke chaotic} or \emph{Li-Yorke $\varepsilon$-chaotic} if it has an uncountable scrambled  or \emph{$\varepsilon$-scrambled} set, respectively.

Li-Yorke chaos fits particularly well into interval dynamics, since maps of 
type $2^n$ in~the Shar\-kov\-sky ordering are not Li-Yorke chaotic, maps of type 
greater than $2^{\infty}$ are Li-Yorke chaotic, and maps of type $2^{\infty}$ 
may or may not be Li-Yorke chaotic. Moreover, the existence of 
just one scrambled pair on the interval implies the existence of a Cantor $\varepsilon$-scrambled set for some $\varepsilon >0$, and hence Li-Yorke  $\varepsilon$-chaos~\cite{KS1989}. Also, as shown in~\cite{Smi1986}, Li-Yorke chaos 
turns out to be the minimal requirement for a continuous selfmap of an interval 
to be ``chaotic"; indeed, an interval map $f$ is either Li-Yorke $\varepsilon$-chaotic for some $\varepsilon >0$, or all 
trajectories of $f$ are approximable by periodic orbits. All of this shows that, on 
the interval, Li-Yorke chaos is quite a natural notion, albeit a very weak form of chaos.  

Outside the interval, Li-Yorke chaos seems to be less natural. For instance, though 
the existence of a scrambled pair implies the existence of an uncountable (in this case even a Cantor) $\varepsilon$-scrambled set also on graphs~\cite{RS2014}, this is no longer true already for dendrite maps \cite{Koc2012} and for triangular maps in the square~\cite{FPS1999}. Though on graphs Li-Yorke chaos and Li-Yorke $\varepsilon$-chaos coincide~\cite{RS2014}, this is not true in general, as for instance Floyd's minimal system shows~\cite{Fl49},\cite[pp. 24--27]{Aus88}; this is a minimal homeomorphism on a nonhomogeneous space whose connected components are singletons and arcs. The maximal scrambled sets coincide with those arcs (cf.~\cite[Proposition~2]{CSS04}) but there is no infinite $\varepsilon$-scrambled set for any $\varepsilon>0$ due to linearity of the map on the arcs. 
Using techniques from \cite[Subsection~2.3]{CSS04} 
one can embed Floyd's system into the Cantor fan (i.e., the cone over the Cantor set) in such a way that the new system still has no 
infinite $\varepsilon$-scrambled set. 
As regards dendrites, in the appendix we show that even every dendrite with uncountably many endpoints
admits a Li-Yorke chaotic map with no infinite $\eps$-scrambled set, see Proposition~\ref{P:LY-chaos}.

One can ask why uncountability, and not 
some other `size', is required in the definition of Li-Yorke chaos. A partial clarification for this can be found in~\cite{BHS08}, where it is shown that in many 
cases (not only for interval and graph maps) the uncountability of a scrambled set implies the existence of a Cantor scrambled set, which is perhaps a more natural choice of a `large' scrambled set. Whether the existence 
of an uncountable scrambled set implies the existence of a Cantor scrambled set 
in general remains an open problem \cite{BHS08}. However, the existence of an uncountable $\varepsilon$-scrambled set in a Polish space implies the existence of a Cantor $\varepsilon$-scrambled set~\cite[Theorem~16]{BHS08}.

It seems that Lasota was one of the first searching for a stronger 
type of formally defined chaos. The notion of generic chaos, suggested by him (as claimed by his 
student Pi\'orek in~\cite{Pio85}), is defined by the requirement for the set 
of all scrambled pairs to be generic, i.e., residual, in the square $X\times X$ 
(recall that a subset of a metric space is residual if its complement is a set 
of the first category, i.e., the union of countably many nowhere dense sets).

It will be convenient to use the following notation. For $\varepsilon >0$,
\begin{eqnarray*}
	\Prox(f)&=&\{(x,y)\in X^2:\liminf_{n\rightarrow 
		\infty}d(f^n(x),f^n(y))=0\}, \\
	\nAs(f)&=&\{(x,y)\in X^2:\limsup_{n\rightarrow \infty}d(f^n(x),f^n(y))>0\}, 
	\\
	\nAs(f,\eps)&=&\{(x,y)\in X^2:\limsup_{n\rightarrow 
		\infty}d(f^n(x),f^n(y))>\eps\},\\
	\LY(f)&=&\Prox(f)\cap \nAs(f), \\
	\LY(f,\eps)&=&\Prox(f)\cap \nAs(f,\eps).
\end{eqnarray*}
Thus, $\Prox(f)$ is the \emph{proximal relation}, $\nAs(f)$ is the complement of the 
\emph{asymptotic relation} and $\LY(f)$ is sometimes called the \emph{Li-Yorke relation} in the 
considered dynamical system. The elements of $\LY(f)$ are Li-Yorke or scrambled 
pairs, and the elements of $\LY(f,\eps)$ are $\varepsilon$-Li-Yorke or 
$\varepsilon$-scrambled pairs. Thus, Lasota's definition is as follows. 
\begin{itemize}
	\item A map $f:X\rightarrow X$ is called \emph{generically chaotic} if the set 
	$\LY(f)$ is residual in $X^2$. 
\end{itemize}

One of the closely related notions introduced and studied in~\cite{Sno1990, 
	Sno1992} is the following (recall also that Ruette 
simplified some ideas and results from~\cite{Sno1990, Sno1992} and answered some 
open questions posed there, see e.g.~\cite{Rue13}). 
\begin{itemize}
	\item A map $f:X\rightarrow X$ is called \emph{generically $\eps$-chaotic} if 
	the set $\LY(f,\eps)$ is residual in $X^2$.
\end{itemize}

The first examples of generically chaotic interval maps were found by Pi\'orek~\cite{Pio85}.
Many examples follow from the fact, contained implicitly in \cite[Theorem 
3.5]{HY02} and explicitly in \cite[Proposition 2.11]{BGKM2002}, that a weakly 
mixing map on a nondegenerate compact metric space $X$ is generically 
$\eps$-chaotic for every $0<\eps<\diam X$. Examples of generically 
$\eps$-chaotic maps that are not weakly mixing can be found in~\cite{Sno1990}.

The way generic chaos is defined can be viewed as a `microscopic' definition in 
the following sense. In order to verify by the definition whether a system is 
generically chaotic, we should investigate trajectories of pairs of `tiny' 
points in the whole space. However, it appears to be an almost impossible task to 
determine whether residually many of them are Li-Yorke pairs. 
Snoha~\cite{Sno1990} found a way to verify, on 
the interval, generic chaoticity in `macroscopic' terms. More precisely, he 
showed that on the interval the notions of generic chaos and generic  
$\varepsilon$-chaos are equivalent, and that the latter (and hence, also the former) can 
be checked by investigating the dynamics at a `macroscopic' level, namely by 
investigating the trajectories of the (nondegenerate) subintervals rather than 
points. To study the behaviour of all subintervals and their pairs under  
iterates of the map is of course much easier than to do the same with residually 
many pairs of
points. Therefore, it is not surprising that, as demonstrated in~\cite{Sno1990}, 
this approach often enables successful verification of `microscopically' defined 
generic chaoticity of an interval map purely `macroscopically'. The situation 
seems to be similar to that of the point transitivity, which is also 
defined `microscopically' (as the existence of a point with dense orbit) yet, 
in nice spaces, is equivalent with topological transitivity and so can be 
verified `macroscopically' by examining the behaviour of nonempty open sets 
under the iterates of the map.

In \cite{Sno1992}, Snoha also remarked that 
it is possible to carry over some results concerning generic 
chaos from the interval to compact metric spaces. Then, 
Murinov\'a~\cite{Mur00} proved these generalized statements for a class of 
metric spaces containing, in particular, all compact metric spaces; to state the 
main result of \cite{Mur00} (see Proposition~\ref{P:B1-B2} below), we first introduce some 
conditions. 

If $(X,f)$ is a dynamical system on a compact metric space $X$ with metric $d$, 
and if $\SSs$ is a family of nondegenerate subsets of $X$, we will consider
the following three conditions which the family $\SSs$ may or may not satisfy:

\begin{enumerate}[label=({Prox}), left=2\parindent]
	\item\label{ENUM:B1} 
	for all sets $S_1,S_2\in\SSs$,
	$$
	\liminf_{n\to\infty} d(f^n(S_1),f^n(S_2))=0;
	$$ 
\end{enumerate}
\begin{enumerate}[label=({Sens}$_0$), left=2\parindent]
	\item\label{ENUM:Sens0} 
	for every set $S\in\SSs$,
	$$
	\limsup_{n\rightarrow \infty}\diam f^n(S)>0;
	$$   
\end{enumerate}
\begin{enumerate}[label=({Sens}), left=2\parindent]
	\item\label{ENUM:B2} 
	there is $\eta>0$ such that, 
	for every set $S\in\SSs$,
	\begin{equation}\label{EQ:Sens-eta}
	\limsup_{n\rightarrow \infty}\diam f^n(S)>\eta.
	\end{equation}
\end{enumerate}
Thus, \ref{ENUM:B1} means that all sets $S_1,S_2\in\SSs$ are proximal.
The other two conditions are close to sensitivity. If $\SSs$ is the family of
all open balls, then \ref{ENUM:B2} is in fact equivalent with the sensitivity of the system.\footnote{A
	system $(X,f)$ is \emph{sensitive} if there is $\eps>0$ such that, for every $x\in X$
	and every $\delta>0$, there is $y\in X$ with $d(y,x)<\delta$ such that
	$d(f^n(y), f^n(x))\ge\eps$ for some $n\ge 0$ (then, by choosing $y$ in a smaller neighbourhood of $x$ 
	if necessary, we may assume that $n$ is as large as we require).}
For this family $\SSs$, pointwise Lyapunov instability implies \ref{ENUM:Sens0},
but the converse is not true (for instance, the identity satisfies \ref{ENUM:Sens0}).\footnote{A system 
	$(X,f)$ is \emph{pointwise Lyapunov unstable} if for every $x\in X$ there exists $\eps>0$
	such that for every $\delta>0$ there is $y\in X$ with $d(y,x)<\delta$ such that
	$d(f^n(y), f^n(x))\ge\eps$ for some $n\ge 0$.} 

The above three conditions are closely related to generic chaos. First, realize that 
trivially
\begin{equation*}
\boxed{
	\begin{split}
	\text{$f$ is generically } & \text{chaotic} \\
	& \Longrightarrow \text{\ref{ENUM:B1} and 
		\ref{ENUM:Sens0} \, hold for the family of all open balls,}
	\end{split}
}
\end{equation*}

\smallskip

\noindent because generic chaoticity of $f$ implies that both $\Prox(f)$ and 
$\nAs(f)$ are dense in $X^2$. The converse implication 
does not hold even on the interval, see~\cite[Example 3.6]{Sno1990}. However, 
we have the equivalence\footnote{Of course, in the two boxed statements, 
	the family of all open balls may be replaced by the 
	family of all closed balls or by the family of all nonempty open sets.}

\begin{equation*}
\boxed{
	\begin{split}
	\text{$f$ is generically } & \text{$\eps$-chaotic for some 
		$\eps >0$} \\
	& \Longleftrightarrow  
	\text{\ref{ENUM:B1} and \ref{ENUM:B2} \, hold for the family of all open 
		balls.}
	\end{split}
}
\end{equation*}

\smallskip

\noindent More precisely, Theorem~A from \cite{Mur00} implies the following.

\begin{proposition}[\cite{Mur00}]  \label{P:B1-B2}
	Let $X=(X,d)$ be a compact metric space and $f\colon X\to X$ be continuous. 
	Then the following are equivalent:
	\begin{enumerate}
		\item $f$ is generically $\eps$-chaotic for some $\eps>0$;
		\item \ref{ENUM:B1} and \ref{ENUM:B2} are satisfied by the family 
		of all open (or closed) balls in $X$.
	\end{enumerate}
	Moreover, if $f$ is generically $\eps$-chaotic then \ref{ENUM:B2} holds, 
	for the family of all open (or closed) balls in $X$, with $\eta=\eps$. 
	Conversely, if \ref{ENUM:B1} and \ref{ENUM:B2} are satisfied by the family 
	of all open (or closed) balls in $X$, then $f$ is generically 
	$\eps$-chaotic for any $\eps < \eta/2$.
\end{proposition}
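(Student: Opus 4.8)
The plan is to prove the two implications separately, extracting the quantitative statements along the way. The forward implication $(1)\Rightarrow(2)$ is essentially the trivial observation recorded above the proposition, so the real work is $(2)\Rightarrow(1)$; there the treatment of \ref{ENUM:B1} is soft ($G_\delta$ plus Baire), while \ref{ENUM:B2} is routed through the Kuratowski--Ulam theorem to yield residuality of $\nAs(f,\eps)$.

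For $(1)\Rightarrow(2)$: if $\LY(f,\eps)$ is residual in $X^2$ then, $X^2$ being a Baire space, it is dense. For any balls $B_1,B_2$, a point $(x,y)\in(B_1\times B_2)\cap\LY(f,\eps)$ lies in $\Prox(f)$, which forces $\liminf_n d(f^n(B_1),f^n(B_2))=0$; this is \ref{ENUM:B1}. For any ball $S$, choose $(x,y)\in(S\times S)\cap\LY(f,\eps)$; then $\diam f^n(S)\ge d(f^n(x),f^n(y))$ for every $n$, hence $\limsup_n\diam f^n(S)\ge\limsup_n d(f^n(x),f^n(y))>\eps$, which is \ref{ENUM:B2} with $\eta=\eps$.

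For $(2)\Rightarrow(1)$: assume \ref{ENUM:B1} and \ref{ENUM:B2}, the latter with parameter $\eta$, for the family of all open balls, and fix $\eps<\eta/2$. Since $\LY(f,\eps)=\Prox(f)\cap\nAs(f,\eps)$, it suffices to find a residual subset of each factor. Now $\Prox(f)=\bigcap_{k\ge1}\bigcup_{n\ge0}\{(x,y):d(f^n(x),f^n(y))<1/k\}$ is a $G_\delta$ set, and \ref{ENUM:B1} shows that each open set $\bigcup_{n}\{(x,y):d(f^n(x),f^n(y))<1/k\}$ meets every product of two balls and is therefore dense, so $\Prox(f)$ is residual by the Baire category theorem. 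For the other factor, note the following consequence of \ref{ENUM:B2}: for every ball $B$, every $c\in X$ and every $N$ there exist $n\ge N$ and $x\in B$ with $d(f^n(x),f^n(c))>\eta/2$ --- choose $n\ge N$ with $\diam f^n(B)>\eta$, pick $p,q\in B$ with $d(f^n(p),f^n(q))>\eta$, and observe that the triangle inequality through $f^n(c)$ makes at least one of $p,q$ serve as $x$. Hence, for each fixed $c$, the open set $H^c_N:=\{x\in X:\exists\,n\ge N,\ d(f^n(x),f^n(c))>\eta/2\}$ is dense in $X$, and so $\bigcap_{N}H^c_N\subseteq\{x:\limsup_n d(f^n(x),f^n(c))\ge\eta/2\}$ is residual in $X$. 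Thus every section of the Borel set $A:=\{(x,y)\in X^2:\limsup_n d(f^n(x),f^n(y))\ge\eta/2\}$ is residual in $X$, and by the Kuratowski--Ulam theorem $A$ itself is residual in $X^2$. Since $\eps<\eta/2$ we have $A\subseteq\nAs(f,\eps)$, so $\nAs(f,\eps)$ is residual; consequently $\LY(f,\eps)$ is residual, i.e.\ $f$ is generically $\eps$-chaotic. The case of closed balls is identical, using that every open ball contains a closed ball and vice versa.

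I expect the one genuinely nonroutine step to be the passage from the sectionwise sensitivity statement to residuality of $A$ in the square: Kuratowski--Ulam is precisely what lets one avoid a direct --- and, for disjoint balls, seemingly awkward --- verification that the relevant open sets are dense in $X^2$. The price of routing \ref{ENUM:B2} through a reference point $f^n(c)$ via the triangle inequality is the factor $2$, which is why the method gives $\eps<\eta/2$ rather than $\eps<\eta$. Alternatively one may simply invoke \cite[Theorem~A]{Mur00} after checking that the family of all open, resp.\ closed, balls satisfies its hypotheses; the sketch above is exactly that verification in the present special case.
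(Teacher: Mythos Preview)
Your argument is correct. Note, however, that the paper does not supply its own proof of this proposition: it is stated as a direct consequence of \cite[Theorem~A]{Mur00} and left at that. So there is no ``paper's proof'' to compare against beyond the citation.

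What you have written is a clean, self-contained reconstruction of the result. The forward direction is the trivial density argument the paper alludes to in its boxed remarks. For the reverse direction, your treatment of $\Prox(f)$ as a dense $G_\delta$ is standard; the one substantive step is handling $\nAs(f,\eps)$, and your route---fix $c$, use the triangle inequality to get a dense open $H^c_N$ in each slice, then apply Kuratowski--Ulam to the Borel set $A$---is exactly the right idea and explains transparently where the factor $2$ (hence $\eps<\eta/2$) comes from. Your closing remark that this amounts to verifying the hypotheses of \cite[Theorem~A]{Mur00} for the family of balls is accurate; you have simply unpacked that verification rather than invoking the theorem as a black box.
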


The two boxed statements are crucial for understanding some proofs in the present paper.
We will often use them without explicitly citing them. Also, we hope that no misunderstanding
will arise if, for a generically chaotic map, we say that something is true by, for instance, 
\ref{ENUM:Sens0}. By this we of course mean that, due to the first boxed statement, 
for a generically chaotic map,
any set with nonempty interior satisfies the inequality from \ref{ENUM:Sens0}.

\smallskip

Thus, in spaces where generic chaos is equivalent to generic $\varepsilon$-chaos, 
we can check generic chaos `macroscopically' using \ref{ENUM:B1} and 
\ref{ENUM:B2} for open or closed balls. We know from~\cite{Sno1990} that 
the interval is such a space.
By~\cite{Tak2016}, even graphs are such spaces. 
However, in general, this is not true for dendrites; indeed, 
by \cite{Mur00}, an $\omega$-star admits a generically 
chaotic selfmap that is not generically $\varepsilon$-chaotic for any 
$\varepsilon >0$ (an \emph{$\omega$-star} is a (topologically unique) dendrite 
having exactly one branch point, and this branch point is of infinite order).

The two boxed statements show that the difference between generic chaos and generic $\eps$-chaos lies in the sensitivity. Indeed, $f$ is generically $\eps$-chaotic for some $\eps>0$ if and only if it is generically chaotic and sensitive (i.e., satisfies \ref{ENUM:B2}). Thus, the above mentioned map on an $\omega$-star shows that a generically chaotic map need not be sensitive. Another such example was suggested by an anonymous referee, see Example~\ref{Ex:gch-not-sensitive}.

Thus, already on dendrites, generic $\varepsilon$-chaos is stronger than generic 
chaos.\footnote{This is not surprising; a similar fact is that for a continuous 
	selfmap of a compact metric space, the whole space can be a 
	scrambled set~\cite{HY01}, while it is never an $\varepsilon$-scrambled 
	set~\cite[Proposition 5]{BHS08} for any $\varepsilon>0$ (compactness is 
	essential here, see~\cite[Example 6]{BHS08}).} 
The main aim of the present 
paper is to \emph{characterize} dendrites on which generic chaos is equivalent 
to generic $\varepsilon$-chaos for some $\eps>0$, i.e., to characterize 
dendrites on which generic chaos can be verified `macroscopically', using the 
conditions \ref{ENUM:B1} and \ref{ENUM:B2}.

We know that on an $\omega$-star there exists a 
generically chaotic map which is not generically $\eps$-chaotic for any 
$\eps>0$. By generalizing the construction from \cite{Mur00}, we 
will show that such a map can be constructed 
on every dendrite having a branch point of infinite order, see Lemma~36. Thus, for all generically chaotic maps to be generically $\eps$-chaotic, every branch point of a dendrite must be of finite order.

In Lemma~36, we will further show 
that another condition necessary for obtaining the equivalence between 
generic chaos and 
generic $\eps$-chaos on a dendrite is complete regularity.
Recall that a continuum is \emph{completely regular} if every nondegenerate 
subcontinuum of it has nonempty interior (\cite{Nik89}, see also 
\cite{Ili80}). A singleton is completely regular for trivial reasons. A subcontinuum of 
a completely regular continuum is completely regular.
By \cite[Theorem~4 on p.~301, Theorem~3 on p.~284]{Kur68}, every dendrite as 
well as every completely regular continuum is regular (a continuum is 
\emph{regular} if it has a basis consisting of open sets with finite boundary). 
Wa\.{z}ewski's universal dendrite is an example of a dendrite which is not 
completely regular.
It is interesting that by \cite{OmZaf05} there is a universal completely 
regular dendrite, that is, a completely regular dendrite containing a copy of 
every completely regular dendrite.

Our main result is the following theorem showing that the two necessary 
conditions, when taken together, are also sufficient. To prove that generic chaos together with these two conditions imply generic $\eps$-chaos, we first study invariant subcontinua for a generically chaotic map on a dendrite satisfying the two conditions. Similarly as on the interval \cite{Sno1990} and graphs \cite{Tak2016}, we are able to prove that the invariant nondegenerate subdendrites have large diameters, see Lemma~\ref{L:subdend_at_least_delta}. Of course, the topology of dendrites makes the proof much more complicated, among other reasons due to the phenomenon described in Footnote~\ref{Footnote-dend}.
Contrary to the interval and graph case, there is still a long road to finish the proof. One of the new ideas is that now we need to use a nontrivial result from \cite{Blo2013} about a dichotomy for the character of fixed points of continuous dendrite maps, see Lemma~\ref{L:noneffluent_endpoint}.
This result is crucial in Lemmas~\ref{L:orbits_with_fD_intersects_D-diameter_delta}
and \ref{L:k_equals_1}, in which we study subdendrites
having orbits containing no fixed and periodic points, respectively (the interval and graph case did not require anything like this). Even using these two lemmas, the proof is still quite long.

\begin{maintheorem}\label{T:dendrite_gch=gech}
	Let $D$ be a dendrite. Then the following are equivalent.
	\begin{enumerate}
		\item \label{ENUM:T1} 
		For every continuous map $f\colon D\to D$, $f$ is generically chaotic if and only if 
		it is generically $\eps$-chaotic for some $\eps>0$.
		\item \label{ENUM:T2} 
		The dendrite $D$ is completely regular, with all points of finite order.
	\end{enumerate}
\end{maintheorem}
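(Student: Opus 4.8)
The plan is to prove the two implications $\eqref{ENUM:T2}\Rightarrow\eqref{ENUM:T1}$ and $\neg\eqref{ENUM:T2}\Rightarrow\neg\eqref{ENUM:T1}$ separately, as outlined in the introduction. The negative direction is the easier one: I would invoke Lemma~36, which (by the authors' description) shows that on any dendrite with a branch point of infinite order, or on any dendrite that is not completely regular, one can construct a generically chaotic map that is not generically $\eps$-chaotic for any $\eps>0$. Concretely, if $D$ fails \eqref{ENUM:T2}, then $D$ either contains a point of infinite order (so it contains a copy of the $\omega$-star) or it is not completely regular (so it contains a copy of the Riemann dendrite); in either case Lemma~36 produces a continuous $f\colon D\to D$ witnessing the failure of \eqref{ENUM:T1}. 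So the real content is in the forward direction.

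For $\eqref{ENUM:T2}\Rightarrow\eqref{ENUM:T1}$, fix a completely regular dendrite $D$ with all points of finite order and a generically chaotic $f\colon D\to D$; I must show $f$ is generically $\eps$-chaotic for some $\eps>0$. Using Proposition~\ref{P:B1-B2} (the second boxed statement), it suffices to verify \ref{ENUM:B2} for the family of all nonempty open sets, since \ref{ENUM:B1} holds automatically for a generically chaotic map. The heart of the argument is Lemma~\ref{L:subdend_at_least_delta}: there is $\delta>0$ such that every $f$-invariant nondegenerate subdendrite of $D$ has diameter at least $\delta$. Granting this, the strategy would be: given any nonempty open $U\subseteq D$, I want to force a large-diameter recurrence of $f^n(U)$. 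By complete regularity, $U$ contains a nondegenerate subdendrite $E$ with nonempty interior. Now I would analyze the orbit of $E$. If $\limsup_n \diam f^n(E)$ is already bounded below by a universal constant we are done; otherwise the orbit of $E$ (or rather of a suitable subdendrite inside it) eventually gets ``small'' infinitely often, and here the case analysis via Lemmas~\ref{L:orbits_with_fD_intersects_D-diameter_delta} and \ref{L:k_equals_1} enters — splitting according to whether the orbit of $E$ can avoid fixed points, respectively periodic points. In the periodic case one passes to a power $f^k$ and an invariant subdendrite, applies Lemma~\ref{L:subdend_at_least_delta} to get diameter $\ge\delta$, then descends back to $f$; the noneffluent-endpoint dichotomy from \cite{Blo2013}, packaged as Lemma~\ref{L:noneffluent_endpoint}, is what rules out the pathological orbit behaviour that would otherwise prevent the large-diameter conclusion.

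The main obstacle, as the authors themselves flag, is proving Lemma~\ref{L:subdend_at_least_delta} — that invariant nondegenerate subdendrites cannot be arbitrarily small. On the interval this is almost immediate (a small invariant subinterval of a generically chaotic map contradicts density of $\nAs(f)$ via the escape to a fixed point); on dendrites the branching structure and the phenomenon in Footnote~\ref{Footnote-dend} (that $f^n(S)$ can fail to contain a fixed-point-free orbit while still shrinking) mean that the naive argument breaks. I expect the proof to proceed by contradiction: assume a sequence of invariant subdendrites $D_m$ with $\diam D_m\to 0$; their ``limit'' (in the Hausdorff metric, after passing to a subsequence, using that $D$ is a dendrite hence its hyperspace of subcontinua is nice) is a point or a short arc $q$, and one derives that $q$ is forced to be (close to) a fixed point attracting a neighbourhood, contradicting \ref{ENUM:Sens0} for open sets near $q$. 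Executing this rigorously requires the finite-order hypothesis (to control how many branches emanate near $q$, and to apply the structure theory of dendrite maps near fixed points) and the noneffluent-endpoint dichotomy. Once Lemma~\ref{L:subdend_at_least_delta} and the orbit lemmas are in hand, assembling \ref{ENUM:B2} for all open sets is still a careful but more routine bookkeeping argument, after which Proposition~\ref{P:B1-B2} finishes the proof.
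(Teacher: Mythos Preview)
Your overall architecture matches the paper's: the negative direction via Lemma~\ref{L:gench-not-eps} (your ``Lemma~36''), the positive direction via Proposition~\ref{P:B1-B2} reduced to verifying \ref{ENUM:B2}, with Lemma~\ref{L:subdend_at_least_delta} and the orbit lemmas as the key inputs. That much is correct.

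However, your sketch mislocates two of the main ideas. First, your proposed proof of Lemma~\ref{L:subdend_at_least_delta} is not what works: the paper does \emph{not} take a Hausdorff limit of small invariant subdendrites, nor does it use the noneffluent-endpoint dichotomy here. Instead, by \ref{ENUM:B1} and Lemma~\ref{L:fixed_pt_in_intersection}, any family of invariant nondegenerate subdendrites has nonempty intersection containing a fixed point $p$; the finite-order hypothesis then allows one to pass to a \emph{nested} sequence $T_n\searrow\{p\}$ and show (this is the delicate part, addressing exactly the phenomenon in Footnote~\ref{Footnote-dend}) that for residually many $x\in T_0$ the orbit of $x$ enters every $T_n$, hence converges to $p$ --- contradicting generic chaos of $f|_{T_0}$. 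No Hausdorff-metric argument, and no Blokh et~al.\ dichotomy at this stage.

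Second, your assembly of \ref{ENUM:B2} from the lemmas is missing a structural case that the paper treats separately and that is precisely where the finite-order hypothesis enters the main proof. Starting from a sequence of subdendrites $D_i$ with $\limsup_n\diam f^n(D_i)\to 0$, the paper decomposes $\Orb_f(D_i)$ into cyclically permuted pieces $L_i^0,\dots,L_i^{r_i-1}$ (via Lemma~\ref{L:finiteLength}) and splits according to whether the cycle lengths $r_i$ are bounded. If they are unbounded, all the cycle ``centres'' $p_i$ coincide and that point has infinite order --- contradiction. Only in the bounded case does one pass to $g=f^r$ and invoke Lemmas~\ref{L:orbits_with_fixed_pts-diameter_delta}, \ref{L:orbits_with_fD_intersects_D-diameter_delta}, and~\ref{L:k_equals_1}; the noneffluent-endpoint dichotomy (Lemma~\ref{L:noneffluent_endpoint}, via Lemma~\ref{L:noneffluent_endpoint-every_point_admires}) is used inside those lemmas to force iterates of $D_i$ to drift monotonically toward a fixed endpoint, not in the proof of Lemma~\ref{L:subdend_at_least_delta}. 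Your ``periodic case: pass to $f^k$, apply Lemma~\ref{L:subdend_at_least_delta}, descend'' is too coarse to capture this, and without the unbounded-$r_i$ branch the argument does not close.
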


Condition \eqref{ENUM:T2} can be reformulated by saying
that the dendrite $D$ contains neither a copy of the Riemann dendrite (see Proposition~\ref{P:conditions} and the definition of the Riemann dendrite just above it) nor a copy of the $\omega$-star.
Notice that if $D$ is a singleton, then both \eqref{ENUM:T1} and \eqref{ENUM:T2}  trivially hold.

Recall that if $X$ is a compact metric space and $A\subseteq X$ is an arc (i.e., a homeomorphic image of the interval $[0,1]$), then $A$ is called a \emph{free arc} if the arc $A$ without its endpoints is an open set in $X$, see e.g.~\cite{DSS2013}.
Proposition~\ref{P:B1-B2} immediately gives the following corollary of Theorem~\ref{T:dendrite_gch=gech}
(see also Proposition~14).

\begin{maincorollary}\label{C:main}
	Let $D$ be a nondegenerate completely regular dendrite with all points of finite order. Let $f\colon D\to D$ be a continuous map. Then the following conditions are equivalent:
	\begin{enumerate}
		\item\label{ENUM:Cor1} $f$ is generically chaotic;
		\item\label{ENUM:Cor2} $f$ is generically $\eps$-chaotic for some $\eps>0$;
		\item\label{ENUM:Cor3} \ref{ENUM:B1} and \ref{ENUM:B2} are satisfied by the family of all open (or closed) 
		balls in $D$;
		\item\label{ENUM:Cor4} \ref{ENUM:B1} and \ref{ENUM:B2} are satisfied by the family of all free arcs in $D$;
		\item\label{ENUM:Cor5} \ref{ENUM:B1} and \ref{ENUM:B2} are satisfied by the family of all nondegenerate subdendrites of $D$.
	\end{enumerate}
	Moreover, if $f$ is generically $\eps$-chaotic then \ref{ENUM:B2} in \eqref{ENUM:Cor3}--\eqref{ENUM:Cor5} hold with $\eta=\eps$. Conversely, if \ref{ENUM:B1} and \ref{ENUM:B2} are satisfied in one of \eqref{ENUM:Cor3}--\eqref{ENUM:Cor5}, then $f$ is generically $\eps$-chaotic for any $\eps<\eta/2$.
\end{maincorollary}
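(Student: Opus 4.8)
The plan is to read Corollary~\ref{C:main} off from Theorem~\ref{T:dendrite_gch=gech}, Proposition~\ref{P:B1-B2}, and a single point-set fact about completely regular dendrites. First I would record which equivalences are essentially free. Since $D$ is completely regular with all points of finite order, it satisfies condition~\eqref{ENUM:T2} of Theorem~\ref{T:dendrite_gch=gech}, hence also condition~\eqref{ENUM:T1}; specializing condition~\eqref{ENUM:T1} to the map $f$ in the statement gives \eqref{ENUM:Cor1}$\Leftrightarrow$\eqref{ENUM:Cor2}. The equivalence \eqref{ENUM:Cor2}$\Leftrightarrow$\eqref{ENUM:Cor3}, together with both halves of the quantitative ``Moreover'' assertion \emph{for the family of balls}, is exactly Proposition~\ref{P:B1-B2}. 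So the real task is to move \ref{ENUM:B1} and \ref{ENUM:B2} between the three families of test sets appearing in \eqref{ENUM:Cor3}--\eqref{ENUM:Cor5}, without degrading the sensitivity constant $\eta$.

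For this I would use two ingredients. The first is a trivial \emph{monotonicity} principle: if $\mathcal F_1$ and $\mathcal F_2$ are families of nondegenerate subsets of $D$ such that every member of $\mathcal F_1$ contains a member of $\mathcal F_2$, then \ref{ENUM:B1} and \ref{ENUM:B2} for $\mathcal F_2$ imply the same for $\mathcal F_1$ with \emph{the same} $\eta$, because passing to larger sets can only decrease the relevant distances and only increase the relevant diameters. The second ingredient is the topological fact that \emph{in a completely regular dendrite every nonempty open set contains a free arc} (see Proposition~\ref{P:conditions}). Granting it, the three families interlace: every free arc contains a ball (the arc minus its two endpoints is open), every nondegenerate subdendrite contains a ball (by complete regularity it has nonempty interior), and conversely every ball contains a free arc, which is in particular a nondegenerate subdendrite. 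Plugging these inclusions into the monotonicity principle at once yields \eqref{ENUM:Cor3}$\Leftrightarrow$\eqref{ENUM:Cor4}$\Leftrightarrow$\eqref{ENUM:Cor5} and carries the constant $\eta$ back and forth between balls and the other two families; together with Proposition~\ref{P:B1-B2} this is precisely the ``Moreover'' clause in the asserted generality, and the proof is finished.

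The only non-bookkeeping step is the topological fact, and I expect this to be the main (if mild) obstacle. To prove it, given a nonempty open $U\subseteq D$ I would use local connectedness of $D$ to choose a nonempty connected open $V$ with $\overline V\subseteq U$, so that $\overline V$ is a nondegenerate subdendrite lying in $U$, and then pick any arc $A_0=[a,b]\subseteq V$ (a connected subset of a dendrite contains the arc joining any two of its points). The set of branch points of $D$ is at most countable, hence not dense in $A_0$: if it were dense, then no point of $(a,b)$ could be interior to $A_0$ in $D$ (arbitrarily near such a point there would be a branch point, and a third arc issuing from it would supply points of $D$ arbitrarily close to it but off $A_0$), so the nondegenerate subcontinuum $A_0$ would have empty interior, contradicting complete regularity. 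Consequently some open subarc $(c,c')\subseteq(a,b)$ avoids all branch points; then every point of $(c,c')$ has order $2$ in $D$, so it has arbitrarily small arc neighbourhoods that near it run along $A_0$, whence $(c,c')$ is open in $D$ and $[c,c']$ is a free arc contained in $U$. The care here is confined to the local structure of a dendrite at order-$2$ points and at branch points; there is no genuine difficulty, and this statement is in any case subsumed by the structural results recorded earlier.
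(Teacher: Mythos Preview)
Your proof is correct and follows exactly the route the paper intends: Theorem~\ref{T:dendrite_gch=gech} gives \eqref{ENUM:Cor1}$\Leftrightarrow$\eqref{ENUM:Cor2}, Proposition~\ref{P:B1-B2} gives \eqref{ENUM:Cor2}$\Leftrightarrow$\eqref{ENUM:Cor3} with the quantitative bounds, and the interlacing of the three test families via Proposition~\ref{P:conditions} handles \eqref{ENUM:Cor3}$\Leftrightarrow$\eqref{ENUM:Cor4}$\Leftrightarrow$\eqref{ENUM:Cor5} without degrading~$\eta$. One cosmetic point: in your sketch of the topological fact, the clause ``at most countable, hence not dense in $A_0$'' is misleading (countable sets can be dense); the real argument is the one you give after the colon, deriving empty interior of $A_0$ from density of branch points and contradicting complete regularity --- countability plays no role there.
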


Using Proposition~\ref{P:B1-B2}, one can observe that conditions \eqref{ENUM:Cor2}--\eqref{ENUM:Cor4} in Corollary~\ref{C:main} are equivalent for a larger class of dendrites, namely for dendrites with dense free arcs
(an example of such a dendrite is the Riemann dendrite, see Figure~1).

\medskip
In connection with Theorem~\ref{T:dendrite_gch=gech}, 
a natural question is whether the equivalence between generic chaos and generic $\eps$-chaos
can be extended to a larger class of spaces.
First of all, realize that some compact metric spaces $X$ do not admit generically chaotic maps at all.
For instance, this is true if $X$ is \emph{rigid} (i.e., $X$ admits 
no continuous selfmap other than the identity or a constant map). 
This is also true if $X$ has an isolated point 
(an isolated point is a nonempty open set which does not satisfy the inequality from \ref{ENUM:Sens0}).

We denote by $\XXx$ the system of all compact metric spaces $X$ admitting at least one generically chaotic map
and satisfying the condition
\[
f\colon X\to X \text{ is generically chaotic } \iff f \text{ is generically }\varepsilon\text{-chaotic for some } \varepsilon >0.
\]
Thus, $\XXx$ is the system of those compacta on which generic chaos of continuous selfmaps 
can be checked macroscopically in the sense discussed above, i.e., using \ref{ENUM:B1} and \ref{ENUM:B2}.

The system $\XXx$ contains no zero-dimensional compact metric space 
(we already know this if it contains an isolated point;
otherwise, see Proposition~\ref{P:Cantor}).
We now consider one-dimensional spaces.
By \cite{Tak2016}, $\XXx$ contains all finite graphs and, by our Theorem~\ref{T:dendrite_gch=gech},
it contains many, but not all, dendrites.
We conjecture that Theorem~\ref{T:dendrite_gch=gech} can be extended to local dendrites,
i.e., we conjecture that a local dendrite is in $\XXx$ if and only if it is completely regular, with all points of finite order. More generally, one can ask which locally connected curves belong to $\XXx$.

The square is an example of a two-dimensional space which admits a generically chaotic map but is not in $\XXx$,
see the end of the proof of Theorem~B in \cite{Mur00}.

\medskip
The paper is organized as follows.
Sections~2 and 3 are preliminary.
Section~4 contains technical lemmas which are then used
in Section~5 to prove the implication (\ref{ENUM:T2}) $\Rightarrow$ (\ref{ENUM:T1})
in Theorem~\ref{T:dendrite_gch=gech}.
In Section~6, we construct special exact maps on dendrites which are
not completely regular.
Such maps are used in Section~7 to prove the implication (\ref{ENUM:T1}) $\Rightarrow$ (\ref{ENUM:T2}).

%%%%%%%%%%%%%%%%%%%%%%%%%%%%%%%%%%%%%%%%%%%%%%%%%%%%%%%%%%%%%%%%%%%%%%%%%%%%
%%%%%%%%%%%%%%%%%%%%%%%%%%%%%%%%%%%%%%%%%%%%%%%%%%%%%%%%%%%%%%%%%%%%%%%%%%%%
%%%%%%%%%%%%%%%%%%%%%%%%%%%%%%%%%%%%%%%%%%%%%%%%%%%%%%%%%%%%%%%%%%%%%%%%%%%%
%%%%%%%%%%%%%%%%%%%%%%%%%%%%%%%%%%%%%%%%%%%%%%%%%%%%%%%%%%%%%%%%%%%%%%%%%%%%
%%%%%%%%%%%%%%%%%%%%%%%%%%%%%%%%%%%%%%%%%%%%%%%%%%%%%%%%%%%%%%%%%%%%%%%%%%%%
\section{Preliminaries}\label{S:prelim}

Let $\NNN$ be the set of all positive integers and $\NNN_0=\{0\}\cup\NNN$.
Let $I$ be the unit interval $[0,1]$.
We sometimes use the symbol $\sqcup$ to denote a disjoint union.
The cardinality of a set $A$ is denoted by $\card{A}$.
By $\Bd A$ we denote the boundary of $A$.

If $X=(X,d)$ is a metric space and $A,B$ are subsets of it, by $d(A,B)$ we mean the distance from $A$ to $B$,
that is, $d(A,B)=\inf\{d(a,b)\colon a\in A,\, b\in B\}$; if $A=\{a\}$ is a singleton we write
$d(a,B)$ instead of $d(\{a\},B)$. The open $\eps$-ball centered at $x$ is denoted by $B_\eps(x)$.
A \emph{regular closed set} is a set which is equal to the closure of its interior.
A family of subsets of $X$ is called a \emph{null family} if for every $\eps>0$ 
only finitely many sets from this family have diameters larger than $\eps$.

Let $(X,f)$ be a \emph{dynamical system}, that is, $X$ is a compact metric space and $f\colon X\to X$ 
is a continuous map. 
The sets of all fixed points and all periodic points
are denoted by $\Fix(f)$ and $\Per(f)$, respectively.
The \emph{orbit} of a set $A\subseteq X$ is $\Orb_f(A)=\bigcup_{n=0}^\infty f^n(A)$.
We say that $f$ is 
\begin{itemize}
	\item \emph{transitive} if for every nonempty open sets $U,V\subseteq X$
	there is $n\in\NNN$ with $f^n(U)\cap V\ne\emptyset$ (then there are infinitely many such positive integers $n$);
	\item \emph{totally transitive} if $f^n$ is transitive for every $n\in\NNN$;
	\item \emph{weakly mixing} if $f\times f\colon X\times X\to X\times X$ is transitive;
	\item \emph{strongly mixing} if for every nonempty open sets $U,V\subseteq X$
	there is $n_0\in\NNN$ such that $f^n(U)\cap V\ne\emptyset$ for every $n\ge n_0$;
	\item \emph{exact}, or \emph{locally eventually onto}, if for every nonempty open set $U\subseteq X$
	there is $n\in\NNN$ with $f^n(U)=X$.
\end{itemize}

If $A,B$ are nonempty open subsets of $X$ and $X=A\sqcup B$, we speak on a \emph{disconnection of $X$}.
Recall that $X$ is connected if and only if it has no disconnection.
A \emph{continuum} is a nonempty, compact, connected metric space.  We say 
that a continuum is \emph{nondegenerate} if it has more than one point. A 
continuum $X$ is \emph{uniquely arcwise connected} provided that for every 
two distinct points of $X$ there is exactly one arc in $X$ joining these 
points. A continuum $X$ is \emph{unicoherent} if the intersection of every 
two of its subcontinua whose union is $X$ is connected (hence, a 
subcontinuum). $X$ is \emph{hereditarily unicoherent} if all its 
subcontinua are unicoherent. Equivalently, a continuum $X$ is hereditarily 
unicoherent if and only if the intersection of any two subcontinua of $X$ 
is connected. 

A \emph{simple closed curve} is a homeomorphic image of the unit circle. 
A \emph{dendrite} is a locally connected continuum which 
contains no simple closed curve. Note that a singleton is also a dendrite 
but the empty set is not. We will use basic facts on dendrites 
from~\cite{Kur68,Nad92,Cha98}. In particular, every 
subcontinuum of a 
dendrite is a dendrite and all dendrites are uniquely arcwise connected and 
hereditarily unicoherent.
Since a dendrite is locally connected, the components of its open subsets are open.
Recall also that dendrites have the fixed point property.

Let $D$ be a dendrite and $x\in D$. The \emph{order} of the point $x$,  
denoted by $\ord(x,D)$,  is the cardinality of the set of (connected) components of $D\setminus\{x\}$
(see \cite[Theorem~10.13]{Nad92}). 
The order can be either finite or infinite countable. 
An \emph{endpoint} is a point of order $1$. Any point with order greater than $1$
or greater than $2$ is called a \emph{cutpoint} or a \emph{branch point}, respectively. 
The sets of all endpoints, cutpoints, branch points of $D$ 
are denoted by $\End(D)$, $\Cut(D)$, $\Branch(D)$, respectively.
Notice that any nondegenerate dendrite $D$ has at least two endpoints,
and every point of it is either an endpoint or a cutpoint; 
the set $B(D)$ is countable \cite[Theorem~7, p.~302]{Kur68}
and the set $\End(D)$ is totally disconnected \cite[Theorem~2, p.~292]{Kur68}.
The unique point in a degenerate dendrite has order $0$.
For distinct $x,y\in D$ let $[x,y]$ denote the unique arc with endpoints $x$ and $y$, and let $[x,x]$ denote just the singleton $\{x\}$. For $x\neq y$ put $(x,y)=[x,y]\setminus \{x,y\}$, $(x,y]=[x,y]\setminus \{x\}$ and $[x,y)=[x,y]\setminus \{y\}$.

\begin{lemma}\label{L:endS}
Let $S$ be a nondegenerate connected set in a dendrite $D$ (hence the closure $\overline S$ is a nondegenerate dendrite). 
Let $x\in \overline{S}\setminus S$.
Then $x$ is an endpoint of $\overline{S}$.
\end{lemma}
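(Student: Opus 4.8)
The plan is to argue by contradiction: suppose $x \in \overline{S} \setminus S$ has order at least $2$ in $\overline{S}$, and derive a contradiction with the assumption that $S$ is connected. If $\ord(x, \overline{S}) \geq 2$, then $\overline{S} \setminus \{x\}$ has at least two components; pick two distinct points $a, b \in \overline{S} \setminus \{x\}$ lying in different components, so that $x$ is the unique point of the arc $[a,b] \subseteq \overline{S}$ separating $a$ from $b$, i.e. $x \in (a,b)$ and $x$ is a cutpoint of $\overline{S}$ between $a$ and $b$.

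The key step is to locate points of $S$ near $a$ and near $b$. Since $x \notin S$ but $x \in \overline{S}$, and since $a, b \neq x$, the two components $C_a, C_b$ of $\overline{S}\setminus\{x\}$ containing $a$ and $b$ are open in $\overline{S}$ (dendrites are locally connected, so components of open sets are open); because $S$ is dense in $\overline S$ and $x$ does not disconnect $S$ itself from its closure in a trivial way, I would show both $C_a \cap S$ and $C_b \cap S$ are nonempty. Indeed, $S \setminus \{x\} = S$ (as $x\notin S$) is a dense subset of $\overline S$, hence meets the nonempty open sets $C_a$ and $C_b$; pick $a' \in C_a \cap S$ and $b' \in C_b \cap S$. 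Now the arc $[a', b']$ lies in $\overline{S}$ and must pass through $x$, since $a'$ and $b'$ are in different components of $\overline{S}\setminus\{x\}$ — any arc in the uniquely arcwise connected continuum $\overline S$ joining points of different components of $\overline{S}\setminus\{x\}$ contains $x$.

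The final step exploits that $S$ is \emph{connected}, not merely dense. Since $S$ is a connected subset of the dendrite $D$ containing both $a'$ and $b'$, and $D$ is uniquely arcwise connected, $S$ must contain the arc $[a',b']$ (a connected subset of a dendrite joining two points contains the unique arc between them). Therefore $x \in [a', b'] \subseteq S$, contradicting $x \notin S$. Hence $\ord(x, \overline{S}) = 1$, i.e. $x$ is an endpoint of $\overline{S}$. (The parenthetical claim that $\overline S$ is a nondegenerate dendrite is immediate: a nondegenerate connected subset of a dendrite has nondegenerate closure, and the closure of a connected set is connected, closed, and — as a subcontinuum of a dendrite — again a dendrite.)

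The main obstacle, and the only point requiring care, is justifying that a connected subset of a dendrite containing two points $a', b'$ must contain the whole arc $[a',b']$; this is a standard fact for uniquely arcwise connected continua (if $S$ omitted a point $c \in (a',b')$, then $c$ separates $D$, hence separates $S$, into the part of $S$ on the $a'$-side and the part on the $b'$-side, both nonempty, contradicting connectedness of $S$), but it should be invoked explicitly since it is the crux of the argument. Everything else — local connectedness giving open components, density of $S$ in $\overline S$ meeting open sets, unique arcwise connectedness forcing the arc through the separating point — is routine.
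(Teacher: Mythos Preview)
Your proof is correct and follows the same approach as the paper's: argue by contradiction, assume $x$ is a cutpoint of $\overline{S}$, and use density of $S$ in $\overline{S}$ to find points of $S$ in different components of $\overline{S}\setminus\{x\}$, contradicting connectedness of $S$. The paper's version is simply more direct: once $\overline{S}\setminus\{x\}=A\sqcup B$ is a disconnection, it observes immediately that $S=(A\cap S)\sqcup(B\cap S)$ is a disconnection of $S$ (nonemptiness of both pieces coming from density of $S$), without the detour through arcs $[a',b']$ and the auxiliary fact that connected subsets of dendrites are arcwise connected.
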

\begin{proof}
If $x$ is a cutpoint of $\overline{S}$ then there is a disconnection $\overline{S}\setminus\{x\} = A\sqcup B$. 
Hence, since $x\notin S$, $S=(A\cap S)\sqcup (B\cap S)$ is a disconnection of $S$.
\end{proof}

\begin{lemma}\label{L:boundaryBumping}
	Let $X$ be a topological space and $U$ be an open set.
	Let $A$ be a connected subset of $X$ such that $A\cap U\ne\emptyset$ and $A\cap \Bd U=\emptyset$.
	Then $A\subseteq U$.
\end{lemma}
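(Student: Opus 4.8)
The statement to prove is Lemma~\ref{L:boundaryBumping}: in a topological space $X$ with open $U$, a connected set $A$ meeting $U$ but disjoint from $\Bd U$ must lie inside $U$.

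\medskip

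\noindent\textbf{Plan.} The plan is to argue by contradiction using the defining property of connectedness: I will exhibit a disconnection of $A$ whenever $A$ is not contained in $U$. First I would recall the standard decomposition of the ambient space relative to $U$. Since $U$ is open, we have the partition $X = \Int U \sqcup \Bd U \sqcup \Int(X\setminus U)$, where $\Int U = U$ because $U$ is open, and $\Int(X\setminus U) = X\setminus \overline{U}$. Thus $X\setminus \Bd U = U \sqcup (X\setminus\overline{U})$, and both pieces are open: $U$ is open by hypothesis, and $X\setminus\overline{U}$ is open as the complement of a closed set. This is the key structural observation; everything else is bookkeeping.

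\medskip

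\noindent Next I would intersect this decomposition with $A$. By hypothesis $A\cap\Bd U = \emptyset$, so $A \subseteq X\setminus\Bd U$, and therefore
\[
A = \bigl(A\cap U\bigr)\ \sqcup\ \bigl(A\cap (X\setminus\overline{U})\bigr).
\]
Each of the two sets on the right is open in the subspace topology on $A$, being the intersection of $A$ with an open subset of $X$. By hypothesis $A\cap U\neq\emptyset$. If $A$ were not contained in $U$, then $A$ would contain a point not in $U$; since $A$ avoids $\Bd U$, that point lies in $X\setminus\overline{U}$, so $A\cap(X\setminus\overline{U})\neq\emptyset$ as well. Then the displayed formula is a disconnection of $A$ into two nonempty relatively open sets, contradicting the connectedness of $A$. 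Hence $A\subseteq U$.

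\medskip

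\noindent\textbf{Main obstacle.} There is essentially no obstacle here — this is a routine ``boundary bumping''-style separation argument. The only point requiring the slightest care is making sure the two pieces $U$ and $X\setminus\overline{U}$ genuinely partition $X\setminus\Bd U$ and are genuinely disjoint (disjointness is immediate since $U\subseteq\overline{U}$), and that both are open in $X$ (hence relatively open in $A$). No separation axioms or any special structure of $X$ are needed; the lemma holds for an arbitrary topological space, which is why the proof uses only the definitions of boundary, interior, and connectedness.
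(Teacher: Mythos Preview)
Your proof is correct and is exactly the paper's argument, just written out in more detail: the paper's one-line proof is ``Otherwise $A=(A\setminus \overline{U}) \sqcup (A\cap U)$ is a disconnection of the connected space $A$,'' which is precisely your decomposition $A = (A\cap U)\sqcup(A\cap(X\setminus\overline{U}))$.
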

\begin{proof}
	Otherwise $A=(A\setminus \overline{U}) \sqcup (A\cap U)$ is a disconnection of the connected space $A$.
\end{proof}

The following simple lemma will be applied to dendrites.
\begin{lemma}\label{L:finiteCap}
	Let $E$ be a hereditarily unicoherent continuum and $E_1,\dots,E_k$ be subcontinua of $E$. 
	If the sets $E_i$ intersect pairwise, then the intersection $\bigcap_{i=1}^k E_i$ 
	is nonempty, hence it is a subcontinuum of $E$.
\end{lemma}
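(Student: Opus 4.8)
\textbf{Proof plan for Lemma~\ref{L:finiteCap}.}
The natural approach is induction on $k$, the base case $k=1$ being trivial and the case $k=2$ being exactly the definition of hereditary unicoherence (the intersection of two subcontinua of $E$ is a subcontinuum, in particular nonempty if the two subcontinua meet). So the work is in the inductive step: assuming the statement for $k-1$, prove it for $k$. The obvious temptation---intersect $E_1,\dots,E_{k-1}$ by induction and then intersect with $E_k$---does not immediately work, because although $F:=\bigcap_{i=1}^{k-1}E_i$ is a subcontinuum by the inductive hypothesis, it is not clear a priori that $F\cap E_k\ne\emptyset$; the hypothesis only gives us that $E_k$ meets each $E_i$ individually, not that it meets their common part. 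So the main obstacle is precisely to upgrade ``$E_k$ meets each $E_i$'' to ``$E_k$ meets $\bigcap_{i<k}E_i$''.

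To overcome this I would argue by contradiction, or rather directly using unique arcwise connectedness (which holds here, since a hereditarily unicoherent continuum need not be uniquely arcwise connected in general---but wait, the lemma is stated for hereditarily unicoherent continua, not dendrites, so I should avoid arcwise connectedness). Let me instead stay purely within hereditary unicoherence. Here is the cleaner inductive step. Set $F=\bigcap_{i=1}^{k-1}E_i$, a subcontinuum of $E$ by the inductive hypothesis. I claim $F\cap E_k\ne\emptyset$. Consider the subcontinuum $G_i:=$ (the intersection $E_i\cap E_k$), which is a nonempty subcontinuum for each $i=1,\dots,k-1$ by the $k=2$ case. Now the continua $G_1,\dots,G_{k-1}$ are subcontinua of $E$ that pairwise intersect: indeed $G_i\cap G_j = E_i\cap E_j\cap E_k$, and this is nonempty because $E_i\cap E_k$ and $E_j\cap E_k$ are subcontinua of the continuum $E_k$ each meeting\ldots hmm, this is circular.

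Let me take the genuinely clean route: \emph{induction where at each step we intersect with $E_k$ first.} Reformulate: it suffices to prove, by induction on $k\ge 2$, that if $C$ is a subcontinuum of $E$ meeting each of the pairwise-intersecting subcontinua $E_1,\dots,E_k$, then $C\cap\bigcap_{i=1}^k E_i\ne\emptyset$ and this intersection is a subcontinuum. Actually the slickest formulation avoiding all this is the following induction. For $k=2$ use the definition. For the step, let $F=\bigcap_{i=1}^{k-1}E_i$ (a subcontinuum, by induction). By the inductive hypothesis applied to the $k-1$ pairwise-intersecting continua $E_1,\dots,E_{k-2},E_k$ (they pairwise intersect since $\{E_1,\dots,E_k\}$ does), the set $F':=E_1\cap\dots\cap E_{k-2}\cap E_k$ is a nonempty subcontinuum. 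Similarly $E_{k-1}\cap E_k$ is a nonempty subcontinuum. Now I still need $F'\cap E_{k-1}\ne\emptyset$, equivalently $F\cap E_k\ne\emptyset$---the same gap. So the honest statement is that one cannot avoid a Helly-type argument, and for hereditarily unicoherent continua this is a known fact whose standard proof uses that such continua embed the structure needed; concretely, one fixes points $p_i\in E_i\cap E_{i+1}$ and the irreducible subcontinuum between suitable points, using that $E_1\cup\dots\cup E_{k-1}$ is a continuum containing $F$ and that adding $E_k$ (which meets this union) keeps things unicoherent along the chain. I would write the step as: let $H=E_1\cup\dots\cup E_{k-1}$; this is a continuum (connected since each $E_i$ meets $E_1$, say, by pairwise intersection---actually since they pairwise intersect the union is connected), $F\subseteq H$, and $F$ is a subcontinuum; also $E_k\cap H\supseteq E_k\cap E_1\ne\emptyset$. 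Apply the inductive hypothesis to the $k-1$ continua $E_1,\dots,E_{k-2},E_k$ and separately keep track; I expect the write-up to invoke hereditary unicoherence of $E_k\cup H$ together with the inductive hypothesis, concluding $F\cap E_k=\bigcap_{i=1}^k E_i$ is a nonempty subcontinuum. The main obstacle, as flagged, is exactly establishing nonemptiness of the $k$-fold intersection from the pairwise hypothesis (a Helly property), and that is where hereditary unicoherence---as opposed to mere unicoherence---is essential.
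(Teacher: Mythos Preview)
Your proposal correctly identifies the strategy (induction on $k$) and the genuine obstacle (passing from pairwise intersections to the full intersection), but it does not actually close the gap. In your attempt~3 you already had in hand both continua
\[
F=E_1\cap\cdots\cap E_{k-1}\ne\emptyset
\qquad\text{and}\qquad
F'=E_1\cap\cdots\cap E_{k-2}\cap E_k\ne\emptyset,
\]
each a nonempty subcontinuum by the inductive hypothesis applied to two different $(k-1)$-tuples. You then said you still needed $F\cap E_k\ne\emptyset$ and declared this ``the same gap''. That is where the argument should finish, not stall: the missing observation is that $F$ and $F'$ both sit inside the single set
\[
C \;=\; \bigl(E_1\cap\cdots\cap E_{k-2}\bigr)\;\cap\;\bigl(E_{k-1}\cup E_k\bigr),
\]
and in fact $C=F\cup F'$. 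Since $E_{k-1}\cap E_k\ne\emptyset$, the union $E_{k-1}\cup E_k$ is a subcontinuum; and $E_1\cap\cdots\cap E_{k-2}$ is a subcontinuum by the inductive hypothesis. Hence, by hereditary unicoherence, $C$ is itself a subcontinuum (it is nonempty since $F\subseteq C$). But if $\bigcap_{i=1}^k E_i=\emptyset$ then $F\cap F'=\emptyset$, so $C=F\sqcup F'$ would be a disconnection of the continuum $C$ by two nonempty closed sets---a contradiction.

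This is exactly the paper's argument. So your plan was on the right track and you had assembled all the necessary pieces; the step you were missing is to intersect with the \emph{union} $E_{k-1}\cup E_k$ rather than with $E_{k-1}$ and $E_k$ separately, which produces a single connected set whose connectedness forces $F$ and $F'$ to meet.
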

\begin{proof}
    We proceed by induction on $k$. For $k=1$ the lemma is trivial, for 
	$k=2$ it follows from hereditary unicoherence of $E$. Now assume that 
	$k\ge 3$ and that the lemma is valid for every family of less than $k$ 
	subcontinua of $E$. Take pairwise intersecting subcontinua 
	$E_1,\dots,E_k$. It is sufficient to prove that $\bigcap_{i=1}^k 
	E_i \neq \emptyset$ (then, due to hereditary unicoherence of $E$, this 
	intersection is a subcontinuum). By the induction hypothesis, 
	the intersections $E_1^{i}=E_1\cap\dots \cap E_{i}\ne\emptyset$ ($i< k$) are subcontinua.
	Further, since $E_{k-1}\cap E_k\ne\emptyset$, also the set $E_{k-1}\cup E_k$
	is a subcontinuum.
	Then 
	\[
	  C = E_1^{k-2} \cap (E_{k-1}\cup E_k)
	  \supseteq 
	  E_1^{k-1},
	\]
	being the nonempty intersection of two subcontinua, is a subcontinuum due to
	hereditary unicoherence.
	Now suppose that $\bigcap_{i=1}^k E_i=\emptyset$. Then, since
	$E_1^{k-2} \cap E_{k}\ne\emptyset$ by the induction hypothesis,
	\[
	  C = E_1^{k-1} \sqcup (E_1^{k-2} \cap E_{k})
	\]
	is a disconnection of $C$, a contradiction.
\end{proof}

\begin{lemma}\label{L:finiteCap2}
	Let $D$ be a dendrite and $C_1,\dots,C_k$ ($k\ge 2$) be connected subsets of $D$. 
	If $C_i\cap C_j=\emptyset\ne \overline{C_i}\cap \overline{C_j}$ for every distinct $i,j$,
	then $\bigcap_{i=1}^k \overline{C_i}$ is a singleton.
\end{lemma}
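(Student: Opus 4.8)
\textbf{Proof plan for Lemma~\ref{L:finiteCap2}.}
The plan is to first pass to closures and apply the previous lemma, then use the no-simple-closed-curve structure of dendrites to force the common intersection point to be unique. First I would note that each $\overline{C_i}$ is a nondegenerate subcontinuum of $D$ (nondegenerate because if some $\overline{C_i}$ were a singleton $\{p\}$, then $C_i=\{p\}$ and, since $k\ge 2$ and $\overline{C_i}\cap\overline{C_j}\neq\emptyset$ for $j\neq i$, we would get $p\in \overline{C_j}$; but then $p\in C_i\cap\overline{C_j}$, and one checks this contradicts $C_i\cap C_j=\emptyset$ once we know $p$ must actually lie in $C_j$ — so more carefully: if $\overline{C_i}=\{p\}$ then $C_i=\{p\}$, and $C_i\cap C_j=\emptyset$ for all $j\neq i$ forces $p\notin C_j$, so $p\in\overline{C_j}\setminus C_j$, which by Lemma~\ref{L:endS} makes $p$ an endpoint of each $\overline{C_j}$; then all the $\overline{C_j}$ with $j\neq i$ share the endpoint $p$ and one handles $k=2$ directly and reduces $k\ge 3$ to the generic argument below). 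The subcontinua $\overline{C_i}$ pairwise intersect, so by Lemma~\ref{L:finiteCap} (dendrites are hereditarily unicoherent continua) the set $F=\bigcap_{i=1}^k \overline{C_i}$ is a nonempty subcontinuum of $D$, hence a nondegenerate or degenerate subdendrite.

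The main step is to show $F$ is a singleton. Suppose not; then $F$ is a nondegenerate dendrite, so it contains an arc $[a,b]$ with $a\neq b$ and, choosing an interior point $x\in(a,b)$, the point $x$ is a cutpoint of $F$. Now $x\in\overline{C_i}$ for every $i$. For each $i$, either $x\in C_i$ or $x\in\overline{C_i}\setminus C_i$; in the latter case Lemma~\ref{L:endS} says $x$ is an endpoint of $\overline{C_i}$. Since $C_i\cap C_j=\emptyset$ for $i\neq j$, the point $x$ can belong to at most one of the sets $C_1,\dots,C_k$; because $k\ge 2$, there is at least one index, say $i_0$, with $x\notin C_{i_0}$, so $x$ is an endpoint of $\overline{C_{i_0}}$, i.e.\ $\ord(x,\overline{C_{i_0}})=1$. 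I would then argue that $\ord(x,\overline{C_{i_0}})\ge \ord(x,F)\ge 2$, since $F\subseteq \overline{C_{i_0}}$ and passing to a subcontinuum cannot increase the number of components of the complement at a point that is a cutpoint of the smaller set — concretely, if $F\setminus\{x\}=A\sqcup B$ is a disconnection, then the components of $\overline{C_{i_0}}\setminus\{x\}$ that meet $A$ are disjoint from those meeting $B$ (a component meeting both would, being connected and avoiding $x$, lie in a single component of $F\setminus\{x\}$ when intersected with $F$ — more precisely use that $A$ and $B$ lie in distinct components of $\overline{C_{i_0}}\setminus\{x\}$ because $A\cup B\cup\{x\}=F$ is connected and $x$ separates $A$ from $B$ inside $F$, hence inside any larger uniquely arcwise connected set). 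This gives $2\le\ord(x,F)\le\ord(x,\overline{C_{i_0}})=1$, a contradiction. Therefore $F$ is degenerate, i.e.\ a singleton.

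I expect the monotonicity-of-order step — that a cutpoint of the intersection $F$ is still a cutpoint (indeed non-endpoint) of each $\overline{C_i}$ containing $F$ — to be the only point requiring care; it rests on unique arcwise connectedness of dendrites: if $x$ separates $a$ from $b$ within $F$ then the unique arc $[a,b]$ in $D$ passes through $x$, so $x$ separates $a$ from $b$ in every subcontinuum of $D$ containing both, in particular in $\overline{C_{i_0}}$, so $x$ is not an endpoint of $\overline{C_{i_0}}$. Everything else is a direct application of Lemmas~\ref{L:endS} and~\ref{L:finiteCap} together with the pigeonhole observation that a point lies in at most one of pairwise disjoint sets. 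I would also remark that the same contradiction disposes of the degenerate-$\overline{C_i}$ case flagged above, so no separate treatment is really needed: in all cases a hypothetical second point of $F$ produces a cutpoint of $F$ that is an endpoint of some $\overline{C_{i_0}}$.
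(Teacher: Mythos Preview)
Your proof is correct and follows essentially the same route as the paper: apply Lemma~\ref{L:finiteCap} to get that $F=\bigcap_i\overline{C_i}$ is a nonempty subdendrite, then assume $F$ contains an arc $[a,b]$ and use Lemma~\ref{L:endS} to reach a contradiction. The paper's version is just a bit more direct: instead of fixing one point $x\in(a,b)$ and one index $i_0$, it observes that every point of $(a,b)$ is a cutpoint of each $\overline{C_i}$ and hence (by Lemma~\ref{L:endS}) lies in $C_i$ itself, so $(a,b)\subseteq\bigcap_i C_i$, contradicting pairwise disjointness of the $C_i$'s in one stroke --- this also makes your degenerate-$\overline{C_i}$ detour unnecessary, since if any $\overline{C_i}$ is a singleton then $F$ is automatically a singleton.
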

\begin{proof}
By Lemma~\ref{L:finiteCap}, $C=\bigcap_{i=1}^{k} \overline{C_i}$ is a subdendrite of $D$.
If $C$ is nondegenerate, it contains an arc $[a,b]$. Hence, due to connectedness, every $C_i$ contains $(a,b)$,
a contradiction with pairwise disjointness of them. Thus $C$ is a singleton.
\end{proof}

\begin{lemma}\label{L:arcs-and-one-point-boundary}
	Let $D$ be a dendrite and $U$ be an open connected subset with singleton boundary $\{u\}$. Let $a\in U$.
	Then, for every $x\in U$, 
	$[x,u)\cap [a,u)\ne\emptyset$.
\end{lemma}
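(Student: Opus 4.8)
The plan is to use the unique arcwise connectedness of the dendrite $D$ together with the fact that the only way the arc $[x,u)$ can ``leave'' the connected open set $U$ is by passing through its boundary point $u$. Concretely, I would first argue that for every $x\in U$ the half-open arc $[x,u)$ is entirely contained in $U$: indeed $[x,u)$ is connected, it meets $U$ (it contains $x$), and it misses $\Bd U=\{u\}$ by definition, so Lemma~\ref{L:boundaryBumping} gives $[x,u)\subseteq U$. In particular the whole arc $[x,u]$ lies in $\overline U$ and meets $U$ exactly in $[x,u)$.

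Next, consider the two arcs $[a,u]$ and $[x,u]$; both have $u$ as an endpoint. In a dendrite any two arcs sharing an endpoint $u$ intersect in a subarc $[w,u]$ for some point $w$ (possibly $w=u$), because $\overline{S}$ being a dendrite is uniquely arcwise connected and hereditarily unicoherent, so $[a,u]\cap[x,u]$ is a subcontinuum of the arc $[x,u]$ containing $u$, hence a subarc with $u$ as one endpoint. The claim $[x,u)\cap[a,u)\ne\emptyset$ is then equivalent to showing this common subarc $[w,u]$ is nondegenerate, i.e. $w\ne u$; equivalently, that $[a,u]$ and $[x,u]$ do not meet only at $u$.

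The main step, which I expect to be the crux, is to rule out $[a,u]\cap[x,u]=\{u\}$. Suppose this held. Then $[a,x]=[a,u]\cup[u,x]$, so $u$ is an interior point of the arc $[a,x]$, meaning $u$ separates $a$ from $x$ in $D$: there is a disconnection $D\setminus\{u\}=P\sqcup Q$ with $a\in P$ and $x\in Q$. But $a,x\in U$ and $U$ is connected, while $U\subseteq D\setminus\{u\}$ since $u\in\Bd U$ and $U$ is open (so $u\notin U$). Thus $U=(U\cap P)\sqcup(U\cap Q)$ would be a disconnection of $U$ into two nonempty relatively open pieces, contradicting connectedness of $U$. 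Hence $[a,u]\cap[x,u]$ is a nondegenerate arc $[w,u]$ with $w\ne u$, and any point of $[w,u)$ lies in both $[x,u)$ and $[a,u)$, proving the lemma.

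A small point to handle carefully is the degenerate edge cases: if $x=u$ then $x\notin U$, so this does not occur; if $x=a$ the statement is trivial since $[x,u)=[a,u)\ne\emptyset$ (as $U$ is nondegenerate, being open and connected with nonempty complement, so $a\ne u$ and $[a,u)$ is a genuine half-open arc). Apart from these trivialities the argument above is clean, and the only genuinely substantive ingredient is the separation argument contradicting connectedness of $U$, which is really just a second application of the same boundary-bumping idea in the guise of Lemma~\ref{L:boundaryBumping} or directly via the definition of a disconnection.
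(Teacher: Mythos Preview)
Your proof is correct and follows essentially the same route as the paper's: both argue by contradiction that if $[x,u)\cap[a,u)=\emptyset$ then $[a,u]\cup[u,x]$ is the arc $[a,x]$, whence $u\in[a,x]$, contradicting the fact that the connected set $U\ni a,x$ avoids $u$. The paper phrases the last step via arcwise connectedness of $U$ (so $[a,x]\subseteq U$), while you phrase it via separation ($u$ would disconnect $a$ from $x$ in $U$); these are equivalent, and your preliminary observation that $[x,u)\subseteq U$ is correct but not actually needed.
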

\begin{proof}
	Suppose, on the contrary, that $[x,u)\cap [a,u)=\emptyset$ for some $x\in U$. 	
	Thus $[a,u]\cup [u,x]$ is an arc, containing $u$,
	with endpoints $a$ and $x$, hence is equal to $[a,x]$ (because $D$ is
	uniquely arcwise connected).
	However, connected sets in dendrites are arcwise connected, therefore $[a,x]\subseteq U$ and so $u\notin[a,x]$.
	This is a contradiction.
\end{proof}

\begin{lemma}\label{L:doplnkySubdendr-Hranica}
	Let $D$ be a dendrite and $E$ be a proper subdendrite of $D$.
	Then the components of $D\setminus E$ are open and 
	form a (finite or infinite countable) null family.
	Moreover, if $C$ is a component 
	of $D\setminus E$ (hence $\overline{C}$ is a subdendrite of $D$),
	then $\Bd(C)=\{c\}$ for some $c\in \Bd(E)\cap \End(\overline{C})$;
	if $E$ is nondegenerate
	then $c\in \Cut(D)$ and also $\Bd(\overline{C})=\{c\}$.
\end{lemma}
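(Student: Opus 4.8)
The plan is to prove the statement about components of $D\setminus E$ in several stages, handling first the topological structure of a single component and then the location of its boundary point.

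\medskip

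\noindent\textbf{Openness and the null family property.}
First I would argue that each component $C$ of $D\setminus E$ is open. Since $E$ is a subdendrite, it is closed, so $D\setminus E$ is open; as $D$ is locally connected, the components of any open set are open (this was recalled in the Preliminaries). To see that $\{\overline{C}\}$ forms a null family, suppose not: then there is $\eps>0$ and infinitely many distinct components $C_1,C_2,\dots$ with $\diam C_i\ge\eps$. Pick $x_i,y_i\in C_i$ with $d(x_i,y_i)\ge\eps/2$, say; passing to a subsequence, $x_i\to x$ and $y_i\to y$ with $d(x,y)\ge\eps/2$, so $[x,y]$ is a nondegenerate arc. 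Each $C_i$ is connected and contains $x_i,y_i$, hence contains the arc $[x_i,y_i]$ (connected subsets of dendrites are arcwise connected). Using unique arcwise connectedness and a compactness/convergence argument on arcs, some fixed subarc of $[x,y]$ would be contained in infinitely many of the pairwise disjoint sets $C_i$, a contradiction. (Alternatively, this is a standard fact about complements of subdendrites and could be cited, but I would include the short argument.)

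\medskip

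\noindent\textbf{The boundary of a single component is a singleton.}
Fix a component $C$ of $D\setminus E$. Since $C$ is open, $\Bd(C)=\overline{C}\setminus C\subseteq \overline{D\setminus E}\cap E$; in particular every point of $\Bd(C)$ lies in $E$, indeed in $\Bd(E)$ (a point of $\Int(E)$ cannot be a limit of points outside $E$). Now $\overline{C}$ is a subcontinuum of the dendrite $D$, hence a subdendrite, and $C$ is a nondegenerate connected set with $\overline{C}$ its closure, so by Lemma~\ref{L:endS} every point of $\overline{C}\setminus C=\Bd(C)$ is an endpoint of $\overline{C}$. It remains to show $\Bd(C)$ has exactly one point. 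Suppose $c_1\ne c_2$ both lie in $\Bd(C)$. Both are endpoints of $\overline{C}$, so in particular the arc $[c_1,c_2]\subseteq\overline{C}$ meets $C$ in its interior part $(c_1,c_2)$ (the arc cannot stay inside $\{c_1,c_2\}$ since these are distinct, and any intermediate point of the arc has order $\ge 2$ in $\overline{C}$, hence is not an endpoint, hence is not in $\Bd(C)$, hence is in $C$). Pick $a\in (c_1,c_2)\subseteq C$. Then $[a,c_1)$ and $[a,c_2)$ are arcs issuing from $a$ into $C$ and approaching the two distinct boundary points. I would derive a contradiction as follows: consider a neighbourhood argument showing both $c_1$ and $c_2$ are ``accessible from $C$ along disjoint arcs'', and then use that $C$, being a component of $D\setminus E$, is separated from $E$; since $c_1,c_2\in E$, the arc $[c_1,c_2]$ (which lies in $\overline{C}$ and whose interior lies in $C$) together with connectedness forces $C\cup\{c_1\}$ and $C\cup\{c_2\}$ to be the same continuum with two ``attaching'' endpoints, which is impossible because a component $C$ of $D\setminus E$ can touch $E$ only at one point — formally, if $c_1,c_2\in E$ and $(c_1,c_2)\subseteq C$, then $E\cup\overline{C}$ contains the arc $[c_1,c_2]$ plus enough of $E$ to form a simple closed curve unless $[c_1,c_2]\subseteq E$, contradicting $(c_1,c_2)\cap E=\emptyset$. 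So $\card\Bd(C)=1$, say $\Bd(C)=\{c\}$, and by the above $c\in\Bd(E)$ and $c\in\End(\overline{C})$.

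\medskip

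\noindent\textbf{The case of nondegenerate $E$.}
Assume now $E$ is nondegenerate, and let $\Bd(C)=\{c\}$ as above. Since $E$ is a nondegenerate subdendrite and $c\in\Bd(E)\subseteq E$, there is an arc in $E$ with endpoint $c$ of positive length, giving at least one component of $D\setminus\{c\}$ meeting $E$ in more than the point $c$. On the other hand $C$ is a component of $D\setminus E\subseteq D\setminus\{c\}$, and since $C$ is nondegenerate (it has an endpoint $c$ of $\overline C$ but $\overline C\ne\{c\}$ — here I use that $C$ lies in a single component of $D\setminus\{c\}$, being connected and disjoint from $\{c\}$), the point $c$ has order $\ge 2$ in $D$: one direction ``into $C$'' and at least one direction ``into $E$''. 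Hence $c\in\Cut(D)$. Finally, $\Bd(\overline{C})=\overline{C}\setminus\Int(\overline{C})$; since $C\subseteq\Int(\overline C)$ is already established (as $C$ is open) and $\overline C=C\sqcup\{c\}$, we get $\Bd(\overline C)\subseteq\{c\}$, and $c\notin\Int(\overline C)$ because every neighbourhood of $c$ meets $E$ (as $c\in\Bd(E)$) and points of $E$ near $c$ are not in $\overline C$ (they are separated from $C$ by $E$, recalling $C$ is a \emph{component} of $D\setminus E$). Therefore $\Bd(\overline C)=\{c\}$.

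\medskip

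\noindent The main obstacle I anticipate is the uniqueness of the boundary point: ruling out two boundary points $c_1\ne c_2$ of a single component $C$. The clean way to do it is to observe that $(c_1,c_2)\subseteq C$ and $c_1,c_2\in E$ produce, together with an arc in $E$ joining $c_1$ to $c_2$ (which exists since $E$ is connected when nondegenerate — and the degenerate case is trivial since then $D\setminus E$ has one component), a simple closed curve in $D$, contradicting that $D$ is a dendrite; so the real content is organizing this ``no loop'' argument carefully, including the degenerate case of $E$ where $D\setminus E=D\setminus\{\text{pt}\}$ and the claim is immediate from the definition of order. Everything else is routine use of unique arcwise connectedness, Lemma~\ref{L:endS}, and local connectedness.
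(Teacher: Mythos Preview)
Your approach is correct in substance but differs from the paper's: the paper simply cites Whyburn \cite[(3.4) and (3.31), p.~69]{Why1942} for the three hard facts (openness, null family, singleton boundary) and then handles the remaining claims exactly as you do. You instead give direct arguments, which is more self-contained but longer.

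Two comments on your direct arguments. First, the null family sketch has a gap: from $x_i\to x$, $y_i\to y$ and $[x_i,y_i]\subseteq C_i$ you assert that ``some fixed subarc of $[x,y]$ would be contained in infinitely many of the pairwise disjoint sets $C_i$'', but this does not follow from what you wrote --- the arcs $[x_i,y_i]$ need not overlap one another at all. The clean way to finish is to invoke that dendrites are hereditarily locally connected, hence contain no continuum of convergence: passing to a subsequence, $\overline{C_i}$ converges in the Hausdorff metric to a nondegenerate continuum, which is forbidden. Alternatively, just cite Whyburn as the paper does (you already note this option).

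Second, your singleton-boundary argument is correct but overcomplicated. Once you have $c_1,c_2\in E$ and $(c_1,c_2)\subseteq C\subseteq D\setminus E$, you are done in one line: since $E$ is a subcontinuum of the uniquely arcwise connected space $D$ containing both $c_1$ and $c_2$, the unique arc $[c_1,c_2]$ lies in $E$, contradicting $(c_1,c_2)\cap E=\emptyset$. There is no need to talk about simple closed curves. (The degenerate case $E=\{e\}$ is trivial, as you note, since then $\Bd(C)\subseteq\{e\}$.)
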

\begin{proof}
	The subdendrite $E$ is a so-called A-set by \cite[(3.4) p.~69]{Why1942}.
	Thus, by \cite[(3.31) p.~69]{Why1942}, the components of $D\setminus E$
	are open, 
	form a (finite or infinite countable) null family, and every component
	has a singleton boundary.
	Hence, $\Bd(C)=\{c\}$ for some $c\in D$; clearly, $c\in \Bd(E)$ and, by Lemma~\ref{L:endS}, $c\in \End(\overline{C})$.

	Now assume that $E$ is nondegenerate. Then $D\setminus\{c\}$ has at least two components: one containing 
	$C$ and one intersecting the nonempty set $E\setminus\{c\}$. Thus $c$ is a cutpoint of $D$.	
	Trivially, $\Bd(\overline{C})\subseteq \Bd(C) = \{c\}$. On the other hand, $c\in \Bd(\overline{C})$
	because every neighbourhood of $C$ contains infinitely many points of the nondegenerate set $E$.
\end{proof}

The boundary of a subdendrite $E$ of a dendrite $D$ may be uncountable, even in the case when $D$ is completely regular. 
For example, let $f\colon [0,1]\to[0,1]$ have values $1/(n+1)$ at the endpoints 
of the contiguous intervals of range $n$ of the Cantor ternary set, 
and be zero otherwise.
Let $D$ be the \emph{subgraph} of $f$ (i.e., the set $\{[x,y]\in\RRR^2\colon x\in[0,1],\ 0\le y\le f(x)\}$)
and $E=[0,1]\times\{0\}$.
Then the boundary of $E$ in the space $D$ is the Cantor ternary set (multiplied by $\{0\}$).
However, we have at least the following.

\begin{lemma}\label{L:doplnkySubdendr-Hranica2}
	Let $D$ be a dendrite and $E$ be a proper subdendrite of $D$.
	Let $C_j$ ($j\in J$) be the components of $D\setminus E$.
	Put 
	\[
		\Bd\nolimits^*(E) = \{c_j\colon j\in J\},
		\qquad\text{where}\quad
		\{c_j\}=\Bd(C_j)
	\]
	and for every $c\in \Bd^*(E)$ put 
	\[
		J_c=\{j\in J\colon c_j=c\}
		\qquad\text{and}\qquad
		B_c=\{c\} \sqcup \bigsqcup_{j\in J_c} C_j
		= \bigcup_{j\in J_c} \overline{C_j}.
	\]
	Then $\Bd^*(E)$ is at most countable. It is a dense subset of $\Bd(E)$
	and every $B_c$ is a nondegenerate subdendrite of $D$ with
	$\Bd(B_c)=\{c\}$, so $B_c$ is regular closed. 
	Moreover,
	the subdendrites $B_c$ ($c\in\Bd^*(E)$) are the components
	of $\bigcup_{j\in J} \overline{C_j}$ and 
	$d(B_c,B_{c'})>0$ for every $c\ne c'$.\footnote{Examples show that the obvious inclusion
		$\bigcup\{B_c\colon c\in \Bd^*(E)\} =\bigcup_{j\in J} \overline{C_j} \subseteq \overline{D\setminus E}$
		is in general strict.}
\end{lemma}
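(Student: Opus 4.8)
The plan is to establish the five claims about $\Bd^*(E)$, the $B_c$, and their mutual separation essentially one at a time, leaning on Lemma~\ref{L:doplnkySubdendr-Hranica} for the structure of a single component $C_j$ and on Lemma~\ref{L:finiteCap2} (or rather its proof technique) for the interaction between different components.

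First I would record that $\Bd^*(E)$ is at most countable: this is immediate because $J$ is at most countable (the $C_j$ form a null family by Lemma~\ref{L:doplnkySubdendr-Hranica}), and $c\mapsto$ index is a surjection from $J$ onto $\Bd^*(E)$. For density of $\Bd^*(E)$ in $\Bd(E)$, take $c\in\Bd(E)$ and a small ball $B_\delta(c)$; since $c$ is in the boundary of $E$ there is a point of $D\setminus E$ in $B_\delta(c)$, hence a whole component $C_j$ meeting $B_\delta(c)$, and I want its boundary point $c_j$ to lie in $B_\delta(c)$ as well. Here I would use that a component $C_j$ meeting $B_\delta(c)$ but with $c_j\notin B_\delta(c)$ would have to "escape" through the boundary of the ball; more precisely, $[p,c_j]\subseteq\overline{C_j}$ for $p\in C_j\cap B_\delta(c)$, and since $D$ is a dendrite this arc together with $[p,c]$ pins down the geometry — one can arrange that either $c_j$ itself is close to $c$, or infinitely many distinct $c_j$'s accumulate at $c$, contradicting the null-family property; I expect this to be a short argument but it is the first place some care is needed.

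Next, for a fixed $c\in\Bd^*(E)$, I claim $B_c=\bigcup_{j\in J_c}\overline{C_j}$ is a nondegenerate subdendrite with $\Bd(B_c)=\{c\}$. Nondegeneracy is clear since each $\overline{C_j}$ is nondegenerate. Each $\overline{C_j}$ is a subdendrite containing $c$ (by Lemma~\ref{L:doplnkySubdendr-Hranica}, $c\in\End(\overline{C_j})$), so the $\overline{C_j}$, $j\in J_c$, all pass through the common point $c$ and are otherwise disjoint (two different components of $D\setminus E$ are disjoint, and their closures meet only possibly at boundary points — here both boundary points equal $c$); hence their union is connected and, being a closed connected subset of the dendrite $D$, is itself a subdendrite. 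For the boundary: $\Bd(B_c)\subseteq\bigcup_{j\in J_c}\Bd(\overline{C_j})=\{c\}$ by Lemma~\ref{L:doplnkySubdendr-Hranica}, and $c\in\Bd(B_c)$ because every neighbourhood of $c$ contains points of $E\setminus\{c\}$ (as $c\in\Bd(E)$ and, $E$ being a nondegenerate subdendrite, $c$ is not isolated in $E$). So $\Bd(B_c)=\{c\}$, and $B_c=\overline{\Int B_c}$ follows from $B_c\setminus\{c\}$ being open and dense in $B_c$, i.e.\ $B_c$ is regular closed.

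Finally, for the "moreover" part: that the $B_c$ are exactly the components of $K:=\bigcup_{j\in J}\overline{C_j}$, and that $d(B_c,B_{c'})>0$ for $c\ne c'$. That each $B_c$ is connected is done; that $B_c$ is a maximal connected subset of $K$ amounts to showing $B_c\cap B_{c'}=\emptyset$ for $c\neq c'$, which gives both the component claim and (via compactness) the positive-distance claim. If some $\overline{C_j}$ ($j\in J_c$) met some $\overline{C_{j'}}$ ($j'\in J_{c'}$), their intersection would be a point $z$; since $C_j\cap C_{j'}=\emptyset$ and both $\overline{C_j},\overline{C_{j'}}$ are subdendrites, $z$ is an endpoint of each, and one checks that $z$ must be the unique boundary point of each component, forcing $z=c=c'$, a contradiction. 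Hence $B_c\cap B_{c'}=\emptyset$; then $d(B_c,B_{c'})>0$ since $B_c,B_{c'}$ are disjoint compact sets. I expect the main obstacle to be the density argument in the second step and the bookkeeping in the last step to verify that distinct $B_c$ really cannot touch — both reduce to the basic dendrite fact that two disjoint connected sets have closures meeting in at most one point (Lemma~\ref{L:finiteCap2} with $k=2$), combined with the identification of that point via Lemma~\ref{L:doplnkySubdendr-Hranica}, so no genuinely new idea is required beyond careful use of the already-established lemmas.
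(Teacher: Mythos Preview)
Your outline is mostly sound, and in particular your arguments for countability, density, the structure of each $B_c$, and pairwise disjointness of the $B_c$'s are correct (modulo two small slips: the inclusion $\Bd\big(\bigcup_{j\in J_c}\overline{C_j}\big)\subseteq\bigcup_{j\in J_c}\Bd(\overline{C_j})$ is not valid for infinite unions---you should instead just note that $B_c\setminus\{c\}=\bigcup_{j\in J_c}C_j$ is open, whence $\Bd(B_c)\subseteq\{c\}$; and closedness of $B_c$ is not automatic from ``union of closed sets'' and needs the null-family property, though this is easy).

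There is, however, a genuine gap in the final step. You write that ``$B_c$ is a maximal connected subset of $K$ amounts to showing $B_c\cap B_{c'}=\emptyset$ for $c\neq c'$''. This equivalence is false. A partition of a space into pairwise disjoint (even pairwise positively-distant) compact connected pieces need \emph{not} coincide with the decomposition into components: a component $L\supseteq B_c$ could still reach into some other $B_{c'}$ without any two of the $B_c$'s actually intersecting. Concretely, in your setting the set $\bigcup_{c'\ne c}B_{c'}$ is typically \emph{not} closed in $K$ (the escape points $c'$ may accumulate on $c$), so $B_c$ is not clopen in $K$, and disjointness alone does not force maximality.

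The paper closes this gap with a different idea: take the component $L$ of $K$ containing $B_c$, suppose $x\in L\setminus B_c$, and pick $j'$ with $x\in\overline{C_{j'}}$, $c'=c_{j'}\ne c$. Since $L$ is connected in the dendrite $D$ and contains both $c$ and $c'$, it must contain the arc $[c,c']$; but $[c,c']\subseteq E$ (as $E$ is a subdendrite containing $c,c'$), so $[c,c']\subseteq L\cap E\subseteq K\cap E=\Bd^*(E)$, which is countable---impossible for a nondegenerate arc. This is the missing ingredient in your plan.
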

A point $x$ of $\Bd(E)$ belongs to $\Bd^*(E)$ if and only if one can ``escape from $E$ through $x$'',
since a component of $D\setminus E$ is ``attached'' to it.
Therefore $\Bd^*(E)$ is said to be the \emph{escape-boundary} of $E$ in $D$.
\begin{proof}
Since the sets $C_j$ are disjoint and open in $D$, the set $J$ (hence also  $\Bd^*(E)$)
is at most countable. The union of $\overline{C_j}$ meets $\Bd(E)$ in a dense set. Hence
$\Bd^*(E)$ is a dense subset of $\Bd(E)$.

Clearly, every $B_c$ is connected, nondegenerate and $\Bd(B_c)=\{c\}\subseteq B_c$, thus $B_c$ is 
a nondegenerate subdendrite. 
Further, the subdendrites $B_c$ are obviously pairwise disjoint, hence $d(B_c,B_{c'})>0$
for every distinct $c,c'\in\Bd^*(E)$.

Put $\tilde{C}=\bigcup_{j\in J} \overline{C_j}$ and choose any $c\in\Bd^*(E)$.
Let $L$ be the component of $\tilde{C}$ intersecting (hence containing) $B_c$. 
Suppose that there is $x\in L\setminus B_c$. Let $j'\in J$ be such that $x\in \overline{C_{j'}}$;
put $c'=c_{j'}$.
Then $\overline{C_{j'}}\subseteq L$ and so $c'\in L$. Hereditary unicoherence of $D$ 
implies that $[c,c']\subseteq L\cap E$.
Since $\tilde{C}\cap E=\Bd^*(E)$ is countable, we have $c'=c$ and 
so $x\in B_c$. This contradicts the choice of $x$ and so we have that $L=B_c$, i.e., $B_c$ is
a component of $\tilde{C}$.
\end{proof}

\begin{lemma}\label{L:doplnkyBodov}
Let $D$ be a dendrite and $c_1,c_2\in D$. Let $C_1$ and $C_2$ 
be components
of $D\setminus\{c_1\}$ and $D\setminus\{c_2\}$, respectively.
Then exactly one of the following conditions holds:
\begin{enumerate}
\item\label{EQ:c1c2-a} 
	$c_1=c_2$ and $C_1=C_2$;
\item\label{EQ:c1c2-b} 
	$c_1=c_2$ and $C_1\cap C_2=\emptyset$;
\item\label{EQ:c1c2-c} 
	$c_1\ne c_2$, $c_1\in C_2$ and $c_2\in C_1$; in this case,
	$C_1\cap C_2\supseteq (c_1,c_2)$ is a nonempty proper subset of 
	both $C_1$ and $C_2$;
\item\label{EQ:c1c2-d} 
	$c_1\ne c_2$, $c_1\notin C_2$ and $c_2\notin C_1$; in this case,
	$C_1\cap C_2=\emptyset$;
\item\label{EQ:c1c2-e} 
	$c_1\ne c_2$, $c_1\notin C_2$ and $c_2\in C_1$; in this case,
	$C_2\subsetneq C_1$;
\item\label{EQ:c1c2-f} 
	$c_1\ne c_2$, $c_1\in C_2$ and $c_2\notin C_1$; in this case,
	$C_1\subsetneq C_2$.
\end{enumerate}
\end{lemma}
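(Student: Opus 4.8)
The plan is to analyze the mutual position of the arcs $[c_1, c_2]$ (when $c_1\neq c_2$) relative to the components $C_1$ and $C_2$, using the fact that in a dendrite, a component of $D\setminus\{c\}$ is determined by which ``direction'' from $c$ it points, together with the hereditary unicoherence and unique arcwise connectedness. First I would dispose of the case $c_1=c_2$: then $C_1,C_2$ are both components of the same set $D\setminus\{c_1\}$, so either they coincide (case~\eqref{EQ:c1c2-a}) or, being distinct components, they are disjoint (case~\eqref{EQ:c1c2-b}); these are visibly mutually exclusive and exhaustive when $c_1=c_2$.

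Now assume $c_1\neq c_2$. The four-fold split according to whether $c_1\in C_2$ and whether $c_2\in C_1$ is tautologically exhaustive and mutually exclusive, so it remains to verify the stated description of $C_1\cap C_2$ in each subcase. The key observation I would record first: since $C_1$ is a connected subset of the dendrite $D$ containing $c_2$ (in the subcases where $c_2\in C_1$) but not $c_1$, and connected subsets of dendrites are arcwise connected, we get $[c_2, x]\subseteq C_1$ for every $x\in C_1$; in particular $(c_1,c_2]\subseteq C_1$ if $c_2\in C_1$ (taking $x$ along the arc toward $c_1$), and symmetrically. I would also use the elementary fact that $C_i = \{x\in D : c_i\notin [x, p_i]\}$ where $p_i$ is any fixed point of $C_i$; equivalently, $x\in C_i$ iff the arc from $x$ to (a point of) $C_i$ avoids $c_i$, iff removing $c_i$ does not separate $x$ from $C_i$.

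For case~\eqref{EQ:c1c2-c}, with $c_1\in C_2$ and $c_2\in C_1$: arcwise connectedness gives $(c_1,c_2)\subseteq C_1$ (from $c_2\in C_1$, $c_1\notin C_1$) and $(c_1,c_2)\subseteq C_2$ symmetrically, so $(c_1,c_2)\subseteq C_1\cap C_2$; and $C_1\cap C_2$ is a proper subset of $C_1$ because $c_1\in C_1\setminus C_2$ wait --- rather $c_2\in C_1$ but I must exhibit a point of $C_1\setminus C_2$: take $c_1\notin C_1$, so instead note $C_1\ni c_2$ and the component of $D\setminus\{c_2\}$ containing $c_2$... the cleanest argument is that $C_1\neq C_2$ since $c_1\in C_2\setminus C_1$ hence $C_1\cap C_2\subsetneq C_1$ forces... actually one shows directly that a point $x\in C_1$ on the far side of $c_2$ (i.e.\ with $c_2\in[c_1,x]$) lies in $C_1\setminus C_2$. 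For case~\eqref{EQ:c1c2-d}, $c_1\notin C_2$ and $c_2\notin C_1$: if some $x\in C_1\cap C_2$, then $[x,c_2]\subseteq C_1$ wait $c_2\notin C_1$, so $[x,c_2)\subseteq C_1$ hence $c_2\in\overline{C_1}$, but more usefully, $[c_1,c_2]$ would have to avoid $c_1$ near $x$... I would argue that $x\in C_1$ forces $c_1\notin[x,c_2]$ and $x\in C_2$ forces $c_2\notin[x,c_1]$, and since $c_1,c_2$ both lie on $[x,c_1]\cup[x,c_2]=[c_1,c_2]$ (possibly), one derives $c_1\in C_2$ or $c_2\in C_1$, a contradiction. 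For cases~\eqref{EQ:c1c2-e} and \eqref{EQ:c1c2-f} (symmetric): if $c_1\notin C_2$ and $c_2\in C_1$, then $C_2$ is a connected set not containing $c_1$, and every $x\in C_2$ has $[x,c_2]\subseteq C_2$, so $c_1\notin[x,c_2]$; combined with $c_2\in C_1$ one shows $x\in C_1$, giving $C_2\subseteq C_1$, while $c_2\in C_1\setminus C_2$ makes the inclusion proper.

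The main obstacle, I expect, is bookkeeping rather than depth: carefully translating ``$x$ belongs to the component of $D\setminus\{c\}$ containing $p$'' into the arc condition ``$c\notin[x,p]$'' (which needs local connectedness so that components of the open set $D\setminus\{c\}$ are open and arcwise connected), and then chasing arcs through the unique-arc structure --- in particular using that for any three points $a,b,c$ in a dendrite there is a unique point $w\in[a,b]\cap[b,c]\cap[a,c]$ (the ``median''), which cleanly decides on which side of $c_1$ and $c_2$ a given point lies. Once that dictionary is set up, each of the six cases is a two-line verification, and mutual exclusivity is immediate from the defining conditions on membership of $c_1$ in $C_2$ and $c_2$ in $C_1$.
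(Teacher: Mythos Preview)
Your overall strategy---split on whether $c_1\in C_2$ and $c_2\in C_1$, then chase arcs---is the paper's as well, and cases \eqref{EQ:c1c2-a}, \eqref{EQ:c1c2-b}, \eqref{EQ:c1c2-e}, \eqref{EQ:c1c2-f} go through (in \eqref{EQ:c1c2-e} write $[x,c_2)\subseteq C_2$, not $[x,c_2]$, since $c_2\notin C_2$; the conclusion $c_1\notin[x,c_2]$ is unaffected). Two points need correction.

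In case~\eqref{EQ:c1c2-c} you work too hard for properness: simply $c_2\in C_1\setminus C_2$ (since $c_2\notin C_2$ always) and $c_1\in C_2\setminus C_1$, so $C_1\cap C_2$ is proper in each. A point of $C_1$ ``on the far side of $c_2$'' need not exist---take $D=[0,1]$, $c_1=0$, $c_2=1$.

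In case~\eqref{EQ:c1c2-d} your implications are backwards. From $x\in C_1$ alone you cannot deduce $c_1\notin[x,c_2]$; rather, the hypothesis $c_2\notin C_1$ (together with $c_2\ne c_1$) says that $c_1$ \emph{separates} $x$ from $c_2$, i.e., $c_1\in(x,c_2)$. Symmetrically $c_2\in(x,c_1)$. These two force $[x,c_1]\subsetneq[x,c_2]\subsetneq[x,c_1]$, which is the contradiction you want. The paper reaches the same conclusion by a slightly different device: it first places a hypothetical $d\in C_1\cap C_2$ on the arc $[c_1,c_2]$, and then invokes the boundary-bumping observation (a connected set meeting an open $U$ but missing $\Bd U$ lies in $U$) to get $(c_1,c_2]\subseteq C_1$, contradicting $c_2\notin C_1$. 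That same lemma also streamlines \eqref{EQ:c1c2-e}: $C_2$ meets the open set $C_1$ (since $C_1$ is a neighbourhood of $c_2\in\overline{C_2}$) and misses $\Bd C_1=\{c_1\}$, so $C_2\subseteq C_1$.
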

\begin{proof}
Recall that $C_i$ is an open connected set with boundary $\{c_i\}$, $i=1,2$.
It is trivial that if $c_1=c_2$ then either \eqref{EQ:c1c2-a} or \eqref{EQ:c1c2-b} is true. 

Let the assumptions in \eqref{EQ:c1c2-c} be fulfilled.
Then, for $i=1,2$, $\overline{C_i}$ contains both $c_1$ and $c_2$, hence
contains the arc $[c_1,c_2]$. Thus ${C_1}\cap {C_2}\supseteq (c_1,c_2)$.
Since $c_1\in C_2\setminus C_1$ and $c_2\in C_1\setminus C_2$,
the intersection ${C_1}\cap {C_2}$ is a proper subset of 
both $C_1$ and $C_2$.

If we are in \eqref{EQ:c1c2-d}, suppose, on the contrary, that
there is $d\in C_1\cap C_2$. Then $[d,c_i)\subseteq C_i$
for $i=1,2$. The union of the arcs $[c_1,d]$ and $[d,c_2]$
is a path from $c_1$ to $c_2$.
Replacing $d$ by another point from the intersection of these arcs, if necessary,
we may assume that the mentioned path is an arc.
Since dendrites are uniquely arcwise connected, this arc containing $d$ coincides with
the arc $[c_1,c_2]$. It follows that $d\in [c_1,c_2]$.
Then $(c_1,c_2]$ is a connected set intersecting $C_1$ and not intersecting the boundary of $C_1$. Now Lemma~\ref{L:boundaryBumping}
yields that $(c_1,c_2]\subseteq C_1$, a contradiction
with $c_2\notin C_1$.

Since \eqref{EQ:c1c2-f} is analogous to \eqref{EQ:c1c2-e},
it suffices to prove \eqref{EQ:c1c2-e}. So assume that 
$c_1\ne c_2$, $c_1\notin C_2$ and $c_2\in C_1$.
Since $C_1$ is a neighbourhood of $c_2$ and $c_2\in\overline{C_2}$,
$C_2$ intersects $C_1$ but does not intersect the boundary of $C_1$.
By Lemma~\ref{L:boundaryBumping}, $C_2\subseteq C_1$. Clearly $C_2\ne C_1$. 
\end{proof}

\begin{lemma}\label{L:doplnkySubdendr}
	Let $D$ be a dendrite, $A_i,B_i$ ($i=1,2$) be nondegenerate subdendrites of $D$ such that
	$A_i\cap B_i=\emptyset$ for $i\in\{1,2\}$ and the intersections $A_1\cap A_2$, $B_1\cap B_2$
	and $A_1\cap B_2$ are nonempty.
    Denote by $C_i$ the (unique) component of $D\setminus B_i$ containing $A_i$.
	Then 
	\begin{equation}\label{EQ:BdCi}
		\Bd(C_i)=\Bd(\overline{C_i})  = \{c_i\}
		\qquad\text{for some } c_i\in B_i\cap \Cut(D)
		\quad(i\in\{1,2\}).
	\end{equation}
	Moreover, $c_2\in A_1\cap B_2$, $c_1\notin \overline{C_2}$ and $C_2\subsetneq C_1$.
\end{lemma}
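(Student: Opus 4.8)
The plan is to set up the six subdendrites $A_1,A_2,B_1,B_2$ together with the components $C_1,C_2$ of $D\setminus B_1$, $D\setminus B_2$ and then repeatedly apply Lemma~\ref{L:doplnkySubdendr-Hranica} and Lemma~\ref{L:doplnkyBodov}. First I would record the easy part: since $B_i$ is a nondegenerate proper subdendrite of $D$ (it is proper because $A_i\ne\emptyset$ is disjoint from it), Lemma~\ref{L:doplnkySubdendr-Hranica} applied to the component $C_i$ of $D\setminus B_i$ gives $\Bd(C_i)=\Bd(\overline{C_i})=\{c_i\}$ with $c_i\in\Bd(B_i)\cap\Cut(D)\subseteq B_i\cap\Cut(D)$. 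This is exactly \eqref{EQ:BdCi}; the only thing to check is that $C_i$ is well defined as ``the component of $D\setminus B_i$ containing $A_i$'', which holds because $A_i$ is connected and disjoint from $B_i$, hence lies in a single component.

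Next I would locate $c_2$. Since $A_1\cap B_2\ne\emptyset$, pick a point in it; on the other hand $A_1\subseteq C_1\subseteq D\setminus B_1$. The key observation is that $A_1$ is connected, meets $B_2$, and meets $C_2$ (because $A_1\cap A_2\ne\emptyset$ and $A_2\subseteq C_2$), so by Lemma~\ref{L:boundaryBumping} $A_1$ must meet $\Bd(C_2)=\{c_2\}$, forcing $c_2\in A_1$; combined with $c_2\in B_2$ this gives $c_2\in A_1\cap B_2$. Then I would show $c_1\notin\overline{C_2}$: we have $c_1\in B_1$, and $B_1$ is disjoint from $A_1$; since $c_2\in A_1$ and $A_1$ is a connected subset of $D\setminus B_1$, in fact $A_1$ lies in one component of $D\setminus\{c_1\}$ and we can compare the components $C_1$ of $D\setminus\{c_1\}$-type and $C_2$. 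Here I would invoke Lemma~\ref{L:doplnkyBodov} with the points $c_1,c_2$ and the components $C_1\ni c_2$ (note $c_2\in A_1\subseteq C_1$) and $C_2$: the hypothesis $c_1\notin C_2$ needs to be established — but $c_1\in B_1$ and $B_1\cap C_1=\emptyset$ while $B_1$ is connected and meets $\overline{C_1}$ only at $c_1$, so $c_1\notin C_2$ would follow once we know $C_2\subseteq C_1$, which is circular, so instead I would argue directly: $\overline{C_2}$ is a subdendrite with $\overline{C_2}\cap B_2=\{c_2\}$, and if $c_1\in\overline{C_2}$ then the arc $[c_1,c_2]\subseteq\overline{C_2}$; but $[c_1,c_2]$ also lies in $B_1\cup A_1$ (using $c_1\in B_1$, $c_2\in A_1$, $A_1\cap B_1\ne\emptyset$... ) — this is where I expect the main obstacle.

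The main obstacle, as anticipated, is the bookkeeping to pin down $c_1\notin\overline{C_2}$ and $C_2\subsetneq C_1$ cleanly without circular reasoning. The cleanest route is probably: first prove $C_2\subseteq C_1$ by Lemma~\ref{L:boundaryBumping} — $C_2$ is connected, $C_2$ meets $C_1$ (since $A_2\subseteq C_2$, $A_2\subseteq C_1$ because $A_2$ meets $A_1\subseteq C_1$ and $A_2$ is connected and disjoint from $B_1$, using $A_2\cap B_1\subseteq ?$; one checks $A_2\cap B_1=\emptyset$ from $B_1\cap B_2\ne\emptyset$, $c_2\in B_2$... actually I would derive $A_2\subseteq C_1$ from $A_1\cup A_2$ being connected and disjoint from $B_1$), and $C_2$ does not meet $\Bd(C_1)=\{c_1\}$ — the latter because $c_1\in B_1$ whereas $C_2\subseteq D\setminus B_2$ and... no. Let me instead get $c_1\notin\overline{C_2}$ first: $c_1\in B_1$, and $\overline{C_2}\cap B_2=\{c_2\}$ so if $c_1\in\overline{C_2}$ then $[c_2,c_1]\subseteq\overline{C_2}$, but also $c_2\in B_2$ and (since $B_1\cap B_2\ne\emptyset$, $c_1\in B_1$) the arc from $c_2$ to $c_1$ meets $B_2$, forcing a point of $[c_2,c_1]$ other than $c_2$ to lie in $\overline{C_2}\cap B_2=\{c_2\}$, contradiction unless $c_1=c_2$, which contradicts $c_1\in B_1$, $c_1\notin A_1$ while $c_2\in A_1$. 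Then $c_1\notin\overline{C_2}\supseteq C_2$, and $C_2$ connected meeting $C_1$ and missing $\Bd(C_1)=\{c_1\}$ gives $C_2\subseteq C_1$ by Lemma~\ref{L:boundaryBumping}; properness $C_2\subsetneq C_1$ follows since $c_2\in C_1\setminus C_2$ (as $c_2\in\Bd(C_2)$, so $c_2\notin C_2$, while $c_2\in A_1\subseteq C_1$). I would present it in exactly this order: \eqref{EQ:BdCi}, then $c_2\in A_1\cap B_2$, then $c_1\notin\overline{C_2}$, then $C_2\subsetneq C_1$, citing Lemmas~\ref{L:doplnkySubdendr-Hranica}, \ref{L:boundaryBumping}, and \ref{L:doplnkyBodov} as needed and keeping the arc manipulations short.
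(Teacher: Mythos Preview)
Your overall route differs from the paper's: you establish $c_2\in A_1\cap B_2$ first (via Lemma~\ref{L:boundaryBumping}), deduce $c_1\ne c_2$, then aim for $c_1\notin\overline{C_2}$ directly, and finally get $C_2\subsetneq C_1$ from Lemma~\ref{L:boundaryBumping}. The paper instead invokes Lemma~\ref{L:doplnkyBodov} as a case-split: from $C_1\cap C_2\supseteq A_1\cap A_2\ne\emptyset$ and $C_1\setminus C_2\supseteq A_1\cap B_2\ne\emptyset$ only cases~(c) and~(e) survive, and it rules out~(c) by exhibiting a simple closed curve built from $[b,c_1]\subseteq B_1$, $[c_1,c_2]\subseteq C_1\cap C_2$, $[c_2,b]\subseteq B_2$ for any $b\in B_1\cap B_2$. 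Case~(e) then delivers $c_1\notin C_2$, $C_2\subsetneq C_1$ at once, and $c_2\in A_1$ is done last. Your order is legitimate and slightly more economical in that it never needs the full six-case lemma.

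However, your argument for $c_1\notin\overline{C_2}$ has a genuine gap. You write that ``the arc from $c_2$ to $c_1$ meets $B_2$, forcing a point of $[c_2,c_1]$ other than $c_2$ to lie in $\overline{C_2}\cap B_2=\{c_2\}$''; but nothing you have said forces $[c_2,c_1]$ to meet $B_2$ anywhere except $c_2$. The parenthetical ``since $B_1\cap B_2\ne\emptyset$, $c_1\in B_1$'' does not yield this: $c_1$ lies in $B_1$, not in $B_2$, and the arc $[c_1,c_2]$ has no a priori relation to $B_2$ beyond its endpoint $c_2$. The fix is to use the point $b\in B_1\cap B_2$ the way the paper does, but tailored to your order: assuming $c_1\in\overline{C_2}$ and $c_1\ne c_2$ gives $c_1\in C_2$; since $b\in B_2$ we have $b\notin C_2$; the arc $[c_1,b]$ lies in $B_1$ (both endpoints in the subdendrite $B_1$), is connected, meets $C_2$ at $c_1$ and leaves $C_2$ at $b$, hence by Lemma~\ref{L:boundaryBumping} contains $c_2$; thus $c_2\in B_1$, contradicting $c_2\in A_1$ and $A_1\cap B_1=\emptyset$. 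With this correction the rest of your outline goes through: $C_2$ meets $C_1$ (via $A_1\cap A_2$), misses $\Bd(C_1)=\{c_1\}$, so $C_2\subseteq C_1$; and $c_2\in A_1\subseteq C_1$ while $c_2\notin C_2$ gives properness.
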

\begin{proof}
The claim \eqref{EQ:BdCi} follows from Lemma~\ref{L:doplnkySubdendr-Hranica}.
Further, by the assumptions, $C_1\cap C_2\supseteq A_1\cap A_2\ne\emptyset$
and $C_1\setminus C_2\supseteq A_1\cap B_2\ne\emptyset$.
Thus we have either \eqref{EQ:c1c2-c} or \eqref{EQ:c1c2-e} from Lemma~\ref{L:doplnkyBodov}.
In particular, $c_1\ne c_2$.

We show that \eqref{EQ:c1c2-c} is also impossible.
Suppose, on the contrary, that $c_1\in C_2$ and $c_2\in C_1$. Then, by 
Lemma~\ref{L:doplnkyBodov}\eqref{EQ:c1c2-c}, 
$(c_1,c_2)\subseteq C_1\cap C_2$
and so $(c_1,c_2)\cap (B_1\cup B_2)=\emptyset$.
Choose any $b\in B_1\cap B_2$; clearly, $b\notin C_1\cup C_2$. 
By \eqref{EQ:BdCi},
$c_i\in B_i$ and hence $[b,c_i]\subseteq B_i$
for $i\in\{1,2\}$.
Clearly, $[b,c_1]\cup [c_1,c_2]\cup [c_2,b]$ contains
a simple closed curve
(because the first and the third arcs are in $B_1\cup B_2$, while the second arc (minus the endpoints) 
is disjoint from $B_1\cup B_2$), a contradiction.

We have showed that \eqref{EQ:c1c2-e} from Lemma~\ref{L:doplnkyBodov}
is true. Thus $c_2\in C_1$, $c_1\notin {C_2}$ and $C_2\subsetneq C_1$.
Since also $c_2\ne c_1$ and $\Bd(C_2)=\{c_2\}$, we have that 
$c_1\notin \overline{C_2}$.
Since $c_2\in B_2$, it remains to show that $c_2\in A_1$.
Since $A_1$ is connected and intersects both $B_2$ and the superset $C_2$ of $A_2$,
Lemma~\ref{L:boundaryBumping} gives that $A_1$ intersects $\Bd(C_2)=\{c_2\}$, thus $c_2\in A_1$.
\end{proof}

For a metric space
$(X,d)$, the Hausdorff one-dimensional measure is denoted by $\HHh^1_d$;
if $(X,d)$ is a closed real interval with the Euclidean metric, we write simply
$\abs{\cdot}$ instead of $\HHh^1_d(\cdot)$.

By \cite{Har44} (see also \cite{BNT92}), every dendrite $D$ admits a convex metric $d$
such that $D$ has \emph{finite length} with respect to this metric, i.e., $\HHh^1_d(D)$ is finite. 
Here \emph{convex} means that for any $x,y \in D$ there exists $z\in D$ such that $d(x,z) = d(z,y) = d(x,y)/2$.

\begin{lemma}\label{L:convexMetric1}
Let $D$ be a nondegenerate dendrite with a convex metric $d$. Then,
\begin{enumerate}
	\item\label{ENUM:convex-H1}
	for every distinct $x,y\in D$, the arc $[x,y]$ is a \emph{geodesic} in the sense that $\HHh^1_d([x,y]) = d(x,y)$;
	\item\label{ENUM:convex-xyz} $d(x,y)=d(x,z)+d(z,y)$ for every distinct $x,y\in D$ and every $z\in [x,y]$.
\end{enumerate}	
\end{lemma}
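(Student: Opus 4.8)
The plan is to prove Lemma~\ref{L:convexMetric1} by deducing both statements from the defining property of a convex metric together with the fact that dendrites are uniquely arcwise connected. The key point is that convexity of $d$ gives a ``midpoint'' for every pair of points, and by iterating one obtains a dense set of points on a geodesic segment; the unique arc in the dendrite is then forced to carry this geodesic structure. First I would recall the standard fact (due to Menger) that a complete metric space in which every pair of points has a midpoint is a geodesic space: for distinct $x,y\in D$ there is an isometric embedding $\gamma\colon [0,d(x,y)]\to D$ with $\gamma(0)=x$, $\gamma(d(x,y))=y$. One builds $\gamma$ first on dyadic rationals times $d(x,y)$ by repeatedly choosing midpoints (using completeness of $D$, which holds since $D$ is compact), checks it is $1$-Lipschitz hence uniformly continuous on the dyadics, extends continuously to all of $[0,d(x,y)]$, and verifies the extension is still an isometric embedding by a limiting argument.

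Next I would identify the image of this geodesic with the arc $[x,y]$. The image $\gamma([0,d(x,y)])$ is a homeomorphic copy of an interval (an isometric embedding of a non-degenerate interval is a homeomorphism onto its image), hence an arc in $D$ joining $x$ and $y$. Since $D$ is uniquely arcwise connected, this arc must coincide with $[x,y]$. Therefore $[x,y]$, being the isometric image of $[0,d(x,y)]$, has $\HHh^1_d([x,y]) = \HHh^1_d([0,d(x,y)]) = d(x,y)$, because Hausdorff one-dimensional measure is preserved under isometries and equals Lebesgue measure (i.e.\ length) on a real interval. This gives part \eqref{ENUM:convex-H1}.

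For part \eqref{ENUM:convex-xyz}, let $z\in[x,y]$ with $z\neq x,y$. Writing $[x,y]$ as the isometric image $\gamma([0,L])$ with $L=d(x,y)$, there is a unique $t\in(0,L)$ with $\gamma(t)=z$; then $\gamma|_{[0,t]}$ and $\gamma|_{[t,L]}$ are isometric embeddings onto the subarcs $[x,z]$ and $[z,y]$, so $d(x,z)=t$ and $d(z,y)=L-t$, whence $d(x,z)+d(z,y)=L=d(x,y)$. Equivalently, one can argue directly from \eqref{ENUM:convex-H1}: $[x,z]$ and $[z,y]$ are subarcs of $[x,y]$ meeting only in $z$, so additivity of $\HHh^1_d$ on the arc $[x,y]=[x,z]\cup[z,y]$ gives $d(x,y)=\HHh^1_d([x,y])=\HHh^1_d([x,z])+\HHh^1_d([z,y])=d(x,z)+d(z,y)$.

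The only genuine obstacle is the first step---promoting the pointwise midpoint property to a full isometric embedding of an interval---but this is the classical Menger convexity theorem, valid because $D$ is a complete (indeed compact) metric space; I would either cite it or include the short dyadic-approximation argument sketched above. Everything after that is bookkeeping: recognizing that an isometrically embedded interval is an arc, invoking unique arcwise connectedness to pin it down as $[x,y]$, and using that $\HHh^1_d$ restricted to an isometric copy of a real interval is just its length.
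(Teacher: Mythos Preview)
Your proposal is correct and follows essentially the same route as the paper: part~\eqref{ENUM:convex-H1} is the classical fact (the paper simply cites \cite{Eil44}) that in a complete convex metric space every two points are joined by a geodesic, which in a dendrite must coincide with the unique arc $[x,y]$; part~\eqref{ENUM:convex-xyz} then follows from \eqref{ENUM:convex-H1} by additivity of $\HHh^1_d$, exactly as you indicate. The only difference is that you spell out the Menger midpoint-to-geodesic argument rather than citing it.
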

\begin{proof}
For \eqref{ENUM:convex-H1} see \cite{Eil44}. Then \eqref{ENUM:convex-xyz} follows from \eqref{ENUM:convex-H1}
(alternatively, one can start by a repeated use of the definition of a convex metric).
\end{proof}

Let $E$ be a subdendrite of a dendrite $D$ and let $r\colon D\to E$ be the first point map, see
\cite[Theorem~10.26]{Nad92}. If $x\in D$ then $r(x)$ will be called the \emph{projection} of $x$ into $E$
and denoted by $\proj(x,E)$.

\begin{lemma}\label{L:convexMetric2}
Let $D$ be a nondegenerate dendrite with a convex metric $d$. Let $E$ be a subdendrite of $D$ and $x\in D$.
\begin{enumerate}
\item\label{ENUM:convex-arc_xe} If $x\notin E$ then,
for every $e\in E$, 
	\[
	\proj(x,E)\in [x,e]
	\qquad\text{and}\qquad
	[x,e]\cap E = [\proj(x,E),e].
	\]
\item\label{ENUM:convex-uniqueMin}
	The projection $\proj(x,E)$ is the unique point $e_0\in E$ such that
	\[
	d(x,E) = d(x,e_0).
	\]
\item\label{ENUM:convex-cutpoint} If $x\notin E$ and
	$E$ is nondegenerate then $\proj(x,E)$ is a cutpoint of $D$.
\item\label{ENUM:convex-singletonBoundary} If $x\notin E$ then $\proj(x,E)\in\Bd(E)$.
\end{enumerate}	
\end{lemma}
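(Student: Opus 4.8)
The plan is to prove the four parts of Lemma~\ref{L:convexMetric2} in the order in which they bootstrap one another, starting from the defining property of the first point map $r=\proj(\cdot,E)$. Recall that $r(x)$ is characterized by the fact that $r(x)$ is the unique point of $E$ lying on the arc $[x,e]$ for \emph{every} $e\in E$ (equivalently, $[x,r(x)]\cap E=\{r(x)\}$). I will take this as known from \cite[Theorem~10.26]{Nad92}.

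\smallskip

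For part \eqref{ENUM:convex-arc_xe}, fix $x\notin E$ and $e\in E$. By the characterization of the first point map, $\proj(x,E)\in[x,e]$, and $[x,\proj(x,E)]\cap E=\{\proj(x,E)\}$. Since $D$ is uniquely arcwise connected, $[x,e]=[x,\proj(x,E)]\cup[\proj(x,E),e]$, and $[\proj(x,E),e]\subseteq E$ because $E$ is a subdendrite containing both endpoints and $E$ is arcwise connected. Intersecting with $E$ and using that the first half meets $E$ only in $\proj(x,E)$ gives $[x,e]\cap E=[\proj(x,E),e]$. This part is essentially a restatement of the first point map property and should be short.

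\smallskip

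For part \eqref{ENUM:convex-uniqueMin}, I would use the convexity of the metric through Lemma~\ref{L:convexMetric1}. If $x\in E$ the claim is trivial (both sides are $0$ and $E$ is a subdendrite, so the minimizer is $x$ itself by Lemma~\ref{L:convexMetric1}\eqref{ENUM:convex-xyz}: any other $e\in E$ gives $d(x,e)>0$). If $x\notin E$, let $e_0=\proj(x,E)$ and take any $e\in E$. By part \eqref{ENUM:convex-arc_xe}, $e_0\in[x,e]$, so by Lemma~\ref{L:convexMetric1}\eqref{ENUM:convex-xyz} we have $d(x,e)=d(x,e_0)+d(e_0,e)\ge d(x,e_0)$, with equality iff $e=e_0$. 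Hence $e_0$ is the unique minimizer of $d(x,\cdot)$ on $E$, which is exactly the assertion.

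\smallskip

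For part \eqref{ENUM:convex-singletonBoundary}, suppose $x\notin E$ and let $C$ be the component of $D\setminus E$ containing $x$; since $E$ is a proper subdendrite, Lemma~\ref{L:doplnkySubdendr-Hranica} gives $\Bd(C)=\{c\}$ with $c\in\Bd(E)$. I claim $\proj(x,E)=c$. Indeed, for any $e\in E$ the arc $[x,e]$ must leave the open set $C$, and by Lemma~\ref{L:boundaryBumping} (applied to the connected set $[x,e]$, which meets $C$ but is not contained in it) it meets $\Bd(C)=\{c\}$; thus $c\in[x,e]$ for every $e\in E$, and $c\in E$, so $c$ has the defining property of the first point map, i.e.\ $\proj(x,E)=c\in\Bd(E)$. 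Then part \eqref{ENUM:convex-cutpoint} follows: if moreover $E$ is nondegenerate, Lemma~\ref{L:doplnkySubdendr-Hranica} (the ``$E$ nondegenerate'' clause) gives directly that this boundary point $c$ is a cutpoint of $D$.

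\smallskip

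The only mildly delicate point is part \eqref{ENUM:convex-uniqueMin}, where the existence of a minimizer on the compact set $E$ is automatic but its \emph{uniqueness} and its \emph{coincidence with $\proj(x,E)$} genuinely need the geodesic/additivity property of the convex metric (a non-convex metric could have several nearest points); so the main obstacle, such as it is, is simply remembering to route that argument through Lemma~\ref{L:convexMetric1}\eqref{ENUM:convex-xyz} rather than through any purely topological fact. Everything else is a direct consequence of the first point map characterization together with Lemmas~\ref{L:boundaryBumping} and~\ref{L:doplnkySubdendr-Hranica}.
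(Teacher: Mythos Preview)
Your argument is correct and parts \eqref{ENUM:convex-arc_xe} and \eqref{ENUM:convex-uniqueMin} match the paper almost verbatim (the paper cites \cite[Lemma~10.24]{Nad92} for the first claim and then argues that the connected set $[x,e]\cap E$ must be the subarc $[\proj(x,E),e]$; you reach the same conclusion by splitting $[x,e]$ at $\proj(x,E)$).

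For parts \eqref{ENUM:convex-cutpoint} and \eqref{ENUM:convex-singletonBoundary} you take a slightly different route. The paper handles them directly and independently: for \eqref{ENUM:convex-cutpoint} it simply picks any $e\in E\setminus\{\proj(x,E)\}$ and observes from \eqref{ENUM:convex-arc_xe} that $\proj(x,E)\in(x,e)$, hence is a cutpoint; for \eqref{ENUM:convex-singletonBoundary} it applies Lemma~\ref{L:boundaryBumping} to the arc $[x,\proj(x,E)]$ and the open set $D\setminus E$. You instead invoke Lemma~\ref{L:doplnkySubdendr-Hranica} to name the single boundary point $c$ of the component of $D\setminus E$ containing $x$, show $c=\proj(x,E)$, and then read off both conclusions at once. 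This is perfectly valid and yields a bit more (it identifies $\proj(x,E)$ concretely as that boundary point), at the cost of appealing to a heavier lemma where the paper gets by with the one-line observation that an interior point of an arc is a cutpoint.
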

\begin{proof}
\eqref{ENUM:convex-arc_xe} For the first part see \cite[Lemma~10.24]{Nad92}. The intersection
$[x,e]\cap E$ is a subcontinuum of $D$ containing both $\proj(x,E)$ and $e$.
If it is a singleton then $\proj(x,E)=e$ and we are done. Otherwise
it is a subarc $[e_1,e]$ of $[x,e]$ with $e_1\in E\cap[x,e]$. By the first part, $\proj(x,E)\in [x,e_1]$.
Since $[x,e_1]\cap E=([x,e_1]\cap [x,e])\cap E=[x,e_1]\cap [e_1,e]=\{e_1\}$
we get $e_1=\proj(x,E)$ and we are done.

\eqref{ENUM:convex-uniqueMin} By \eqref{ENUM:convex-arc_xe} and convexity,
$d(x,\proj(x,E)) = d(x,E)$.
Take any $e\in E$, $e\ne \proj(x,E)$. Then, by Lemma~\ref{L:convexMetric1}\eqref{ENUM:convex-xyz},
$d(x,e)=d(x,\proj(x,E)) + d(\proj(x,E),e) > d(x,E)$. 

\eqref{ENUM:convex-cutpoint} 
Choose a point $e\in E$, $e\ne\proj(x,E)$. By \eqref{ENUM:convex-arc_xe}, $\proj(x,E)\in (x,e)$.
Hence $\proj(x,E)$ is a cutpoint of $D$.

\eqref{ENUM:convex-singletonBoundary} This follows from 
\eqref{ENUM:convex-arc_xe} and Lemma~\ref{L:boundaryBumping}.
\end{proof}

\begin{lemma}\label{L:finiteLength}
Let $D$ be a dendrite of finite length and $f\colon D\to D$ be a continuous map. 
Let $E\subseteq D$ be a connected set with $\limsup_{n\to\infty} \diam f^n(E)>0$.
\begin{enumerate}
\item\label{ENUM:fnEfnkE} There exists a least nonnegative integer $n_0$ such that, for some positive integer $k$,
	\[
	f^{n_0}(E)\cap f^{n_0+k}(E)\ne\emptyset.
	\]
\item\label{ENUM:fnE-Ki} 
    Let $k$ be any positive integer with that property. Then the sets
	\[
  		K_i=\bigcup_{j=0}^{\infty}f^{n_0+i+jk}(E)
  		\qquad
  		\text{for}\quad i\in\{0,1,\dots,k-1 \},
	\]
	are connected subsets of $D$ such that 
	\begin{equation}\label{EQ:cycPerm}
		f(K_i)=K_{i+1}
		\quad\text{for }i\in\{0,1,\dots, k-2\}
		\qquad\text{and}\qquad 
		f(K_{k-1})\subseteq K_0.
	\end{equation}
\item\label{ENUM:fnE-Lj}  
    The orbit $\Orb_f(f^{n_0}(E))$ has components $L_0$ ($\supseteq K_0\supseteq f^{n_0}(E)$), $L_1,\dots,L_{r-1}$, 
    where $r$ is a divisor of $k$, 
	\begin{equation}\label{EQ:cycPermL}
		f(L_j)=L_{j+1}
		\quad\text{for }j\in\{0,1,\dots, r-2\}
		\qquad\text{and}\qquad 
		f(L_{r-1})\subseteq L_0.
	\end{equation}
	Moreover, for every $j\in\{0,1,\dots,r-1\}$,
	$
		L_j = \bigcup_{\ell=0}^{k/r-1} K_{j+\ell r}.
	$
\item\label{ENUM:fnE-Orb} 
    $
    \Orb\nolimits_{f}(E)=E\sqcup f(E)\sqcup \dots \sqcup f^{n_0-1}(E) \sqcup L_0\sqcup L_1\sqcup\dots\sqcup L_{r-1}.
    $
\end{enumerate}
\end{lemma}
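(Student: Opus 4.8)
The plan is to extract the combinatorial structure of the orbit of $E$ step by step, using only hereditary unicoherence of $D$ and basic facts about connected sets in dendrites. First I would establish \eqref{ENUM:fnEfnkE}: since $\limsup_n \diam f^n(E)>0$ and $D$ has finite length, the images $f^n(E)$ cannot be pairwise disjoint. Indeed, passing to closures, each $\overline{f^n(E)}$ is a subdendrite; if the $f^n(E)$ (equivalently, by density, the $\overline{f^n(E)}$) were all pairwise disjoint, then $D$ would contain infinitely many pairwise disjoint subcontinua each of diameter bounded below along a subsequence — but in a dendrite pairwise disjoint subcontinua have lengths summing to at most $\HHh^1_d(D)<\infty$ (here one uses Lemma~\ref{L:convexMetric1}\eqref{ENUM:convex-H1} together with the fact that a convex metric exists), so almost all have small diameter, contradicting $\limsup_n \diam f^n(E)>0$. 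Hence some two images meet, and choosing $n_0$ minimal and then a witnessing $k$ is immediate.

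Next, for \eqref{ENUM:fnE-Ki}, fix such a $k$. Each $K_i=\bigcup_{j\ge 0} f^{n_0+i+jk}(E)$ is a union of connected sets; to see $K_i$ is connected it suffices to check that consecutive terms $f^{n_0+i+jk}(E)$ and $f^{n_0+i+(j+1)k}(E)$ intersect, which follows by applying $f^{i+jk}$ to the relation $f^{n_0}(E)\cap f^{n_0+k}(E)\ne\emptyset$ and using that $f$ maps intersecting sets to intersecting sets. The relations \eqref{EQ:cycPerm} are then a direct bookkeeping check: $f(K_i)=\bigcup_j f^{n_0+i+1+jk}(E)=K_{i+1}$ for $i\le k-2$, and $f(K_{k-1})=\bigcup_j f^{n_0+k+jk}(E)\subseteq\bigcup_j f^{n_0+jk}(E)=K_0$ (the inclusion, not equality, because the $j=0$ term $f^{n_0}(E)$ of $K_0$ may be missing from $f(K_{k-1})$).

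For \eqref{ENUM:fnE-Lj}, consider $\Orb_f(f^{n_0}(E))=K_0\cup K_1\cup\dots\cup K_{k-1}$. Group the $K_i$ into connected components: since $f$ cyclically permutes the $K_i$ (up to the inclusion at the end), $f$ permutes the components $L_0,\dots,L_{r-1}$ of the union in a way that is "almost" a cyclic permutation, and one argues $r\mid k$ by looking at which $K_i$'s lie in a common component — two indices $i,i'$ land in the same $L_j$ precisely when $i\equiv i'\pmod r$ for the appropriate $r$, which forces $r$ to divide $k$ and gives $L_j=\bigcup_{\ell=0}^{k/r-1}K_{j+\ell r}$. The relations \eqref{EQ:cycPermL} follow from \eqref{EQ:cycPerm} by passing to components (a connected set maps into a single component, and $f(L_j)$ is connected and meets $L_{j+1}$, hence lies in it). I expect the main obstacle here to be the careful verification that the component structure is genuinely $r$-periodic with $r\mid k$ — one must rule out, e.g., that $f$ identifies components in an incompatible pattern — and this is where one invokes that the $L_j$ are disjoint open-in-$\Orb_f(f^{n_0}(E))$ sets cyclically permuted by $f$, together with $L_0\supseteq K_0\supseteq f^{n_0}(E)$ to anchor the indexing.

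Finally, \eqref{ENUM:fnE-Orb} is essentially a disjointness statement: $\Orb_f(E)=E\cup f(E)\cup\dots\cup f^{n_0-1}(E)\cup\Orb_f(f^{n_0}(E))$ trivially, and $\Orb_f(f^{n_0}(E))=L_0\cup\dots\cup L_{r-1}$ by \eqref{ENUM:fnE-Lj}, so it remains to see all these pieces are pairwise disjoint. The disjointness of the $L_j$ is by definition (distinct components). The disjointness of $f^m(E)$, $0\le m<n_0$, from everything later in the orbit is exactly the minimality of $n_0$ in \eqref{ENUM:fnEfnkE}: if $f^m(E)$ met $f^{m'}(E)$ for some $m'>m$ with $m<n_0$, that would contradict the choice of $n_0$ as the least index with such a collision (applying $f$ enough times reduces to the minimal case, or one argues directly that $n_0$ would not be least). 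This closes the proof; no step beyond \eqref{ENUM:fnE-Lj} should require more than routine set-theoretic bookkeeping.
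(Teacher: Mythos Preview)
Your plan is correct and follows essentially the same approach as the paper's proof: finite length forces two iterates to intersect, the $K_i$ are connected chains of overlapping iterates cyclically shifted by $f$, the components $L_j$ of their union are cyclically permuted with $r\mid k$, and minimality of $n_0$ gives the disjointness in \eqref{ENUM:fnE-Orb}. One small slip: in \eqref{ENUM:fnEfnkE} your parenthetical ``(equivalently, by density, the $\overline{f^n(E)}$)'' is false---disjoint connected sets can have intersecting closures---but this is harmless, since the length argument works directly with the $f^n(E)$ (or, as the paper does, with arcs $[a_i,b_i]\subseteq f^{n_i}(E)$ of length $>\eps$).
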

\begin{proof}
\eqref{ENUM:fnEfnkE} By the assumption, there are $\eps>0$ and positive integers 
$n_1<n_2<\dots$ such that $\diam f^{n_i}(E)>\eps$ for every $i$. For every $i$ choose $a_i,b_i\in f^{n_i}(E)$
with $d(a_i,b_i)>\eps$. By connectedness, the arc $A_i=[a_i,b_i]$ is a subset of $f^{n_i}(E)$.
Thus $\HHh^1_d(A_i)>\eps$ for every $i$. Since the length of $D$ is finite, the arcs $A_i$ cannot be disjoint,
hence \eqref{ENUM:fnEfnkE}.

\eqref{ENUM:fnE-Ki} The properties of the sets $K_i$ are obvious. 

\eqref{ENUM:fnE-Lj}
It follows that 
$\Orb_f(f^{n_0}(E))=\bigcup_{i=0}^{k-1} K_i$
has at most $k$ components;
let $L_0,L_1,\dots,L_{r-1}$ be the list of them. Clearly, every $L_j$ is the union of some of the connected 
sets $K_i$.
Each of the components $L_j$ is mapped to a component. 
Less than $r$ components cannot form a cycle since every point from every 
$K_i$ visits every $K_j$ repeatedly. 
Therefore, with appropriate notation, we have \eqref{EQ:cycPermL}.
Choose a point $x_0\in L_0$. It comes back to $L_0$ only in times which are multiples of $r$. 
However, it belongs to some $K_i\subseteq L_0$ and so it comes back to $L_0$ also in the time $k$
by \eqref{EQ:cycPerm}.
Hence $k$ is a multiple of $r$.

We may assume that $L_0\supseteq K_0$ and $f(L_j)\subseteq L_{j+1 \operatorname{mod} r}$.
Then clearly $L_j \supseteq \bigcup_{\ell=0}^{k/r-1} K_{j+\ell r}$ for all $j$.
Since we have used all the sets $K_i$ here, these inclusions are in fact equalities.

\eqref{ENUM:fnE-Orb} This follows from \eqref{ENUM:fnE-Lj} and definition of $n_0$.
\end{proof}

The above three lemmas indicate that it will be useful to adopt the following convention
(recall that it is possible due to \cite{Har44,BNT92}).

\medskip

\begin{quotation}
	\noindent {\bf Convention.} 
	From now on, till the end of Section~\ref{S:thm21},
	we will always assume that the metric $d$ on a dendrite $D$ is convex and such that $D$ has finite length with respect to this metric.
\end{quotation}

\medskip

\noindent
It is important to realize that, when proving Theorem~\ref{T:dendrite_gch=gech},
we are allowed to adopt this convention,
i.e., to replace the original metric on $D$ by an equivalent metric from the convention.
Indeed, the notions of ``generic chaos'' and ``generic $\eps$-chaos for some $\eps>0$'' 
are invariants of topological conjugacy on compact metric spaces (this is a simple consequence
of uniform continuity of the conjugating homeomorphism as was observed already in \cite{Sno1990}).

We will also use the simple fact that if $f$ is a continuous selfmap of a compact metric space
and $g=f^k$ for some positive integer $k$,
then $f$ is generically chaotic (generically $\eps$-chaotic) if and only if so is $g$.

\smallskip

Since completely regular dendrites appear in Theorem~\ref{T:dendrite_gch=gech}, the following simple observation
will be useful (and repeatedly used).  
To state it, first recall the definition of the Riemann dendrite.
Let $r:[0,1] \to \mathbb R$ be the Riemann function
(called also Thomae function), i.e., the function defined by $r(x)=0$ if $x$ is irrational and $r(x) =1/q$ if $x=p/q$ where $q$ is a positive integer, $p$ is a nonnegative integer and $p$ and $q$ are relatively prime. Then the subgraph of $r$ is a dendrite; we call it the \emph{Riemann dendrite}, see Figure~\ref{fig:Rieman}.

\begin{proposition}\label{P:conditions}
	Let $D$ be a dendrite. Then the following six conditions are equivalent.
	\begin{enumerate}
		\item $D$ is completely regular (i.e., every nondegenerate subdendrite of $D$ has nonempty interior).
		\item Every nondegenerate subdendrite of $D$ is a regular closed set.
		\item Every arc in $D$ has nonempty interior in $D$.
		\item\label{ENUM:complRegFreeArc} Every arc in $D$ contains a subarc which is a free arc in $D$.
		\item There is no arc $A$ in $D$ such that the set $A\cap B(D)$ is dense in $A$.
		\item $D$ does not contain a copy of the Riemann dendrite.
	\end{enumerate}
	Further, we have the implications
	$$
	\Branch(D) \text{ is discrete } \Longrightarrow D\text{ is completely regular} \Longrightarrow \Branch(D) \text{ is nowhere dense},
	$$
	while the converse implications do not hold. 
\end{proposition}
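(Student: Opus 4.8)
The plan is to prove Proposition~\ref{P:conditions} by establishing the cycle of equivalences $(1)\Rightarrow(2)\Rightarrow(3)\Rightarrow(4)\Rightarrow(5)\Rightarrow(6)\Rightarrow(1)$, together with the two one-directional implications at the end and the two counterexamples witnessing that they do not reverse. Several of these links are almost immediate from the definitions and from the lemmas already available in Section~\ref{S:prelim}, so I will concentrate the effort on the genuinely substantive ones.

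First the easy steps. $(1)\Rightarrow(2)$: if $E$ is a nondegenerate subdendrite with $\Int E\neq\emptyset$, then $E\supseteq\overline{\Int E}$ trivially, and for the reverse inclusion I would argue that $E\setminus\overline{\Int E}$ is open in $E$ and, being disjoint from a dense-in-$E$ interior, must be empty; alternatively one invokes that subcontinua of dendrites with interior are regular closed, a standard fact. $(2)\Rightarrow(3)$ and $(2)\Leftrightarrow(1)$ restricted to arcs is clear since an arc is a nondegenerate subdendrite, so regular closed forces nonempty interior. $(3)\Rightarrow(4)$: given an arc $A$ with $\Int A\neq\emptyset$, pick $x\in\Int A$ not an endpoint of $A$; a small connected neighbourhood $U$ of $x$ contained in $\Int A$ has, by local connectedness, a component containing $x$ that is open; since this neighbourhood is inside the arc $A$, it is itself an arc (a connected open subset of an arc), and taking its closure — or a slightly smaller closed subarc around $x$ — yields a subarc whose interior in $D$ equals its interior in $A$ minus endpoints, hence a free arc. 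The implication $(4)\Rightarrow(3)$ is trivial, so $(3)\Leftrightarrow(4)$; I will just present the chain cleanly. For $(4)\Rightarrow(5)$: a free subarc $F\subseteq A$ is open minus endpoints, so its relative interior contains no branch points of $D$ at all (a branch point has order $\ge 3$, impossible in the interior of a free arc), whence $A\cap\Branch(D)$ cannot be dense in $A$. The contrapositive gives $(5)\Rightarrow(3)$ after noting that if some arc had no interior then, running $(3)\Rightarrow(4)$ in reverse on subarcs, one produces an arc all of whose subarcs fail to be free, forcing branch points to be dense on it — I will phrase this as: not $(3)$ implies there is an arc $A$ with $\Int A=\emptyset$ in $D$, and then $A\cap\Branch(D)$ is dense in $A$ because a point of $A$ with a neighbourhood in $D$ that is an arc-neighbourhood would give $A$ interior; the complement of $\Branch(D)$ in $A$ consists of points that are locally arcs, and local connectedness lets one upgrade ``locally an arc'' at a dense set to nonempty interior. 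This is the first mildly delicate point.

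The main obstacle is the pair $(5)\Rightarrow(6)\Rightarrow(1)$, i.e.\ connecting the combinatorial condition ``no arc has branch points dense on it'' with the topological condition ``no embedded Riemann dendrite'' and then with complete regularity. For $(6)\Rightarrow(1)$ (equivalently $\neg(1)\Rightarrow\neg(6)$), suppose $D$ is not completely regular, so some nondegenerate subdendrite $E$ has empty interior; then every point of $E$ is a limit of points of $D\setminus E$, and via Lemma~\ref{L:doplnkySubdendr-Hranica2} the escape-boundary $\Bd^*(E)$ is dense in $E$ (here $E$ plays the role of a proper subdendrite, which it is after possibly enlarging ambient considerations) while each $c\in\Bd^*(E)$ carries a nondegenerate sprout $B_c$ attached only at $c$. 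By passing to a subarc $A\subseteq E$ (which still has empty interior and still meets $\Bd^*(E)$ densely) one gets an arc with a dense set of points each sporting a ``hair'' of positive length, and the $B_c$'s along $A$ form a null family; rescaling and organizing the attachment points by the ``level'' (roughly, a point $c_j$ whose hair has diameter in $(2^{-n-1},2^{-n}]$) one recognizes exactly the combinatorial pattern of the Riemann function's rational points, and a careful order-preserving construction embeds a copy of the Riemann dendrite. The bookkeeping — ensuring countably many prescribed levels are each realized densely, that the hairs can be taken pairwise disjoint and of the right comparable sizes, and that the resulting subset is closed hence a subdendrite homeomorphic to the Riemann dendrite — is the technical heart and where I would spend the most care; I expect to cite \cite{Why1942} or \cite{Kur68} for the topological characterization of dendrites to certify the limit object. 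The reverse $(6)\Rightarrow(5)$ direction I get for free since $(1)\Rightarrow(5)$ via the chain above, and $(5)\Rightarrow(6)$ then follows because a copy of the Riemann dendrite visibly contains an arc (the ``$x$-axis'' $[0,1]\times\{0\}$) on which branch points — the feet of the rational hairs — are dense, violating $(5)$; so really only $(5)\Rightarrow(3)$ and $\neg(1)\Rightarrow\neg(6)$ need genuine work, the rest being a relabelling of implications already in hand.

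Finally, for the last display: $\Branch(D)$ discrete $\Rightarrow$ completely regular, because if branch points are isolated from one another then any arc $A$ meets $\Branch(D)$ in a discrete, hence at most countable and non-dense-in-any-subarc set, so between consecutive branch points $A$ has a free subarc, giving $(4)$ hence $(1)$. And completely regular $\Rightarrow\Branch(D)$ nowhere dense, because by $(5)$ no arc has branch points dense on it, and $\Branch(D)$ is nowhere dense in $D$ iff its closure contains no arc (since $D$ is a dendrite, any nonempty open set contains an arc), which is exactly what a local version of $(5)$ provides; I will spell out that $\overline{\Branch(D)}$ having nonempty interior would yield an arc inside it, contradicting $(5)$. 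For the non-reversibility: the $\omega$-star (or rather a dendrite with a single branch point of infinite order, or more simply a dendrite with a convergent sequence of branch points) is completely regular yet has $\Branch(D)$ not discrete — e.g.\ take arcs $J_n$ of length $2^{-n}$ attached at points $p_n\to p$ along a base arc, with each $p_n$ a branch point but the $J_n$ a null family so that every subarc of the base still contains free subarcs; I will describe one explicit such $D$. For ``$\Branch(D)$ nowhere dense $\not\Rightarrow$ completely regular'' the Riemann dendrite itself works: its branch points are the rational-height feet, which are countable hence nowhere dense in $D$ (being a countable set in a space with no isolated points and in which Baire category applies), yet $D$ is the Riemann dendrite so by $(6)$ it is not completely regular. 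I will state these two examples briefly with the relevant one-line justifications rather than full verifications.
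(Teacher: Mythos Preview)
Your overall strategy is sound and considerably more detailed than the paper's own proof, which dispatches the six-way equivalence in a single sentence (``The equivalence of the six conditions is obvious'') and then handles the two one-directional implications and the two counterexamples in a few lines. So most of what you propose is over and above what the paper does, and the Riemann-dendrite embedding you sketch for $\neg(1)\Rightarrow\neg(6)$ is correct in outline. Two points deserve correction, however.

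\textbf{The step $(1)\Rightarrow(2)$ as written does not work.} Your first argument assumes that $\Int E$ is dense in $E$, which is precisely the conclusion $(2)$; this is circular. Your fallback --- ``subcontinua of dendrites with interior are regular closed, a standard fact'' --- is false in general: take $D$ to be an arc $[0,1]$ with a Riemann-dendrite pattern of hairs attached along $[0,\tfrac12]$ and nothing attached along $(\tfrac12,1]$; then $E=[0,1]$ has nonempty interior $(\tfrac12,1)$ in $D$ but $\overline{\Int E}=[\tfrac12,1]\ne E$. The clean fix is to reroute: prove $(1)\Rightarrow(3)$ directly (arcs are nondegenerate subdendrites), close the cycle $(3)\Rightarrow(4)\Rightarrow(5)\Rightarrow(6)\Rightarrow(1)$, and only then return to $(2)$. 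For $(1)\Rightarrow(2)$ one then argues: if $E$ is not regular closed, pick $x\in E\setminus\overline{\Int_D E}$; since $E\setminus\overline{\Int_D E}$ is open in $E$ and $E$ is locally connected, there is an arc $[x,y]\subseteq E\setminus\overline{\Int_D E}$, and this arc has $\Int_D[x,y]\subseteq\Int_D E\cap[x,y]=\emptyset$, contradicting $(1)$.

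\textbf{The Riemann-dendrite embedding can be simplified.} You do not need to match ``levels'' to the values of the Riemann function. Once you have an arc $A$ with a countable dense set of branch points $\{b_n\}$, attach at each $b_n$ a short arc $[b_n,e_n]$ (lying in a component of $D\setminus\{b_n\}$ disjoint from $A$, with $d(b_n,e_n)<1/n$); unique arcwise connectedness forces these hairs to be pairwise disjoint away from $A$, and the result $R=A\cup\bigcup_n[b_n,e_n]$ is a subdendrite consisting of an arc with a null family of arcs attached at a countable dense set. Any two such dendrites are homeomorphic (match the countable dense sets on the base arcs by a back-and-forth homeomorphism of the arcs, then extend linearly to the hairs; the null-family condition gives continuity), so $R$ is a copy of the Riemann dendrite without any height bookkeeping.

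Finally, a small slip in your counterexamples: the $\omega$-star has $\Branch(D)$ equal to a single point, which \emph{is} discrete, so it does not witness ``completely regular $\not\Rightarrow$ $\Branch(D)$ discrete''. Your corrected example (a null comb: hairs at $p_n\to p$ on a base arc) is exactly what the paper uses (their Figure~\ref{fig:comb}), and the Riemann dendrite is also the paper's witness for the other non-implication.
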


\begin{proof}
	The equivalence of the six conditions is obvious.
	
	Suppose that $\Branch(D)$ is discrete and $A$ is an arc in $D$. Then there is a subarc $A'$ of $A$ such that it does not contain any branch point of $D$. However, then $A'$ is a free arc in $D$ and we get \eqref{ENUM:complRegFreeArc}. 
	Now suppose that $D$ is completely regular and that $\Branch(D)$ is dense in an open set $\emptyset \neq U \subseteq D$. Fix an arc $A\subseteq U$. Since $\overline{\Branch(D)}\supseteq A$, $A$ does not contain any free arc, a contradiction with \eqref{ENUM:complRegFreeArc}. 
	
	The Riemann dendrite has a nowhere dense set of branch points but is not completely regular.
	
	Finally, let $f:[-1,1] \to \mathbb R$ be the function defined by $f(1/n) = 1/n$ for every $n\in \mathbb N$, $f(0)	=1$ and $f(x)=0$ for all other points $x\in [-1,1]$. Then the subgraph of $f$ is a completely regular dendrite whose set of branch points is not discrete, see Figure~\ref{fig:comb}.
\end{proof}

\begin{figure}[ht]
	\centering
	\includegraphics[width=.3\linewidth]{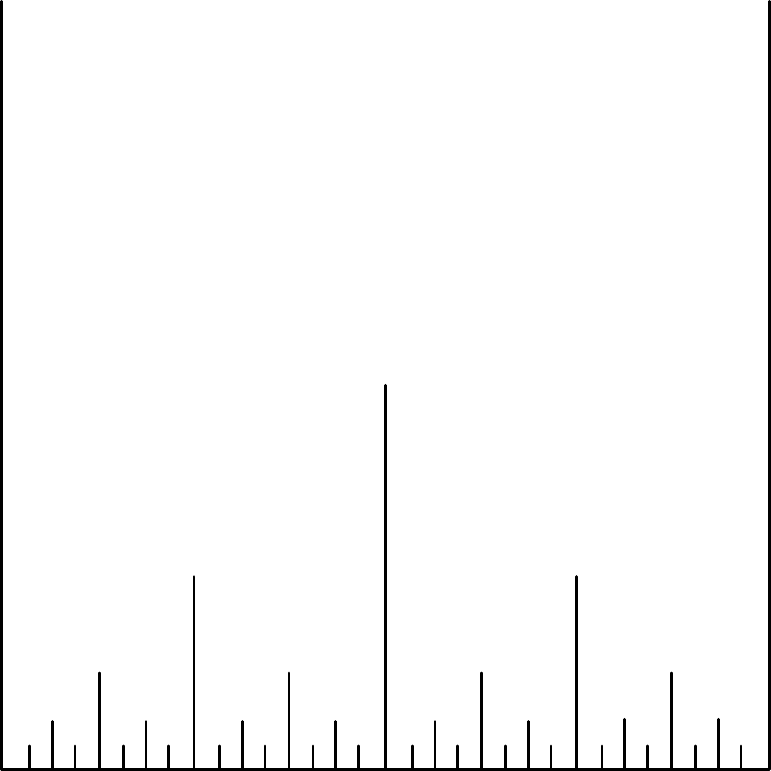}
	\caption{The Riemann dendrite has nowhere dense set of branch points but it is not completely regular.}
	\label{fig:Rieman}
\end{figure}

\begin{figure}[ht]
	\centering
	\includegraphics[width=.6\linewidth]{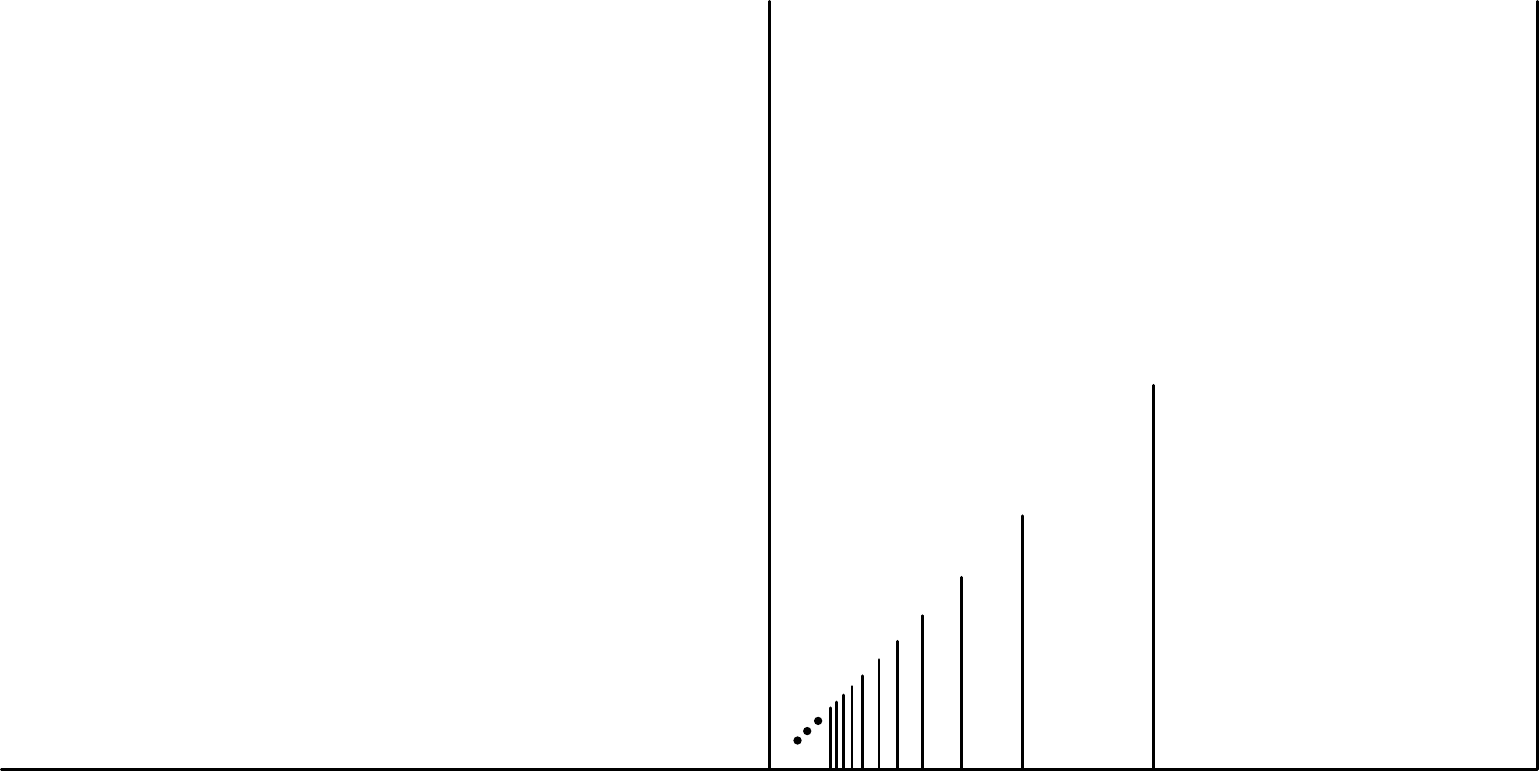}
	\caption{A completely regular dendrite whose set of branch points is not discrete.}
	\label{fig:comb}
\end{figure}

In general, generic chaos is not carried over to invariant subsets (for instance, a generically chaotic map on a dendrite may be equal to the identity on a subarc, see Proposition~\ref{P:exact} below). 
However, the following is true.

\begin{lemma}[Proposition~13 in \cite{Tak2016}]\label{L:zuzenieGCH}
	Let $X$ be a compact metric space, $f\colon X\to X$ be continuous and $Y$ be a regular closed
	$f$-invariant subset of $X$. If $f$ is generically chaotic or generically $\eps$-chaotic, then so is 
	$f|_Y\colon Y\to Y$.
\end{lemma}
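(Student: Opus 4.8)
\textbf{Plan of proof for Lemma~\ref{L:zuzenieGCH}.}
The statement to prove is the following: if $X$ is a compact metric space, $f\colon X\to X$ is continuous, $Y\subseteq X$ is a regular closed $f$-invariant set, and $f$ is generically chaotic (resp. generically $\eps$-chaotic), then $f|_Y$ is generically chaotic (resp. generically $\eps$-chaotic). Since this is cited as Proposition~13 of \cite{Tak2016}, the plan is to reconstruct that short argument. The key point is that $Y$, being regular closed, is the closure of its interior $\Int_X Y$, so $\Int_X Y$ is a nonempty open subset of $X$ which is dense in $Y$; consequently $(\Int_X Y)\times(\Int_X Y)$ is a nonempty open subset of $X^2$ dense in $Y^2$. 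The plan is to pull back the residual Li--Yorke relation on $X^2$ to a residual set on $Y^2$ by intersecting with this dense open box and using that $Y$ carries the subspace metric (so that $\Prox$, $\nAs$, $\nAs(\cdot,\eps)$ computed for $f|_Y$ agree with those for $f$ on pairs from $Y^2$, using $f$-invariance of $Y$ so that all iterates stay in $Y$).

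First I would record that $\LY(f|_Y) = \LY(f)\cap Y^2$ and $\LY(f|_Y,\eps)=\LY(f,\eps)\cap Y^2$; this is immediate from $f(Y)\subseteq Y$ and the fact that distances inside $Y$ are the ambient distances. Next, the hypothesis gives that $\LY(f)$ (resp. $\LY(f,\eps)$) is residual in $X^2$, i.e., it contains a dense $G_\delta$ set $G=\bigcap_n G_n$ with each $G_n$ open and dense in $X^2$. Restricting to the open box $W:=(\Int_X Y)^2$, each $G_n\cap W$ is open and dense in $W$, hence $G\cap W$ is a dense $G_\delta$ in $W$, and therefore also a $G_\delta$ in $Y^2$ (being the intersection with $Y^2$ of the $G_\delta$ set $G\cap W$ of $X^2$, noting $W\subseteq Y^2$). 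The remaining issue is \emph{density in $Y^2$}: I need $G\cap W$ dense not just in $W$ but in all of $Y^2$. This follows because $W$ is dense in $Y^2$ (as $\Int_X Y$ is dense in $Y$ for a regular closed set) and $G\cap W$ is dense in $W$, so by transitivity of density $G\cap W$ is dense in $Y^2$. Thus $G\cap W$ is a dense $G_\delta$ subset of $Y^2$ contained in $\LY(f)\cap Y^2=\LY(f|_Y)$ (resp. in $\LY(f,\eps)\cap Y^2 = \LY(f|_Y,\eps)$), which is exactly generic chaos (resp. generic $\eps$-chaos) of $f|_Y$.

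I do not expect a serious obstacle here; the only point requiring a little care is the density step, where one must use precisely the regular-closedness hypothesis (a general closed invariant subset, e.g. a fixed point, carries no chaos, so the hypothesis is essential). One should also be slightly careful that "residual" is being used in the sense of "contains a dense $G_\delta$", which in a complete metric space such as $X^2$ and $Y^2$ coincides with "complement of a first-category set"; since compact metric spaces are Baire, the two formulations agree and the argument goes through with either. If one prefers to argue directly with first-category complements rather than dense $G_\delta$'s, the same proof works: $X^2\setminus\LY(f)$ is a countable union of nowhere dense sets, each of which meets $W$ in a set nowhere dense in $W$ (hence, since $W$ is open and dense in $Y^2$, nowhere dense in $Y^2$), and one adds the nowhere dense set $Y^2\setminus W$ to conclude that $Y^2\setminus\LY(f|_Y)$ is first category in $Y^2$.
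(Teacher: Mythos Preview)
Your argument is correct. Note, however, that the paper does not supply its own proof of this lemma: it is stated with the attribution ``Proposition~13 in \cite{Tak2016}'' and used as a black box. Your reconstruction is the standard one and matches what one would expect from the cited reference: use regular closedness to get that $W=(\Int_X Y)^2$ is open in $X^2$ and dense in $Y^2$, intersect the dense $G_\delta$ witnessing residuality of $\LY(f)$ (resp.\ $\LY(f,\eps)$) with $W$, and conclude that the result is a dense $G_\delta$ in $Y^2$ contained in $\LY(f|_Y)$ (resp.\ $\LY(f|_Y,\eps)$). The identification $\LY(f|_Y)=\LY(f)\cap Y^2$ via $f$-invariance of $Y$ and the subspace metric is exactly right, and your remark that regular closedness is essential (a fixed point would be a counterexample otherwise) is the correct sanity check.
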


Clearly, a singleton does not admit a generically chaotic selfmap. Therefore, in auxiliary results
in the next sections, 
we will always assume that the phase space is a \emph{nondegenerate} dendrite.

%%%%%%%%%%%%%%%%%%%%%%%%%%%%%%%%%%%%%%%%%%%%%%%%%%%%%%%%%%%%%%%%%%%%%%%%%%%%
%%%%%%%%%%%%%%%%%%%%%%%%%%%%%%%%%%%%%%%%%%%%%%%%%%%%%%%%%%%%%%%%%%%%%%%%%%%%
%%%%%%%%%%%%%%%%%%%%%%%%%%%%%%%%%%%%%%%%%%%%%%%%%%%%%%%%%%%%%%%%%%%%%%%%%%%%
%%%%%%%%%%%%%%%%%%%%%%%%%%%%%%%%%%%%%%%%%%%%%%%%%%%%%%%%%%%%%%%%%%%%%%%%%%%%
%%%%%%%%%%%%%%%%%%%%%%%%%%%%%%%%%%%%%%%%%%%%%%%%%%%%%%%%%%%%%%%%%%%%%%%%%%%%
\section{Fixed points of dendrite maps}\label{S:fixed-points}

Let $D$ be a nondegenerate dendrite and $a,b,x\in D$. We say that $x$ \emph{separates} $a$ and $b$ if $a$ and $b$ lie in different components of $D\setminus \{x\}$.
For $a\in D$ let $\leq_a$ be the partial order on $D$ defined by
$x\leq_a y$ whenever $x\in [a,y]$; if $x\leq_a y$ and $x\ne y$ we will write $x<_a y$. 
If $E$ is a subdendrite of $D$
and $a\in D\setminus E$ then, by Lemma~\ref{L:convexMetric2}\eqref{ENUM:convex-arc_xe}
(recall Convention),
\[
  \proj(a,E) = \inf\nolimits_{\leq_a} E.
\]

For any $x\in D$ the set
\begin{equation}\label{Eq:Max}
D^a(x)=\{y\in D\colon x\leq_a y\}
\end{equation} 
is a subdendrite of $D$. Notice that if $x\ne a$ then $D^a(x)$ is the set containing $x$ and all those points of $D$ which are separated from $a$ by $x$. Since the metric $d$ is convex by Convention, if $y \in D^a(x)$ then $d(y,a) \geq d(x,a)$ and so $x=\proj(a,D^a(x))$.

\begin{figure}[ht]
    \centering
    \includegraphics[width=0.5\textwidth]{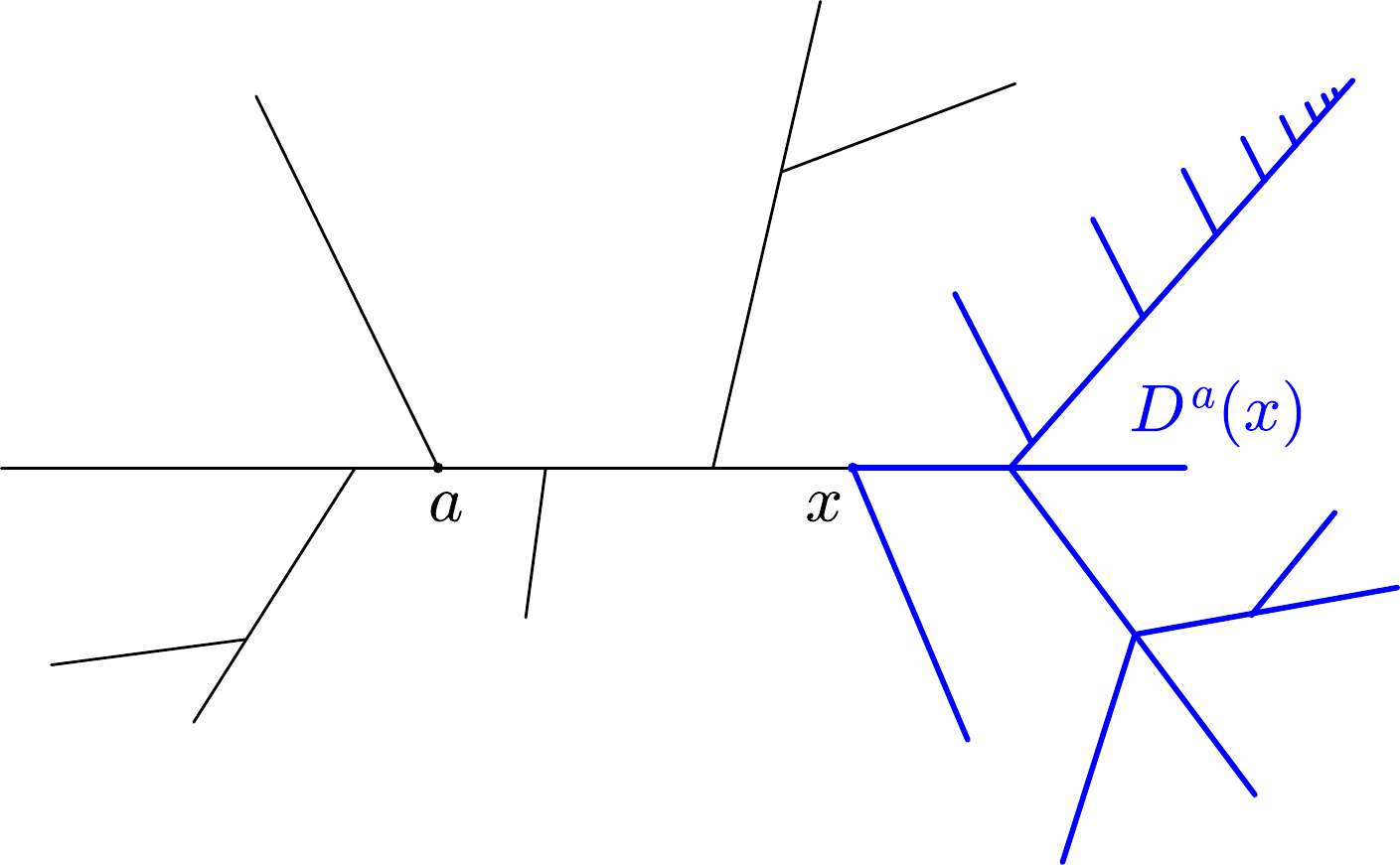}
    \caption{The set $D^a(x)$.}
    \label{fig:Max}
\end{figure}

For any distinct $a,b\in D$ let us define a subdendrite $D_{[a,b]}\subseteq D$ by
\begin{equation}\label{Eq:Dab}
D_{[a,b]}=\overline{D\setminus\left({D^b(a)\cup D^a(b)}\right)}.
\end{equation}
One can imagine it as the subdendrite ``enclosed by $a$ and $b$''. Put $D_{(a,b]}=D_{[a,b]}\setminus\{a\}$, $D_{[a,b)}=D_{[a,b]}\setminus\{b\}$ and $D_{(a,b)}=D_{[a,b]}\setminus\{a,b\}$. 
Let $D_{[a,a]}$ denote the singleton $\{a\}$. 

Now consider a continuous map $f\colon D\to D$ on a nondegenerate dendrite $D$.
Note that, for $a\neq x$ in $D$, there are four mutually exclusive possibilities: 
\begin{itemize}
\item $f(x)=x$, i.e., $x$ is a fixed point of $f$;
\item $f(x)\in D^a(x)\setminus\{x\}$, in this case we say that $x$ \emph{evades} $a$, see Figure~\ref{fig:test2};
\item $f(x)\in D_{[a,x)}$, in this case we say that $x$ \emph{admires} $a$, see Figure~\ref{fig:test1};
\item $a$ separates $x$ from $f(x)$, in this case we say that $x$ \emph{jumps over} $a$.
\end{itemize}
If $a$ is an endpoint of $D$, then the fourth possibility cannot occur and so we have a trichotomy for the points $a\neq x$: either $x$ is fixed or $x$ evades $a$ or $x$ admires $a$. In particular, if also $x$ is an endpoint of $D$, then we have only a dichotomy: either $x$ is fixed or $x$ admires $a$.

\begin{figure}[ht]
\centering
\begin{minipage}{.49\textwidth}
  \centering
  \includegraphics[width=.88\linewidth]{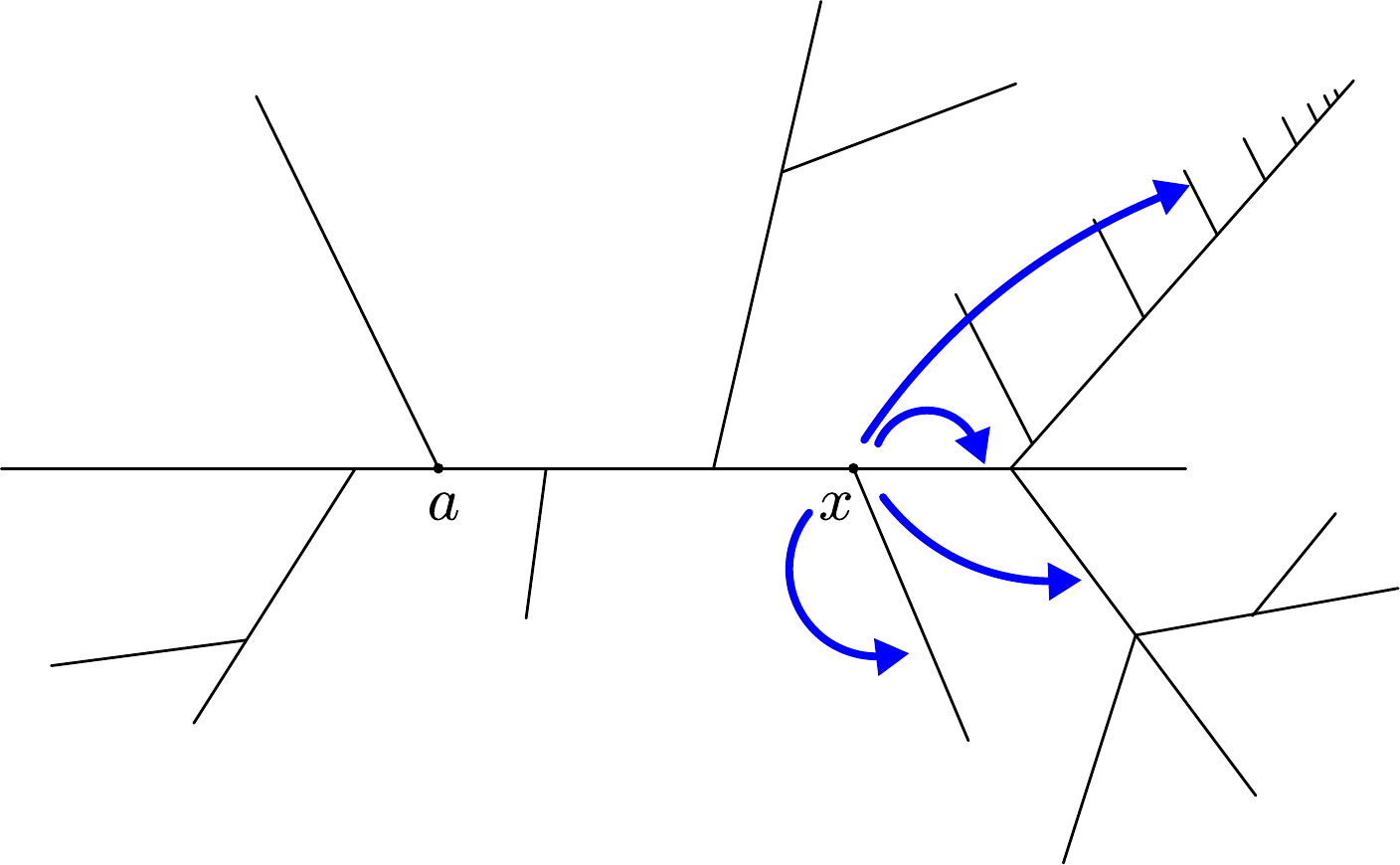}
  \caption{$x$ evades $a$.}
  \label{fig:test2}
\end{minipage}
\begin{minipage}{.49\textwidth}
	\centering
	\includegraphics[width=.88\linewidth]{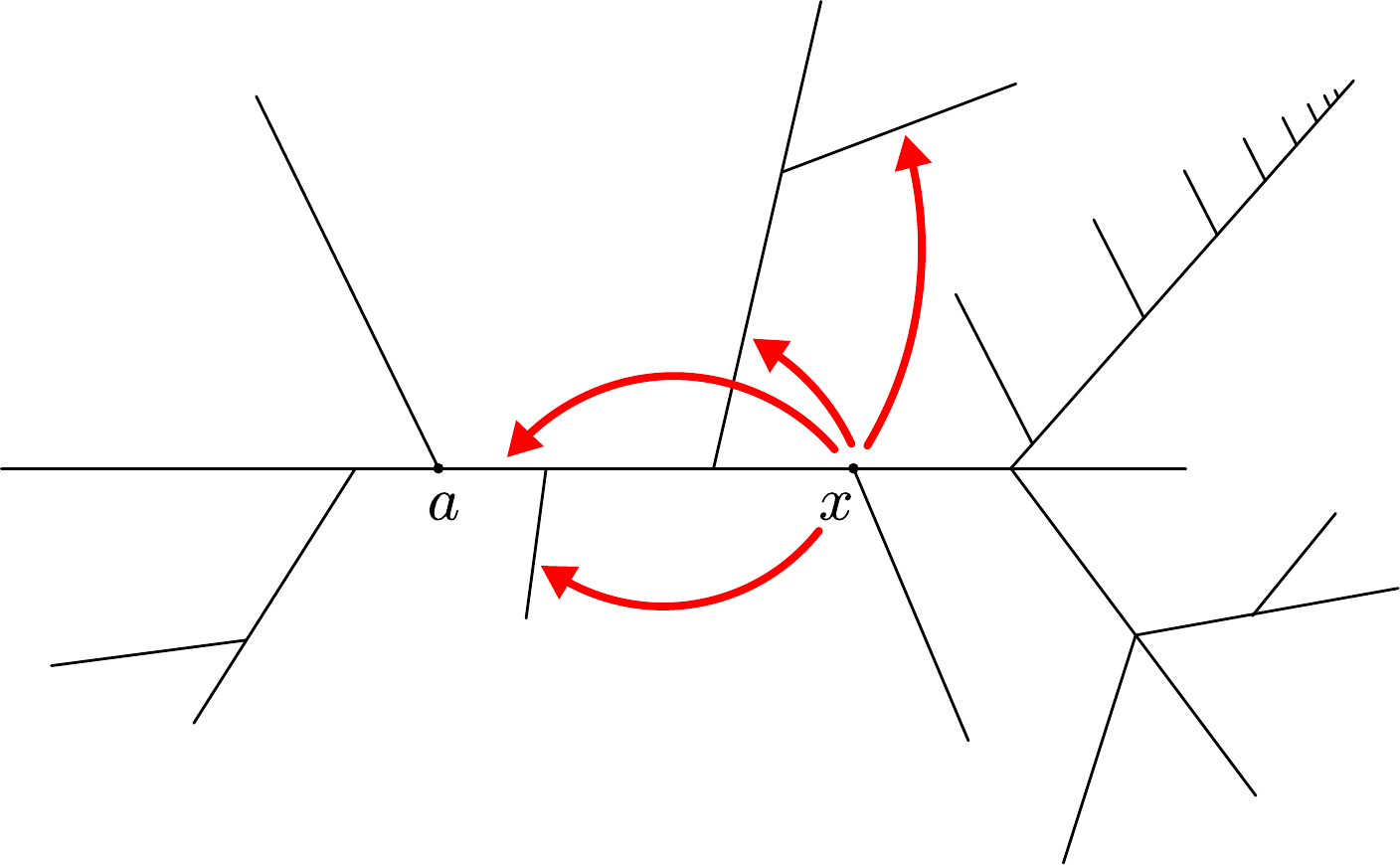}
	\caption{$x$ admires $a$.}
	\label{fig:test1}
\end{minipage}
\end{figure}

\begin{definition}[see Definition~5.3.2 in \cite{Blo2013}]\label{D:weakly cut}
Let $D$ be a nondegenerate dendrite and $f\colon D\to D$ be continuous. Let $a\in D$ be a fixed point of $f$ and $B$ be a component of $D\setminus \{a\}$. Then $a$ is called a \emph{weakly repelling fixed point of $f$ for $B$} if at least one of the following two conditions holds:
\begin{itemize}
	\item in $B$, arbitrarily close to $a$ there is a cutpoint of $D$ which is fixed by $f$, or 
	\item in $B$, arbitrarily close to $a$ there is a cutpoint $x$ separating $a$ from $f(x)$ (i.e., $x$ evades $a$).
\end{itemize}
\end{definition}

We omit the obvious proof of the following lemma.

\begin{lemma}\label{L:cutpoint_attracting_admires}
Let $D$ be a nondegenerate dendrite and $f\colon D\to D$ be continuous. Let $a\in D$ be a fixed point of $f$ and $B$ be a component of $D\setminus \{a\}$. Then $a$ is \emph{not} weakly repelling for $B$ if and only if there exists a neighbourhood $U$ of $a$ in $D$ such that, for every cutpoint $x\in U\cap B$, 
\begin{itemize}
	\item either $x$ admires $a$,
	\item or $x$ jumps over $a$.
\end{itemize} 	
\end{lemma}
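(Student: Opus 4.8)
The plan is to prove the equivalence by directly unwinding Definition~\ref{D:weakly cut}, invoking the quadrichotomy recalled just before it: for any $x\neq a$ exactly one of the following holds --- $x$ is fixed, $x$ evades $a$, $x$ admires $a$, or $x$ jumps over $a$. Since $B$ is a component of $D\setminus\{a\}$ we have $a\notin B$, so this quadrichotomy applies to every cutpoint $x\in B$.

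For the forward implication I would negate the two bullets of Definition~\ref{D:weakly cut} one at a time. Saying that the first bullet fails means exactly that there is a neighbourhood $U_1$ of $a$ in $D$ such that $U_1\cap B$ contains no cutpoint of $D$ fixed by $f$; saying that the second bullet fails means exactly that there is a neighbourhood $U_2$ of $a$ in $D$ such that $U_2\cap B$ contains no cutpoint of $D$ that evades $a$. Putting $U=U_1\cap U_2$, every cutpoint $x\in U\cap B$ is neither fixed nor evading $a$, hence it admires $a$ or jumps over $a$ by the quadrichotomy; this is the $U$ required by the statement.

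For the converse I would take $U$ as in the statement and verify that neither bullet of Definition~\ref{D:weakly cut} can be witnessed arbitrarily close to $a$. If $x\in U\cap B$ is a cutpoint, then $x$ admires $a$ or jumps over $a$; in the first case $f(x)\in D_{[a,x)}$ and in the second $a$ separates $x$ from $f(x)$, so in both cases $f(x)\neq x$, and in both cases $x$ does not evade $a$ (the four options being mutually exclusive). Thus $U\cap B$ contains no fixed cutpoint of $D$ and no cutpoint of $D$ evading $a$; as $U$ is a neighbourhood of $a$, the phrase ``arbitrarily close to $a$'' fails for both bullets, so $a$ is not weakly repelling for $B$.

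I do not anticipate a genuine obstacle --- the statement is a logical rearrangement of the definitions, which is why the authors call its proof obvious. The only points needing a little care are the precise form of the negation of ``arbitrarily close to $a$'' (the existence of one neighbourhood containing no witness), the harmless intersection $U=U_1\cap U_2$ in the forward direction, and the one-line checks that ``admires'' and ``jumps over'' both force $f(x)\neq x$ and both exclude ``evades'', which follow at once from the definitions of $D^a(x)$ and $D_{[a,x)}$.
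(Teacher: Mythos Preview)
Your proposal is correct and matches the paper's approach: the paper simply writes ``We omit the obvious proof of the following lemma,'' and your argument is exactly the direct unwinding of Definition~\ref{D:weakly cut} via the quadrichotomy that makes the statement obvious.
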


The following fact is well known, see \cite[Lemma~3.4]{Sch1978} or \cite[Theorem~7.2.2(1)]{Blo2013}.
\begin{lemma}\label{L:continuity-implies-fixed-pt}
Let $D$ be a nondegenerate dendrite and $f\colon D\to D$ be continuous. If $x,y\in D$ are such that 
%$y\in(x,f(y))$ and $x\in (y,f(x))$ 
$x$ evades $y$ and $y$ evades $x$,
then $f$ has a fixed point in $(x,y)$.
\end{lemma}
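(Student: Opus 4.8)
The plan is to reduce the statement to a fixed-point argument via the map $x \mapsto \proj(f(x), [x,y])$ applied along the arc $[x,y]$, using unique arcwise connectedness to track where points move relative to $[x,y]$.

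First I would set up notation: write $A = [x,y]$ and let $\pi = \proj(\cdot, A)\colon D \to A$ be the first-point projection onto the subdendrite $A$. Define $g\colon A \to A$ by $g(z) = \pi(f(z))$; since $\pi$ and $f$ are continuous, $g$ is a continuous selfmap of the arc $A \cong [0,1]$, so by the intermediate value theorem $g$ has a fixed point $z_0 \in A$, i.e. $\pi(f(z_0)) = z_0$. The point of the argument is then to analyze the endpoints' behavior to locate $z_0$ strictly between $x$ and $y$ and to upgrade ``$\pi(f(z_0)) = z_0$'' to ``$f(z_0) = z_0$''.

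Next I would examine the endpoints. Because $x$ evades $y$, the point $f(x)$ lies in $D^y(x) \setminus \{x\}$, which is exactly the component of $D \setminus \{x\}$ not containing $y$ (here I use that $y \ne x$ and the definition of ``evades'' together with the fact that $[x,y]$ is the relevant arc). Consequently $\pi(f(x)) = x$ but $f(x) \notin A$; similarly, since $y$ evades $x$, we get $\pi(f(y)) = y$ with $f(y) \notin A$. So both $x$ and $y$ are fixed points of $g$, but neither is a ``genuine'' fixed point of $f$. The key observation is a sign/direction argument: near $x$ along $A$, the map $g$ pushes points away from $x$ (toward $y$), because a point $z$ slightly inside $A$ has $f(z)$ close to $f(x)$, hence $f(z)$ lies on the far side of $x$ from $y$ only if $z$ itself is very close to $x$ — more carefully, one shows that for $z$ near $x$ in $A$ with $z \ne x$, either $g(z) >_x x$ or $f(z) = z$ is impossible, so $g$ has no fixed point of $f$ arbitrarily close to $x$ from inside $A$; symmetrically at $y$. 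This forces the fixed point $z_0$ of $g$ (chosen, say, as the infimum along $\le_x$ of the fixed-point set of $g$ that is $\ne x$, or via a connectedness argument on the set $\{z : \pi(f(z)) \le_x z\}$ versus $\{z : z \le_x \pi(f(z))\}$) to satisfy $x <_x z_0 <_x y$, and at such an interior fixed point $z_0$ one checks that $f(z_0) \in A$: indeed if $f(z_0) \notin A$ then $f(z_0) \in D^y(z_0) \cup D^x(z_0)$ or in a side branch hanging off $A$ at $z_0$, and in the branch case $z_0$ would have to be a branch point and a short argument using that $z_0$ is a limit of the relevant monotone-behavior points rules it out — in any case $\pi(f(z_0)) = z_0$ combined with $f(z_0) \notin A$ would contradict one of the evasion-induced directional constraints at the chosen extremal $z_0$. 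Hence $f(z_0) = \pi(f(z_0)) = z_0$, i.e. $f$ has a fixed point in $(x,y)$.

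The main obstacle I expect is handling the side branches: $f(z_0)$ could a priori leave the arc $A$ through a branch point of $D$ lying on $(x,y)$, so $\pi(f(z_0)) = z_0$ does not immediately give $f(z_0) = z_0$. The clean way around this is to not work with a single midpoint but to consider the two relatively closed subsets $P = \{z \in A : f(z) \in D^x(z)\text{-side, i.e. } z \le_x \pi(f(z)) \text{ holds via }f(z) \text{ on the }y\text{-side or on }A\}$ and its complement-type set, show $x \in$ one and $y \in$ the other by the evasion hypotheses (this is where ``$x$ evades $y$'' is used essentially, not just ``$x$ is not fixed''), and invoke connectedness of $A$ to get a point in their intersection; at such a point the only consistent possibility is $f(z_0) = z_0$. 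I would model the precise formulation of $P$ on the proof of Lemma~\ref{L:continuity-implies-fixed-pt}'s cited sources (\cite[Lemma~3.4]{Sch1978}, \cite[Theorem~7.2.2(1)]{Blo2013}), adapting it to the convex-metric setting of the Convention so that $\proj$ is available with the properties from Lemma~\ref{L:convexMetric2}.
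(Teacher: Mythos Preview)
The paper gives no proof of this lemma; it simply cites \cite[Lemma~3.4]{Sch1978} and \cite[Theorem~7.2.2(1)]{Blo2013}. Your retraction-and-IVT strategy is in fact the standard one used in those references, so the overall approach is right. However, as written the argument has a real gap, and one of your intermediate claims is backward.

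First, the direction error: near $x$, the map $g=\pi\circ f|_A$ does \emph{not} push points toward $y$. Since $x$ evades $y$, we have $f(x)\in D^y(x)\setminus\{x\}$, which is open; hence for $z\in(x,y]$ close to $x$ one still has $f(z)\in D^y(x)\setminus\{x\}$, and therefore $g(z)=\pi(f(z))=x$. Thus $g(z)<_x z$ for $z$ just to the right of $x$, and symmetrically $g(z)>_x z$ for $z$ just to the left of $y$. This still yields an interior fixed point of $g$ by the intermediate value theorem, so the error is not fatal, but your stated reasoning is reversed.

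The genuine gap is the side-branch case, which you correctly flag but do not resolve. Your suggested extremal choice, ``the infimum along $\le_x$ of the fixed-point set of $g$ that is $\ne x$'', fails: since $g\equiv x$ on a right-neighbourhood of $x$, that infimum is $x$. The set $P$ in your last paragraph is not coherently defined. What works is to set
\[
z_0=\sup\{\,z\in(x,y):\ g(w)\le_x w\ \text{for every}\ w\in(x,z]\,\}.
\]
The endpoint analysis above gives $x<z_0<y$, and continuity forces $g(z_0)=z_0$. If $f(z_0)\ne z_0$ then, since $\pi(f(z_0))=z_0$, the point $f(z_0)$ lies in a component $C$ of $D\setminus\{z_0\}$ disjoint from $[x,y]$; as $C$ is open, $f(w)\in C$ and hence $g(w)=z_0$ for all $w$ near $z_0$. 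In particular $g(w)=z_0<_x w$ for $w$ slightly above $z_0$, so $z_0$ was not the supremum --- contradiction. The point you were missing is that at a ``fake'' fixed point of $g$ the sign of $g(z)-z$ flips from $+$ to $-$, whereas the endpoint behaviour forces a $-$ to $+$ crossing somewhere in $(x,y)$; only a genuine fixed point of $f$ can produce the latter.
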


\begin{lemma}[see Lemma 7.2.5 in \cite{Blo2013}]\label{L:noneffluent_endpoint}
Let $D$ be a nondegenerate dendrite and $f\colon D\to D$ be continuous. Then 
\begin{itemize}
\item either there is
	a fixed point of $f$ which is a cutpoint of $D$,
\item or there is a fixed point $a\in D$ which is an endpoint of $D$ and is not weakly repelling of $f$ 
    for $D\setminus\{a\}$.
\end{itemize}
\end{lemma}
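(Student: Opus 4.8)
The plan is to prove the dichotomy by contradiction: assume that no cutpoint of $D$ is fixed by $f$, and produce a fixed endpoint $a$ which is not weakly repelling for $D\setminus\{a\}$. Since $D$ has the fixed point property, the fixed point set $\Fix(f)$ is nonempty, and under our assumption it consists entirely of endpoints. The key observation is that, with no fixed cutpoints around, the trichotomy ``fixed / evades / admires'' (valid for any $x\ne a$ with $a$ an endpoint) becomes a dichotomy ``evades $a$ / admires $a$'' for every \emph{cutpoint} $x$, once we fix any endpoint $a$. I would exploit this to set up a selection/exhaustion argument among the cutpoints of $D$.

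First I would pick an arbitrary endpoint $a_0\in D$ and consider whether $a_0$ is weakly repelling for $D\setminus\{a_0\}=:B_0$. If not, we are done. If it is, then by Definition~\ref{D:weakly cut} (and since there are no fixed cutpoints) there is, arbitrarily close to $a_0$ in $B_0$, a cutpoint $x$ that evades $a_0$, i.e.\ $f(x)\in D^{a_0}(x)\setminus\{x\}$. The plan is to use such an evading cutpoint to ``move toward a fixed endpoint'': choosing $x$ close to $a_0$, the subdendrite $D^{a_0}(x)$ is a proper subdendrite not containing $a_0$, it is $f$-``inward'' in the sense that $f(x)\in D^{a_0}(x)$, and one expects to iterate this. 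More precisely, I would build a decreasing (under $\le_{a_0}$, or rather a nested) sequence of subdendrites $D^{a_0}(x_1)\supsetneq D^{a_0}(x_2)\supsetneq\cdots$ with each $x_{n+1}$ an evading cutpoint inside $D^{a_0}(x_n)$ furnished by weak repelling of some fixed endpoint inside; the intersection $\bigcap_n D^{a_0}(x_n)$ is a subdendrite, and if the $x_n$ can be chosen so their diameters shrink (using that cutpoints witnessing weak repelling can be taken arbitrarily close to the relevant fixed point), the intersection is a point which must be a fixed endpoint. The crux is to guarantee that this process terminates at, or converges to, an endpoint that is \emph{not} weakly repelling — i.e.\ that we cannot keep finding evading cutpoints forever without hitting a ``non-weakly-repelling'' endpoint.

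The cleaner route, which I would actually pursue, is a Zorn's lemma / maximality argument. Consider the family of all subdendrites $E\subseteq D$ such that $E$ contains a fixed point and $f(E)\subseteq E$ is, in the appropriate sense, ``attracted inward'' — concretely, subdendrites of the form $E=D^{a}(x)$ for a fixed endpoint $a\notin E$ and an evading cutpoint $x$, together with the property $\Fix(f)\cap E\ne\emptyset$. Partially order this family by inclusion and find a minimal element $E_*$ using that nested intersections of such subdendrites stay in the family (here one needs that the intersection still contains a fixed point — which follows from the fixed point property applied to the subcontinuum, noting $f(E_*)\subseteq E_*$). Let $a_*$ be a fixed point in $E_*$; it is an endpoint of $D$. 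I claim $a_*$ is not weakly repelling for $D\setminus\{a_*\}$: if it were, by Lemma~\ref{L:cutpoint_attracting_admires} there would be, arbitrarily close to $a_*$, a cutpoint $x$ evading $a_*$, and then $E'=D^{a_*}(x)$ (or rather the component of $D\setminus\{x\}$ not containing $a_*$, suitably closed up) would be a strictly smaller subdendrite in the family — provided we verify $\Fix(f)\cap E'\ne\emptyset$, which again uses $f(E')\subseteq E'$ coming from the fact that $x$ evades $a_*$ so $f$ pushes $x$, and by a continuity/monotonicity argument the whole of $E'$, into $E'$. This contradicts minimality of $E_*$, so $a_*$ works.

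\textbf{Main obstacle.} The delicate point — and where I expect the real work to lie — is establishing that $f(D^{a}(x))\subseteq D^{a}(x)$ (or at least that $D^{a}(x)$ contains a fixed point of $f$) when $x$ is merely a single evading cutpoint near $a$; evading only controls $f(x)$, not $f$ on all of $D^{a}(x)$, so in general points of $D^{a}(x)$ may jump back out through $x$. To handle this I would work instead with the point $c=\proj(a, \Fix(f))$ or argue locally: near a weakly repelling endpoint $a$ one gets cutpoints $x_n\to a$, all evading $a$, and applying Lemma~\ref{L:continuity-implies-fixed-pt}-type reasoning (the map $x\mapsto \proj(f(x),[a,x])$ and intermediate-value behaviour along arcs in a dendrite) one locates fixed points trapped between consecutive $x_n$ or inside $D^{a}(x_n)$. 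This is precisely the content of Lemma~7.2.5 in \cite{Blo2013}, and carrying the argument out rigorously amounts to reproducing Blokh et al.'s fixed-point theory for dendrite maps; in the paper I would most naturally just cite \cite[Lemma~7.2.5]{Blo2013}, as the statement is taken from there, and sketch the reduction above only as motivation.
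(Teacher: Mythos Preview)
The paper does not give its own proof of this lemma: it is stated with the attribution ``see Lemma~7.2.5 in \cite{Blo2013}'' and no proof environment follows. So your closing remark --- that one would most naturally just cite \cite[Lemma~7.2.5]{Blo2013} --- is exactly what the paper does, and there is nothing to compare.

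As for the sketch itself: you correctly identify the real obstacle. The Zorn-type argument you outline hinges on having, for each evading cutpoint $x$ near a weakly repelling fixed endpoint $a$, an $f$-invariant subdendrite $D^{a}(x)$ (so that it contains a fixed point and the family is closed under nested intersections). But, as you say, evading controls only $f(x)$, not $f$ on all of $D^{a}(x)$; points of $D^{a}(x)$ can certainly be mapped out of it, so neither invariance nor the existence of a fixed point inside follows from your setup. Your fallback idea --- using Lemma~\ref{L:continuity-implies-fixed-pt} along arcs to trap fixed points between evading cutpoints --- is closer to how Blokh et al.\ actually argue, but turning it into a complete proof requires a more careful inductive construction than what you have written; this is precisely the nontrivial content of \cite[Lemma~7.2.5]{Blo2013}, and the paper wisely imports it rather than reproving it.
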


\begin{lemma}\label{L:noneffluent_endpoint-every_point_admires}
Let $D$ be a nondegenerate dendrite and $f\colon D\to D$ be continuous such that $\Fix(f)\subseteq\End(D)$. 
Then there exists an endpoint $a$, fixed by $f$ and such that every $y\in D$ which is not fixed by $f$ admires $a$.
Such an endpoint $a$ is unique.
\end{lemma}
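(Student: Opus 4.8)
The plan is to combine Lemma~\ref{L:noneffluent_endpoint} with the trichotomy for endpoints described just before Definition~\ref{D:weakly cut}. First I would invoke Lemma~\ref{L:noneffluent_endpoint}. Since $\Fix(f)\subseteq\End(D)$, there is no fixed cutpoint of $D$, so the first alternative of that lemma is excluded; hence there is a fixed endpoint $a\in D$ which is not weakly repelling of $f$ for the unique component $D\setminus\{a\}$. By Lemma~\ref{L:cutpoint_attracting_admires} there is a neighbourhood $U$ of $a$ such that every cutpoint $x\in U\setminus\{a\}$ either admires $a$ or jumps over $a$; but $a$ is an endpoint, so no point jumps over $a$ (the fourth possibility in the trichotomy does not occur when the separating point is an endpoint). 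Therefore every cutpoint $x\in U\setminus\{a\}$ admires $a$.

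Next I would promote this local statement to a global one. Take any $y\in D$ not fixed by $f$. Since $y$ is not fixed and $a\in\End(D)$, the trichotomy gives that $y$ either evades $a$ or admires $a$; I must rule out evasion. Suppose $y$ evades $a$, i.e.\ $f(y)\in D^a(y)\setminus\{y\}$. Pick a cutpoint $x\in U\setminus\{a\}$ lying on the arc $[a,y]$ and strictly between $a$ and $y$ (possible because $a$ is an endpoint, so $[a,y)$ contains points of $U$, and it contains cutpoints of $D$ since every non-endpoint of a nondegenerate dendrite is a cutpoint — if $[a,y]$ had no cutpoint in $U\setminus\{a\}$ it would be a free arc near $a$ and one checks directly, using that $f$ maps $a$ to $a$ and $y$ away from $a$ past $x$, that $x$ itself must evade $a$, contradicting the previous paragraph). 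With $x\in(a,y)$ evading $a$: then $f(x)\in D^a(x)\setminus\{x\}$. I claim $x$ evades $y$ and $y$ evades $x$, so that Lemma~\ref{L:continuity-implies-fixed-pt} produces a fixed point in $(x,y)\subseteq\Cut(D)$, contradicting $\Fix(f)\subseteq\End(D)$. Indeed $x$ separates $a$ from $y$, so $D^a(x)=D^x(a)^c$-side contains $y$; since $f(x)\in D^a(x)$ and... here one has to be slightly careful: evading $a$ means $f(x)\in D^a(x)$, and $y\in D^a(x)$, but "$x$ evades $y$" means $f(x)\in D^y(x)$, the component of $D\setminus\{x\}$ not containing $y$ — this is the opposite side. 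So the direct argument needs adjustment: instead, use that $y$ evades $a$ to get $f(y)\in D^a(y)$, i.e.\ $f(y)$ is on the far side of $y$ from $a$, hence from $x$; so $y$ evades $x$. For $x$: if $x$ admired $a$ we would be done by the global trichotomy applied lower down... I would instead argue by choosing $x$ to be the \emph{supremum} (with respect to $\leq_a$) of the cutpoints in $U\cap[a,y]$ that admire $a$, or more cleanly, contradict minimality by a connectedness/limit argument. The cleanest route: consider the set $F=\{z\in[a,y]\colon z$ admires $a$ or $z\in\Fix(f)\}$; it contains a neighbourhood of $a$ in $[a,y]$ by the first paragraph and $a\in F$; if $y\notin F$, let $z^*=\sup_{\leq_a}\{z\in[a,y]\colon [a,z]\subseteq F\}$, analyze $f(z^*)$ using continuity of $f$ and of the projection maps to derive that $z^*$ evades $a$, and then apply Lemma~\ref{L:continuity-implies-fixed-pt} to a pair $(x',y)$ with $x'$ near $z^*$ on the $a$-side (admiring $a$, hence in particular $f(x')$ on the $a$-side of $x'$, so $x'$ evades... ) — this is the delicate part.

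\textbf{The main obstacle.} The hard part is exactly this promotion from "cutpoints near $a$ admire $a$" to "every non-fixed point admires $a$": one must show that the set of points evading $a$ cannot reach down arbitrarily close to $a$ along any arc $[a,y]$, and the bookkeeping with the partial order $\leq_a$, the subdendrites $D^a(x)$, $D_{[a,x)}$, and the continuity of $f$ has to be done carefully so that Lemma~\ref{L:continuity-implies-fixed-pt} can be triggered to contradict $\Fix(f)\subseteq\End(D)$. A supremum/connectedness argument along a single arc, combined with the observation that a limit of admiring points is either admiring or fixed or evading-by-a-strict-inequality-that-fails, should close it.

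\textbf{Uniqueness.} For uniqueness, suppose $a\ne a'$ are two such endpoints. Since $a'$ is not fixed... wait, $a'$ is fixed. Then apply the conclusion for $a$ to no point (since $a'$ is fixed, the hypothesis "$a'$ not fixed" fails, so we get no information directly). Instead: take any non-fixed $y$ (which exists, since $f$ is not the identity — if $f=\Id$ then $\Fix(f)=D\not\subseteq\End(D)$ as $D$ is nondegenerate). Then $y$ admires $a$ and $y$ admires $a'$, so $f(y)\in D_{[a,y)}\cap D_{[a',y)}$. But also consider a non-fixed point $y'$ on the arc $[a,a']$ close to $a'$: it admires $a$, so $f(y')$ lies on the $a$-side of $y'$; being close to $a'$ on $[a,a']$, this forces, for $y'$ sufficiently close to $a'$, that $y'$ does \emph{not} admire $a'$ (as $f(y')\to f(a')=a'$ would be needed, but $f(y')$ stays on the far side toward $a$) — more precisely $d(f(y'),a')\ge d(f(y'),y')$-type estimate shows $y'$ cannot admire $a'$ unless $f(y')$ is between $y'$ and $a'$, i.e.\ on the $a'$-side, contradicting that it is on the $a$-side. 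This contradiction with "$y'$ admires $a'$" gives $a=a'$.
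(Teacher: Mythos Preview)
Your overall strategy matches the paper's: invoke Lemma~\ref{L:noneffluent_endpoint} to get a fixed endpoint $a$ that is not weakly repelling, use Lemma~\ref{L:cutpoint_attracting_admires} to get a neighbourhood $U$ where cutpoints admire $a$, and then derive a contradiction via Lemma~\ref{L:continuity-implies-fixed-pt}. But you tangle yourself badly in the promotion step and end up proposing an unnecessary supremum argument.

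The concrete gap is this. After setting up $w$ evading $a$ and picking a cutpoint $z\in U\cap(a,w)$, you write ``With $x\in(a,y)$ evading $a$'' --- but $x$ (your $z$) is in $U$, so by your own first paragraph it \emph{admires} $a$, not evades it. This slip is what derails you. The clean observation you are missing is purely order-theoretic: if $z\in(a,w)$ and $z$ admires $a$, then $f(z)\in D_{[a,z)}$, which (since $a$ is an endpoint) is exactly the component of $D\setminus\{z\}$ containing $a$; as $z$ separates $a$ from $w$, this component does not contain $w$, so $f(z)\in D^w(z)\setminus\{z\}$, i.e.\ $z$ evades $w$. Symmetrically, $w$ evades $a$ means $f(w)\in D^a(w)\setminus\{w\}$, and since $z\in(a,w)$ this same component avoids $z$, so $w$ evades $z$. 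Now Lemma~\ref{L:continuity-implies-fixed-pt} gives a fixed cutpoint in $(z,w)$, contradiction. No supremum, no limit argument, no continuity estimate is needed.

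For uniqueness your idea is correct but overcomplicated. The paper simply picks any $x\in(a_1,a_2)$: it is a cutpoint, hence not fixed, hence admires both $a_1$ and $a_2$; but $D_{[a_1,x)}\cap D_{[a_2,x)}=\emptyset$ (the two sets lie in different components of $D\setminus\{x\}$), so $f(x)$ has nowhere to go. You do not need ``$y'$ sufficiently close to $a'$'' or any metric estimate.
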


\begin{proof}
Lemma~\ref{L:noneffluent_endpoint} guarantees the existence of an endpoint $a$ fixed by $f$ which is not a weakly repelling point of $f$ for $D\setminus\{a\}$. Then, by Lemma~\ref{L:cutpoint_attracting_admires}, there exists a neighbourhood $U$ of $a$ such that every cutpoint in $U$ admires $a$. We need to prove that, in fact, all points in $D\setminus \{a\}$ which are not fixed by $f$ admire $a$. Suppose, on the contrary, that some point $w\in D\setminus\{a\}$ evades $a$. So $w$ is a cutpoint because endpoints never evade $a$ and then $w$ is in $D\setminus U$. Consider any cutpoint 
$z\in U\cap (a,w)$. We know that $z$ admires $a$, hence $z$ evades $w$. Clearly, $w$ evades $z$.
Then, by Lemma~\ref{L:continuity-implies-fixed-pt}, $f$ has a fixed point in $(z,w)$, which is obviously a cutpoint, a contradiction.

Suppose that there are two distinct endpoints $a_1,a_2$ with the property from the lemma.
Choose a point $x\in(a_1,a_2)$. Since $x\notin\End(D)$, it is admired both by $a_1$ and $a_2$, i.e.,
$f(x)\in D_{[a_1,x)} \cap D_{[a_2,x)}$. However, this intersection
is obviously empty, a contradiction.
\end{proof}

\begin{lemma}\label{L:at_most_one_WR_point}
Let $D$ be a nondegenerate dendrite and $f\colon D\to D$ be continuous. Suppose that $p$ and $q$ are two distinct endpoints fixed by $f$, which are not weakly repelling of $f$ for $D\setminus\{p\}$ and $D\setminus\{q\}$, respectively. Then there exists a cutpoint $r\in (p,q)$ fixed by $f$.
\end{lemma}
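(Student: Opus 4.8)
The plan is to reduce the statement to Lemma~\ref{L:continuity-implies-fixed-pt}. I would produce a cutpoint $x$ of $D$ lying in $(p,q)$ close to $p$ and a cutpoint $y$ of $D$ lying in $(p,q)$ close to $q$ such that $x$ evades $y$ and $y$ evades $x$; then Lemma~\ref{L:continuity-implies-fixed-pt} gives a fixed point $r\in(x,y)$. Since $x<_p y$ gives $(x,y)\subseteq(p,q)$, and every point of $(p,q)$ separates $p$ from $q$ and is therefore a cutpoint of $D$, such an $r$ is the required cutpoint.

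To obtain $x$ and $y$ I would first exploit the hypotheses. As $p$ is an endpoint, $D\setminus\{p\}$ is connected and, for points of $D\setminus\{p\}$, the ``jumps over $p$'' alternative cannot occur; so applying Lemma~\ref{L:cutpoint_attracting_admires} to $p$ and to the component $D\setminus\{p\}$, the assumption that $p$ is not weakly repelling yields a $\delta>0$ such that every cutpoint of $D$ in $B_\delta(p)\setminus\{p\}$ admires $p$. Symmetrically there is $\delta'>0$ such that every cutpoint of $D$ in $B_{\delta'}(q)\setminus\{q\}$ admires $q$. Using that, by the standing Convention, the metric is convex, so that $[p,q]$ is a geodesic (Lemma~\ref{L:convexMetric1}\eqref{ENUM:convex-H1}), I would pick $y\in(p,q)\cap B_{\delta'}(q)$ (possible since $q$ is an endpoint, so $(p,q)$ has points arbitrarily near $q$) and then $x\in(p,q)$ with $d(p,x)<\min\{\delta,d(p,y)\}$. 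Convexity forces $x<_p y$: otherwise $y\le_p x$, whence $d(p,y)\le d(p,x)$, contradicting the choice of $x$. Thus $x$ and $y$ are cutpoints of $D$ in $(p,q)$ with $x<_p y$, $x$ admires $p$, and $y$ admires $q$.

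It remains to verify mutual evasion. For the endpoint $p$ one has $D^x(p)=\{p\}$ (nothing is separated from the cutpoint $x$ by the endpoint $p$), and unwinding the definition of $D_{[p,x]}$ then shows that $D_{[p,x)}$ equals the component $C_x$ of $D\setminus\{x\}$ containing $p$; hence ``$x$ admires $p$'' says exactly that $f(x)\in C_x$, in particular $f(x)\ne x$. Since $x\in(p,q)$ and $x<_p y$, the cutpoint $x$ separates $p$ from $y$, so $y\notin C_x$, and the arc $[y,f(x)]$ must pass through $x$, that is, $f(x)\in D^y(x)\setminus\{x\}$, which is precisely the statement that $x$ evades $y$. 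The same argument with $q$ in place of $p$ — using that $y\in(p,q)$ and $x<_p y$ imply $y$ separates $x$ from $q$ — shows that $y$ evades $x$, so Lemma~\ref{L:continuity-implies-fixed-pt} applies and completes the proof as described above. The argument is essentially routine once set up; the only steps needing a little care are the identification of $D_{[p,x)}$ with the component of $D\setminus\{x\}$ containing $p$ (where it is essential that $p$ is an endpoint) and the use of the convex metric to force $x<_p y$ by taking $x$ close enough to $p$, and I do not anticipate a genuine obstacle.
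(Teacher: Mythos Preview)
Your proposal is correct and follows essentially the same approach as the paper: pick cutpoints $x,y\in(p,q)$ near $p$ and $q$ respectively that admire $p$ and $q$ (via Lemma~\ref{L:cutpoint_attracting_admires}), verify that this forces them to evade one another, and invoke Lemma~\ref{L:continuity-implies-fixed-pt}. The paper's version is terser (it orders $p^*,q^*$ via a diameter bound rather than the convex metric and leaves the admire-to-evade translation implicit), but the argument is the same.
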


\begin{proof}
By Lemma~\ref{L:cutpoint_attracting_admires} there exist disjoint neighbourhoods $U_p$ and $U_q$ of $p$ and $q$, respectively, such that every cutpoint in $U_p$ admires $p$ and every cutpoint in $U_q$ admires $q$. We can suppose that $\diam U_p<d(p,q)/2$ and $\diam U_q<d(p,q)/2$. Fix $p^*\in U_p\cap (p,q)$ and $q^*\in U_q\cap (p,q)$. Clearly, $p^*$ and $q^*$ are cutpoints and so they admire $p$ and $q$, respectively. As the diameters of $U_p$ and $U_q$ are less than $d(p,q)/2$, the points $p,p^*,q^*$ and $q$ are positioned on $[p,q]$ in this order. The fact that $p^*$ admires $p$ gives us $f(p^*)\in D_{[p,p^*)}$. Similarly, $f(q^*)\in D_{(q^*,q]}$. Then, by Lemma~\ref{L:continuity-implies-fixed-pt} there is a fixed point $r\in(p^*,q^*)\subseteq (p,q)$. Obviously, $r$ is a cutpoint.
\end{proof}

%%%%%%%%%%%%%%%%%%%%%%%%%%%%%%%%%%%%%%%%%%%%%%%%%%%%%%%%%%%%%%%%%%%%%%%%%%%%
%%%%%%%%%%%%%%%%%%%%%%%%%%%%%%%%%%%%%%%%%%%%%%%%%%%%%%%%%%%%%%%%%%%%%%%%%%%%
%%%%%%%%%%%%%%%%%%%%%%%%%%%%%%%%%%%%%%%%%%%%%%%%%%%%%%%%%%%%%%%%%%%%%%%%%%%%
%%%%%%%%%%%%%%%%%%%%%%%%%%%%%%%%%%%%%%%%%%%%%%%%%%%%%%%%%%%%%%%%%%%%%%%%%%%%
%%%%%%%%%%%%%%%%%%%%%%%%%%%%%%%%%%%%%%%%%%%%%%%%%%%%%%%%%%%%%%%%%%%%%%%%%%%%

\section{Generic chaos on completely regular dendrites}\label{S:gen-chaos}

The first lemma in this section does not need the assumption of complete regularity.

\begin{lemma}\label{L:fixed_pt_in_intersection}
Let $D$ be a nondegenerate dendrite and $f\colon D\to D$ be continuous such that the condition \ref{ENUM:B1} is satisfied
by the family of nondegenerate subdendrites. 
Then the intersection of any nonempty family of invariant nondegenerate subdendrites
is an invariant (possibly degenerate) subdendrite and hence it contains a fixed point of $f$.
\end{lemma}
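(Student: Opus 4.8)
The plan is to establish, in order: (i) any two invariant nondegenerate subdendrites of $D$ meet; (ii) hence every finite subfamily of the given family has nonempty intersection; (iii) by compactness the whole intersection $E$ is nonempty; (iv) $E$ is connected, hence a (possibly degenerate) subdendrite; (v) $E$ is $f$-invariant; and (vi) $E$ contains a fixed point of $f$.

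First I would prove (i), and this is the only place \ref{ENUM:B1} is used. Let $E_1,E_2$ be invariant nondegenerate subdendrites with $E_1\cap E_2=\emptyset$. Since $E_1,E_2$ are disjoint nonempty compact sets, $d(E_1,E_2)>0$; invariance gives $f^n(E_i)\subseteq E_i$, so $d(f^n(E_1),f^n(E_2))\ge d(E_1,E_2)>0$ for all $n$, contradicting \ref{ENUM:B1} applied to the pair $E_1,E_2$. Thus any two members of the family intersect. For a finite subfamily $E_{\alpha_1},\dots,E_{\alpha_k}$ — subcontinua of the hereditarily unicoherent continuum $D$ that pairwise intersect — Lemma~\ref{L:finiteCap} yields $\bigcap_{i=1}^k E_{\alpha_i}\ne\emptyset$. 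So the family $\{E_\alpha\}$ has the finite intersection property; as $D$ is compact and each $E_\alpha$ closed, $E:=\bigcap_\alpha E_\alpha\ne\emptyset$.

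Next, $E$ is connected: dendrites are uniquely arcwise connected, so for any $x,y\in E$ the arc $[x,y]$ lies in each $E_\alpha$ (each being arcwise connected and containing $x,y$), hence in $E$; thus $E$ is arcwise connected, so a subcontinuum of $D$, i.e., a subdendrite, possibly a single point. Invariance is immediate: $f(E)\subseteq f(E_\alpha)\subseteq E_\alpha$ for every $\alpha$, whence $f(E)\subseteq E$. Finally, since $E$ is a nonempty dendrite and $f|_E\colon E\to E$ is continuous, the fixed point property of dendrites supplies a fixed point of $f$ in $E$; if $E$ is a singleton this is trivial, as $f(E)\subseteq E$ forces that point to be fixed.

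I do not expect a genuine obstacle here. The only point requiring a little attention is that the intersection may be degenerate, but a one-point set is automatically invariant and fixed, so this costs nothing; the whole content of the lemma is the passage from the defining condition \ref{ENUM:B1} to the pairwise intersection property in step (i), after which continuum-theoretic facts (hereditary unicoherence via Lemma~\ref{L:finiteCap}, unique arcwise connectedness, compactness, and the fixed point property) do the rest.
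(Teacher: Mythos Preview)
Your proof is correct and follows essentially the same route as the paper: use \ref{ENUM:B1} plus compactness to get pairwise intersection, invoke Lemma~\ref{L:finiteCap} for the finite intersection property, apply compactness for nonemptiness of the full intersection, then note invariance and the fixed point property of dendrites. The only cosmetic difference is that you verify connectedness of the intersection via unique arcwise connectedness (the arc $[x,y]$ lies in every $E_\alpha$), whereas the paper simply appeals to hereditary unicoherence; your version is arguably more explicit for an arbitrary (not just finite) intersection.
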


\begin{proof}
The intersection of a family of invariant nondegenerate subdendrites is trivially invariant and closed.
Using unicoherence of $D$ it is a subdendrite if it is nonempty. Hence it is sufficient to prove that 
the intersection is nonempty. Since $D$ is compact it is in fact sufficient to prove that 
any family of invariant nondegenerate subdendrites of $D$ has the finite intersection property. 
So we need to show that any finite family of invariant nondegenerate subdendrites 
has nonempty intersection. In view of Lemma~\ref{L:finiteCap} it is sufficient to prove that
if $D_1$ and $D_2$ are two invariant nondegenerate subdendrites then $D_1\cap D_2\ne\emptyset$.
However this is obvious because, by \ref{ENUM:B1}, $d(D_1,D_2)=0$ and since $D_1$ and $D_2$ are compact sets, we have $D_1\cap D_2\neq \emptyset$.
\end{proof}

A generically chaotic map on an $\omega$-star may have arbitrarily small invariant nondegenerate subdendrites \cite{Mur00}. The next two lemmas show that on some dendrites this cannot happen.

\begin{lemma} \label{L:sub_cont_fix_pt-diam_delta}
Let $D$ be a completely regular nondegenerate dendrite with all points of finite order. Let $f\colon D\to D$ be generically chaotic and $p\in D$ be a fixed point of $f$. Then there exists $\delta>0$ such that every invariant nondegenerate subdendrite $S\subseteq D$ containing $p$ has diameter at least $\delta$. 
\end{lemma}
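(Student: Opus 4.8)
The plan is to argue by contradiction. If no such $\delta$ exists, there is a sequence $(S_n)$ of invariant nondegenerate subdendrites, each containing $p$, with $\diam S_n\to 0$. First I would replace it by a decreasing sequence: put $S_n^\ast=\overline{\bigcup_{k\ge n}S_k}$. Since every $S_k$ contains $p$, the union $\bigcup_{k\ge n}S_k$ is connected, so $S_n^\ast$ is a subdendrite; it is nondegenerate (it contains $S_n$), invariant (the closure of a forward-invariant set is forward-invariant), contains $p$, and $S_{n+1}^\ast\subseteq S_n^\ast$. As each $S_k\subseteq\overline{B_{\diam S_k}(p)}$, we get $\diam S_n^\ast\to 0$ and $\bigcap_n S_n^\ast=\{p\}$. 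By complete regularity each $S_n^\ast$ is regular closed (Proposition~\ref{P:conditions}), so being also invariant, $g_n:=f|_{S_n^\ast}$ is generically chaotic by Lemma~\ref{L:zuzenieGCH}; in particular $\LY(g_n)$ is residual, hence nonempty, in the nondegenerate space $(S_n^\ast)^2$. Finally $\ord(p,S_n^\ast)\le\ord(p,D)<\infty$. The goal is to show that for large $n$ the map $g_n$ cannot be generically chaotic, which gives the contradiction.

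To that end I would apply the fixed-point dichotomy of Lemma~\ref{L:noneffluent_endpoint} to $g_n$ on $S_n^\ast$. Consider first the case in which $g_n$ has no fixed point that is a cutpoint of $S_n^\ast$. Then $\Fix(g_n)\subseteq\End(S_n^\ast)$, so by Lemma~\ref{L:noneffluent_endpoint-every_point_admires} there is a unique endpoint $a_n$ of $S_n^\ast$, fixed by $g_n$, such that every non-fixed point of $S_n^\ast$ admires $a_n$ (with respect to $g_n$). I claim this forces every $g_n$-orbit to converge to a fixed point. The key is a Lyapunov-type monotonicity: for a non-fixed $x$, writing $x_k=g_n^k(x)$ and $\xi_k=\proj(x_k,[a_n,x])$, the fact that $x_{k+1}$ admires $a_n$ from $x_k$ forces $\xi_{k+1}\le_{a_n}\xi_k$ along the linearly ordered arc $[a_n,x]$, so $\xi_k$ converges to some $\xi_\infty$. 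Because $S_n^\ast$ has finite length and all its points have finite order, the sets $\{z:\proj(z,[a_n,x])=\xi\}$ (each the union of the finitely many branches of $S_n^\ast$ attached at $\xi$ off the arc) are pairwise disjoint connected sets, so their diameters tend to $0$ along the distinct values taken by $\xi_k$; hence $x_k$ is pinned ever closer to $\xi_k$, and so to $\xi_\infty$, unless $\xi_k$ stabilizes, in which case the orbit drops into one branch at $\xi_\infty$ and the same analysis recurses into a strictly smaller subdendrite (of finite length), so the $\omega$-limit set of $x$ under $g_n$ shrinks to a single, necessarily fixed, point. If all $g_n$-orbits converge to fixed points, then $d(g_n^kx,g_n^ky)$ converges for every pair $(x,y)$, so $\liminf$ equals $\limsup$ and $\LY(g_n)=\emptyset$ — contradicting the previous paragraph.

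It remains to treat the case in which, for infinitely many $n$, $g_n$ has a fixed point $q_n$ that is a cutpoint of $S_n^\ast$. If $q_n\ne p$ these points converge to $p$, producing fixed points of $f$ accumulating at $p$; one shows, again via the evade/admire trichotomy of Section~\ref{S:fixed-points} (in particular Lemma~\ref{L:continuity-implies-fixed-pt}) together with the finiteness of $\ord(p,D)$, that this is incompatible with generic chaos near $p$. If instead $q_n=p$ for infinitely many of these $n$, so that $p$ is a fixed cutpoint of $S_n^\ast$, one reduces to the endpoint case by running the argument of the preceding paragraph inside the closure of a suitable component of $S_n^\ast\setminus\{p\}$, whose distinguished endpoint is then $p$. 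In every case we obtain a contradiction, so the infimum of $\diam S$ over invariant nondegenerate subdendrites $S$ containing $p$ is positive, and any such positive value is the required $\delta$.

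The main obstacle is the case analysis of the last two paragraphs — above all, making the monotonicity argument rigorous on a general (completely regular, finite-order) dendrite, where branches may accumulate, and handling the fixed-cutpoint subcase. This is also where the two standing hypotheses are indispensable: complete regularity is what makes the restrictions $g_n$ generically chaotic through Lemma~\ref{L:zuzenieGCH}, and finiteness of all orders is essential for the monotonicity to close — indeed the statement fails on an $\omega$-star, on which (by \cite{Mur00}) there is a generically chaotic map admitting arbitrarily small invariant nondegenerate subdendrites around its fixed point.
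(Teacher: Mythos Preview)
Your overall setup (contradiction, nested invariant subdendrites shrinking to $p$, restriction generically chaotic by complete regularity and Lemma~\ref{L:zuzenieGCH}) matches the paper. The gap is in the core of your Case~1 argument.

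The monotonicity you invoke is false. With $\xi_k=\proj(x_k,[a_n,x])$, the fact that $x_k$ admires $a_n$ only says $x_{k+1}\in (S_n^\ast)_{[a_n,x_k)}$, i.e.\ $x_k\notin[a_n,x_{k+1}]$. If $x_k$ lies on a side branch attached at $\xi_k$ (so $x_k\ne\xi_k$), then $x_{k+1}$ may lie on $[a_n,x]$ at any point of $(\xi_k,x)$ and still satisfy $x_k\notin[a_n,x_{k+1}]$; hence $\xi_{k+1}>_{a_n}\xi_k$ is perfectly possible. More seriously, the conclusion you aim for in Case~1 --- that every $g_n$-orbit converges to a fixed point --- is itself false. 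On a $Y$-shaped dendrite (arc $[0,1]$ with a branch $[1/2,b]$) one can build a continuous map with unique fixed point $0$ such that every non-fixed point admires $0$, yet $\{1,b\}$ is a $2$-cycle: set $f(1)=b$, $f(b)=1$, and check that $b\in D_{[0,1)}$ and $1\in D_{[0,b)}$. So ``all non-fixed points admire $a_n$'' does not force orbit convergence, and no amount of recursion into branches will rescue the argument. Your Case~2 is only a gesture; nothing there comes close to a proof.

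The paper's proof avoids any pointwise orbit analysis and does not use the admire/evade trichotomy or Lemma~\ref{L:noneffluent_endpoint} at all. After building the nested invariant $T_n\searrow\{p\}$ (via intersections, using $\ord(p,D)<\infty$), it proves that for each $N$ the set $A_N=\{x\in T_0:\Orb_f(x)\cap T_N=\emptyset\}$ is nowhere dense in $T_0$: if $A_N$ were dense in an open $U\subseteq T_0$, then by tracking images $f^n(U)$ through the escape-boundary of $T_N$ in $T_0$ and using Lemma~\ref{L:finiteLength}, one shows $\Orb_f(U)$ meets only finitely many components of the complement of $T_N$, hence stays bounded away from $p$; this contradicts \ref{ENUM:B1} with a sufficiently small $T_m$. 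Since each $A_N$ is nowhere dense, the residual set $K=T_0\setminus\bigcup_N A_N$ consists of points whose orbits enter every $T_N$ and hence converge to $p$; then $K\times K$ is residual and asymptotic, contradicting generic chaos on $T_0$. The key idea you are missing is this first-category argument via the escape-boundary, rather than a Lyapunov-type monotonicity for individual trajectories.
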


\begin{proof}
Suppose, on the contrary, that 
for every $\delta>0$ there exists an invariant nondegenerate subdendrite $S(\delta)\subseteq D$ such that $p\in S(\delta)$ and $\diam S(\delta)<\delta$.
To obtain a contradiction, we proceed in three steps.

\smallskip
\textit{Step~1.} We construct a nested sequence of invariant nondegenerate subdendrites $\{T_n\}_{n=0}^\infty$ 
containing $p$ and converging to $p$.
\smallskip

Let $\CCc$ be the set of components of $D\setminus\{p\}$. 
Since $\mathcal C$ is finite, 
there are a subsequence $\{S_n\}_{n=0}^\infty$ of $\{S(1/k)\}_{k=1}^\infty$ and 
a nonempty set $\mathcal B\subseteq \mathcal C$ with the following property:
For every $n$, the dendrite $S_n$ intersects a component of $D\setminus\{p\}$ if and only if 
this component belongs to $\BBb$. Clearly, $\diam S_n\to 0$.

For every $n$, $T_n=\bigcap_{i=0}^{n}S_i$ is an invariant nondegenerate subdendrite of $D$ 
and $T_{n+1}\subseteq T_n$. 
Every $T_n$ intersects  each component from $\mathcal B$ and no component from $\mathcal C\setminus \mathcal B$. 
Since $\diam T_n\to 0$, by passing to a subsequence if necessary, we may assume that $\diam T_{n+1}<\diam T_{n}$ for every $n$.

\smallskip
\textit{Step~2.} We show that, for every $N\in \NNN$, the set 
\begin{equation*}
A_N = \{x\in T_0\colon \Orb\nolimits_f(x)\cap T_N = \emptyset\}.
\end{equation*}
is nowhere dense in $T_0$.\footnote{\label{Footnote-dend}Thus, generic chaos on $D$ implies that most of the points from $T_0\setminus T_N$ enter $T_N$. This is not trivial. For instance let $D$ be the dendrite from Figure~\ref{fig:comb}
where the horizontal arc has endpoints $(-1,0)$ and $(1,0)$,
with $p=(0,0)$ being the leftmost branch point. Imagine that $T_0$ consists of $p$ and the points
$(a,b)\in D$ with $a>0$, and $T_n$ is the arc joining $p$ and $(1/(n+1),0)$, $n=1,2,\dots$. If for instance $(x,y)$ is a Li-Yorke pair with $x\in T_0\setminus T_1$ and $y\in T_2$, the point $x$ is proximal to the invariant set
$T_2$ but this does not mean that $x$ necessarily enters $T_2$ or at least $T_1$. Indeed, apriori it is not excluded
that all points $f(x),f^2(x),\dots$ are in vertical arcs of $D$ and none of them is in the horizontal one.}
\smallskip

Fix $N$.
To simplify the notation, denote $T_N$ and $A_N$ just by $T$ and $A$, respectively. 
From now on, we will consider $T_0$ as the underlying space to work with. 
Since $f$ is generically chaotic on $D$ and $T_0$ is an invariant regular closed subset by Proposition~\ref{P:conditions}, $f$ is generically chaotic on $T_0$ by Lemma~\ref{L:zuzenieGCH}. 
Now suppose, on the contrary, that $A$ is dense in a nonempty open subset $U$ of $T_0$. Since dendrites are locally connected, we may additionally assume that $U$ is connected.
By continuity and \ref{ENUM:Sens0}, $f^n(U)$ is connected and nondegenerate for every $n$.

By Lemma~\ref{L:doplnkySubdendr-Hranica2}, the escape-boundary $\Bd^*(T)=\{c_j\colon j\in J\}$ 
of (the proper subdendrite) $T$ in $T_0$ is at most countable.
Put
\[
	T^*=\bigcup\{\overline{C}\colon C\text{ is a component of } T_0\setminus T\}
	\subseteq\overline{T_0\setminus T}.
\]
For every $j\in J$ denote by $B_j$ the component of $T^*$ containing $c_j$
(that is, $B_j$ is the union of the closures of those components of $T_0\setminus T$ 
whose boundaries are $\{c_j\}$).
By Lemma~\ref{L:doplnkySubdendr-Hranica2}, every $B_j$ is a nondegenerate subdendrite of $T_0$
with $\Bd(B_j)=\{c_j\}$ 
and $d(B_i,B_j)>0$ for any distinct $i,j\in J$. 
By the choice of $T_n$'s, $p\notin B_j$ for any $j\in J$.

Since $A$ is dense in $U$, we have that $f^n(A)$ is dense in $f^n(U)$ for every $n\in \mathbb N_0$. 
By definition of~$A$, 
\begin{equation}\label{EQ:fnAcapT}
 f^n(A)\cap T=\emptyset
 \qquad\text{for every } n\in\NNN_0.
\end{equation}

Now fix $n\in \mathbb N_0$. We will show that 
\begin{equation}\label{EQ:fnUcapT}
f^n(U)\cap T \text{ contains at most one point and this point, if it exists, belongs to } \Bd\nolimits^*(T).
\end{equation}
First suppose that there are two distinct points $x,y\in f^n(U)\cap T$. Since both $f^n(U)$ and $T$ are connected, the whole arc $[x,y]\subseteq f^n(U)\cap T$. As $[x,y]$ is a regular closed set
by Proposition~\ref{P:conditions}, the set $f^n(A)$ is dense in $[x,y]\subseteq T$ and so $f^n(A)\cap T\ne\emptyset$, a contradiction with \eqref{EQ:fnAcapT}. We have thus shown that $f^n(U) \cap T$ has at most one element. 
Now suppose that $f^n(U) \cap T=\{y\}$.
The set $f^n(U)$ is nondegenerate, connected, contains the point $y\in T$ 
and $f^n(U)\setminus\{y\} \subseteq T_0\setminus T$.
Thus there is a component $B_{j_0}$ of $T^*$ such that
$f^n(U)\subseteq B_{j_0}$; clearly, 
$\{y\}$ coincides with $\Bd(B_{j_0})=\{c_{j_0}\}$. So $y$ belongs to the escape-boundary $\Bd^*(T)$. We have proved \eqref{EQ:fnUcapT}.
 
Fix any $n\in\NNN_0$. The set $f^n(U)$ is connected and nondegenerate. If
$f^n(U)\cap T\ne\emptyset$ then, by \eqref{EQ:fnUcapT}, $f^n(U)$ contains some $c_j$
and so $f^n(U)\subseteq B_j$. If $f^n(U)\cap T=\emptyset$ then $f^n(U)$ is a subset of some component of $T_0\setminus T$,
so again there is $j\in J$ with $f^n(U)\subseteq B_j$. That is,
\begin{equation}\label{EQ:fnU-in-Bj}
	\text{for every }n\in\NNN_0
	\text{ there is a unique } j\in J
	\text{ such that }
	f^n(U)\subseteq B_{j}.
\end{equation}

Since $f$ is generically chaotic on $T_0$ and $U$ is nonempty open in $T_0$, 
$\limsup_{n\rightarrow \infty} \diam f^n(U)>0$ by \ref{ENUM:Sens0}.
Since $U$ is connected, Lemma~\ref{L:finiteLength} yields the existence of
a minimal $n_0\in\mathbb N_0$ and a corresponding minimal $k\in\NNN$ such that
\[
f^{n_0}(U)\cap f^{n_0+k}(U)\neq \emptyset.
\]
Then, in view of \eqref{EQ:fnU-in-Bj}, the sets $f^{n_0}(U)$ and $f^{n_0+k}(U)$ 
lie in the same component of $T^*$.
It follows that for any $i\in \{0,1,\dots, k-1\}$ and for any $\ell\in \mathbb N$, 
the sets $f^{n_0+i}(U)$ and $f^{n_0+i+\ell k}(U)$ lie in the same component of
$T^*$. This implies that the orbit of $U$ intersects only finitely many components of $T^*$,
i.e.,
\begin{equation*}
  \Orb\nolimits_f(U)\subseteq \bigcup_{j\in J'} B_j
\end{equation*}
for some finite subset $J'$ of $J$.
Hence $\Orb_f(U)$ is disjoint from some $\eps$-neighbourhood $B_\eps(p)$ of $p$.

Since $\diam(T_m)\to 0$, we can choose $m$ with $T_m\subseteq B_{\eps/2}(p)$.
Then $\Orb_f(T_m)\subseteq T_m$ has positive distance from $\Orb_f(U)$. Since both $T_m$ and $U$
have nonempty interiors in $T_0$, this contradicts the fact that $f\colon T_0\to T_0$ is generically chaotic.
This finishes Step~2 of the proof.

\smallskip
\textit{Step~3.} We finish the proof by finding a contradiction with generic chaoticity of $f$ on $T_0$.
\smallskip

By Step~2, each of the sets $A_n\subseteq T_0$ ($n\in \NNN$) of points whose orbits do not intersect $T_n$
is nowhere dense in $T_0$.
Clearly, $\{A_n\}_{n=1}^{\infty}$ is an increasing sequence in the sense that $A_n\subseteq A_{n+1}$.
Consider the set 
\[
M=\bigcup_{n=1}^{\infty}A_n.
\]
Then $K=T_0\setminus M$ is the set of points from $T_0$ whose orbits intersect $T_n$ for every $n\in\NNN$. 
The trajectory of every such point actually converges to $p$, because all the sets $T_n$ are invariant, contain
$p$ and their diameters tend to zero. Consequently, for every $x,y\in K$ we have $d(f^n(x), f^n(y))\to 0$.
Since each $A_n$ is nowhere dense in $T_0$, the set $K$ is residual in $T_0$ and $K^2$ is residual in $T_0^2$.
This contradicts the fact that $f$ is generically chaotic on $T_0$.
\end{proof}

\begin{lemma} \label{L:subdend_at_least_delta}
Let $D$ be a completely regular nondegenerate dendrite with all points of finite order. Let $f\colon D\to D$ be generically chaotic. Then there exists $\delta>0$ such that the diameter of each invariant nondegenerate subdendrite is at least $\delta$.
\end{lemma}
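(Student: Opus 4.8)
The plan is to reduce the statement to Lemma~\ref{L:sub_cont_fix_pt-diam_delta} by exhibiting a single point that lies in \emph{every} invariant nondegenerate subdendrite of $D$. First I would record that, since $f$ is generically chaotic, $\Prox(f)$ is dense in $D^2$, so \ref{ENUM:B1} holds for the family of all open balls, and hence --- because $D$ is completely regular and every nondegenerate subdendrite therefore has nonempty interior --- also for the family of all nondegenerate subdendrites. The family $\mathcal F$ of all \emph{invariant} nondegenerate subdendrites of $D$ is nonempty, as it contains $D$ itself. Consequently Lemma~\ref{L:fixed_pt_in_intersection} applies to $\mathcal F$, and $M:=\bigcap\mathcal F$ is a nonempty invariant subdendrite of $D$ with $M\subseteq S$ for every $S\in\mathcal F$.

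Next I would split into two cases. If $M$ is nondegenerate, then $M\in\mathcal F$, so $M$ is the least invariant nondegenerate subdendrite and every $S\in\mathcal F$ satisfies $\diam S\ge\diam M>0$; one takes $\delta=\diam M$. If $M$ is degenerate, then $M=\{p\}$ for some $p\in\Fix(f)$, and every $S\in\mathcal F$ contains $p$. Applying Lemma~\ref{L:sub_cont_fix_pt-diam_delta} to this fixed point $p$ produces $\delta>0$ such that every invariant nondegenerate subdendrite containing $p$ --- that is, every member of $\mathcal F$ --- has diameter at least $\delta$, which is exactly the assertion.

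The real work is already hidden in Lemma~\ref{L:sub_cont_fix_pt-diam_delta}; the present step is only the passage from a ``local'' bound (valid for subdendrites through a prescribed fixed point) to a ``global'' one, and the bridge is the common core $M$ provided by Lemma~\ref{L:fixed_pt_in_intersection}. The one place calling for a little attention is the verification that complete regularity lets the family of nondegenerate subdendrites be used in Lemma~\ref{L:fixed_pt_in_intersection}; beyond that I do not expect any obstacle, the argument being a short two-case split once the core $M$ has been identified.
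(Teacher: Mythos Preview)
Your argument is correct and is essentially the same as the paper's: both use Lemma~\ref{L:fixed_pt_in_intersection} to produce a common fixed point and then invoke Lemma~\ref{L:sub_cont_fix_pt-diam_delta}. The only difference is cosmetic --- the paper argues by contradiction (assuming a sequence of invariant nondegenerate subdendrites with diameters tending to zero and intersecting that sequence), whereas you proceed directly by intersecting the entire family $\mathcal F$ and handling the extra (harmless) case that this intersection might itself be nondegenerate.
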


\begin{proof}
Suppose, on the contrary, that there exists a sequence $\{D_n\}_{n=1}^\infty$ of invariant subdendrites with $\diam D_n\to 0$. By Lemma~\ref{L:fixed_pt_in_intersection} (note that our assumptions imply that the condition
\ref{ENUM:B1} is satisfied by the family of nondegenerate subdendrites), there exists a fixed point $p\in\bigcap_{n=1}^{\infty}D_n$. This contradicts Lemma~\ref{L:sub_cont_fix_pt-diam_delta}.
\end{proof}

The following lemmas show some connections between generic chaos and the orbits of subdendrites.

\begin{lemma}\label{L:orbits_with_fixed_pts-diameter_delta}
Let $D$ be a completely regular nondegenerate dendrite with all points of finite order. Let $f\colon D\to D$ be generically chaotic. Then there exists $\delta>0$ such that every nondegenerate subdendrite $E\subseteq D$ with $\Orb_f(E)$ containing a fixed point satisfies $\limsup_{n\rightarrow \infty} \diam f^n(E)>\delta$.
\end{lemma}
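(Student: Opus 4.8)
The plan is to produce, from a late tail of the orbit of $E$, an invariant nondegenerate subdendrite and then apply Lemma~\ref{L:subdend_at_least_delta} to it. The observation I would exploit is that if a fixed point $p$ lies in $\Orb_f(E)$, then it lies in \emph{every} sufficiently late iterate of $E$: if $p\in f^m(E)$, then since $f(p)=p$, applying $f$ repeatedly gives $p\in f^n(E)$ for all $n\ge m$.

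First I would fix $\delta_0>0$ from Lemma~\ref{L:subdend_at_least_delta}, so that every invariant nondegenerate subdendrite of $D$ has diameter at least $\delta_0$, and claim that $\delta:=\delta_0/3$ works. Given $E$ as in the statement, pick a fixed point $p\in f^m(E)$. Since $E$ is a nondegenerate subdendrite of the completely regular dendrite $D$, it has nonempty interior by Proposition~\ref{P:conditions}, hence $\limsup_{n\to\infty}\diam f^n(E)>0$ by \ref{ENUM:Sens0}; in particular $f^m(E)\ne\{p\}$, so $f^m(E)$ is again a nondegenerate subdendrite (it is a subcontinuum of $D$), and it has the same value of $\limsup_{n\to\infty}\diam f^n(\cdot)$ as $E$. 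Replacing $E$ by $f^m(E)$ I may therefore assume $p\in f^n(E)$ for every $n\in\NNN_0$.

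Next, for each $N\in\NNN_0$ I would set $L_N:=\overline{\Orb_f(f^N(E))}=\overline{\bigcup_{n\ge N}f^n(E)}$. Since all the sets $f^n(E)$ with $n\ge N$ contain the common point $p$, their union is connected; it is clearly $f$-invariant, so $L_N$ is an invariant subdendrite of $D$ containing $p$, and it is nondegenerate because $\limsup_{n\to\infty}\diam f^n(E)>0$ forces some $f^n(E)$ with $n\ge N$ to be nondegenerate, and it is contained in $L_N$. Hence $\diam L_N\ge\delta_0$. On the other hand, any point of $L_N$ is a limit of points lying in sets $f^n(E)$ with $n\ge N$, each of which contains $p$, so it lies within distance $\sup_{n\ge N}\diam f^n(E)$ of $p$; the triangle inequality through $p$ then gives $\diam L_N\le 2\sup_{n\ge N}\diam f^n(E)$. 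Combining the two bounds, $\delta_0\le 2\sup_{n\ge N}\diam f^n(E)$ for every $N$, and letting $N\to\infty$ yields $\limsup_{n\to\infty}\diam f^n(E)\ge\delta_0/2>\delta$.

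I do not expect a serious obstacle here: the argument rests on the elementary observation that $p$ belongs to every late iterate of $E$, after which only the harmless reduction from $E$ to $f^m(E)$ and the standard fact that $\sup_{n\ge N}\diam f^n(E)$ decreases to $\limsup_{n\to\infty}\diam f^n(E)$ remain to be checked. This is in contrast with Lemmas~\ref{L:orbits_with_fD_intersects_D-diameter_delta} and~\ref{L:k_equals_1}, where the orbit of the subdendrite is assumed to \emph{avoid} fixed, respectively periodic, points, so that the structural Lemma~\ref{L:finiteLength} together with Lemma~\ref{L:noneffluent_endpoint} must be brought into play.
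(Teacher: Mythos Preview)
Your proof is correct and follows essentially the same approach as the paper: both exploit that a fixed point in $\Orb_f(E)$ lies in every sufficiently late iterate, making the tail $\overline{\bigcup_{n\ge N} f^n(E)}$ a nondegenerate invariant subdendrite to which Lemma~\ref{L:subdend_at_least_delta} applies. The only cosmetic difference is that the paper argues by contradiction with a sequence $\{D_i\}$ having $\eps_i\to 0$, whereas you argue directly for a single $E$ and obtain the slightly sharper bound $\limsup_{n\to\infty}\diam f^n(E)\ge\delta_0/2$.
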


\begin{proof}
Suppose, on the contrary, that there exists a sequence of nondegenerate subdendrites $\{D_i\}_{i=1}^\infty$ such that for every $i$ there is some fixed point $p_i\in\Orb_f(D_i)$ and 
\[
\eps_i\to 0,
\quad\text{where}\quad
\eps_i=\limsup_{n\rightarrow \infty} \diam f^n(D_i).
\]
Here $\eps_i >0$ because $D_i$ has nonempty interior and $f$ is generically chaotic, see \ref{ENUM:Sens0}.
For every $i\in \mathbb N$ we can pick $n_i\in \mathbb N$ such that $f^n(D_i)$ contains $p_i$ and $\diam f^n(D_i)<2\eps_i$ whenever $n \geq n_i$. Consider the subdendrites
\[
E_i=\overline{\bigcup_{n=n_i}^{\infty} f^n(D_i)}.
\]
It can be easily concluded that $\diam E_i\leq 4\eps_i$. So the sets $E_i$ are invariant nondegenerate subdendrites with $\diam E_i\to 0$, which contradicts Lemma~\ref{L:subdend_at_least_delta}.
\end{proof}

\begin{lemma}\label{L:orbits_with_fD_intersects_D-diameter_delta}
Let $D$ be a completely regular nondegenerate dendrite with all points of finite order. Let $f\colon D\to D$ be generically chaotic. Then there exists $\delta>0$ such that 
\[
\limsup_{n\rightarrow \infty} \diam f^n(E)>\delta
\] 
whenever $E\subseteq D$ is a (nondegenerate) subdendrite satisfying $E\cap f(E)\neq \emptyset$ and having $\Orb_f(E)$ fixed point free.
\end{lemma}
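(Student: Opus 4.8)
The plan is to argue by contradiction, following the scheme of Lemma~\ref{L:orbits_with_fixed_pts-diameter_delta} but now feeding in the fixed-point dichotomy of Lemma~\ref{L:noneffluent_endpoint} to analyse the (forced, nonempty) set of fixed points lying in the closure of the orbit of $E$. So assume the conclusion fails; then there are nondegenerate subdendrites $E_i\subseteq D$ with $E_i\cap f(E_i)\neq\emptyset$, with $\Orb_f(E_i)$ fixed point free, and with $\eps_i:=\limsup_{n\to\infty}\diam f^n(E_i)\to 0$ (note $\eps_i>0$ by \ref{ENUM:Sens0}). After replacing each $E_i$ by a sufficiently late image $f^{N_i}(E_i)$ -- still nondegenerate and still satisfying the same hypotheses -- I may assume $\diam f^n(E_i)\le 2\eps_i$ for all $n\ge 0$. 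Since $E_i\cap f(E_i)\neq\emptyset$ and $\limsup_n\diam f^n(E_i)>0$, Lemma~\ref{L:finiteLength} applies with $n_0=0$ and $k=r=1$: the orbit $\Orb_f(E_i)$ is connected and forward invariant, so $F_i:=\overline{\Orb_f(E_i)}$ is an invariant nondegenerate subdendrite of $D$. Being a subdendrite of a completely regular dendrite, $F_i$ is completely regular with all points of finite order, and by Proposition~\ref{P:conditions} it is a regular closed subset of $D$; hence $f|_{F_i}$ is generically chaotic by Lemma~\ref{L:zuzenieGCH}, and $\diam F_i\ge\delta_0$ for some $\delta_0>0$ independent of $i$, by Lemma~\ref{L:subdend_at_least_delta}.

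Next I would identify the distinguished fixed endpoint. If $q\in\Fix(f|_{F_i})$, then $q\notin\Orb_f(E_i)$ (the orbit is fixed point free), and the connected set $\Orb_f(E_i)$ lies in a single component of $F_i\setminus\{q\}$; since $\Orb_f(E_i)$ is dense in $F_i$, this component must be all of $F_i\setminus\{q\}$, so $\ord(q,F_i)=1$. Thus $\Fix(f|_{F_i})\subseteq\End(F_i)$, and Lemma~\ref{L:noneffluent_endpoint-every_point_admires} yields a unique endpoint $a_i$ of $F_i$, fixed by $f$, such that every point of $F_i$ which is not fixed -- in particular every point of the dense set $\Orb_f(E_i)$ -- admires $a_i$ for $f|_{F_i}$.

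The hard part will be to turn this into a contradiction with the generic chaoticity of $f|_{F_i}$ for $i$ large. Admiring $a_i$ means precisely that $f$ never sends a point strictly ``past itself'' on its arc to $a_i$; morally the dynamics near $a_i$ is therefore funnelled towards $a_i$, and the goal is to exploit this together with $\diam F_i\ge\delta_0$ and $\sup_n\diam f^n(E_i)\le 2\eps_i\to 0$. Concretely I would try to build a nondegenerate forward invariant subdendrite $Z_i\ni a_i$ on which the iterates of $f$ contract to $\{a_i\}$ (so $\diam f^n(Z_i)\to 0$) together with a nonempty open set $W_i$ of $F_i$ with $f(W_i)\subseteq Z_i$; then $\diam f^n(W_i)\to 0$ contradicts \ref{ENUM:Sens0} for $f|_{F_i}$. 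An alternative is to track the forward orbit of a point $z_i\in\Orb_f(E_i)$ at nearly maximal distance from $a_i$: this distance changes by at most roughly $4\eps_i$ per step (consecutive images of $E_i$ overlap and have diameter $\le 2\eps_i$), and, using the admiring property together with a careful analysis of the arcs $[a_i,z_n]$, one aims to show the orbit cannot return near $a_i$, contradicting the proximality of $\Int_{F_i}(E_i)$ with $\{a_i\}$ forced by \ref{ENUM:B1}.

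The real obstacle -- and the reason complete regularity and finiteness of all orders are indispensable here, and the proof is long -- is that $a_i$ need not bound a free arc of $F_i$: branch points of $F_i$ may accumulate at $a_i$, so the naive candidate for $Z_i$, namely the arc from $a_i$ to its ``first'' branch point, may degenerate. Consequently $Z_i$ (or the tube along which the orbit is controlled) must instead be assembled from a short initial arc issuing from $a_i$ together with the finitely many, small, hanging branches along it, and verifying forward invariance and contraction of this hybrid object is where essentially all the technical work lies. This is exactly the point where the excluded configurations (a branch point of infinite order, as in the $\omega$-star, or branch points dense along an arc, as in the Riemann dendrite) would make such control impossible.
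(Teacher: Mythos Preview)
Your setup is correct and matches the paper: the contradiction hypothesis, the passage to $F_i=\overline{\Orb_f(E_i)}$, the fact that $\Fix(f|_{F_i})\subseteq\End(F_i)$ via Lemma~\ref{L:endS}, and the use of Lemma~\ref{L:noneffluent_endpoint-every_point_admires} to obtain the admired fixed endpoint $a_i$. But from that point on you have only speculation, not a proof, and neither sketched route works as stated. The first (a forward invariant $Z_i\ni a_i$ with $\diam f^n(Z_i)\to 0$) is self-defeating: if $Z_i$ is forward invariant and nondegenerate, there is no mechanism forcing $\diam f^n(Z_i)\to 0$ from the admiring property alone, since $f(x)\in (F_i)_{[a_i,x)}$ allows $f(x)$ to land anywhere on the $a_i$-side of $x$, including on a different branch far from $a_i$. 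The second sketch (orbit at nearly maximal distance cannot return near $a_i$) runs \emph{against} what actually happens; the paper shows precisely the opposite, that the iterates of $E_i$ are forced \emph{towards} $a_i$.

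The missing idea is this. Put $x_j=\proj(a_i,f^j(E_i))$, the point of $f^j(E_i)$ closest to $a_i$. Using the overlap $f^{j-1}(E_i)\cap f^j(E_i)\ne\emptyset$ together with the admiring property applied to $x_{j-1}$, one shows first that $x_{j-1}\in f^j(E_i)$, and then that $x_j\in(a_i,x_{j-1})$; thus the $x_j$ are strictly monotone along the arc $[a_i,x_0]$ and converge to some $x$, which a short continuity argument forces to be $a_i$. Now choose $N$ so large that both $x_n$ lies in the $\eps_i$-ball around $a_i$ and $\diam f^n(E_i)<2\eps_i$ for $n\ge N$; then $\overline{\bigcup_{n\ge N} f^n(E_i)}$ is a nondegenerate $f$-invariant subdendrite of diameter at most $6\eps_i$, and Lemma~\ref{L:subdend_at_least_delta} gives the contradiction. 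Note that this projection argument is entirely one-dimensional (it lives on the arc $[a_i,x_0]$) and is insensitive to whether branch points of $F_i$ accumulate at $a_i$; the obstacle you describe in your last paragraph simply does not arise.
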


\begin{proof}
Suppose on the contrary that there exists a sequence of nondegenerate subdendrites $\{D_i\}_{i=1}^{\infty}$ such that $D_i\cap f(D_i)\neq \emptyset$, $\Orb_f(D_i)$ is fixed point free and 
\[
\eps_i\to 0,
\quad\text{where} \quad
\eps_i=\limsup_{n\rightarrow \infty} \diam f^n(D_i).
\]
By \ref{ENUM:Sens0}, $\eps_i>0$.
Now fix $i\in \mathbb N$. 

\smallskip
\emph{Step~1.} We define a subdendrite $F_i$.
\smallskip

The set $F_i=\overline{\Orb_f(D_i)}$ is an invariant nondegenerate subdendrite of $D$ since $\Orb_f(D_i)$ is connected. 
From now on we will work with $F_i$ (with relative topology). 
Since $\Orb_f(D_i)$ is fixed point free, by Lemma~\ref{L:endS}
\[
  \emptyset \ne \Fix(f) \cap F_i \subseteq \End(F_i).
\]

\smallskip
\emph{Step~2.} We find an appropriate endpoint $a_i\in F_i\setminus\Orb_f(D_i)$ 
fixed by $f$, and points $x_j\in f^j(D_i)$ ($j\in\NNN_0$).
\smallskip

By Lemma~\ref{L:noneffluent_endpoint-every_point_admires} there exists a fixed endpoint $a_i\in F_i$ such that every non-fixed $y\in F_i$ admires~$a_i$. By the assumption, $a_i\notin \Orb_f(D_i)$ and every point from $\Orb_f(D_i)$
admires $a_i$. For every $j\in\NNN_0$ put
\[
  x_j = \proj(a_i,f^j(D_i)) \in f^j(D_i).
\]
Since $x_j\in f^j(D_i)\subseteq \Orb_f(D_i)$, it is not fixed by $f$.

\smallskip
\emph{Step~3.} We prove that $x_{j-1}\in f^j(D_i)$ for every $j\in \NNN$.
\smallskip

Suppose on the contrary that $x_{j-1}\notin f^j(D_i)$ for some $j\in\NNN$. 
As $x_{j-1}\in f^{j-1}(D_i)$, we have $f(x_{j-1})\in f^j(D_i)$, and $x_{j-1}$ as a non-fixed point of $F_i$ admires $a_i$. In other words $f(x_{j-1})\in (F_i)_{[a_i,x_{j-1})}$ (see \eqref{Eq:Dab}). 
Since $D_i\cap f(D_i)\ne\emptyset$, there is $y\in f^{j-1}(D_i)\cap f^{j}(D_i)$. Obviously $y\neq x_{j-1}$, as $x_{j-1}\notin f^j(D_i)$. Then by definition of $x_{j-1}$ we get $y\in (F_i)^{a_i}(x_{j-1})\setminus\{x_{j-1}\}$ (see \eqref{Eq:Max}). It follows that $x_{j-1}\in (f(x_{j-1}),y)\subseteq f^j(D_i)$, a contradiction. So indeed 
\[
x_{j-1}\in f^j(D_i).
\]

\smallskip
\emph{Step~4.} We prove that $x_j\in (a_i,x_{j-1})$ for every $j\in \NNN$.
\smallskip

By Step~3, $x_{j-1}\in f^j(D_i)$. 
However,  $x_j = \proj({a_i}, f^j(D_i))$ and so
$x_j\in [a_i,x_{j-1}]$ by Lemma~\ref{L:convexMetric2}\eqref{ENUM:convex-arc_xe}.
Since $x_j\in\Orb_f(D_i)$ and $a_i\not\in\Orb_f(D_i)$, we get $x_j\ne a_i$.
Further, $x_{j-1}$ admires $a_i$, i.e., $f(x_{j-1})\in (F_i)_{[a_i,x_{j-1})}$.
Now $f(x_{j-1})\in f^j(D_i)$ and the definition of $x_j$ give that $x_j\in (F_i)_{[a_i,x_{j-1})}$.
Thus $x_j\ne x_{j-1}$.

\smallskip
\emph{Step~5.} We prove that $\lim_{j\to\infty} x_j=a_i$.
\smallskip

By Step~4, all points $x_j$ are in $(a_i,x_0]$ with $x_{j+1}<_{a_i}x_j$.
Therefore the sequence $\{x_j\}_{j=1}^\infty$ monotonically converges to some $x\in [a_i,x_0)$.
We want to show that $x=a_i$. 
For contradiction let $x\in (a_i,x_0)$. 
Then $x$ is a cutpoint of $F_i=\overline{\Orb_f(D_i)}$ and so $x\in\Orb_f(D_i)$ by Lemma~\ref{L:endS}.
Hence $x$ admires $a_i$, i.e., $f(x)\in (F_i)_{[a_i,x)}$. 
The continuity of $f$ implies that for some index $K$, the point $x_K$, which is close enough to $x$, 
has its image $f(x_K)$ in $(F_i)_{[a_i,x)}$. As $f(x_K)\in f^{K+1}(D_i)$, 
we get $f^{K+1}(D_i)\cap (F_i)_{[a_i,x)}\neq \emptyset$ and hence $x_{K+1}\in (F_i)_{[a_i,x)}$. 
On the other hand, $x_{K+1}\in[a_i,x_0)$ by Step~4.
This gives that $x_{K+1}\in [a_i,x)$, which contradicts the definition of~$x$.

\smallskip
\emph{Step~6.} We construct a nondegenerate invariant subdendrite $E_i$ of $D$ with $\diam E_i\le 6\eps_i$.
\smallskip

Let $U_i$ be the $\eps_i$-neighbourhood of $a_i$ (in $F_i$). By Step~4, choose $N_1\in \mathbb N$ such that $x_n\in U_i$ whenever $n\geq N_1$. Hence $f^n(D_i)\cap U_i\neq \emptyset$ for every $n\geq N_1$. 
By the definition of $\eps_i$, there is  $N_2\in \mathbb N$ such that $\diam f^n(D_i)<2\eps_i$ for every $n\geq N_2$. Put $N=\max\{N_1,N_2\}$; then
\[
E_i=\overline{\bigcup_{n=N}^{\infty}f^n(D_i)}
\]
is a nondegenerate invariant subdendrite of $F_i$ (hence also of $D$) with diameter at most~$6\eps_i$.

\smallskip
\emph{Step~7.} We finish the proof.
\smallskip

Since fixed $i\in \NNN$ was arbitrary, we have a sequence $\{E_i\}_{i=1}^\infty$ of nondegenerate 
invariant subdendrites of $D$ with diameters converging to zero. 
By Lemma~\ref{L:subdend_at_least_delta} this contradicts generic chaoticity of~$f$. 
\end{proof}

\begin{lemma}\label{L:cycle}
Let $D$ be a nondegenerate dendrite. 
Let $f\colon D\to D$ be continuous and $E\subseteq D$ be a subdendrite with $\limsup_{n\rightarrow \infty}\diam f^n(E)>0$. Let $L_0,L_1,\dots,L_{r-1}$ be as in Lemma~\ref{L:finiteLength}.
If $f$ is generically chaotic, $r>1$ and $L_0,L_1,\dots, L_{r-1}$ have nonempty interiors, then 
$\bigcap_{i=0}^{r-1} \overline{L_i}$ is a singleton.
\end{lemma}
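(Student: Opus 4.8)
The plan is to argue by contradiction: assume $C:=\bigcap_{i=0}^{r-1}\overline{L_i}$ is a nondegenerate subdendrite. Since $f(L_j)=L_{j+1\,\mymod r}$ and $f$ is continuous, $f$ permutes the closures $\overline{L_0},\dots,\overline{L_{r-1}}$ cyclically (up to the usual subset issue $f(\overline{L_{r-1}})\subseteq\overline{L_0}$), so $C$ is an invariant subcontinuum, hence an invariant nondegenerate subdendrite. The key point to exploit is that $C$ being nondegenerate forces, via Lemma~\ref{L:endS} and the structure of the $K_i$ from Lemma~\ref{L:finiteLength}, that all the $L_i$ share a common arc up to endpoints; I want to turn this into a statement that the orbit of a suitable open set never escapes a small neighbourhood, contradicting \ref{ENUM:B1} exactly as in Step~2 of Lemma~\ref{L:sub_cont_fix_pt-diam_delta}.

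Concretely, first I would observe that since $C$ is nondegenerate it contains an arc $[a,b]$, and by Lemma~\ref{L:endS} the points of $[a,b]$ that are cutpoints of each $\overline{L_i}$ actually lie in $L_i$ (endpoints of $\overline{L_i}$ being the only points of $\overline{L_i}\setminus L_i$). So the open arc $(a,b)$ is contained in every $L_i$; but the $L_i$ are the distinct \emph{components} of $\Orb_f(f^{n_0}(E))$, hence pairwise disjoint — so $(a,b)$ can be contained in at most one of them. Since $r>1$, this is already the contradiction, provided $C$ being nondegenerate really does give an arc interior to every $L_i$. The subtlety is that $C\subseteq\overline{L_i}$ only, not $\subseteq L_i$, so I must rule out that the whole arc $[a,b]$ consists of boundary-type points of some $L_i$; this is where Lemma~\ref{L:endS} does the work, since $\overline{L_i}\setminus L_i\subseteq\End(\overline{L_i})$ is totally disconnected and cannot contain an arc.

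The main obstacle I anticipate is making rigorous the claim that $C$ is genuinely invariant and nondegenerate before extracting the arc — in particular handling the asymmetry $f(L_{r-1})\subseteq L_0$ rather than equality, and checking that $\overline{L_i}\cap\overline{L_j}$ behaving well under $f$ does not collapse $C$ for trivial reasons. One clean way is: by Lemma~\ref{L:finiteCap} (hereditary unicoherence), $C$ is automatically a subcontinuum once nonempty, and nonemptiness plus invariance can be read off from the cyclic relations \eqref{EQ:cycPermL} together with compactness. So the real content is just: \emph{a nondegenerate subdendrite common to all $L_i$ would contain an arc interior to all of them, contradicting their pairwise disjointness}; once that is said, there is nothing left. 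If the hypotheses of generic chaos are needed at all here, it is only indirectly — the $L_i$ having nonempty interiors is assumed, and the cyclic structure $r>1$ is assumed — so I expect the proof to be short, with the only care needed being the $\overline{L_i}$ versus $L_i$ distinction.
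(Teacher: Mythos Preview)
Your argument that $C=\bigcap_{i=0}^{r-1}\overline{L_i}$ cannot be nondegenerate is correct and is exactly the content of Lemma~\ref{L:finiteCap2}: a nondegenerate $C$ would contain an arc $[a,b]$, every point of $(a,b)$ is a cutpoint of each $\overline{L_i}$, hence by Lemma~\ref{L:endS} lies in $L_i$ itself, contradicting pairwise disjointness of the $L_i$.

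The genuine gap is the \emph{nonemptiness} of $C$. Your claim that ``nonemptiness plus invariance can be read off from the cyclic relations \eqref{EQ:cycPermL} together with compactness'' is false, and your final remark that generic chaos is ``only indirectly'' needed is precisely where the argument breaks. The cyclic relations $f(L_j)=L_{j+1\bmod r}$ impose no intersection condition on the closures. For a concrete counterexample, take $D$ a tripod with center $c$ and legs $[c,a_1],[c,a_2],[c,a_3]$, let $f$ map each leg isometrically onto the next, and let $E$ be a small arc in the first leg bounded away from $c$. Then $r=3$, $L_i=f^i(E)$, the $L_i$ have nonempty interiors, and the $\overline{L_i}$ are pairwise disjoint, so $C=\emptyset$.

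This is exactly where the paper uses generic chaos, and it is essential. The map $g=f^r$ is generically chaotic, and each $L_i$ is $g$-invariant with nonempty interior. Hence for distinct $i,j$ the open set $\Int L_i\times\Int L_j\subseteq D^2$ meets the residual set $\LY(g)$, so there is a Li--Yorke pair $(x,y)\in L_i\times L_j$ for $g$. Proximality of $(x,y)$ together with $g$-invariance of $L_i,L_j$ gives $d(L_i,L_j)=0$, whence $\overline{L_i}\cap\overline{L_j}\ne\emptyset$. Now Lemma~\ref{L:finiteCap2} applies directly (with $C_i=L_i$) to conclude that $C$ is a singleton. In short: you reproved the ``degenerate'' half of Lemma~\ref{L:finiteCap2} but missed that the hypothesis $\overline{L_i}\cap\overline{L_j}\ne\emptyset$ of that lemma is not automatic and is precisely what the generic chaos assumption provides.
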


\begin{proof}
The map $g=f^r$ is generically chaotic. Take any distinct $i, j\in \{0,1,\dots, r-1\}$. Since $L_i$ and $L_j$ have nonempty interiors, there exists a Li-Yorke pair in $L_i\times L_j$. This implies that $d(L_i,L_j)=0$, hence $\overline{L_i}\cap \overline{L_j}\neq\emptyset$. 
So $L_i\cap L_j=\emptyset\neq\overline{L_i}\cap \overline{L_j}$ for any distinct $i,j$. 
By Lemma~\ref{L:finiteCap2}, $\bigcap_{i=0}^{r-1} \overline{L_i}$ is a singleton.
\end{proof}

\begin{lemma}\label{L:proper-connection}
Let $D$ be a completely regular nondegenerate dendrite. Let $f\colon D\to D$ be generically chaotic and $E\subseteq D$ be a nondegenerate subdendrite such that the following conditions are satisfied:
\begin{enumerate}
\item $\Orb_f(E)$ is connected;
\item there exists $k\in \mathbb N$ such that $E\cap f^k(E)\neq\emptyset$.
\end{enumerate}
Denote $K_j=\bigcup_{i=0}^{\infty}f^{j+ik}(E)$, $j\in\{0,1,\dots, k-1\}$. Then $\bigcap_{j=0}^{k-1}K_j\neq \emptyset$.  
\end{lemma}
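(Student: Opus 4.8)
The plan is to argue by contradiction after first unpacking the cyclic structure of the orbit. Since $D$ is completely regular, the nondegenerate subdendrite $E$ has nonempty interior, so generic chaoticity of $f$ forces $\limsup_{n\to\infty}\diam f^n(E)>0$ via \ref{ENUM:Sens0}; as $E\cap f^k(E)\ne\emptyset$, Lemma~\ref{L:finiteLength} applies with $n_0=0$, giving that the sets $K_j$ are connected with $f(K_j)=K_{j+1}$ for $j<k-1$ and $f(K_{k-1})\subseteq K_0$, and --- because $\Orb_f(E)$ is connected --- that the number $r$ of orbit components in Lemma~\ref{L:finiteLength}\eqref{ENUM:fnE-Lj} equals $1$, hence $\Orb_f(E)=\bigcup_{j=0}^{k-1}K_j$. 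Taking closures, $F:=\overline{\Orb_f(E)}=\bigcup_{j=0}^{k-1}\overline{K_j}$ is an $f$-invariant subdendrite with $f(\overline{K_j})\subseteq\overline{K_{(j+1)\bmod k}}$. The case $k=1$ is trivial, so assume $k\ge2$ and, for contradiction, that $\bigcap_{j=0}^{k-1}K_j=\emptyset$.

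Next I would localize a fixed point. By the fixed point property of dendrites, $F$ contains a point $p$ with $f(p)=p$. Since $p$ lies in some $\overline{K_{j_1}}$, propagating cyclically through the inclusions $f(\overline{K_j})\subseteq\overline{K_{j+1}}$ (and $f(\overline{K_{k-1}})\subseteq\overline{K_0}$) shows $p\in\bigcap_j\overline{K_j}$; the identical propagation through $f(K_j)=K_{j+1}$ shows that if $p$ lay in a single $K_{j_0}$ it would lie in all $K_j$, contradicting our assumption. Hence $p\in\overline{K_j}\setminus K_j$ for every $j$, so by Lemma~\ref{L:endS} each $K_j$ is nondegenerate, $p$ is an endpoint of every $\overline{K_j}$, and (as $p\in F\setminus\Orb_f(E)$) $p$ is an endpoint of $F$.

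The heart of the argument is then a nerve computation exploiting that $p$ is a common endpoint. The sets $\overline{K_j}\setminus\{p\}$ are nonempty, connected, and closed in $F\setminus\{p\}$, which they cover and which is connected (because $p$ is an endpoint of $F$); hence the intersection graph $G$ on $\{0,\dots,k-1\}$, with an edge $\{i,j\}$ iff $\overline{K_i}\cap\overline{K_j}\supsetneq\{p\}$, is connected. I claim $G$ has the cluster property: if $i$ is joined to $i_1$ and to $i_2$, then $i_1$ is joined to $i_2$. Indeed, choosing $a_1\in(\overline{K_i}\cap\overline{K_{i_1}})\setminus\{p\}$ and $a_2\in(\overline{K_i}\cap\overline{K_{i_2}})\setminus\{p\}$, the arcs $[p,a_1]$ and $[p,a_2]$ lie in $\overline{K_i}$ and issue from its endpoint $p$ (which separates nothing), so their intersection is a nondegenerate arc $[p,u]$, and $[p,u]\subseteq[p,a_1]\cap[p,a_2]\subseteq\overline{K_{i_1}}\cap\overline{K_{i_2}}$. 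A connected graph with the cluster property is complete, so $\overline{K_0}\cap\overline{K_j}\supsetneq\{p\}$ for all $j$. Picking $a_j\in(\overline{K_0}\cap\overline{K_j})\setminus\{p\}$, all the arcs $[p,a_j]$ lie in $\overline{K_0}$ and issue from its endpoint $p$, so $\bigcap_{j=1}^{k-1}[p,a_j]$ is a nondegenerate arc $[p,u]$; each interior point of $[p,u]$ is a cutpoint of $\overline{K_j}$ for every $j$ (the unique arc joining $p$ to $u$ in $\overline{K_j}$ passes through it) and hence, by Lemma~\ref{L:endS}, belongs to $K_j$. Therefore $\emptyset\ne(p,u)\subseteq\bigcap_{j=0}^{k-1}K_j$, a contradiction.

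The step I expect to be the main obstacle is the last one: turning the intuition ``near the common endpoint $p$ all the pieces coincide'' into the chain of elementary dendrite facts used above --- an endpoint separates nothing, two arcs issuing from an endpoint overlap in a nondegenerate arc, a finite intersection of arcs issuing from an endpoint is a nondegenerate arc, and cutpoints of $\overline{K_j}$ lie in $K_j$ by Lemma~\ref{L:endS} --- together with the connectedness and cluster property of the nerve. By contrast, the reduction to the cyclic structure and the fixed-point propagation are routine once Lemma~\ref{L:finiteLength} is available.
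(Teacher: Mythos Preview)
Your proof is correct and takes a genuinely different route from the paper's. The paper first uses generic chaoticity of $g=f^k$ together with complete regularity (so that each $\overline{K_j}$ is regular closed, hence has nonempty interior) to find Li--Yorke pairs in every product $\overline{K_l}\times\overline{K_m}$; this gives pairwise intersection of the closures, and then Lemmas~\ref{L:finiteCap} and~\ref{L:finiteCap2} force $\bigcap_j\overline{K_j}$ to be a singleton $\{p\}$. Cyclic permutation makes $p$ fixed, and connectedness of $\Orb_f(E)$ forces $p$ into some $K_{j_0}$, hence into all $K_j$. You instead start from the fixed point property of the invariant subdendrite $F=\overline{\Orb_f(E)}$, propagate cyclically to get $p\in\bigcap_j\overline{K_j}$, and then---rather than showing this intersection is a singleton---use that $p$ is an endpoint of every $\overline{K_j}$ and of $F$ to run a nerve argument on $F\setminus\{p\}$: the intersection graph is connected and has the cluster property, hence is complete, so all $\overline{K_j}$ share a common nondegenerate arc at $p$, whose interior lies in every $K_j$ by Lemma~\ref{L:endS}. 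The paper's argument is shorter and leans directly on the prepared Lemmas~\ref{L:finiteCap}/\ref{L:finiteCap2} and on generic chaos a second time; your argument is longer but more elementary and in fact more general---after the initial appeal to \ref{ENUM:Sens0} to get $\limsup_n\diam f^n(E)>0$, you never use generic chaos or complete regularity again, so your proof establishes the conclusion for any continuous selfmap of any dendrite once that $\limsup$ condition, connectedness of $\Orb_f(E)$, and $E\cap f^k(E)\ne\emptyset$ are assumed.
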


\begin{proof} 
We assume that $k\ge 2$, otherwise the lemma is trivial.
The sets $K_j$ are nondegenerate and connected.
The map $g=f^k$ is generically chaotic and the sets $K_j$ are $g$-invariant. 
The subdendrites $\overline{K_j}$ are regular closed sets by Proposition~\ref{P:conditions}. So for any distinct $l,m\in\{0,1,\dots, k-1\}$ we can find a Li-Yorke pair in $\overline{K_l}\times\overline{K_m}$, hence $\overline{K_l}\cap\overline{K_m}\neq\emptyset$. 
By Lemma~\ref{L:finiteCap}, $\bigcap_{j=0}^{k-1}\overline{K_j}\neq\emptyset$.

For contradiction suppose that $\bigcap_{j=0}^{k-1}{K_j}=\emptyset$. 
By Lemma~\ref{L:finiteCap2}, $\bigcap_{j=0}^{k-1}\overline{K_j}$ is a singleton $\{p\}$. 
By Lemma~\ref{L:finiteLength} (with $n_0=0$), the sets $K_j$ are cyclically permuted by $f$ in the sense of 
\eqref{EQ:cycPerm}. Then also their closures are cyclically permuted by $f$
and since $p\in \bigcap_{j=0}^{k-1}\overline{K_j}$, then also $f(p)\in \bigcap_{j=0}^{k-1}\overline{K_j}=\{p\}$. Hence $p$ is fixed by $f$.

The point $p$ belongs to some $K_j$, otherwise $\Orb_f(E)=\bigcup_{j=0}^{k-1} K_j$ would not be connected. 
But then the point $p$, being fixed for $f$, belongs to every $K_j$ by \eqref{EQ:cycPerm}.
This contradicts the assumption that $\bigcap_{j=0}^{k-1}{K_j}=\emptyset$.
\end{proof}

Let $D$ be a nondegenerate dendrite and $f\colon D\to D$ be generically chaotic. Given a nondegenerate 
subdendrite $E \subseteq D$, in the following lemma we will work with the set
\begin{equation}\label{Eq:predef k}
  M_f(E) = 
  \{l\in \mathbb N \colon \text{there exists } n \geq 0 \text{ such that } f^{n}(E) \cap f^{n+l}(E)\neq\emptyset\}.
\end{equation}
If $D$ is completely regular, $E$ has nonempty interior 
and so $\limsup_{n\rightarrow\infty}\diam f^n(E)>0$ by \ref{ENUM:Sens0}.
Then $M_f(E)\ne\emptyset$ by Lemma~\ref{L:finiteLength}. 

\begin{lemma}\label{L:k_equals_1}
Let $D$ be a completely regular nondegenerate dendrite. Let $f\colon D\to D$ be generically chaotic. Let $E\subseteq D$ be a nondegenerate subdendrite and $k=\min M_f(E)$. 
Assume that the following conditions are satisfied:
\begin{enumerate}
\item\label{ENUM:OrdConn} $\Orb_f(E)$ is connected and periodic point free;
\item\label{ENUM:fkEcapE} $E\cap f^k(E)\neq\emptyset$.
\end{enumerate}
Then $k=1$. 
\end{lemma}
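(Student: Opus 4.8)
The plan is to argue by contradiction, so suppose $k\ge 2$ and keep the notation $K_j=\bigcup_{i=0}^{\infty}f^{j+ik}(E)$.

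\smallskip\noindent\emph{Step 1 (produce an invariant subdendrite $C$).} Hypotheses \eqref{ENUM:OrdConn} and \eqref{ENUM:fkEcapE} are exactly what Lemma~\ref{L:proper-connection} requires, so it gives a point $p\in\bigcap_{j=0}^{k-1}K_j$. Since $\bigcap_j K_j\subseteq K_0\subseteq\Orb_f(E)$ and $\Orb_f(E)$ is periodic point free, $p$ is not periodic; in particular $f(p)\ne p$. By Lemma~\ref{L:finiteLength}\eqref{ENUM:fnE-Ki} (with $n_0=0$) the map $f$ cyclically permutes the subdendrites $\overline{K_0},\dots,\overline{K_{k-1}}$, so $C:=\bigcap_{j=0}^{k-1}\overline{K_j}$ is $f$-invariant; as an intersection of pairwise intersecting subcontinua of the hereditarily unicoherent $D$ it is, by Lemma~\ref{L:finiteCap}, a subdendrite (nonempty because it contains $p$). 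By the fixed point property $f|_C$ has a fixed point $q$, and $q\ne p$; hence $C$ is \emph{nondegenerate}.

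\smallskip\noindent\emph{Step 2 (structure on $C$).} Since $D$ is completely regular, the nondegenerate subdendrite $C$ is regular closed (Proposition~\ref{P:conditions}), so $f|_C$ is generically chaotic by Lemma~\ref{L:zuzenieGCH}. For every $j$ the set $K_j$ is dense in $C$ (the interior of $C$ in $D$ is open, is dense in $C$, and is contained in $\overline{K_j}$), and $f(K_j\cap C)\subseteq K_{j'}\cap C$ whenever $j'\equiv j+1\pmod k$. Because $\Orb_f(E)$ is periodic point free, Lemma~\ref{L:endS} forces every fixed point of $f$ lying in $\overline{\Orb_f(E)}\supseteq C$ to be an endpoint of $\overline{\Orb_f(E)}$, hence of $C$; thus $\Fix(f|_C)\subseteq\End(C)$, and Lemma~\ref{L:noneffluent_endpoint-every_point_admires} furnishes a unique fixed endpoint $a^*$ of $C$ that is admired, inside $C$, by every non‑fixed point of $C$. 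Note $a^*\notin\bigcup_j K_j$ since all the $K_j$ lie in $\Orb_f(E)$ while $a^*$ is fixed.

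\smallskip\noindent\emph{Step 3 (the contradiction — the main obstacle).} It remains to contradict the minimality $k=\min M_f(E)$. The idea is to run the argument of Steps~3--5 in the proof of Lemma~\ref{L:orbits_with_fD_intersects_D-diameter_delta} — those steps, unlike Steps~6--7, do not use that all points have finite order — now applied to $g=f^k$ on $\overline{K_0}$ (which is regular closed, $g$-invariant, and on which $g$ is generically chaotic with $\Orb_g(E)=K_0$ fixed point free, while $E\cap g(E)\ne\emptyset$ by \eqref{ENUM:fkEcapE}). This yields, for the $g$‑special fixed endpoint $a_0$ of $\overline{K_0}$, points $x_i=\proj(a_0,f^{ik}(E))\in f^{ik}(E)$ marching monotonically to $a_0$. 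Transporting this picture by $f$ to the other classes $K_j$ (whose $g$‑special endpoints are $f^j(a_0)$) and using that $K_0\cap C,\dots,K_{k-1}\cap C$ are all dense near their common fixed endpoint while $f$ shifts one class into the next, one locates $n\ge 0$ and $l\in\{1,\dots,k-1\}$ with $f^n(E)\cap f^{n+l}(E)\ne\emptyset$, i.e.\ $l\in M_f(E)$ with $l<k$ — the desired contradiction. I expect the hard part to be precisely this last collapse of a $k$‑step intersection to a strictly shorter one: one must organise the ``admires $a^*$'' combinatorics carefully and keep track of which pieces $f^n(E)$ of the orbit actually meet $C$, a subtlety of the kind illustrated in Footnote~\ref{Footnote-dend}. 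If this route stalls, an alternative is to derive the contradiction from Lemma~\ref{L:fixed_pt_in_intersection} applied to a well‑chosen family of invariant subdendrites of $C$, which would place a fixed point inside $\Orb_f(E)$.
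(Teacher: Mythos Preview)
Your Steps~1--2 are essentially correct and set up a genuinely different frame from the paper's: you produce the $f$-invariant nondegenerate subdendrite $C=\bigcap_{j}\overline{K_j}$, observe that $f|_C$ is generically chaotic (complete regularity $\Rightarrow$ $C$ regular closed $\Rightarrow$ Lemma~\ref{L:zuzenieGCH}), and that each $K_j$ contains every cutpoint of $C$ (since $\overline{K_j}\setminus K_j\subseteq\End(\overline{K_j})$ and any interior point of an arc in $C$ is a cutpoint of $\overline{K_j}$). So far so good.

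The gap is Step~3, and it is a real one, not merely a matter of writing out details. Knowing that $\bigcap_j K_j\ne\emptyset$ tells you that some $f^{j+ik}(E)$ and $f^{j'+i'k}(E)$ intersect with $j\ne j'$, but the time gap $\abs{(j'+i'k)-(j+ik)}$ need not be smaller than $k$; in fact the paper shows it is always at least $k+1$ (by minimality of $k$ and $0<\abs{j'-j}<k$). Your ``collapse'' to some $l<k$ in $M_f(E)$ therefore cannot come simply from proximity to a common endpoint: two pieces $f^n(E)$, $f^{n'}(E)$ from different classes can both be arbitrarily close to $a^*$ (or to the $g$-endpoint $a_0$ of $\overline{K_0}$) while remaining disjoint, sitting in different branches at each scale. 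Your transport step also asserts that the $g$-special endpoint of $\overline{K_j}$ is $f^j(a_0)$; it is true that $f^j(a_0)$ is a $g$-fixed endpoint of $\overline{K_j}$, but uniqueness in Lemma~\ref{L:noneffluent_endpoint-every_point_admires} refers to the ``admired by all'' property, and $f$ need not carry the admiration relation from $\overline{K_0}$ to $\overline{K_1}$. Finally, your fallback via Lemma~\ref{L:fixed_pt_in_intersection} only places a fixed point in an intersection of \emph{closures}, not inside $\Orb_f(E)$ itself, so it does not immediately contradict periodic-point-freeness.

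For comparison, the paper does not work on $C$ at all. It fixes $j\ne j'$, takes the \emph{minimal} gap $\Delta$ among intersecting iterates from $K_j$ and $K_{j'}$, writes $\Delta=k+m$ with $1\le m<k$, and sets $A_n=g^n(\widetilde E)$, $B_n=g^n(f^m(\widetilde E))$ for $g=f^k$. Minimality of $\Delta$ forces $A_n\cap B_n=\emptyset$ while $A_{n-1}\cap B_n\ne\emptyset$; Lemma~\ref{L:doplnkySubdendr} then yields a strictly nested sequence of components $C_n\supsetneq C_{n+1}$ with one-point boundaries $\{x_n\}$. The nondegenerate intersection $\bigcap_n\overline{C_n}$ has a boundary point $x=\lim x_n$ which is a cutpoint of $\overline{\Orb_f(E)}$, hence lies in $\Orb_f(E)$ and is non-periodic; tracking $g(x_n)\in A_n\subseteq C_n$ versus $g(x_n)\in B_{n+1}$ gives the contradiction. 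The key device you are missing is precisely this use of the \emph{minimal} $\Delta$ to manufacture the disjointness $A_n\cap B_n=\emptyset$, which is what drives the nested-component argument.
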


\begin{proof}
Due to \eqref{ENUM:OrdConn}, the set $\overline{\Orb_f(E)}$ is a subdendrite of $D$. Since it is invariant and, by Proposition~\ref{P:conditions}, also regular closed, the restriction of $f$ to this set is generically chaotic by
Lemma~\ref{L:zuzenieGCH}. Therefore, without loss of generality we may assume that
$$
D=\overline{\Orb\nolimits_{f}(E)},
$$
otherwise we just restrict our dynamical system to the subdendrite 
$\overline{\Orb_f(E)}$. 
Notice that $\Orb_f(E)$, being a connected dense set, contains all cutpoints of $D$.

Suppose that $k\ge 2$. 
To get a contradiction, we proceed in several steps. 

\smallskip
\textit{Step~1.} We find a subdendrite $\widetilde E$ that is an appropriate 
iterate of $E$, and define subdendrites $A_n,B_n$ ($n\in\NNN_0$). 
\smallskip

Since $f$ satisfies \ref{ENUM:Sens0}, 
the condition \eqref{ENUM:fkEcapE} and Lemma~\ref{L:finiteLength} (with $n_0=0$) imply that
\[
  \Orb\nolimits_f(E)=\bigcup_{j=0}^{k-1}K_j, 
  \qquad\text{where}\quad
  K_j=\bigcup_{i=0}^{\infty}f^{j+ik}(E)
  \quad(j\in\{0,1,\dots,k-1\}). 
\]
Fix two distinct $j,j' \in\{0,1,\dots,k-1\}$. By Lemma~\ref{L:proper-connection} we know that $K_j\cap K_{j'}\neq\emptyset$. So, there are nonnegative integers $i$, $i'$ such that 
\begin{equation}\label{Eq:i,j}
f^{j+ik}(E)\cap f^{j'+i'k}(E)\neq\emptyset.
\end{equation}
Choose $i$ and $i'$ satisfying~\eqref{Eq:i,j} which minimize the quantity $\Delta=|(j'+i'k)-(j+ik)|$.
Clearly $\Delta>0$, and so $\Delta \geq k$ due to the definition of $k$. We have in fact $\Delta \geq k+1$ because $0<|j'-j|<k$. Without loss of generality we may assume that  $j'+i'k > j+ik$, so 
$$
\Delta = (j'+i'k)-(j+ik) \geq k+1.
$$ 
Consider the (nondegenerate) dendrite 
\[
\widetilde E=f^{j+ik}(E).
\]
Put $m= \Delta -k$. Then $m \geq 1$ and $m$ is not a multiple of $k$. 
In the sequel we will denote $g=f^k$ and
\[
  A_n=f^{nk}(\widetilde E)=g^n(\widetilde E)
  \quad\text{and}\quad
  B_n=f^{nk+m}(\widetilde E)=g^n(f^m(\widetilde E))
  \qquad\text{for}\quad n\in\NNN_0.
\]
The sets $A_n,B_n$ are nondegenerate subdendrites.

\smallskip
\textit{Step~2.} We study $g$-iterates of the sets $A_n$ and $B_n$ from the 
point of view of their intersections.
\smallskip

We claim that
\begin{equation}\label{Eq:pretnu}
A_0\cap B_1 = \widetilde E\cap f^{k+m}(\widetilde E)\neq\emptyset
\quad \text{and} \quad
A_1\cap B_1 = f^{k}(\widetilde{E})\cap f^{k+m}(\widetilde{E})
=\emptyset.
\end{equation}
The first part is (\ref{Eq:i,j}). Further,
$f^{k}(\widetilde{E})\cap f^{k+m}(\widetilde{E}) = f^{j+(i+1)k}({E}) 
\cap f^{j'+i'k}({E})$ and $(j'+i'k)- (j+(i+1)k) = m < k+m = \Delta$ and so 
$A_1\cap B_1=\emptyset$ by minimality of $\Delta$. 

By applying $g=f^k$ to the 
nonempty intersection in~\eqref{Eq:pretnu} and, 
respectively, by minimality of $\Delta$, we get
\begin{equation*}
A_1\cap B_2=f^{k}(\widetilde{E})\cap f^{2k+m}(\widetilde{E})\neq\emptyset
\quad \text{and} \quad
A_2\cap B_2=f^{2k}(\widetilde{E})\cap f^{2k+m}(\widetilde{E})=\emptyset.
\end{equation*}
By induction, for all $n\in \mathbb N$,
\begin{equation}\label{Eq:pretnun}
A_{n-1}\cap B_n  \neq\emptyset
\quad \text{and} \quad
A_n\cap B_n=\emptyset.
\end{equation}
Further, by \eqref{ENUM:fkEcapE},
\begin{equation*}
A_n\cap A_{n+1} = f^{nk}(\widetilde{E})\cap f^{(n+1)k}(\widetilde{E})\neq\emptyset
\end{equation*}
and
\begin{equation*}
B_n\cap B_{n+1} = f^{nk+m}(\widetilde{E})\cap f^{(n+1)k+m}(\widetilde{E})\neq\emptyset
\end{equation*}

\smallskip
\textit{Step~3.} We partially describe the ``position'' of the sets $A_n,B_n$ in $D$. 
\smallskip

Using~\eqref{Eq:pretnun} we get that for all $n\in N$
\begin{equation*}
A_n\subseteq C_n\quad \text{for some (unique) component } C_n \text{ of } 
D\setminus B_n.
\end{equation*}
Choose $n\in\NNN$ and apply Lemma~\ref{L:doplnkySubdendr} to the sets $A_n,A_{n+1}$ and $B_n,B_{n+1}$.
Denoting the boundary of $C_i$ by $\{x_i\}$ ($i=n,n+1$), we have
\begin{equation}\label{Eq:AnCn}
	A_n\subseteq C_n,\quad
	C_{n}\supsetneq C_{n+1},\quad
	x_{n+1}\in A_{n}\cap B_{n+1},\quad
	x_n\notin \overline{C_{n+1}}, \quad
	\{x_{n+1}\} = \Bd(\overline{C_{n+1}}).
\end{equation}
It also follows that $x_{n+1}\notin C_{n+1}$ since $C_{n+1}$ is open. Further, 
\begin{equation}\label{EQ:xn-ne-xn1}
	x_n\ne x_{n+1}
	\qquad\text{for every}\  n\in\NNN.
\end{equation}

\smallskip
\textit{Step~4.} We study the intersection $C=\bigcap_{n=1}^\infty C_n$. 
\smallskip

Since $\widetilde{E}$ is a subdendrite with nonempty interior and $g=f^k$ is generically chaotic,  
\[
  \delta
  =\limsup_{n\rightarrow\infty} \diam A_n 
  =\limsup_{n\rightarrow\infty} \diam g^n(\widetilde E)
  >0.
\] 
Since $C_1\supsetneq C_2\supsetneq\dots$ is a nested sequence
and $C_n\supseteq A_n$, it follows that for all $n$ we have $\diam C_n \geq \delta$. 
Notice that
$$
C=\bigcap_{n=1}^\infty C_n
=\bigcap_{n=1}^\infty \overline{C_n}.
$$
Indeed, this follows from the fact that, for every $n$, the boundary of $C_n$ is $\{x_n\}$ and $x_n\notin C_{n+1}$.
Thus $C$ is a nondegenerate dendrite.

\smallskip
\textit{Step~5.} By considering the limit of the sequence $\{x_n\}_{n=1}^\infty$ we get a contradiction. 
\smallskip

By \eqref{Eq:AnCn}, for $n\ge 2$ we have $\{x_n\}=\Bd(\overline{C_n})$
and $x_1\notin \overline{C_n}$.
So, by Lemma~\ref{L:convexMetric2}\eqref{ENUM:convex-singletonBoundary},
\begin{equation*}
	x_n=\proj(x_1, \overline{C_n}).
\end{equation*}
Put 
\[
  x = \proj(x_1, C).
\]
Let $n\ge 2$. Then $x\in C\subseteq \overline{C_n}$. On the other hand, $x_1\notin \overline{C_n}$ by \eqref{Eq:AnCn}.
Hence, by Lemma~\ref{L:convexMetric2}\eqref{ENUM:convex-arc_xe}, 
\begin{equation}\label{EQ:xn-x1}
	x_n\in [x_1,x].
\end{equation}
By \eqref{EQ:xn-x1}, \eqref{EQ:xn-ne-xn1} and the fact that $\{\overline{C_n}\}_{n=1}^\infty$ is a nested sequence, 
we get that
\begin{equation}\label{EQ:xn-ordered}
 \{x_n\}_{n=1}^{\infty} \text{ is a linearly ordered set with }
 x_{n+1}<_{x}x_n.  
\end{equation}

We claim that (see \eqref{Eq:Max})
\begin{equation}\label{EQ:C-and_leq}
C = \{y\in D\colon x\leq_{x_1} y\}=D^{x_1}(x).
\end{equation}
Indeed, by the choice of $x$ we have $C\subseteq D^{x_1}(x)$. 
Now let $y\in D^{x_1}(x)$. To show that $y\in C$ we fix $n$ and show that $y\in {C_n}$. 
Put $A=(x_n,y]$. This is a connected set that intersects $C_n$ but does not contain the point $x_n$ which
is the unique boundary point of $C_n$.
Thus $A\subseteq C_n$ by Lemma~\ref{L:boundaryBumping} and so $y\in C_n$.

By Lemma~\ref{L:convexMetric2}\eqref{ENUM:convex-arc_xe},
\begin{equation*}%\label{EQ:Cn-and_leq}
[x_n,x]\subseteq \overline{C_n} \subseteq \{y\in D\colon x_n\leq_{x_1} y\}=D^{x_1}(x_n)
\end{equation*}
for every $n\ge 2$. This together with \eqref{EQ:xn-x1}, \eqref{EQ:xn-ordered} and the definitions of $C$ 
and $x$ yield
\begin{equation}\label{EQ:lim-xn}
\lim_{n\rightarrow\infty}x_n=x.
\end{equation}

Since $C$ is nondegenerate, $x$ is a cutpoint of $D$ by Lemma~\ref{L:convexMetric2}\eqref{ENUM:convex-cutpoint},
hence $x\in\Orb_f(E)$. 
As $\Orb_f(E)$ is periodic point free, $x$ is not a fixed point for $g$. 
Further, by \eqref{Eq:AnCn}, $x_n\in A_{n-1}$; hence 
$g(x_n)\in g(A_{n-1})=A_n\subseteq C_{n}\subseteq \overline{C_n}$. 
Then, since $\{\overline{C_n}\}_{n=1}^\infty$ is a nested sequence, \eqref{EQ:lim-xn} and \eqref{EQ:C-and_leq}
give
\[
  g(x)=\lim_{n\to\infty} g(x_n)\in \bigcap_{n=1}^\infty \overline{C_n} = C = D^{x_1}(x).
\]
As $g(x)\neq x$, we have $g(x)\in D^{x_1}(x)\setminus\{x\}$. 
By continuity, $g(x_n)\in D^{x_1}(x)\setminus\{x\}\subseteq C$ for sufficiently large $n$. 
On the other hand, by \eqref{Eq:AnCn}, for every $n\ge 2$ we have $x_n\in B_n$ 
and so $g(x_n)\in B_{n+1}$. It follows that $g(x_n)\notin C_{n+1}$,
whence $g(x_n)\notin C$, a contradiction.
\end{proof}

%%%%%%%%%%%%%%%%%%%%%%%%%%%%%%%%%%%%%%%%%%%%%%%%%%%%%%%%%%%%%%%%%%%%%%%%%%%%
%%%%%%%%%%%%%%%%%%%%%%%%%%%%%%%%%%%%%%%%%%%%%%%%%%%%%%%%%%%%%%%%%%%%%%%%%%%%
%%%%%%%%%%%%%%%%%%%%%%%%%%%%%%%%%%%%%%%%%%%%%%%%%%%%%%%%%%%%%%%%%%%%%%%%%%%%
%%%%%%%%%%%%%%%%%%%%%%%%%%%%%%%%%%%%%%%%%%%%%%%%%%%%%%%%%%%%%%%%%%%%%%%%%%%%
\section{Proof of Theorem~\ref{T:dendrite_gch=gech}: (\ref{ENUM:T2}) implies (\ref{ENUM:T1}) }
\label{S:thm21}

We assume that $D$ is a completely regular (nondegenerate) dendrite with all points of finite order 
and $f\colon D\to D$ 
is a generically chaotic map. We are going to prove that $f$ is generically $\eps$-chaotic for some $\eps>0$. 
By Proposition~\ref{P:B1-B2} and the assumptions on $D$, 
it is sufficient to show that \ref{ENUM:B1} and \ref{ENUM:B2} 
are satisfied by the family of all nondegenerate subdendrites of $D$.
Since $f$ is generically chaotic, the condition \ref{ENUM:B1} is satisfied trivially (also \ref{ENUM:Sens0}
is satisfied trivially).

To prove \ref{ENUM:B2} suppose, on the contrary, that there exists a sequence 
$\{\tilde{D_i}\}_{i=1}^\infty $ of nondegenerate subdendrites of $D$ such that
\begin{equation}\label{E:diameters_tend_to_infty}
\lim_{i\rightarrow\infty}\limsup_{n\rightarrow \infty}\diam f^n(\tilde{D_i})=0,
\end{equation}
where, by \ref{ENUM:Sens0}, for every $i$ we have 
\begin{equation}\label{EQ:limsup-positive}
\limsup_{n\rightarrow \infty}\diam f^n(\tilde{D_i})>0.
\end{equation}
To get a contradiction, we proceed in several steps.

\smallskip
\textit{Step~1.} For $i\in\NNN$ we define positive integers $m_i$ and replace $\tilde{D_i}$ by $D_i$. 
\smallskip

The set $M_f(\tilde{D_i})$ from \eqref{Eq:predef k} is nonempty due to \ref{ENUM:Sens0};
put $m_i=\min M_f(\tilde{D_i})$. Fix $n_i\ge 0$ such that $f^{n_i}(\tilde{D_i}) \cap f^{n_i+m_i}(\tilde{D_i}) \ne \emptyset$ and put
\[ 
    D_i=f^{n_i}(\tilde{D_i}) \qquad(i\in\mathbb N).
\] 
Then \eqref{E:diameters_tend_to_infty} and \eqref{EQ:limsup-positive} still hold for $D_i$ instead of $\tilde{D_i}$.
Moreover, 
\begin{equation}\label{EQ:Di-fmiDi}
	D_i\cap f^{m_i}(D_i)\neq\emptyset
	\qquad (i\in\NNN).
\end{equation}
Clearly, $m_i=\min M_f({D_i})$.

\smallskip
\textit{Step~2.} For $i\in\NNN$ we define positive integers $r_i$, connected sets $K^k_i$ and $L^j_i$ and points $p_i$. 
\smallskip

By \eqref{EQ:Di-fmiDi}, for $i\in\NNN$ the sets 
\[
K^k_i=\bigcup_{n=0}^\infty f^{k+n m_i}(D_i)
\qquad(k\in \{0,1,\dots,m_i-1\})
\]
are connected and, by \eqref{EQ:limsup-positive}, they are nondegenerate. 
By Lemma~\ref{L:finiteLength}, for any $i\in \mathbb N$ there exist $r_i\in\NNN$ (a divisor of $m_i$) and connected sets $L^0_i,L^1_i,\dots,L^{r_i-1}_i$ such that
\[
\Orb\nolimits_f(D_i)=L^0_i\sqcup L^1_i\sqcup \dots \sqcup L^{r_i-1}_i,
\]
$f(L^0_i)= L^1_i$, $f(L^1_i)= L^2_i,\dots$, $f(L^{r_i-2}_i)= L^{r_i-1}_i$ and $f(L^{r_i-1}_i)\subseteq L^0_i$. 
Here, 
\[
L^j_i=\bigcup_{k=0}^{{m_i}/{r_i}-1} K^{j+k r_i}_i
\qquad (j\in\{0,1,\dots, r_i-1\}).
\]
The closures
$\overline{L_i^j}$ are $f^{r_i}$-invariant
nondegenerate subdendrites.
By Lemma~\ref{L:cycle}, if $r_i>1$ then there exists a unique $p_i$ such that
\[
  \bigcap_{j=0}^{r_i-1}\overline{L^j_i}=\{p_i\}.
\]
Since the sets $L^j_i$ ($j\in\{0,\dots,r_i-1\}$) are pairwise disjoint,
they belong to different components of $D\setminus \{p_i\}$. Thus
\begin{equation}\label{EQ:order-pi}
	\ord(p_i,D)\ge r_i.
\end{equation}

\smallskip
\textit{Step~3.} To finish the proof, i.e., to get a contradiction, we distinguish two cases depending on whether the sequence $\{r_i\}_{i=1}^\infty$ is bounded or unbounded.
\smallskip

\smallskip
\textit{Case I.} Assume that the sequence $\{r_i\}_{i=1}^\infty$ is bounded.
\smallskip

By passing to a subsequence if necessary, we may assume that the sequence is constant, $r_i=r$ for every $i$.
Then $r$ divides every $m_i$. The map $g=f^r$  is generically chaotic. 
Put $L_i=L^0_i$ for every $i$. Recall that each $L_i$ is nondegenerate, $g$-invariant and connected,
with 
\begin{equation}\label{E:piece_is_union_of_prepieces}
L_i=\Orb\nolimits_g(D_i)=\bigcup_{k=0}^{{m_i}/{r}-1}K^{rk}_i.
\end{equation}
From \eqref{E:diameters_tend_to_infty} and generic chaoticity of $g$ we get
\begin{equation}\label{E:diameters_tend_to_infty-for_g}
\lim_{i\rightarrow\infty}\eps_i=0,
\qquad\text{where}\quad
\eps_i=\limsup_{n\rightarrow \infty}\diam g^n(D_i)>0.
\end{equation}
By (\ref{E:diameters_tend_to_infty-for_g})  and Lemma~\ref{L:orbits_with_fixed_pts-diameter_delta},
$\Orb_g(D_i)$ contains a fixed point of $g$ only for finitely many indices $i$. We may assume that 
\begin{equation}\label{Li-fpfree}
  \Orb\nolimits_g(D_i) \cap \Fix(g) =\emptyset \qquad\text{for every}\quad i\in \mathbb N. 
\end{equation}

If $m_i=r$ for infinitely many $i$, then $D_i\cap g(D_i)\neq\emptyset$ for these indices, which together with (\ref{E:diameters_tend_to_infty-for_g}) contradicts Lemma~\ref{L:orbits_with_fD_intersects_D-diameter_delta}. Therefore, we may assume that 
\begin{equation}\label{EQ:mi>r}
   m_i>r \qquad\text{for all }i\in\NNN.
\end{equation}
By definition of $m_i$,
\begin{equation}\label{Eq:gn gn+1}
g^{n}(D_i)\cap g^{n+1}(D_i)=\emptyset \qquad\text{ for every}\quad n\in \mathbb N_0.
\end{equation}

Fix any $i\in \mathbb N$. 
Each $\overline{L_i}$ is a nondegenerate $g$-invariant dendrite. 
It is regular closed by Proposition~\ref{P:conditions}.
Moreover, $\overline{L_i}$ shares the properties of $D$; it is completely regular, with all points of finite order.
From now on till the end of Case~I we will work with this dendrite $\overline{L_i}$ and
we will write just $g$ rather than $g|_{\overline{L_i}}$.
Note that by Lemma~\ref{L:zuzenieGCH}, this restriction is generically chaotic. 

By \eqref{EQ:mi>r}, in (\ref{E:piece_is_union_of_prepieces}) we have the union of at least two sets. 
Recall that $\Orb_g(D_i)$ in \eqref{E:piece_is_union_of_prepieces} is connected and by \eqref{EQ:Di-fmiDi}
we have $D_i\cap g^{m_i/r}(D_i)= D_i\cap f^{m_i}(D_i)\ne\emptyset$.
Notice also that $K_i^{rk} = \bigcup_{n=0}^\infty f^{rk+nm_i}(D_i)= \bigcup_{n=0}^\infty g^{k+nm_i/r}(D_i)$,
$k=0,\dots,m_i/r-1$. Thus we may apply Lemma~\ref{L:proper-connection} to get that
\[
	\bigcap_{k=0}^{m_i/r-1}K^{rk}_i\neq \emptyset. 
\]

By \eqref{Li-fpfree}, fixed points of $g$ exists only in $\End(\overline{L_i})\setminus \Orb_g(D_i)$. Therefore Lemma~\ref{L:noneffluent_endpoint-every_point_admires} ensures the existence of a point 
\begin{equation}\label{EQ:ai-not-in-orbDi}
a_i\in\End(\overline{L_i})\setminus \Orb\nolimits_g(D_i)
\end{equation}
which is fixed for $g$ and such that
\begin{equation}\label{EQ:x-admires-ai}
	\text{every point from } \Orb_g(D_i) \text{ admires } a_i.
\end{equation}
By
\eqref{E:piece_is_union_of_prepieces}, $a_i\in \overline{K^{rk}_i}$ for some $k\in \{0,1,\dots,m_i/r-1\}$. 
As $g(a_i)=a_i$ and the sets $K_i^{rk}$ in \eqref{E:piece_is_union_of_prepieces} are cyclically permuted by $g$,
we have 
\begin{equation*}
	a_i\in \bigcap_{k=0}^{m_i/r-1} \overline{K^{rk}_i}.
\end{equation*}
Denote $a=a_i$. Distinguish two subcases.

\smallskip
\textit{Subcase I(1).} Assume that there exists an open (in $\overline{L_i}$) neighbourhood $U$ of $a$  with a singleton boundary $\{u\}$  such that
\begin{equation}\label{Eq:case a}
\text{for any }n\in \mathbb N_0, \text{ if }g^n(D_i)\cap U\neq \emptyset \text{ then }g^n(D_i)\cap(a,u)\neq\emptyset.
\end{equation}
\smallskip

As $a\in \overline{\Orb_g(D_i)}$, there exists $n_0$ with $g^{n_0}(D_i)\cap U\neq \emptyset$. By \eqref{Eq:case a}, $g^{n_0}(D_i)\cap(a,u)\neq\emptyset$. Let 
\[
  x_0=\proj(a, g^{n_0}(D_i)).
\]
By \eqref{EQ:ai-not-in-orbDi}, $x_0\ne a$. 
Choose $y\in g^{n_0}(D_i)\cap(a,u)$ and use Lemma~\ref{L:convexMetric2}\eqref{ENUM:convex-arc_xe} to get
$x_0\in (a,y]\subseteq (a,u)$.
By \eqref{EQ:x-admires-ai} and \eqref{EQ:ai-not-in-orbDi}, $g(x_0)\in (\overline{L_i})_{(a,x_0)}$. 
Thus
\[
  x_0\in (a,u)
  \qquad\text{and}\qquad
  g(x_0)\in (\overline{L_i})_{(a,x_0)}.
\]

Then, since $g(x_0)\in g^{n_0+1}(D_i)$, we have that
$x_1=\proj(a, g^{n_0+1}(D_i))\in (\overline{L_i})_{(a,x_0)}$.
Similarly as above we can show that also $x_1\in (a,u)$.
It follows that $x_1\in (a,x_0)$.  Therefore, by \eqref{Eq:gn gn+1}, $g^{n_0+1}(D_i)\subseteq (\overline{L_i})_{(a,x_0)}$.

By induction, denoting $x_n=\proj(a,g^{n_0+n}(D_i))$, we get that the sequence $\{x_n\}_{n=0}^{\infty}$ lies in $(a,u)$ and is such that, for every $n\in\NNN_0$,
\[
  a \ <_a \  x_{n+1} \ <_a  \ x_{n}
  \qquad(n\in\NNN),
\] 
and 
\begin{equation}\label{EQ:gDi}
	  g^{n_0+n+1}(D_i)\subseteq (\overline{L_i})_{(a,x_n)}.
\end{equation}

We claim that $x_n\to a$. Indeed, by \eqref{E:piece_is_union_of_prepieces} and \eqref{EQ:ai-not-in-orbDi}, 
$\liminf_{n\to\infty} d(a_i, g^{n_0+n}(D_i))=0$, hence $\liminf_{n\to\infty} d(a_i, x_n)=0$; now monotonicity yields that $x_n\to a$.
Since $x_n\rightarrow a$,  $\diam (\overline{L_i})_{(a,x_n)}\to 0$ and 
\eqref{EQ:gDi} yields that $\diam g^n(D_i)\to 0$, which contradicts generic chaoticity of $g$.

\smallskip
\textit{Subcase I(2).} Assume that for every open (in $\overline{L_i}$) neighbourhood $U$ of $a$  with a singleton boundary~$\{u\}$  
\begin{equation*}
\text{there is }n_U\in \mathbb N_0
\text{ such that }
 g^{n_U}(D_i)\cap U\neq\emptyset
 \text{ and }
 g^{n_U}(D_i)\cap(a,u)=\emptyset.
\end{equation*}
%\smallskip

If such a neighbourhood $U$ is connected, $g^{n_U}(D_i)\subseteq U$.
(Otherwise, by Lemma~\ref{L:boundaryBumping}, $g^{n_U}(D_i)$ contains~$u$.
Choose $x\in g^{n_U}(D_i)\cap U$. By Lemma~\ref{L:arcs-and-one-point-boundary},
$ g^{n_U}(D_i)\cap [a,u) \supseteq [x,u)\cap [a,u)\ne\emptyset$, a contradiction.)
Since $a$ is an endpoint of $\overline{L_i}$ it has a basis of connected neighbourhoods with singleton boundaries. 
Consider a sequence of such neighbourhoods $U_k$ ($k\in\NNN$) with corresponding singleton boundaries $\{u_k\}$ such that $u_k\to a$ and $u_{k+1}<_a u_k$ for any $k\in \mathbb N$. To simplify the notation let $n_k=n_{U_k}$. Thus
\begin{equation*}
\text{for every } k\in\NNN
\text{ there is } n_k\in\NNN_0
\text{ such that}
\quad
g^{n_k}(D_i)\subseteq U_k
\quad\text{and}\quad
g^{n_k}(D_i)\cap (a,u_k)=\emptyset.
\end{equation*}
Since $a\in\overline{\Orb_g(D_i)} \setminus{\Orb_g(D_i)}$, we may assume that $n_{k+1}>n_k\geq k$. 

We claim that $\Orb_g(D_i)$ does not contain any periodic point of $g$. Assume the opposite is true, i.e., $p\in \Orb_g(D_i)$ is a periodic point. Thus $p\in g^{N}(D_i)$ for some $N$. Then $g^n(D_i)\cap \Orb_g(p)\neq\emptyset$ for all $n\geq N$. Also for some  $m>N$ the neighbourhood $U_m$ is small enough to be disjoint with the orbit of $p$, i.e., $U_m\cap \Orb_g(p)=\emptyset$. But also $g^{n_m}(D_i)\subset U_m$ and, 
since $n_m\ge m > N$,  $g^{n_m}(D_i)\cap \Orb_g(p)\neq\emptyset$, which is a contradiction. Hence $L_i=\Orb_g(D_i)$ is connected and periodic point free. Further, $D_i\cap g^{m_i/r}(D_i)\neq \emptyset$ by \eqref{EQ:Di-fmiDi}.
Since $m_i=\min M_f(D_i)$ we have $m_i/r=\min M_g(D_i)$ (see \eqref{Eq:predef k}). Then Lemma~\ref{L:k_equals_1} implies $m_i/r=1$, a contradiction with \eqref{EQ:mi>r}.

\smallskip
\textit{Case II.} Assume that the sequence $\{r_i\}_{i=1}^\infty$ is unbounded.
\smallskip

By passing to a subsequence we may assume $1<r_i<r_{i+1}$ for $i\in\NNN$.
Fix $i$ and consider the map $g=f^{r_i r_1}$. Recall that the sets $L_i^j$ ($j\in \{0,1,\dots,r_i-1\}$) and $L_1^{j'}$ ($j'\in \{0,1,\dots,r_1-1\}$) are $g$-invariant and
\[
\bigcap_{j=0}^{r_i-1}\overline{L_i^j}=\{p_i\}
\quad\text{and}\quad
\bigcap_{j'=0}^{r_1-1}\overline{L_1^{j'}}=\{p_1\}.
\]
Since $g$ is generically chaotic on $D$, for any $j,j'$ the sets $L_i^j$ and $L_1^{j'}$ have zero distance, as they have non-empty interiors. 

We claim that $p_i=p_1$. Suppose not. Since $r_i>1$, there exists $J\in \{0,1,\dots,r_i-1\}$ such that $L^J_i$ lies in a component of $D\setminus \{p_i\}$ which does not contain $p_1$. Similarly, there exists  $J'\in \{0,1,\dots,r_1-1\}$ such that $L_1^{J'}$ lies in  a component of $D\setminus \{p_1\}$ which does not contain $p_i$. This gives $d(L_i^J,L_1^{J'})\geq d(p_i,p_1)>0$, a contradiction.

We have proved that $p_i=p_1$ for any $i$. 
Since $r_i\to\infty$, \eqref{EQ:order-pi} gives that $\ord(p_1,D)=\infty$, 
which contradicts the assumption that all points of $D$ are of finite order.

%%%%%%%%%%%%%%%%%%%%%%%%%%%%%%%%%%%%%%%%%%%%%%%%%%%%%%%%%%%%%%%%%%%%%%%%%%%%
%%%%%%%%%%%%%%%%%%%%%%%%%%%%%%%%%%%%%%%%%%%%%%%%%%%%%%%%%%%%%%%%%%%%%%%%%%%%
%%%%%%%%%%%%%%%%%%%%%%%%%%%%%%%%%%%%%%%%%%%%%%%%%%%%%%%%%%%%%%%%%%%%%%%%%%%%
%%%%%%%%%%%%%%%%%%%%%%%%%%%%%%%%%%%%%%%%%%%%%%%%%%%%%%%%%%%%%%%%%%%%%%%%%%%%
%%%%%%%%%%%%%%%%%%%%%%%%%%%%%%%%%%%%%%%%%%%%%%%%%%%%%%%%%%%%%%%%%%%%%%%%%%%%
\section{Exact maps on dendrites}\label{S:exact-maps}
The purpose of this section is to prove the following result,
which will be used in Section~\ref{S:thm12}.

\begin{proposition}\label{P:exact}
	Let $D$ be a dendrite and $A\subseteq D$ be either a singleton 
	or a nowhere dense arc. 
	Then there is an exact map $f\colon D\to D$ such that
	$$
		f(x)=x \quad\text{for every}\quad x\in A.
	$$
\end{proposition}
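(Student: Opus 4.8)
The plan is to construct $f$ by first handling a model situation and then transferring it to $D$ via a retraction. The key structural fact about dendrites I would use is that if $A\subseteq D$ is a singleton or an arc, then there is a monotone retraction $\rho\colon D\to A$ (the first-point map of Lemma~\ref{L:convexMetric2}, or $\proj(\cdot,A)$), whose point-preimages $\rho^{-1}(a)$ are subcontinua, i.e.\ subdendrites, meeting $A$ exactly in $\{a\}$. Thus $D$ decomposes as the union of the ``fibers'' $D_a=\rho^{-1}(a)$ over $a\in A$. When $A$ is a singleton this is vacuous; when $A$ is a nowhere dense arc, nowhere density guarantees that the fibers with nondegenerate interior are ``spread out'' enough that one can still achieve exactness.

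First I would treat $A=\{pt\}$ (the degenerate case) and, more importantly, reduce the arc case to it. The core claim is: \emph{every dendrite $D$ admits an exact map fixing a prescribed point $p\in D$}. This is a known-type construction --- one builds a Markov-like map on a fine partition of $D$ into small subdendrites, arranging that one partition element is mapped onto all of $D$ and that $p$ is a fixed point lying on the ``expanding'' part; exactness ($f^n(U)=D$ for every nonempty open $U$) follows because every open set eventually covers a partition element which then spreads over $D$. I would cite/adapt the standard technique for producing exact (locally eventually onto) interval/graph/dendrite maps, being careful that the constructed map can be taken continuous on all of $D$ and equal to the identity at $p$.

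Next, for $A$ a nowhere dense arc: parametrize $A$ as $[0,1]$ and put $g=\mathrm{id}$ on $A$. On each nondegenerate fiber $D_a$ I want a map $f_a\colon D_a\to D_a$ that fixes the base point $a$, is exact on $D_a$, and glues continuously as $a$ varies. The subtlety is continuity of $a\mapsto f_a$ together with exactness of the global $f$ on $D$ (not merely on each fiber): a nonempty open $U\subseteq D$ need not lie in one fiber, but since $A$ is nowhere dense, $U$ meets some fiber $D_a$ in a set with nonempty interior in $D_a$; applying the fiberwise exactness gives $f^m(U)\supseteq D_a$, and then I need $f^{m'}(D_a)=D$. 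To get the latter I would not keep the fibers invariant; instead I let $f$ push each fiber's ``expanding cell'' across the base arc and out into the whole dendrite, using the singleton-$A$ construction as the engine once we have landed a neighborhood of a point of $D$. Concretely: design $f$ so that $f$ restricted to a suitable neighborhood in $D_a$ first maps onto a neighborhood of $a$ in $D$ meeting many fibers, and iterate.

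The main obstacle --- and where I would spend the most care --- is \textbf{simultaneously} achieving (i) continuity of $f$ on all of $D$, (ii) $f|_A=\mathrm{id}$, and (iii) genuine exactness of $f$ on $D$ rather than just leaf-wise exactness. Keeping $A$ pointwise fixed forces $A$ to be in the non-expanding part of the dynamics, so the expansion/covering must come entirely from the fibers, and one must verify that orbits of open sets do not get ``trapped'' near $A$; this is exactly where nowhere density of $A$ is essential (if $A$ had interior, the subarc would be invariant and $f$ could not be exact, matching the hypothesis). I would organize the construction inductively over a null sequence of fibers of decreasing diameter, defining $f$ on larger and larger regular-closed pieces and invoking the Tietze-type extension/gluing available on dendrites, with the singleton case of the proposition used as a black box on each piece. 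The verification of exactness would then be a routine ``every open set swallows a partition cell, every cell spreads to $D$'' argument.
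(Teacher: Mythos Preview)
Your outline has the right skeleton --- decompose $D$ into countably many ``bushes'' $D_k$ rooted at points $x_k\in A$ (these are exactly your nondegenerate fibers $\rho^{-1}(a)$), keep $A$ pointwise fixed, map each $D_k$ onto a strictly larger subdendrite $E_k$ containing another bush, and argue that open sets eventually swallow a bush which then spreads to all of $D$. This is precisely the architecture of the paper's proof. However, your proposal has a genuine gap at the expansion step: you invoke the singleton case as a black box producing an exact self-map of each fiber, and then say you will ``not keep the fibers invariant'' and ``push each fiber's expanding cell across the base arc.'' But once you replace the codomain $D_k$ by the larger set $E_k$, the black-box exactness is gone, and you give no mechanism guaranteeing that a small open set $U\subseteq D_k$ still grows under iteration. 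The paper handles this not with a generic exact-map black box but with the quantitative machinery of $\varrho$-length-expanding maps from \cite{Spi2014}: each $D_k$ comes with surjections $\varphi_k\colon I\to D_k$ and $\psi_k\colon D_k\to I$ that expand one-dimensional Hausdorff measure by a fixed factor $\varrho>1$, and $f|_{D_k}$ is built as $\tilde\varphi_k\circ\psi_k$ where $\tilde\varphi_k\colon I\to E_k$ is carefully assembled by blowing up points of a subinterval of $I$ into the bushes contained in $E_k$. The dichotomy ``either $f(C)$ contains some bush $D_h$, or $\HHh^1_d(f(C))\ge\varrho^2\HHh^1_d(C)$'' is what forces every open set to eventually contain a bush; this is the engine your sketch lacks.

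A second, smaller issue: your remark about needing ``continuity of $a\mapsto f_a$'' and ``Tietze-type extension/gluing'' is a red herring. Continuity of the global $f$ comes for free from the facts that each $f|_{D_k}$ fixes the root $x_k$, the bushes $D_k$ form a null family, and their images $E_k$ also form a null family (which requires a careful choice of the targets $E_k$). No continuous dependence on a base parameter and no extension theorem is needed; the map is defined piecewise on closed pieces meeting only in single points. Finally, the step ``$f^n(D_h)=D$'' in the paper is arranged combinatorially by choosing, for each $k\ge 2$, a strictly smaller index $\ell_k<k$ with $D_{\ell_k}\subseteq f(D_k)$, so that the indices descend to $1$ and $f(D_1)=D$; your phrase ``maps onto a neighborhood of $a$ in $D$ meeting many fibers, and iterate'' gestures at this but does not supply the descent argument.
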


We start by recalling some results from
\cite{Spi2014} which will be used in the proof of this proposition.

Let $D$ be a nondegenerate dendrite. We say that a family $\CCc$ of
nondegenerate subdendrites of $D$ is \emph{dense} if $D\in\CCc$ and every nonempty
open set in $D$ contains some $C\in\CCc$.
The system of all nondegenerate closed subintervals of $I=[0,1]$ is denoted by
$\CCc_I$; we assume that $I$ is equipped with the Euclidean metric
$d_I$. 

\begin{definition}\label{D:length-exp} 
	Let $(D,d)$, $(D',d')$ be nondegenerate dendrites
	and $\CCc,\CCc'$ be dense families of nondegenerate subdendrites of $D,D'$, respectively.
	Let $\varrho>1$.
	We say that a continuous map $f\colon D\to D'$ is 
	\emph{$\varrho$-length expanding (with respect to $\CCc,\CCc'$)}
	if for every $C\in\CCc$ we have $f(C)\in\CCc'$ and
	\begin{equation*}
		f(C)=D'
		\qquad\text{or}\qquad
		\HHh^1_{d'}(f(C)) \ge \varrho\cdot \HHh^1_d(C).
	\end{equation*}
\end{definition}

The following is a part of \cite[Theorem~C]{Spi2014}.

\begin{proposition}\label{P:lel-maps}
	Let $\varrho>1$, $D$ be a nondegenerate dendrite and $a\in D$.
	Then there is a convex metric $d_{D,a}$ on $D$ compatible with
	the topology of $D$, and 
	continuous surjections $\varphi_{D,a}\colon I\to D$, 
	$\psi_{D,a}\colon D\to I$ such that the following are true:
	\begin{enumerate}
	\item $\HHh^1_{d_{D,a}}(D)=1$;
	\item $\varphi_{D,a}(0)=\varphi_{D,a}(1)=a$ and $\psi_{D,a}(a)=0$;
	\item the family $\CCc_{D,a}=\varphi_{D,a}(\CCc_I)$ is a dense family of nondegenerate subdendrites of $D$;
	\item $\varphi_{D,a}$ and $\psi_{D,a}$ are $\varrho$-length expanding 
			(with respect to $\CCc_I,\CCc_{D,a}$ and  $\CCc_{D,a},\CCc_I$, respectively).	
	\end{enumerate}
\end{proposition}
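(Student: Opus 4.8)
The plan is to build the metric and the two maps hand in hand, starting from a convex metric of finite length supplied by \cite{Har44} and rescaled so that $\HHh^1_{d_{D,a}}(D)=1$; this immediately yields property~(1), while property~(2) will be arranged by hand once the maps are in place. The heart of the matter is the surjection $\varphi\colon I\to D$, and the right model to keep in mind is the interval case $D=I$: there a zigzag map (piecewise-affine in arc length) with $n$ equal teeth and slopes $\pm 2n$, running between the two endpoints of $I$, is already $\varrho$-length expanding whenever $n\ge\varrho$. Indeed a subinterval $J$ either lies in one monotone piece, where $\HHh^1(\varphi(J))=2n\,\HHh^1(J)$, or it straddles a single turning point, where $\varphi(J)$ reaches an extreme and $\HHh^1(\varphi(J))\ge n\,\HHh^1(J)$, or it straddles two turning points, in which case $\varphi(J)=I$; only the middle case is binding, and it forces merely $n\ge\varrho$.

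I would generalize this by traversing $D$ as a tree: a depth-first traversal that runs out to a tip, reverses, and moves on, so that \emph{direction reversals occur only at endpoints of $D$}, exactly as the teeth of the interval model turn only at the extremes $0$ and $1$. To recover the expansion lost to backtracking, each excursion is replaced by a high-frequency oscillation along the arc being traversed, with the frequency chosen so that every parameter subinterval $J$ with $\varphi(J)\ne D$ either sits inside a single monotone excursion (hence expands by the built-in slope) or straddles a turning point at a tip and so captures a whole branch subdendrite of controlled length, giving $\HHh^1(\varphi(J))\ge\varrho\,\HHh^1(J)$. Since $D$ may have infinitely many branch points and endpoints, I would carry this out on an exhausting sequence of finite subtrees $T_1\subseteq T_2\subseteq\cdots$ with $\overline{\bigcup_k T_k}=D$, defining $\varphi$ on each $T_k$ and passing to a uniform limit. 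Here Lemma~\ref{L:doplnkySubdendr-Hranica} is decisive: the components of $D\setminus T_k$ form a null family, so the extra excursions needed to reach them can be spliced into the parametrization as arbitrarily small perturbations, guaranteeing uniform convergence to a continuous surjection onto $D$ while preserving the expansion bound secured at each finite stage.

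The map $\psi\colon D\to I$ is obtained by a dual, largely independent construction in the same spirit: one builds a continuous surjection $D\to I$ that oscillates rapidly as one moves through $D$, arranged so that every nondegenerate subdendrite $C$ (in particular every $C\in\CCc_{D,a}$, on which the expansion is actually tested) is either sent onto all of $I$ or satisfies $\HHh^1(\psi(C))\ge\varrho\,\HHh^1(C)$; the finite-tree approximation and the straddle analysis are as before, with the roles of source and target interchanged. Properties~(2) and~(3) are then cheap. I would position the traversal so that $\varphi(0)=\varphi(1)=a$ and $\psi(a)=0$, and density of $\CCc_{D,a}=\varphi(\CCc_I)$ follows because $\varphi$ is surjective and, by expansion, nonconstant on every subinterval: a sufficiently short interval about a chosen preimage of any point maps into any prescribed neighbourhood as a nondegenerate subdendrite.

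I expect the main obstacle to be the simultaneous control, across the limit over the infinite dendrite, of surjectivity, of the uniform lower bound $\varrho$ for the length stretch, and of convexity together with finiteness of $\HHh^1_{d_{D,a}}$. The delicate point is the bookkeeping at turning points and branch points, where one must ensure that straddling intervals always either cover a full subdendrite or expand, uniformly in $k$; reconciling this with the reparametrizations forced by splicing in the null family of components of $D\setminus T_k$ is where the argument becomes genuinely technical.
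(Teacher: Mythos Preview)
The paper does not prove this proposition: it is quoted as a part of \cite[Theorem~C]{Spi2014} and used as a black box in Section~\ref{S:exact-maps}. There is no in-paper argument to compare your proposal against.

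Taken on its own, your sketch has the right geometric picture but leaves a real gap at the limit. You build approximations $\varphi_k$ on an exhausting chain of finite subtrees and pass to a uniform limit, asserting that the $\varrho$-expansion bound is ``preserved''. This is not automatic: Hausdorff $1$-measure is only lower semicontinuous under Hausdorff convergence of continua, so from $\HHh^1(\varphi_k(J))\ge\varrho\,|J|$ for all $k$ one cannot directly conclude $\HHh^1(\varphi(J))\ge\varrho\,|J|$; length can be lost in the limit. To make the argument go through you would need an additional monotonicity---for instance, arranging that each splice only enlarges the image, so that $\varphi(J)\supseteq\varphi_k(J)$ for suitable $k$---and this must be reconciled with the reparametrisations you yourself flag as delicate. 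The treatment of $\psi_{D,a}$ as merely ``dual'' is also too quick: its domain is the dendrite, so the straddling analysis at branch points of order $\ge 3$ is structurally different from the two-ended turning-point analysis for $\varphi_{D,a}$, and it matters that $\psi_{D,a}$ is only required to be length-expanding on the special family $\CCc_{D,a}=\varphi_{D,a}(\CCc_I)$, not on arbitrary subdendrites---you do not exploit this restriction.
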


%%%%%%%%%%%%%%%%%%%%%%%%%%%%%%%%%%%%%%%%%%%%%%%%%%%%%%%%%%%%%%%%%%%%%%%%%%%%
%%%%%%%%%%%%%%%%%%%%%%%%%%%%%%%%%%%%%%%%%%%%%%%%%%%%%%%%%%%%%%%%%%%%%%%%%%%%
%\subsection{Proof of Proposition~\ref{P:exact}}\label{SS:proof-prop-exact}

\begin{proof}[Proof of Proposition~\ref{P:exact}]
We may assume that $D$ is nondegenerate.
If $A$ is a singleton, the assertion follows from \cite[Corollary~E]{Spi2014}.
(Though not stated explicitly, an exact map constructed in the mentioned corollary
can have one or two prescribed fixed points, since the proof of it is based on
\cite[Theorem~D]{Spi2014}.)
Assume now that $A$ is an arc and fix $0<q<1<\varrho$. 
The proof is divided into eight steps.

\smallskip
\textit{Step~1.} We define subdendrites $D_k$ ($k\ge 0$). 
\smallskip

Put 
$\tilde D=\bigcup\{\overline{C}\colon C\text{ is a component of } D\setminus A\}$.
Since $A$ is nondegenerate and nowhere dense,
the set $\tilde D$ has $\aleph_0$ components; denote them by $D_k$ ($k\in \NNN$).
By Lemma~\ref{L:doplnkySubdendr-Hranica2}, 
every $D_k$ is a nondegenerate dendrite such that $\Bd(D_k)=D_k\cap A$ is a singleton $\{x_k\}$;
moreover, $x_k\ne x_{h}$ for every $k\ne h$.
In the sequel we sometimes write $D_0$ instead of $A$.
We believe that it will be convenient to call the subdendrite $D_k$ ($k\ge 1$) a \emph{bush rooted at $x_k$}.
Then $D$ is the union of $D_0=A$ (which is not a bush) and all the bushes.
	
\smallskip
\textit{Step~2.} We define a convex metric $d$ on $D$ compatible with the topology of $D$,
and surjective maps $\varphi_k\colon I\to D_k$ and $\psi_k\colon D_k\to I$ ($k\ge 0$) such that,
for $k\ge 1$, $\varphi_k$ and $\psi_k$ are $\varrho$-length expanding (with respect to $\CCc_I,\CCc_k$ and
$\CCc_k,\CCc_I$, respectively).
\smallskip

For every $k\in\NNN$, consider the metric
$d_k=d_{D_k,x_k}$ and maps $\varphi_k=\varphi_{D_k,x_k}$, $\psi_k=\psi_{D_k,x_k}$ 
obtained from Proposition~\ref{P:lel-maps}
applied to the dendrite $D_k$ and the point $x_k$. 
Then
\begin{equation}\label{EQ:phipsi0}
   \varphi_k(0)=\varphi_k(1)=x_k
   \qquad\text{and}\qquad
   \psi_k(x_k)=0,
\end{equation}
and
\begin{equation}\label{EQ:Ck}
	\CCc_k=\varphi_k(\CCc_I) \quad \text{is a dense family of nondegenerate subdendrites of}\  D_k.
\end{equation}
Fix a homeomorphism $\varphi_0\colon I\to A=D_0$ and denote $\psi_0=\varphi_0^{-1}$. Put $\CCc_0=\varphi_0(\CCc_I)$. Let $d_0$ be a convex metric on $A$ given by
$d_0(x,x')=d_I(\psi_0(x),\psi_0(x'))$; thus $\HHh^1_{d_0}(A)=1$.

For every $k\ge 0$ put $\lambda_k=(1-q)q^k$. Define a metric $d$ on $D$ by
\begin{equation*}%\label{EQ:metric-d}
	d(x,x') = 
	\begin{cases}
		\lambda_k d_k(x,x')
		&\text{if } x,x'\in D_k \text{ for } k\in\NNN_0;
	\\
		\lambda_0 d_0(x,x_k) + \lambda_k d_k(x_k,x')
		&\text{if } x\in A, x'\in D_k \text{ for } k\in\NNN;
	\\
		\lambda_k d_k(x,x_k) + \lambda_0 d_0(x_k,x')
		&\text{if } x\in D_k \text{ for } k\in\NNN, x'\in A;
	\\
		\lambda_k d_k(x,x_k) + \lambda_0 d_0(x_k,x_h) + \lambda_h d_h(x_h,x')
		&\text{if } x\in D_k, x'\in D_h \text{ for } k\ne h \text{ in }\NNN.
	\end{cases}
\end{equation*}
Since dendrites are uniquely arcwise connected,
it is an easy exercise to show that $d$ is a convex metric compatible with
the topology of $D$ and that
\begin{equation*}
	\diam\nolimits_d D_k \le \HHh^1_d(D_k)=\lambda_k
	\qquad\text{for every } k\ge 0.
\end{equation*}
Hence $\diam\nolimits_d D \le\HHh^1_d(D)=\sum_{k=0}^\infty \lambda_k = 1$.
Notice that for $k,h\in\NNN_0$ we have 
\[
k< h 
\iff
\HHh^1_d(D_k) > \HHh^1_d(D_{h})
\]
and $\HHh^1_d(D_k)\searrow 0$ as $k\to\infty$. In particular, if $D_k$ and $D_h$ are two bushes
with $k< h$, it is reasonable to say that $D_h$ is \emph{smaller} than $D_k$ and 
$D_k$ is \emph{larger} than $D_h$.

\smallskip
\textit{Step~3.} For $k\ge 1$ we define subdendrites $E_k\supsetneq D_k$ of $D$
  such that $E_1=D$ and 
  each $E_k$ ($k\ge 2$) will contain one bush $D_{\ell_k}$ larger than $D_k$ 
  and infinitely many bushes smaller than $D_k$.
\smallskip

For every $k\ge 2$ find an integer $1\le \ell_k<k$ such that $d(x_k,x_{\ell_k})\to 0$
and $\ell_k\to\infty$ as $k\to\infty$; 
this is possible since the set $\{x_k\colon k\in\NNN\}$ is dense in $A$.
Put 
$$
  N_k = 
  \{k,\ell_k\}
  \ \sqcup\ 
  \{h>k\colon x_h\in(x_k,x_{\ell_k})\}.
$$
Define subdendrites $E_k$ of $D$ by
$$
  E_1=D,
  \qquad
  E_k = [x_k,x_{\ell_k}] 
  \cup
  \bigsqcup_{h\in N_k} D_h
  \quad(k\ge 2).
$$
Thus $E_k$ is the union of the arc $[x_k,x_{\ell_k}]$, bushes $D_k$ and $D_{\ell_k}$,
and all those bushes rooted at the points strictly between $x_k$ and $x_{\ell_k}$ which are smaller than the bush $D_k$.

For $k\ge 2$ we have $\HHh^1_d(E_k\cap A)=d(x_k,x_{\ell_k})$ and
$\HHh^1_d(E_k\setminus A)=\sum_{h\in N_k} \lambda_h \le \lambda_{\ell_k}+\sum_{i\ge k} \lambda_i = \lambda_{\ell_k}+q^k$.
Moreover, $\lambda_{\ell_k}\to 0$ because $\ell_k\to\infty$.
Therefore $\HHh^1_d(E_k) \to 0$, hence
\begin{equation}\label{EQ:Zk-length}
  \diam\nolimits_d(E_k) \to 0.
\end{equation}

\smallskip
\textit{Step~4.} We define surjective maps $\tilde\varphi_k\colon I\to E_k$ ($k\ge 1$).
\smallskip

Fix $k \ge 2$. Denote
$$
  t_h=\psi_0(x_h)
  \text{ for every } h\in N_k,
  \quad\text{and}\quad
  J_k=[t_k,t_{\ell_k}]\subseteq I;
$$
we will assume that $t_k<t_{\ell_k}$ (the case $t_k>t_{\ell_k}$ is analogous).
Note that the set $\{t_h\colon h\in N_k\}$ is dense in $J_k$.
We are going to define an auxiliary surjection $g_k$ from a compact real interval $J_k^+$ (containing $J_k$)
to $E_k$. The construction starts, roughly speaking, by taking the interval $J_k$ and  
``blowing-up'' every point $t_h$ ($h\in N_k$) into a closed interval $T_h$
of length $\lambda_h$. To be more precise, for every $h\in N_k$ put 
\begin{equation*}
	\Lambda_h = \sum\limits_{i\in N_k, \ t_{i}<t_h}  \lambda_i
\end{equation*}
(here $\Lambda_k=0$);
note that every $\Lambda_h$ is finite. 
Put $J_k^+=[t_k, t_{\ell_k}+\Lambda_{\ell_k}+\lambda_{\ell_k}]$
and let $\mu_k\colon J_k^+\to J_k$ be the unique nondecreasing (continuous) surjection
such that $\mu_k^{-1}(t_h)=T_{h}=[t_h+\Lambda_h, t_h+\Lambda_h+\lambda_h]$
for every $h\in N_k$.

We are ready to define a continuous surjective 
map $g_k\colon J_k^+\to E_k$ as follows.
\begin{itemize}
\item If $s\in T_{h}$ 
	for some $h\in N_k$, then put
	$g_k(s)=\varphi_h(s')$, 
	where the positions of $s'$ in $I$ is the same as that of $s$ in $T_h$;
	thus
	 $s'=(s-t_h-\Lambda_h)/\lambda_h\in I$.
	Note that $g_k(s)\in D_h\subseteq E_k$.
\item If $s\in J_k^+\setminus \bigcup_{h\in N_k} T_{h}$, then put
    $g_k(s)=\varphi_0(\mu_k(s))$; 
    here we have that $g_k(s)\in [x_k,x_{\ell_k}]\setminus\{x_h\colon h\in N_k\} \subseteq [x_k,x_{\ell_k}]=E_k\cap A$.
\end{itemize}
The map $g_k\colon J_k^+\to E_k$ has the following properties:
\begin{enumerate}
\item\label{EQ:gk-Th}
	$g_k(T_h)=D_h$ for every $h\in N_k$, the endpoints of $T_h$ being mapped to $x_h$ by \eqref{EQ:phipsi0};
	in particular, $g_k(t_k)=x_k$;
\item\label{EQ:gk-in-Dh}
	for $s\in J_k^+$ and $h\in N_k$, $g_k(s)\in D_h$ if and only if $s\in T_h$;
\item\label{EQ:gk-Ch}
	if $h\in N_k$ and $L$ is a nondegenerate closed interval in $T_h$, then $g_k(L)\in \CCc_h$
	(indeed, $g_k|_{T_h}\colon T_h\to D_h$ is the composition of a nonconstant linear map $T_h\to I$
	followed by the map $\varphi_h\colon I\to D_h$ and so $g_k(L)=\varphi_h(L')$, where $L'$ is a nondegenerate closed interval in $I$, i.e., $L'\in \CCc_I$, whence $g_k(L)\in \varphi_h(\CCc_I)=\CCc_h$);
\item\label{EQ:gk-Jk}
    $g_k(J_k^+\setminus \bigcup_{h\in N_k} T_{h})=[x_k,x_{\ell_k}] \setminus\{x_h\colon h\in N_k\}$;
\item\label{EQ:gk-surj}
 	$g_k$ is surjective by \eqref{EQ:gk-Th} and \eqref{EQ:gk-Jk};
\item\label{EQ:gk-cont}
	$g_k$  is continuous (to see this, use \eqref{EQ:gk-Th}, \eqref{EQ:gk-Jk} and the facts that 
	the maps $\mu_k$, $\varphi_0$ and $\varphi_h$ ($h\in N_k$) are continuous, and
	the families $\{T_h\colon h\in N_k\}$ and $\{D_h\colon h\in N_k\}$ are null);
\item\label{EQ:gk-lexp}
	for every closed nondegenerate subinterval $K$ of $J_k^+$,
	\begin{itemize}
	\item 
		if $g_k(K)\cap A$ is nondegenerate, then $g_k(K)\supseteq D_h$ for some (even for infinitely many) 
		$h\in N_k$;
	\item
		if $g_k(K)\cap A$ is degenerate or empty, that is, $K$ is a subinterval of some
		$T_{h}$, then $g_k(K)= D_h$ or $\HHh^1_d(g_k(K))\ge \varrho\abs{K}$.
	    To see this, let $K'\subseteq I$ be the interval whose position in $I$ is the same as the position of
	    $K$ in $T_h$;
	    i.e., ${K'}=(K-t_h-\Lambda_h)/\lambda_h$. Then $\abs{K'} = \abs{K}/\lambda_h$
	    and $g_k(K)=\varphi_h(K')$. Now, by 
        $\varrho$-length expansiveness of the map $\varphi_h\colon I\to D_h$ and the definition of $d$,
        $g_k(K)=\varphi_h(K')=D_h$ or
        \[
        	\frac{1}{\lambda_h} \HHh^1_d(g_k(K)) 
        	= \HHh^1_{d_k}(\varphi_h(K')) 
        	\ge \varrho \abs{K'}
        	= \frac{\varrho}{\lambda_h} \abs{K}.
        \]
	\end{itemize}
\end{enumerate}

Still for $k\ge 2$ we define a map $\tilde\varphi_k\colon I\to E_k$ via the composition
$\tilde\varphi_k=g_k\circ\nu_k$, where $\nu_k\colon I\to J_k^+$ is a 
continuous map such that $\nu_k(0)=t_k$ and 
for every closed nondegenerate subinterval $J$ of $I$, 
$\abs{\nu_k(J)}\ge \abs{J}$.\footnote{For instance, $\nu_k$ can 
be a piecewise linear map with constant slope and sufficiently many laps, each of which is mapped onto $J_k^+$.}
So, by \eqref{EQ:gk-lexp},
\begin{equation}\label{EQ:LEL-of-tilde-phi}
	\tilde\varphi_k(J) \text{ contains some } D_h,
	\qquad\text{or}\qquad
	\HHh^1_d(\tilde\varphi_k(J))\ge \varrho\abs{J}
	\quad\text{with}\quad
	\card(\tilde\varphi_k(J)\cap A)\le 1.
\end{equation}
By \eqref{EQ:gk-surj} and \eqref{EQ:gk-cont}, 
$\tilde\varphi_k$ is a continuous surjection and, since $g_k(t_k)=x_k$,
\begin{equation}\label{EQ:phi0}
\tilde\varphi_k(0)=x_k.
\end{equation}

So we have defined $\tilde\varphi_k$ for every $k\ge 2$.
For $k=1$, the map $\tilde\varphi_1\colon I\to E_1$ is defined analogously,
as $\tilde\varphi_1=g_1\circ \nu_1$.
The only differences are:
\begin{itemize}
\item in the definition of $g_1\colon J_1^+\to E_1$ we put $J_1=I$ and $J_1^+=[0,1+\Lambda]$, 
	where $\Lambda=\sum_{h\in\NNN}\lambda_h=q$;
	now the analogue of the property $g_k(t_k)=x_k$ is $g_1(0)=x_1$;
\item to get $\tilde\varphi_1(0)=x_1$ we choose $\nu_1\colon I\to J_1^+$ with $\nu_1(0)=0$.
\end{itemize}
Then \eqref{EQ:LEL-of-tilde-phi} and \eqref{EQ:phi0} work also for $k=1$.

\smallskip
\textit{Step~5.} We define a continuous map $f\colon D\to D$ such that $f(D_1)=D$ and 
$f(D_k)\supseteq D_{\ell_k}$ for every $k\ge 2$.
\smallskip

Define $f$ by
\begin{equation*}
	f(x) = 
	\begin{cases}
		x 
		&\text{if } x\in A;
	\\
		\tilde\varphi_k\circ \psi_k(x)  
		&\text{if } x\in D_k \text{ for some }k\in\NNN.
	\end{cases}
\end{equation*}
Since $\tilde\varphi_k\circ \psi_k(x_k)=\tilde\varphi_k(0)=x_k$
for every $k\ge 1$ by \eqref{EQ:phipsi0} and \eqref{EQ:phi0}, the map $f$ is well-defined.
Further, $f(D_k)=E_k$ for every $k\in\NNN$; thus
\begin{equation}\label{EQ:fDk}
	f(D_1)=D 
	\qquad\text{and}\qquad
	f(D_k)\supseteq D_{\ell_k} 
	\quad\text{for every }k\ge 2.
\end{equation}
The continuity of $f$ is obvious since it is continuous on $A$ and on each $D_k$,
and the $f$-images of the bushes $D_k$ form a null family by \eqref{EQ:Zk-length}.

\smallskip
\textit{Step~6.} We claim that, for every $k\in\NNN$ and every $C\in \CCc_k$
(see \eqref{EQ:Ck}),
at least one of the following holds:
\renewcommand{\theenumi}{\alph{enumi}}
\begin{enumerate}
	\item\label{ENUM:exact-1} $f(C)$ contains some $D_h$; 
	\item\label{ENUM:exact-2} $\HHh^1_d(f(C)) \ge \varrho^2 \cdot \HHh^1_d(C)$
	%$f(C)\cap A$ is either degenerate or empty, 
	and $f(C)\in\CCc_h$ for some $h\in\NNN$.
\end{enumerate}
\renewcommand{\theenumi}{enumi}
\smallskip

To see this, we use \eqref{EQ:LEL-of-tilde-phi}
and $\varrho$-length expansiveness of $\psi_k$. Indeed, 
fix any $k\in\NNN$ and $C\in\CCc_k$, and denote
the (nondegenerate) closed interval $\psi_k(C)$ by $J$.
Since $C\subseteq D_k$, we have
\[
	f(C)
	=\tilde\varphi_k(\psi_k(C))
	= \tilde\varphi_k(J)
	= g_k(\nu_k(J)).	
\]
Assume that $f(C)$ does not satisfy \eqref{ENUM:exact-1},
i.e., it contains no $D_h$.
Then $J\ne I$ (since otherwise $f(C)=\tilde\varphi_k(I)=E_k\supseteq D_k$)
and so, by $\varrho$-length expansiveness of $\psi_k$,
$J=\psi_k(C)\in C_I$ is nondegenerate and
$\abs{J}\ge\varrho\cdot \HHh^1_d(C)$.
Further, 
since we assume that $\tilde\varphi_k(J)$ contains no $D_h$, \eqref{EQ:LEL-of-tilde-phi} gives that
$\HHh^1_d(\tilde\varphi_k(J))\ge \varrho\abs{J}$,
hence $\HHh^1_d(f(C)) \ge \varrho^2 \cdot \HHh^1_d(C)$,
and $\card(\tilde\varphi_k(J)\cap A)\le 1$. 
This cardinality condition gives that $\tilde\varphi_k(J)\subseteq D_h$ for some $h\in N_k$.
Since $\tilde\varphi_k(J)= g_k(\nu_k(J))$, the property \eqref{EQ:gk-in-Dh} of $g_k$
shows that the (nondegenerate) closed interval $\nu_k(J)$ is a subset of $T_h$. Then,
by the property \eqref{EQ:gk-Ch} of $g_k$, we get that
$f(C)=g_k(\nu_k(J))\in \CCc_h$.
So we have \eqref{ENUM:exact-2}.

\smallskip
\textit{Step~7.} We prove that for every nonempty open set $U$ in $D$ there is $h\ge 1$ and $n\in\NNN_0$
such that 
\[
  f^n(U)\supseteq D_h.
\]

Since $A$ is nowhere dense, we may assume
that $U$ is a subset of $D_k$ for some $k\ge 1$. 
By \eqref{EQ:Ck} we may fix some $C\in\CCc_k$ lying in $U$.
Since $\varrho>1$ and $D$ has finite length, the iterative use of Step~6
yields that $f^n(C)\supseteq D_{h}$ for some $n$ and $h$.

\smallskip
\textit{Step~8.} We finish the proof by showing that $f$ is exact.
\smallskip

In view of Step~7 it is sufficient to prove that for every $h_0\ge 1$ there is $m$
such that 
\[
  f^m(D_{h_0})=D.
\]
If $h_0=1$ then $f(D_1)=D$ by \eqref{EQ:fDk}, and we are done with $m=1$.
Now let $h_0\ge 2$ and put $h_1=\ell_{h_0}$ (see the definition of integers $\ell_k$ in the beginning of Step~3).
By \eqref{EQ:fDk}, $f(D_{h_0})\supseteq D_{h_1}$ and $1\le h_1<h_0$. 
If $h_1=1$ then we are done with $m=2$. If $h_1>1$, we continue 
by taking $h_2=\ell_{h_1}$ and so on. Since $h_0>h_1>h_2>\dots\ge 1$,
in a finite number $n$ of steps we obtain that $h_n=1$.
Then, $f^{n+1}(D_{h_0})\supseteq f(D_{1})=D$ and we are done with $m=n+1$.
This finishes the proof.
\end{proof}

%%%%%%%%%%%%%%%%%%%%%%%%%%%%%%%%%%%%%%%%%%%%%%%%%%%%%%%%%%%%%%%%%%%%%%%%%%%%
%%%%%%%%%%%%%%%%%%%%%%%%%%%%%%%%%%%%%%%%%%%%%%%%%%%%%%%%%%%%%%%%%%%%%%%%%%%%
%%%%%%%%%%%%%%%%%%%%%%%%%%%%%%%%%%%%%%%%%%%%%%%%%%%%%%%%%%%%%%%%%%%%%%%%%%%%
%%%%%%%%%%%%%%%%%%%%%%%%%%%%%%%%%%%%%%%%%%%%%%%%%%%%%%%%%%%%%%%%%%%%%%%%%%%%
%%%%%%%%%%%%%%%%%%%%%%%%%%%%%%%%%%%%%%%%%%%%%%%%%%%%%%%%%%%%%%%%%%%%%%%%%%%%
\section{Proof of Theorem~\ref{T:dendrite_gch=gech}: (\ref{ENUM:T1}) implies (\ref{ENUM:T2})}\label{S:thm12}

The following two lemmas generalize ideas from \cite[p.~49]{Mur00}.

\begin{lemma}\label{L:union-k}
	Let $(X,d)$ be a compact metric space and $f\colon X\to X$ be 
	a continuous map. Assume that, for some $k\in \NNN$, there are
	$f$-invariant closed subsets $Z_1,\dots,Z_k$ of $X$ such that 
	\begin{enumerate}
		\item\label{ENUM:union-1} $X=Z_1\cup\dots\cup Z_k$;
		\item\label{ENUM:union-2} $Z_i\cap Z_j\ne\emptyset$ for every $i,j$;
		\item\label{ENUM:union-3} $f|_{Z_i}\colon Z_i\to Z_i$ is strongly mixing for every $i$.
	\end{enumerate}
	Then $f$ is generically $\eps$-chaotic for any $0<\eps<(1/2)\min_i\diam Z_i$.
\end{lemma}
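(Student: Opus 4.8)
The plan is to verify that $f$ satisfies conditions \ref{ENUM:B1} and \ref{ENUM:B2} for the family of all open balls in $X$, with every prescribed $\eta<\min_i\diam Z_i$, and then to invoke Proposition~\ref{P:B1-B2}. First I would dispose of trivialities: if some $Z_i$ is degenerate the asserted range of $\eps$ is empty, and each $Z_i$ is nonempty by \eqref{ENUM:union-2}; so I may assume every $Z_i$ is a nondegenerate subcontinuum carrying the strongly mixing dynamical system $f|_{Z_i}$ (recall $Z_i$ is $f$-invariant and closed, hence compact).

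To obtain \ref{ENUM:B1}: given nonempty open balls $B_1,B_2\subseteq X$, use \eqref{ENUM:union-1} to find indices $i_1,i_2$ with $B_1\cap Z_{i_1}\ne\emptyset\ne B_2\cap Z_{i_2}$, and use \eqref{ENUM:union-2} to pick a point $w\in Z_{i_1}\cap Z_{i_2}$. For an arbitrary $\gamma>0$ put $W=B_\gamma(w)$. Since $B_1\cap Z_{i_1}$ and $W\cap Z_{i_1}$ are nonempty relatively open subsets of $Z_{i_1}$ and $f|_{Z_{i_1}}$ is strongly mixing, there is $N_1$ with $f^n(B_1\cap Z_{i_1})\cap W\ne\emptyset$ for all $n\ge N_1$; symmetrically there is $N_2$ with $f^n(B_2\cap Z_{i_2})\cap W\ne\emptyset$ for all $n\ge N_2$. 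Hence for every $n\ge\max\{N_1,N_2\}$ both $f^n(B_1)$ and $f^n(B_2)$ meet $W$, so $d(f^n(B_1),f^n(B_2))<2\gamma$; letting $\gamma\to0$ gives $\liminf_{n\to\infty}d(f^n(B_1),f^n(B_2))=0$. I would stress that this is exactly where \emph{strong} mixing of the pieces (rather than mere transitivity or weak mixing) is indispensable: the two forward images must be driven close to the common point $w$ simultaneously, for the same $n$.

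To obtain \ref{ENUM:B2}: fix any $\eta$ with $0<\eta<\min_i\diam Z_i$ and insert an auxiliary $\eta'$ with $\eta<\eta'<\min_i\diam Z_i$. For a nonempty open ball $B\subseteq X$, pick $i$ with $B\cap Z_i\ne\emptyset$ by \eqref{ENUM:union-1}. As $\diam Z_i>\eta'$, choose $a,b\in Z_i$ with $d(a,b)>\eta'$, set $\rho=(d(a,b)-\eta')/3>0$ and $V_a=B_\rho(a)\cap Z_i$, $V_b=B_\rho(b)\cap Z_i$, so that $d(x,y)>\eta'$ for all $x\in V_a$, $y\in V_b$. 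Strong mixing of $f|_{Z_i}$ applied to the pairs $(B\cap Z_i,V_a)$ and $(B\cap Z_i,V_b)$ yields $N$ such that $f^n(B)$ meets both $V_a$ and $V_b$ for all $n\ge N$; thus $\diam f^n(B)>\eta'>\eta$ eventually, whence $\limsup_{n\to\infty}\diam f^n(B)\ge\eta'>\eta$. So \ref{ENUM:B2} holds with this $\eta$.

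Finally, Proposition~\ref{P:B1-B2} converts \ref{ENUM:B1} together with \ref{ENUM:B2}-with-$\eta$ into generic $\eps$-chaos of $f$ for every $\eps<\eta/2$; since $\eta$ was arbitrary below $\min_i\diam Z_i$, this covers every $0<\eps<(1/2)\min_i\diam Z_i$, as claimed. The only genuinely fiddly point I foresee is the $\eta'$-bookkeeping needed to squeeze the strict inequality in \ref{ENUM:B2} out of an ``eventually $>\eta'$'' statement; the rest is a straightforward application of strong mixing on each invariant piece, and I do not anticipate any substantial obstacle.
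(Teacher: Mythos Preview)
Your argument is correct and follows the same overall strategy as the paper: verify \ref{ENUM:B1} and \ref{ENUM:B2} for open balls and invoke Proposition~\ref{P:B1-B2}. There are two minor differences worth noting. First, the paper inserts a Baire-type step showing that every ball contains a sub-ball lying entirely in some $Z_i$, and then works with that sub-ball; you bypass this by using $B\cap Z_i$ directly as a nonempty relatively open subset of $Z_i$, which is a small simplification. Second, for \ref{ENUM:B2} the paper appeals only to weak mixing of $f|_{Z_i}$ (transitivity of the product), whereas you use strong mixing twice; both work, and your $\eta'$-bookkeeping matches the paper's choice of open sets $U,V\subseteq Z_i$ with $d(U,V)>\eta$. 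One harmless slip: $Z_i$ is compact but need not be connected, so ``subcontinuum'' is not the right word---nothing in your proof uses connectedness.
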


\begin{proof}
	We may assume that $X$ is nondegenerate, otherwise the claim is trivial.
If $k=1$, the lemma follows from the fact, mentioned already in 
Section~\ref{S:intro}, that a weakly mixing map on a (nondegenerate) compact metric space 
$X$ is generically $\eps$-chaotic for every $0<\eps<\diam X$. From now on  
assume that $k\geq 2$. 

If $B$ is an open ball in $X$, we can write $B = (B\cap Z_1) \cup \dots \cup 
(B\cap Z_k)$. Since the sets $B\cap Z_i$ are closed in 
$B$, there is $i$ such that $B\cap Z_i$ has nonempty interior in $B$. In other 
words, 
\begin{equation}\label{EQ:subball}
\text{every ball contains a ball lying in one of the sets $Z_i$.}
\end{equation}

To prove the lemma, it is clearly sufficient to show that $f$ is generically 
$\eps$-chaotic for any $\eps< \eta /2$ whenever $0< \eta < \min_i\diam Z_i$. 
Therefore fix such an $\eta$. By Proposition~\ref{P:B1-B2}, it is 
sufficient to prove that \ref{ENUM:B1} and \eqref{EQ:Sens-eta} in \ref{ENUM:B2} are satisfied by the 
family of all open balls in $X$.

To prove \ref{ENUM:B1}, let $B_1,B_2\subseteq X$ be open balls. To show that  
$\liminf_{n\to\infty} d(f^n(B_1),f^n(B_2))=0$, we may assume, in view 
of~\eqref{EQ:subball}, that $B_1\subseteq Z_i$ and $B_2\subseteq Z_j$ for some 
$i$ and $j$. Let $\delta>0$. By \eqref{ENUM:union-2} there is $x_0\in Z_i\cap 
Z_j$. Since the restrictions of $f$ to $Z_i$ and $Z_j$ are strongly mixing,
there is $n_0$ such that, for every $n\ge n_0$, both $d(x_0,f^n(B_1))$ and 
$d(x_0,f^n(B_2))$ are smaller than $\delta/2$. Hence $d(f^n(B_1),f^n(B_2)) < 
\delta$ for every $n\ge n_0$. We have proved that
$\lim_{n\to\infty} d(f^n(B_1),f^n(B_2))=0$.

To prove \eqref{EQ:Sens-eta} in \ref{ENUM:B2}, choose any open ball $B\subseteq X$. To show that 
$\limsup_{n\rightarrow \infty}\diam f^n(B)>\eta$, again by~\eqref{EQ:subball} we may
assume that $B\subseteq Z_i$ for some $i$. Put $g=f|_{Z_i}$.
Since $\eta<\diam Z_i$, there are nonempty open
sets $U,V\subseteq Z_i$ such that $d(U,V)>\eta$. 
By transitivity of $g\times g$, there is an increasing sequence 
$\{n_k\}_k$ of positive integers such that every $(g\times g)^{n_k}(B\times B)$ 
intersects $U\times V$. Hence $\diam g^{n_k}(B)\ge d(U,V)>\eta$ for every $k$,
which proves that $\limsup_{n\rightarrow \infty}\diam f^n(B)>\eta$.
\end{proof}

\begin{lemma}\label{L:union-inf}
	Let $X$ be a compact metric space and $f\colon X\to X$ be a continuous map.
	Assume that there is an increasing sequence of $f$-invariant closed sets 
	$X_i$ ($i\in\NNN$) such that $X=\bigcup_{i=1}^\infty X_i$
	and $f|_{X_i}\colon X_i\to X_i$ is generically chaotic for every $i$. 
	Then $f$ is generically chaotic.
\end{lemma}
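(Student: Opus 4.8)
The plan is to prove directly that $X^2\setminus\LY(f)$ is of the first category in $X^2$. The first observation is that each $X_i$ is $f$-invariant and carries the restriction of the metric of $X$, so for a pair $(x,y)\in X_i^2$ the values $\liminf_{n\to\infty}d(f^n(x),f^n(y))$ and $\limsup_{n\to\infty}d(f^n(x),f^n(y))$ are the same whether computed in $X_i$ or in $X$. Hence $\Prox(f|_{X_i})=\Prox(f)\cap X_i^2$ and $\nAs(f|_{X_i})=\nAs(f)\cap X_i^2$, and therefore
\[
	\LY(f|_{X_i})=\LY(f)\cap X_i^2.
\]
Since $f|_{X_i}$ is generically chaotic, $\LY(f|_{X_i})$ is residual in $X_i^2$, so $X_i^2\setminus\LY(f)=X_i^2\setminus\LY(f|_{X_i})$ is meager (first category) in the space $X_i^2$.

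The one point requiring a little care is the passage from ``meager in $X_i^2$'' to ``meager in $X^2$''. This works because $X_i^2=X_i\times X_i$ is a \emph{closed} subset of $X^2$, as $X_i$ is closed in $X$: for a subset $N$ of a closed subspace $C$ of a space $Z$ the closure of $N$ in $Z$ agrees with its closure in $C$, so a set nowhere dense in $C$ is nowhere dense in $Z$, and consequently a set meager in $C$ is meager in $Z$. Applying this to (the countably many nowhere dense pieces of) $X_i^2\setminus\LY(f)$, we conclude that $X_i^2\setminus\LY(f)$ is meager in $X^2$ for every $i$.

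Finally, since the sequence $\{X_i\}$ is increasing and $X=\bigcup_i X_i$, every pair $(x,y)\in X^2$ lies in $X_m^2$ with $m=\max\{i,j\}$ whenever $x\in X_i$ and $y\in X_j$; thus $X^2=\bigcup_{i=1}^\infty X_i^2$ and
\[
	X^2\setminus\LY(f)=\bigcup_{i=1}^\infty\bigl(X_i^2\setminus\LY(f)\bigr)
\]
is a countable union of meager subsets of $X^2$, hence meager. Equivalently $\LY(f)$ is residual in $X^2$, i.e., $f$ is generically chaotic. There is no genuine obstacle in this argument; it is a short exercise with the definitions and the Baire category formalism, the only subtlety being the transfer of category notions between $X_i^2$ and $X^2$, which is precisely why it is used that the $X_i$ are closed and not merely, say, dense.
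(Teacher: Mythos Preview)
Your proof is correct and follows essentially the same approach as the paper's own proof: both show that $X^2\setminus\LY(f)=\bigcup_i\bigl(X_i^2\setminus\LY(f)\bigr)$ is a countable union of sets that are meager in $X_i^2$, hence (by closedness of $X_i^2$) meager in $X^2$. You have simply spelled out in more detail the points the paper leaves implicit, namely the identity $\LY(f|_{X_i})=\LY(f)\cap X_i^2$ and the transfer of meagerness from the closed subspace $X_i^2$ to $X^2$.
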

\begin{proof}
	Denote by $\nLY(f)$ the set of pairs $(x,y)\in X^2$ which are not Li-Yorke 
	for $f$. Since every $f|_{X_i}$ is generically chaotic, the sets $X_i^2\cap 
	\nLY(f)$ are of the first category in $X_i^2$, hence of the first category in $X^2$.
	Since $X^2=\bigcup_{i=1}^\infty X_i^2$ due to the fact that $X_i\subseteq X_{i+1}$ 
	for every $i$, the set $\nLY(f)$ is of the first category in $X^2$. This shows 
	that $f$ is generically chaotic.
\end{proof}
One can see that a slightly stronger lemma is true. It is sufficient to assume 
that $X=\bigcup_{i=1}^\infty  X_i\cup Y$, where $Y$ is of the first category in $X$.

By combining the previous two lemmas we get the following proposition.

\begin{proposition}\label{P:GCH-not-eps}
	Let $X$ be a compact metric space and $f\colon X\to X$ be a continuous map.
	Assume that there are $f$-invariant closed sets $X_i$ ($i\in\NNN$) such that 
	\begin{enumerate}
		\item $X=\bigcup_{i=1}^\infty  X_i$;
		\item\label{ENUM:Xi-interior}
			every $X_i$ has nonempty interior;
		\item\label{ENUM:diamXi}
			$\diam X_i\to 0$;
		\item $X_i\cap X_j\ne\emptyset$ for every $i\ne j$;
		\item  $f|_{X_i}\colon X_i\to X_i$ is  strongly mixing for every $i$.
	\end{enumerate}
	Then $f$ is generically chaotic but not generically $\eps$-chaotic for any $\eps>0$.
\end{proposition}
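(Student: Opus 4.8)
The plan is to derive this directly from Lemmas~\ref{L:union-k} and~\ref{L:union-inf}, together with the ``moreover'' part of Proposition~\ref{P:B1-B2}.

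\emph{Generic chaos.} For $j\in\NNN$ put $Y_j=X_1\cup\dots\cup X_j$. Each $Y_j$ is closed and $f$-invariant, the sequence $\{Y_j\}_{j\in\NNN}$ is increasing, and $\bigcup_{j=1}^\infty Y_j=X$. Fix $j$ and apply Lemma~\ref{L:union-k} to the space $Y_j$ with $Z_1=X_1,\dots,Z_j=X_j$: these sets are closed and $f$-invariant, they cover $Y_j$, they intersect pairwise (for $i\ne j$ by hypothesis~(4), and for $i=j$ trivially since $X_i$ is nonempty by~(2)), and each $f|_{X_i}$ is strongly mixing by~(5). Hence $f|_{Y_j}$ is generically $\eps$-chaotic for small $\eps$, and in particular generically chaotic, because $\LY(f|_{Y_j},\eps)\subseteq\LY(f|_{Y_j})$. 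Lemma~\ref{L:union-inf} now yields that $f$ is generically chaotic.

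\emph{Not generically $\eps$-chaotic.} Suppose, for contradiction, that $f$ is generically $\eps$-chaotic for some $\eps>0$. By the last assertion of Proposition~\ref{P:B1-B2}, condition \ref{ENUM:B2} then holds for the family of all open balls in $X$ with $\eta=\eps$; that is, $\limsup_{n\to\infty}\diam f^n(B)>\eps$ for every open ball $B\subseteq X$. Using~(3), choose $i$ with $\diam X_i<\eps$. By~(2) the open set $\Int X_i$ is nonempty, so it contains some open ball $B$. Since $X_i$ is $f$-invariant, $f^n(B)\subseteq X_i$ and hence $\diam f^n(B)\le\diam X_i<\eps$ for every $n\ge 0$, contradicting $\limsup_{n\to\infty}\diam f^n(B)>\eps$. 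Therefore $f$ is not generically $\eps$-chaotic for any $\eps>0$.

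There is no serious obstacle here: the proposition is a clean combination of the two preceding lemmas, and the only point requiring (entirely routine) care is checking that the finite partial unions $Y_j=X_1\cup\dots\cup X_j$ satisfy all the hypotheses of Lemma~\ref{L:union-k}, which is immediate from assumptions (2), (4) and~(5).
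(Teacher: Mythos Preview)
Your proof is correct and follows essentially the same route as the paper: define the partial unions $Y_j=X_1\cup\dots\cup X_j$, apply Lemma~\ref{L:union-k} to each $Y_j$ and then Lemma~\ref{L:union-inf} to conclude generic chaos, and use the small invariant sets $X_i$ together with Proposition~\ref{P:B1-B2} to rule out generic $\eps$-chaos. Your write-up is just a bit more explicit in verifying the hypotheses and in phrasing the second half as a contradiction, but there is no substantive difference.
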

\begin{proof}
	Denote $Y_k=\bigcup_{i=1}^k X_i$, $k\in\NNN$. By Lemma~\ref{L:union-k}, $f|_{Y_k}\colon Y_k\to Y_k$
	is generically chaotic for every $k$. Since $X=\bigcup_{k=1}^\infty  Y_k$, $f$ is generically chaotic by Lemma~\ref{L:union-inf}. By \eqref{ENUM:Xi-interior} and \eqref{ENUM:diamXi}, there are arbitrarily small invariant sets 
	with nonempty interiors. This implies that \ref{ENUM:B2} is not satisfied by the family of open balls in $X$.
	Therefore, by Proposition~\ref{P:B1-B2}, $f$ is not generically $\eps$-chaotic for any $\eps>0$.
\end{proof}

\begin{lemma}\label{L:gench-not-eps}
	Let $D$ be a nondegenerate dendrite and $A\subseteq D$ be either a nowhere dense nondegenerate 
	subdendrite, or a singleton $\{a\}$ such that $a$ is of infinite order in $D$. 
	Then there is a generically chaotic map 
	$f\colon D\to D$ which is not generically $\eps$-chaotic for any $\eps>0$.
\end{lemma}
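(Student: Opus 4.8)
The plan is to reduce everything to Proposition~\ref{P:GCH-not-eps}: in each case I will build a continuous $f\colon D\to D$ together with $f$-invariant closed sets $X_i$ ($i\in\NNN$) that cover $D$, have nonempty interiors and diameters tending to $0$, intersect pairwise, and on which $f$ restricts to a strongly mixing map. Since an exact self-map of a compact space is surjective and strongly mixing, the basic tool in both cases is Proposition~\ref{P:exact}.

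First I would treat the case $A=\{a\}$ with $a$ of infinite order. By Lemma~\ref{L:doplnkySubdendr-Hranica} (applied to the proper subdendrite $\{a\}$) the components $C_k$ of $D\setminus\{a\}$ form a countably infinite null family with $\Bd(C_k)=\{a\}$; put $D_k=\overline{C_k}$, a nondegenerate subdendrite having $a$ as an endpoint and with $D_k\cap D_h=\{a\}$ for $k\neq h$. By Proposition~\ref{P:exact} (singleton case) pick exact maps $f_k\colon D_k\to D_k$ with $f_k(a)=a$ and glue them; the resulting $f$ is continuous because $\{D_k\}$ is a null family and all the $f_k$ fix $a$. With $X_i=D_i$ all hypotheses of Proposition~\ref{P:GCH-not-eps} are immediate ($\diam D_i\to0$, $D_i\cap D_j=\{a\}$, $C_i\subseteq\Int D_i$, and $f|_{D_i}=f_i$ strongly mixing), so $f$ is generically chaotic but not generically $\eps$-chaotic.

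Now the case $A$ a nowhere dense nondegenerate subdendrite. I would first replace $A$ by a nondegenerate subarc (still nowhere dense) and fix a homeomorphism of it with $[0,1]$; let $A_i$ be the subarc corresponding to $[0,1/i]$, so $\diam A_i\to0$. By Lemma~\ref{L:doplnkySubdendr-Hranica} the closures $D_k$ of the components of $D\setminus A$ form a null family, with roots $x_k=\Bd(D_k)$ dense in $A$. Using a straightforward bookkeeping I would partition the bushes $\{D_k\}$ into a ``large'' group $G_0$ and ``small'' groups $G_i$ ($i\ge1$) so that: each $G_i$ ($i\ge1$) consists of bushes of diameter $<1/i$ rooted in $A_i$, with root set dense in $A_i$; and $G_0$ contains every remaining bush (hence all but finitely many of the largest ones), with root set dense in $A$. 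Put $W=A\cup\bigcup_{k\in G_0}D_k$ and $Z_i=A_i\cup\bigcup_{k\in G_i}D_k$. Each is a subdendrite; $A$ is nowhere dense in $W$ and $A_i$ is nowhere dense in $Z_i$ (dense root sets); $\diam Z_i\to0$; and $W\cap Z_i$, $Z_i\cap Z_j$ are subarcs of $A$, nowhere dense in the relevant dendrites. Define $f$ to be the identity on $A$ and, on each bush $D_k$, the restriction of the exact self-map furnished by Proposition~\ref{P:exact} for the dendrite ($W$ if $k\in G_0$, else $Z_i$) containing it, fixing that dendrite's base arc pointwise. Then $f|_W$ and $f|_{Z_i}$ are exact, these sets are $f$-invariant and cover $D$, and $f$ is continuous because the images of the small bushes form a null family — the construction in Proposition~\ref{P:exact} makes a bush's image small when the bush is small — while only finitely many bushes are large. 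Proposition~\ref{P:GCH-not-eps}, applied with $X_1=W$ and $X_{i+1}=Z_i$, then finishes the case.

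The only real work is in the second case, in the combinatorial bookkeeping: the partition of the bushes must make all the $Z_i$ small, make every piece meet every other one, and still exhaust $D$, while keeping each overlap nowhere dense in the pieces it sits in (so that exactness of the restrictions is not obstructed); and one must check continuity of the glued map, whose value on a small bush near a point of $A$ must be a small set. The first case is routine by comparison.
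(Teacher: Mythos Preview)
Your proposal is correct and follows the paper's approach: cover $D$ by pairwise-intersecting invariant subdendrites with exact restrictions (via Proposition~\ref{P:exact}) and diameters tending to zero, then invoke Proposition~\ref{P:GCH-not-eps}. The only cosmetic differences are that you single out a ``large'' piece $W$ where the paper's $E_1$ plays the same role without special notation, and your continuity check leans on internals of the construction in Proposition~\ref{P:exact} where the paper instead uses the cleaner observation that the pieces $E_j$ converge in Hausdorff metric to a single point (which your $Z_i$ also do, to the $0$-endpoint of $A$).
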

\begin{proof}
	We may assume that $A$ is either a nowhere dense arc or $A=\{a\}$. 
	By the assumptions, in either case the set $D\setminus A$ has infinitely many components $C_i$ ($i\in\NNN$).
	By Lemma~\ref{L:doplnkySubdendr-Hranica}, $\Bd(C_i)=\{c_i\}\subseteq A$ and, since $A$ is nowhere dense, the set $\{c_i\colon i\in\NNN\}$
	is dense in $A$. Further, the open connected sets $C_i$ form a null family.
	
	Assume first that $A=\{a\}$. By Proposition~\ref{P:exact}, every dendrite $\overline{C_i}$ admits
	an exact map $f_i$ fixing the point $a$.
	The map $f\colon D\to D$ such that $f(x)=f_i(x)$ if $x\in \overline{C_i}$ is well defined and continuous.
	By Proposition~\ref{P:GCH-not-eps}, $f$ is generically chaotic but not generically $\eps$-chaotic for any $\eps>0$. 
	
	Now let $A$ be a nowhere dense arc. 	
	Fix $a\in A$; for simplicity we can choose $a\notin \{c_i\colon i\in\NNN\}$. 
	Choose a nested sequence of subarcs $A_j$ of $A$ such that
	$A_1=A$ and $\bigcap_{j=1}^\infty A_j = \{a\}$.
	By induction one can obviously construct a partition $\NNN=\bigsqcup_{j=1}^\infty N_j$
	with infinite sets $N_j$
	such that the (nondegenerate) dendrites $E_j$ ($j\in\NNN$) defined by 
	\[
	E_j = A_j \sqcup  \bigsqcup_{i\in N_j} C_i
	\]  
	are such that
	\begin{enumerate}[label=(\alph*)]
		\item\label{ENUM:not-Aj} $A_j=E_j\cap A$ is nowhere dense in $E_j$ for every $j$.
	\end{enumerate}	
	We clearly have the following:
	\begin{enumerate}[resume*]
		\item\label{ENUM:not-union} $D=\bigcup_{j=1}^\infty E_j$;
		\item\label{ENUM:not-y0} $a\in E_j$ for every $j$;
		\item\label{ENUM:not-ZjZk} $E_j\cap E_k=A_{k} \subseteq A$ for every $j< k$;
		%\item\label{ENUM:not-Ci} $C_i\subseteq E_j$ whenever $C_i$ intersects $E_j$;
		\item\label{ENUM:not-diamZj} $\diam E_j\to 0$ as $j\to\infty$ (because $a\notin\{c_i\colon i\in\NNN\}$);
		\item\label{ENUM:not-intEj} $E_j$ has nonempty interior in $D$ for every $j$
		(in fact, every $E_j$ contains some $C_i$ and $C_i$ is nonempty and open in $D$);
		\item\label{ENUM:not-ZjA} there is a sequence of positive reals $\delta_j\to 0$ 
		such that $E_j\subseteq B_{\delta_j}(a)$ for every $j$,
		i.e., in the Hausdorff metric the sequence $E_j$ converges to the singleton $\{a\}$
		(this follows from \ref{ENUM:not-y0} and \ref{ENUM:not-diamZj}).
	\end{enumerate}

	Using \ref{ENUM:not-Aj} and Proposition~\ref{P:exact}, for every $j$ there is an exact map
	\begin{equation}\label{EQ:L:gench-not-eps:1}
	    f_j\colon E_j\to E_j
  		\quad\text{such that}\quad
		f_j(x) = x 
		\quad\text{for every }x\in A_j. 
	\end{equation}
	Define $f\colon D\to D$ by $f(x)=f_j(x)$ provided $x\in E_j$ ($j\in\NNN$).
	This map is well-defined by \ref{ENUM:not-union}, 
	\ref{ENUM:not-ZjZk} and \eqref{EQ:L:gench-not-eps:1}.
	We prove that $f$ is continuous. If $x\ne a$ then, by \ref{ENUM:not-ZjA}, there is an open neighbourhood of $x$ covered by finitely many sets $E_j$; thus $f$ is continuous at $x$ by the pasting lemma.
	To prove continuity at the point $a$, fix a neighbourhood $U$ of $a$. By \ref{ENUM:not-ZjA}
	there exists $N$ such that $\bigcup_{j>N} E_j\subseteq U$. Since $f_1,\dots,f_N$ are continuous at $a$ and $a$
	is their common fixed point,
	there is a neighbourhood $V\subseteq U$ of $a$ such that $f_j(V)\subseteq U$ for every $j\le N$.
	Then $f_j(V)\subseteq U$ for all $j$ and so $f(V)\subseteq U$.

	By Proposition~\ref{P:GCH-not-eps}, $f$ is generically chaotic but not generically $\eps$-chaotic for any $\eps>0$. 
\end{proof}

\begin{proof}[Proof of Theorem~\ref{T:dendrite_gch=gech}: $(\ref{ENUM:T1}) \Rightarrow (\ref{ENUM:T2})$]
	Assume that (\ref{ENUM:T2}) is not true, i.e.,  $D$ contains a nowhere dense nondegenerate subdendrite
	or $D$ contains a point of infinite order. In either case, Lemma~\ref{L:gench-not-eps} shows
	that $D$ admits a generically chaotic selfmap which is not generically $\eps$-chaotic for any $\eps>0$.
	Thus (\ref{ENUM:T1}) from Theorem~\ref{T:dendrite_gch=gech} is not satisfied.
\end{proof}

We add the following simple observation used at the end of Section~\ref{S:intro}.

\begin{proposition}\label{P:Cantor}
	A Cantor set admits a generically chaotic map which is not generically $\eps$-chaotic for any $\eps>0$.
\end{proposition}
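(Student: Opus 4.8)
The plan is to deduce Proposition~\ref{P:Cantor} directly from Proposition~\ref{P:GCH-not-eps} by exhibiting a suitable decomposition of the Cantor set. First I would fix a Cantor set $C$ with metric $d$ and a point $a\in C$, and use zero-dimensionality of $C$ to pick a \emph{strictly} decreasing sequence of clopen neighbourhoods $C=U_1\supsetneq U_2\supsetneq\cdots$ of $a$ with $\diam U_j\to 0$; strictness is possible at each step because $U_j$, being a clopen neighbourhood of the non-isolated point $a$, is infinite and zero-dimensional, so it contains a proper clopen neighbourhood of $a$. From $\diam U_j\to 0$ one gets $\bigcap_j U_j=\{a\}$. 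Setting $D_j=U_j\setminus U_{j+1}$ then yields a partition $C=\{a\}\sqcup\bigsqcup_{j\ge 1}D_j$ into nonempty clopen pieces, each of which is again a Cantor set (it is compact, totally disconnected, metrizable, and perfect, the last because $C$ has no isolated points), and moreover $\diam(\{a\}\cup D_j)\le\diam U_j\to 0$.

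Next I would put $X_j=\{a\}\cup D_j$. This is a closed subset of $C$ with nonempty interior (it contains the nonempty open set $D_j$), with $X_i\cap X_j=\{a\}\ne\emptyset$ for $i\ne j$, and with $\diam X_j\to 0$. Since each $X_j$ is homeomorphic to the Cantor set and the homeomorphism group of the Cantor set is transitive, I can fix a homeomorphism $h_j\colon X_j\to\{0,1\}^{\NNN}$ carrying $a$ to the fixed point $\mathbf{0}$ of the one-sided full shift $\sigma$; then $g_j=h_j^{-1}\circ\sigma\circ h_j$ is a strongly mixing (indeed exact) selfmap of $X_j$ with $g_j(a)=a$. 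I then define $f\colon C\to C$ by $f|_{X_j}=g_j$ for every $j$; this is well defined precisely because the $X_j$ meet only in the point $a$ and each $g_j$ fixes $a$.

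The only step requiring an actual argument is continuity of $f$. Away from $a$ it is immediate: each $x\ne a$ lies in a unique $D_j$, which is open in $C$, so on a neighbourhood of $x$ the map $f$ coincides with the continuous map $g_j$. For continuity at $a$ I would argue that if $x_n\to a$ then, for every fixed $k$, we have $x_n\in U_k=\{a\}\cup\bigsqcup_{j\ge k}D_j$ for all large $n$, hence $f(x_n)\in\{a\}\cup\bigsqcup_{j\ge k}X_j\subseteq U_k$; since $\diam U_k\to 0$ this forces $f(x_n)\to a=f(a)$.

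Finally, the data $C=\bigcup_{j\ge 1}X_j$ with the $X_j$ closed, $f$-invariant, of nonempty interior, pairwise intersecting, with $\diam X_j\to 0$, and with $f|_{X_j}$ strongly mixing, is exactly the hypothesis of Proposition~\ref{P:GCH-not-eps}, whose conclusion is that $f$ is generically chaotic but not generically $\eps$-chaotic for any $\eps>0$. I do not expect any serious obstacle: the whole argument is essentially organizing the Cantor set as a null sequence of clopen Cantor pieces sharing a single accumulation point, the two small technical points being that the pieces $D_j$ can be taken nonempty and shrinking (handled by $\diam U_j\to 0$) and that the pasted map is continuous at $a$.
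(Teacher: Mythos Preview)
There is a genuine gap. In your decomposition, the point $a$ is \emph{isolated} in each $X_j=\{a\}\cup D_j$: indeed $U_{j+1}$ is a clopen neighbourhood of $a$ in $C$ which is disjoint from $D_j=U_j\setminus U_{j+1}$, so $d(a,D_j)>0$. Consequently $X_j$ is not perfect, hence not homeomorphic to the Cantor set, and your choice of $h_j$ does not exist. More importantly, no continuous selfmap of $X_j$ that fixes the isolated point $a$ can be transitive (the open set $\{a\}$ is invariant and never hits the rest of $X_j$), let alone strongly mixing; so hypothesis~(5) of Proposition~\ref{P:GCH-not-eps} cannot be met by your $g_j$'s.

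The paper avoids this by arranging that the shared point is a genuine accumulation point of every piece. It realizes the Cantor set inside an $\omega$-star with branch point $z$: in the closure $A_i$ of each component of $D\setminus\{z\}$ one picks a Cantor set $Z_i\ni z$, so that $z$ is automatically non-isolated in $Z_i$ (Cantor sets are perfect), the $Z_i$ meet pairwise only in $\{z\}$, $\diam Z_i\to 0$, and $X=\bigcup_i Z_i$ is again a Cantor set. On each $Z_i$ one then takes a strongly mixing map fixing $z$ and glues as you do; Proposition~\ref{P:GCH-not-eps} finishes the argument. Your overall strategy (pasting strongly mixing Cantor maps that share a single fixed point and invoking Proposition~\ref{P:GCH-not-eps}) is exactly right; what is missing is a construction in which the common point is a limit point of every piece. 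One way to repair your argument intrinsically is to take, for each $j$, a Cantor set $E_j\subseteq U_{j+1}$ with $a\in E_j$ and $E_j\cap E_k=\{a\}$ for $j\ne k$, and set $X_j=D_j\cup E_j$; but making the $E_j$'s pairwise disjoint away from $a$ essentially reproduces the $\omega$-star picture.
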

\begin{proof}
	Let $D$ be an $\omega$-star with branch point $z$. Let $A_i$ ($i\in\NNN$) be the closures of the components
	of $D\setminus\{z\}$. 
	In each $A_i$ choose a Cantor set $Z_i$ containing $z$.
	Then $X=\bigcup_{i=1}^\infty Z_i$ is a Cantor set.
	Every $Z_i$ admits a strongly mixing map $f_i\colon Z_i\to Z_i$ with $f_i(z)=z$.
	Then $f\colon X\to X$ defined by $f(x)=f_i(x)$ for every $i\in \NNN$ and $x\in Z_i$
	is continuous. By Proposition~\ref{P:GCH-not-eps}, $f$ is generically chaotic but not generically $\eps$-chaotic for any $\eps>0$. 
\end{proof}

Finally, here we present the example, suggested by an anonymous referee and mentioned in Section~\ref{S:intro}, of a generically chaotic map which is not sensitive.
	
\begin{example}[cf. Corollary~4.2 in \cite{HY02}]\label{Ex:gch-not-sensitive}
Take any  topologically transitive, nonminimal homeomorphism $h$
on a compact metric space $X$
which is almost equicontinuous
(for a construction of such a homeomorphism see e.g.~\cite[Theorem~4.2]{AAB}).
Then the transitive points are equicontinuity
points \cite[Theorem~2.4]{AAB}, 
$h$ is uniformly rigid \cite[Corollary~3.7]{AAB}
and the union $M$ of all minimal sets is not dense \cite[Theorem~2.5]{AAB}. 
The closure $\overline M$ is an invariant nowhere dense set
(if $\overline M$ has nonempty interior, then it contains a transitive point 
and so $\overline M=X$, contradicting the fact that $M$ is not dense).
By collapsing $\overline M$ to a point,
we obtain a system $(\tilde X, \tilde h)$ where $\tilde h$ is a homeomorphism of a compact metric space $\tilde X$.
This new system has the following properties. 
First, since all the equicontinuity points (i.e., the transitive points)
of $h$ are outside the invariant 
nowhere dense closed set $\overline M$,
it is straightforward to show that also $\tilde h$ is almost equicontinuous
(alternatively, one can use \cite[Lemma~1.6]{GW}).
Further, $\tilde h$ has a fixed point which is
the unique minimal set,
and so the new system is proximal 
\cite[Proposition~2.2]{AK}. Finally, 
$\tilde h$ is uniformly rigid \cite[Corollary~3.7]{AAB}
and so every pair of points is recurrent.
That is, the whole
space $\tilde X$ is strongly scrambled (meaning that every pair of points is proximal 
and recurrent) 
and so the system $(\tilde X, \tilde h)$ is obviously generically chaotic.
Being almost equicontinuous, it is not sensitive.
\end{example}

%%%%%%%%%%%%%%%%%%%%%%%%%%%%%%%%%%%%%%%%%%%%%%%%%%%%%%%%%%%%%%%%%%%%%%%%%%%%%%%
%%%%%%%%%%%%%%%%%%%%%%%%%%%%%%%%%%%%%%%%%%%%%%%%%%%%%%%%%%%%%%%%%%%%%%%%%%%%%%%
%%%                        APPENDIX                                         %%%
%%%%%%%%%%%%%%%%%%%%%%%%%%%%%%%%%%%%%%%%%%%%%%%%%%%%%%%%%%%%%%%%%%%%%%%%%%%%%%%
%%%%%%%%%%%%%%%%%%%%%%%%%%%%%%%%%%%%%%%%%%%%%%%%%%%%%%%%%%%%%%%%%%%%%%%%%%%%%%%
\setcounter{section}{0}
\renewcommand{\thesection}{\Alph{section}}
\section{Appendix: Li-Yorke chaotic dendrite map which is not Li-Yorke 
	$\eps$-chaotic}\label{S:appendix}

The purpose of this appendix is to show that an analogue of Theorem~\ref{T:dendrite_gch=gech}
does not hold with generic chaos replaced by Li-Yorke chaos. 
We start with a construction of a system which resembles that of Floyd-Auslander \cite{Fl49,Aus88}, see also \cite{HJ97}. However, our pattern for producing a family of subrectangles from a given rectangle is very different. Therefore the obtained homeomorphism will not be minimal (even not transitive) and in fact will have an appropriate invariant Cantor set.

Consider the alphabet $\AAa=\{0,1,2\}$, the set $\AAa^*=\bigcup_{n\in \NNN_0} \AAa^n$ of finite words (here $\AAa^0$ contains just the empty word $\emptyset$) and the set $\Sigma=\AAa^{\NNN_0}$ of infinite words over the alphabet $\AAa$. The elements $\alpha \in \Sigma$ are infinite sequences $\alpha = \alpha_0 \alpha_1 \dots$, $\alpha_i$ being the $i$-th coordinate of $\alpha$. Analogously, the finite words are written in the form of finite sequences; in the usual way we can concatenate them. The set $\Sigma$ together with addition $\bmod 3$ with carry from the left to the right is the $3$-adic group. Below we abbreviate $\alpha+10^\infty$ to $\alpha+1$;
then $\alpha+n$ has the usual meaning for every $n\in\ZZZ$.

For a rectangle $K=[a,b]\times [c,d]$ let $K_i$ ($i\in\AAa$) be the subrectangles of 
$K$ given by
\[
	K_i = \left[a+\frac{2i(b-a)}5,a+\frac{(2i+1)(b-a)}5\right]
	\times \left[c,c+\theta_i(d-c)  \right],
\]
where $\theta_i = 1$ if $i=1$ and $\theta_i=1/3$ otherwise. Starting with $K_\emptyset=[0,1]^2$
and applying this pattern inductively, we obtain rectangles $K_a$ for every $a\in\AAa^*$;
for every $a\in\AAa^*$ and $i\in\AAa$ we put $K_{ai} = (K_a)_i$ (recall that $\emptyset i =i$). 
For every $n\in\NNN_0$ define $X_n=\bigcup_{a\in\AAa^n} K_a$, see Figures~\ref{Fig:FA-1} and \ref{Fig:FA-2}, and $X = \bigcap_{n\in\NNN_0} X_n$. One can see that 
\[
	X 
	= \bigcup_{\alpha\in\Sigma} K_\alpha,
	\qquad\text{where}\quad
	K_\alpha 
	= \{x_\alpha\} \times J_\alpha,
\]
with $x_\alpha=\sum_{i=0}^\infty (2\alpha_i)/5^{i+1}$ and 
\[
	J_\alpha=[0,3^{-\ell_\alpha}],
	\qquad
	\ell_\alpha = \card\{i\in\NNN_0\colon \alpha_i\in\{0,2\} \}
\]
(we adopt the convention $3^{-\aleph_0}=0$).
The sets $K_\alpha$ will be called \emph{(vertical) fibres} of $X$; $K_{\alpha}$ is the fibre above $x_\alpha$. 

Define a map $H\colon X\to X$ by 
\[
	H(x_\alpha,y) = (x_{\alpha+1},h_\alpha(y))
	\qquad
	(\alpha\in \Sigma,\ y\in J_\alpha),
\]
where 
\begin{equation}\label{EQ:LY-galpha}
	h_\alpha\colon J_\alpha\to J_{\alpha+1},\qquad
	h_\alpha(y) = 3^{\ell_\alpha - \ell_{\alpha+1}} y,
\end{equation}
is an increasing linear surjection. Figures~\ref{Fig:FA-1} and \ref{Fig:FA-2} show the form of the fibre maps $h_{\alpha}$ except on the fibres lying in the rightmost rectangle.

\begin{figure}[ht!]
	\centering{\includegraphics{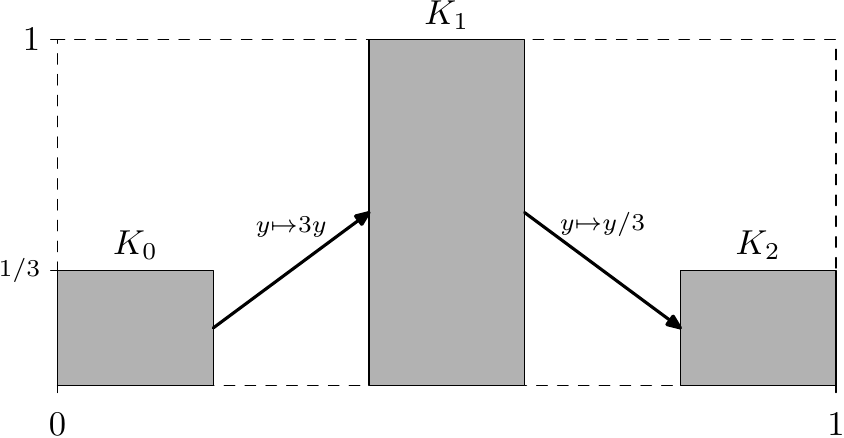}}
	\caption{The set $X_1 = K_0 \cup K_1 \cup K_2$}
	\label{Fig:FA-1}
\end{figure}

\begin{figure}[ht!]
	\centering{\includegraphics{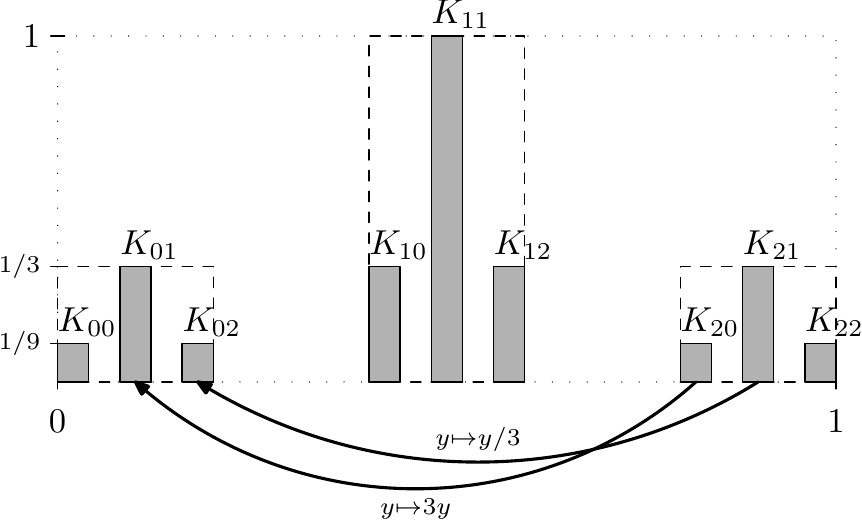}}
	\caption{The set $X_2 = K_{00} \cup K_{01} \cup \dots \cup K_{22}$}
	\label{Fig:FA-2}
\end{figure}

Basic properties of $X$ and $H$ are summarized in the following lemma.
We omit a proof, since it is straightforward and analogous to the proof of the corresponding properties of the Floyd-Auslander system (the property \eqref{ENUM:L:LY-H:scambled} can be proved similarly as~\cite[Proposition 2(b)]{CSS04}).

\begin{lemma}\label{L:LY-H}
Let $X$ and $H$ be defined as above. Then the following hold.
\begin{enumerate}
\item $X$ is a compact subset of the unit square. The projection of $X$ onto the first coordinate is a Cantor set $C_1=\{x_\alpha\colon \alpha\in\Sigma\}$ and the connected components 
of $X$ are the fibres $K_\alpha$ ($\alpha\in\Sigma$).
\item Every degenerate fibre is a singleton with second coodinate zero. The fibre $K_\alpha$ is nondegenerate if and only if $\alpha=1^\infty + m$ for some $m\in\ZZZ$ (i.e., if only finitely many coordinates of $\alpha$ are different from $1$). Thus the nondegenerate fibres are the fibres above the full orbit (under the homeomorphism $x_{\alpha} \mapsto x_{\alpha +1}$) of the point $x_{\alpha}$, $\alpha = 1^{\infty}$.
\item The map $H$ is a homeomorphism and $H^n(K_\alpha)=K_{\alpha+n}$
for every $\alpha\in\Sigma$ and $n\in\ZZZ$.
\item For every nondegenerate fibre $K_{\alpha}$ we have 
	\[
		\liminf_{n\to\infty} \diam H^n(K_\alpha) = 0,
		\qquad
		\limsup_{n\to\infty} \diam H^n(K_\alpha) = \frac 13.
	\]
\item\label{ENUM:L:LY-H:scambled} Maximal scrambled sets of $H$ are the nondegenerate vertical fibres $K_\alpha$.
\item For every $\eps>0$ there is $n_\eps>0$ such that every $\eps$-scrambled set of $H$ 
has cardinality at most $n_\eps$.
\end{enumerate}	
\end{lemma}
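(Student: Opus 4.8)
The goal is to prove item (6) of Lemma~\ref{L:LY-H}: for every $\eps>0$ there is $n_\eps$ bounding the cardinality of all $\eps$-scrambled sets of $H$. The plan is as follows.

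\textbf{Setting up.} By item (5), the only nondegenerate scrambled sets (in particular, the only infinite $\eps$-scrambled sets) are contained in single vertical fibres $K_\alpha$. So fix a nondegenerate fibre $K_\alpha$; by item (2) we may write $\alpha=1^\infty+m$, and since $\eps$-scrambledness is preserved under the homeomorphism $H$ (which permutes the nondegenerate fibres transitively by item (3)), it suffices to bound the cardinality of $\eps$-scrambled subsets of the single fibre $K_{1^\infty}=\{x_{1^\infty}\}\times[0,1]$. Thus I am reduced to a one-dimensional problem: understand the dynamics of the second coordinate along the orbit of this fibre.

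\textbf{Key computation.} Along the orbit, $H^n(K_{1^\infty})=K_{1^\infty+n}$, and the second-coordinate map is the composition of the linear maps $h_\alpha$ from \eqref{EQ:LY-galpha}, which is again linear: the induced map $[0,1]=J_{1^\infty}\to J_{1^\infty+n}=[0,3^{-\ell_n}]$ (writing $\ell_n=\ell_{1^\infty+n}$) is $y\mapsto 3^{-\ell_n}y$, since $\ell_{1^\infty}=0$. Hence for two points $y,y'\in[0,1]$ in the fibre, $\diam$-wise we get $\lvert \text{(2nd coord of }H^n)\rvert$-difference equal to $3^{-\ell_n}\lvert y-y'\rvert$, and the full distance in the plane is just this (the first coordinates agree). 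So the pair $((x_{1^\infty},y),(x_{1^\infty},y'))$ is $\eps$-scrambled iff $\limsup_n 3^{-\ell_n}\lvert y-y'\rvert>\eps$, i.e. iff $\lvert y-y'\rvert>\eps$ (using $\liminf_n \ell_n$-behaviour: $\ell_n$ attains the value $0$ infinitely often? — no, I must be careful here; what is needed is $\limsup_n 3^{-\ell_n}=1/3$ or similar as in item (4); the correct statement is that $\ell_{1^\infty+n}$ takes its minimum value infinitely often, and that minimum controls the $\limsup$). The essential point: there is a constant $c>0$ (namely $c=\limsup_n 3^{-\ell_n}$, which is $1/3$ by item (4) applied to this fibre, or can be read off directly) such that the pair is $\eps$-scrambled precisely when $c\lvert y-y'\rvert>\eps$.

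\textbf{Conclusion.} Therefore an $\eps$-scrambled subset of $K_{1^\infty}$ corresponds to a subset $S\subseteq[0,1]$ with $\lvert y-y'\rvert>\eps/c$ for all distinct $y,y'\in S$. Such a set has at most $\lfloor c/\eps\rfloor+1$ elements. Since every $\eps$-scrambled set of $H$ lies in one nondegenerate fibre and all nondegenerate fibres are images of $K_{1^\infty}$ under iterates of the homeomorphism $H$ (which is an isometry-like bijection preserving the second-coordinate structure up to the linear factors already accounted for), the same bound $n_\eps=\lfloor c/\eps\rfloor+1$ works uniformly. This proves (6).

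\textbf{Main obstacle.} The only delicate point is the precise description of the $\limsup$ of the contraction factors $3^{-\ell_{1^\infty+n}}$ along the orbit — i.e. verifying that the minimal value of $\ell_{1^\infty+n}$ is attained infinitely often and equals a fixed small number, so that the effective Lipschitz comparison constant $c$ is a genuine positive constant independent of the pair. This is exactly the content of item (4) of the lemma, so once that is granted the argument is routine; without it one would need the short combinatorial observation that adding $1$ repeatedly in the $3$-adic integers to $1^\infty$ returns to words with a bounded (indeed the same small) number of non-$1$ digits infinitely often.
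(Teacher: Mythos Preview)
Your argument is correct. The paper omits the proof entirely, calling it ``straightforward and analogous to the proof of the corresponding properties of the Floyd--Auslander system,'' so there is no detailed approach to compare against; your direct verification via item~(5) (reduction to a single fibre), item~(4) (the value $c=\limsup_n 3^{-\ell_{1^\infty+n}}=1/3$), and the linear fibre maps is exactly the natural route and presumably what the authors had in mind.

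Two small points of tidiness. First, the sentence ``$\eps$-scrambledness is preserved under the homeomorphism $H$'' deserves the one-line justification you essentially give later: for any fixed $m$, the sequences $\bigl(d(H^n p,H^n q)\bigr)_n$ and $\bigl(d(H^n H^{-m}p,H^n H^{-m}q)\bigr)_n$ differ only by an index shift, so their $\limsup$ and $\liminf$ coincide; no ``isometry-like'' property of $H$ is needed. Second, when you say ``the pair is $\eps$-scrambled precisely when $c\lvert y-y'\rvert>\eps$,'' you should note (as you implicitly do) that the proximality half is automatic because $\liminf_n 3^{-\ell_n}=0$, which also follows from item~(4). With these clarifications the bound $n_\eps=\lfloor 1/(3\eps)\rfloor+1$ is fully justified.
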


Now let $C_2$ denote the Cantor ternary set. Then, by \eqref{EQ:LY-galpha}, the
(closed) set
$Y=X \cap (C_1\times C_2)$ 
is strongly $H$-invariant, i.e., $H(Y)=Y$.
Obviously, $Y$ is a Cantor set and 
$K_\alpha\cap Y$ is a Cantor set for every nondegenerate fibre $K_\alpha$.
Then Lemma~\ref{L:LY-H} immediately yields the following corollary.

\begin{corollary}\label{L:LY-G}
	There is a Cantor homeomorphism 
	which is Li-Yorke chaotic but not Li-Yorke $\eps$-chaotic
	for any $\eps>0$.
\end{corollary}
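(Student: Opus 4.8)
The statement follows almost immediately from Lemma~\ref{L:LY-H} applied to the invariant Cantor subset $Y=X\cap(C_1\times C_2)$, so the work consists in checking that the relevant properties of $H$ pass to the restriction $G=H|_Y$. First I would record that $Y$ is indeed a Cantor set: it is a closed subset of the Cantor set $C_1\times C_2$ with no isolated points, since each nondegenerate fibre $K_\alpha\cap Y=\{x_\alpha\}\times(J_\alpha\cap C_2)$ is a nondegenerate Cantor set (here one uses that $J_\alpha=[0,3^{-\ell_\alpha}]$ and that $C_2$ intersected with an interval of the form $[0,3^{-k}]$ is again a nondegenerate Cantor set), and the degenerate fibres accumulate on these. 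That $G$ is a homeomorphism of $Y$ is immediate from Lemma~\ref{L:LY-H}(3) together with the observation, already used above, that $H(Y)=Y$, which in turn follows from the fact that each fibre map $h_\alpha$ in \eqref{EQ:LY-galpha} multiplies by a power of $3$ and hence sends $J_\alpha\cap C_2$ onto $J_{\alpha+1}\cap C_2$.

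\textbf{Li-Yorke chaos of $G$.} The key point is that a maximal scrambled set of $H$ is a nondegenerate fibre $K_\alpha$ by Lemma~\ref{L:LY-H}\eqref{ENUM:L:LY-H:scambled}, and $K_\alpha\cap Y$ is an \emph{uncountable} (Cantor) set contained in it; being a subset of a scrambled set, it is itself scrambled for $H$, and since the metric and the dynamics of $G$ are just the restrictions of those of $H$, it is scrambled for $G$ as well. Hence $G$ has an uncountable scrambled set, so $G$ is Li-Yorke chaotic. (One can quote Lemma~\ref{L:LY-H}(4) directly on the fibre $K_\alpha\cap Y$: for any two distinct points of this set, $\liminf_n d(G^n(x),G^n(y))=0$ because the whole fibre $K_\alpha$ has diameter tending to $0$ along a subsequence, while $\limsup_n d(G^n(x),G^n(y))>0$ can be extracted from the structure of $h_{\alpha+n}$ on the Cantor set $C_2$ — the simplest argument is that $x,y$ are proximal yet not asymptotic because the expansion factors $3^{\ell_{\alpha+n}-\ell_{\alpha+n+1}}$ oscillate, exactly as for $H$.)

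\textbf{Absence of $\eps$-scrambled sets.} Here I would appeal to Lemma~\ref{L:LY-H}(6): for each $\eps>0$ there is $n_\eps$ bounding the cardinality of every $\eps$-scrambled set of $H$. An $\eps$-scrambled set of $G$ is in particular an $\eps$-scrambled set of $H$ (again because $G$ is a restriction of $H$ and the metric is unchanged), so it has cardinality at most $n_\eps$. Thus $G$ has no infinite $\eps$-scrambled set for any $\eps>0$; a fortiori it is not Li-Yorke $\eps$-chaotic for any $\eps>0$. Combining the two parts gives the corollary.

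\textbf{Main obstacle.} There is no serious obstacle once Lemma~\ref{L:LY-H} is in hand; the only points that need genuine (if routine) verification are that $Y$ has no isolated points and that $H(Y)=Y$, both of which reduce to the elementary fact that multiplication by a power of $3$ preserves the Cantor ternary set (intersected with the appropriate dyadic-length starting interval $[0,3^{-k}]$). If one wanted a self-contained proof that does not cite Lemma~\ref{L:LY-H}(6), the mildly delicate step would be the uniform bound on the size of $\eps$-scrambled sets: one argues that a scrambled set can contain at most one point from each vertical fibre, and two points $x_\alpha\times C_2$-fibre and $x_\beta\times C_2$-fibre with $\alpha\ne\beta$ are $\eps$-scrambled only when $x_\alpha$ and $x_\beta$ are close in $C_1$, forcing such a set into a small cluster of fibres whose number is controlled by $\eps$ via the self-similar scaling of the $x_\alpha$'s. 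But since the paper explicitly provides Lemma~\ref{L:LY-H}, invoking it is the clean route.
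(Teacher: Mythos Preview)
Your proof is correct and follows exactly the paper's route: restrict $H$ to the invariant Cantor set $Y=X\cap(C_1\times C_2)$, observe that nondegenerate fibres meet $Y$ in Cantor sets (giving uncountable scrambled sets via Lemma~\ref{L:LY-H}\eqref{ENUM:L:LY-H:scambled}), and invoke Lemma~\ref{L:LY-H}(6) to rule out infinite $\eps$-scrambled sets. The paper states this in two sentences; you have simply unpacked the details.

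One caution about your final parenthetical (the ``self-contained'' sketch you explicitly do not use): the claim that ``a scrambled set can contain at most one point from each vertical fibre'' is backwards --- by Lemma~\ref{L:LY-H}\eqref{ENUM:L:LY-H:scambled} every scrambled set lies entirely in a \emph{single} nondegenerate fibre, so the bound $n_\eps$ comes from controlling how many points of one fibre can be mutually $\eps$-scrambled, not from counting nearby fibres. This does not affect your actual argument, which correctly just cites Lemma~\ref{L:LY-H}(6).
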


Let $(Y,G)$ be the system from Corollary~\ref{L:LY-G} 
and let $D$ be the Gehman dendrite \cite[p.~42]{Ge25}.
(Recall that the \emph{Gehman dendrite} is a topologically unique 
dendrite $D$ whose all branch points are of order $3$ and 
the set of endpoints $\End(D)$ is a Cantor set.) 
We may assume that $\End(D)=Y$. By \cite[Theorem~B]{Ro20}, 
there is a continuous map $F\colon D\to D$ such that
the restriction of $F$ onto $\End(D)=Y$ is $G$, and every point of
$D\setminus\End(D)$ is eventually mapped to one fixed point
which also lies in $D\setminus\End(D)$. 
This clearly implies that the scrambled sets of $F$ coincide with the scrambled sets of $G$. 
Hence $(D,F)$ is Li-Yorke chaotic but not Li-Yorke $\eps$-chaotic
for any $\eps>0$.
Thus the Gehman dendrite is a (completely regular) dendrite (with all points of finite order)
on which Li-Yorke chaos and Li-Yorke $\eps$-chaos are not equivalent.
However, more can be said.

Since every dendrite with uncountably many endpoints contains a copy of the Gehman dendrite
\cite[Proposition~6.8]{AC01}, 
and dendrites are absolute retracts, we immediately get the following fact.

\begin{proposition}\label{P:LY-chaos}
	Every dendrite with uncountably many endpoints
	admits a Li-Yorke chaotic map which is not Li-Yorke $\eps$-chaotic
	for any $\eps>0$.
\end{proposition}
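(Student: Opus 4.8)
The plan is to reduce Proposition~\ref{P:LY-chaos} to the specific construction already carried out on the Gehman dendrite just above its statement. Recall from that discussion that we have produced a Li-Yorke chaotic map $F\colon D_G\to D_G$ on the Gehman dendrite $D_G$ which is not Li-Yorke $\eps$-chaotic for any $\eps>0$; moreover the scrambled sets of $F$ are exactly the scrambled sets of the Cantor system $(Y,G)$ sitting on $\End(D_G)$, so in particular $F$ has no infinite $\eps$-scrambled set. The task is to transport this phenomenon to an arbitrary dendrite $D$ with uncountably many endpoints.

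First I would invoke \cite[Proposition~6.8]{AC01}: since $D$ has uncountably many endpoints, it contains a topological copy of the Gehman dendrite; identify $D_G$ with that copy, so $D_G\subseteq D$ is a subdendrite. Next, use the fact that dendrites are absolute retracts: there is a retraction $\rho\colon D\to D_G$ (a continuous map with $\rho|_{D_G}=\mathrm{id}$). Define $f\colon D\to D$ by $f=\iota\circ F\circ\rho$, where $\iota\colon D_G\hookrightarrow D$ is the inclusion; i.e. first retract onto $D_G$, then apply $F$. This $f$ is continuous, and since $\rho$ is the identity on $D_G$ we have $f|_{D_G}=F$, hence $D_G$ is $f$-invariant and $f^n|_{D_G}=F^n$ for all $n$. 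The key observation is that for every $x\in D$, $f(x)\in D_G$, so the entire orbit of every point from the first iterate onward lies inside $D_G$; thus for any $x,y\in D$ and any $n\ge 1$, $d\bigl(f^n(x),f^n(y)\bigr)=d\bigl(F^{n-1}(\rho(x)),F^{n-1}(\rho(y))\bigr)$, where $d$ is the metric on $D$ restricted to $D_G$.

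From this identity the rest is a routine check. A pair $(x,y)$ in $D\times D$ is scrambled for $f$ if and only if $(\rho(x),\rho(y))$ is scrambled for $F$ (the $\liminf$ and $\limsup$ of $d(f^n(x),f^n(y))$ over $n$ agree with those of $d(F^m(\rho x),F^m(\rho y))$ over $m$); similarly for $\eps$-scrambled pairs. Therefore $S\subseteq D$ is $\eps$-scrambled for $f$ only if $\rho(S)$ is $\eps$-scrambled for $F$ and $\rho|_S$ is injective (distinct points of $S$ must have positive $\limsup$ of $f$-distance, hence cannot have the same image under $\rho$). Since $F$ has no infinite $\eps$-scrambled set for any $\eps>0$ (by Lemma~\ref{L:LY-H}\eqref{ENUM:L:LY-H:scambled} and the sentence after it, carried over to $(Y,G)$ and then to $F$), neither does $f$. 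Conversely, pick an uncountable scrambled set $S_0\subseteq D_G$ for $F$ (which exists, e.g. a nondegenerate fibre intersected with $Y$ enlarged appropriately, or directly from Li-Yorke chaoticity of $F$): it is then an uncountable scrambled set for $f$, so $f$ is Li-Yorke chaotic. This establishes the proposition.

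I do not expect any genuine obstacle: the only point requiring a little care is verifying that pushing distances through the retraction does not manufacture a large $\eps$-scrambled set — and that is handled precisely by the remark that $\rho$ must be injective on any $\eps$-scrambled set of $f$, reducing the cardinality bound for $f$ to the one already known for $F$. Everything else is bookkeeping with $\liminf$/$\limsup$ along orbits that, after one step, live entirely in $D_G$.
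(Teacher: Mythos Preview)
Your proposal is correct and follows exactly the approach the paper takes: embed the Gehman dendrite via \cite[Proposition~6.8]{AC01}, retract onto it using the absolute retract property of dendrites, and compose with $F$. (There is a harmless off-by-one slip: one has $f^n(x)=F^{n}(\rho(x))$ for $n\ge 1$, not $F^{n-1}(\rho(x))$, but this does not affect the $\liminf/\limsup$ arguments.)
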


%%%%%%%%%%%%%%%%%%%%%%%%%%%%%%%%%%%%%%%%%%%%%%%%%%%%%%%%%%%%%%%%%%%%%%%%%%%%%%%
%%%%%%%%%%%%%%%%%%%%%%%%%%%%%%%%%%%%%%%%%%%%%%%%%%%%%%%%%%%%%%%%%%%%%%%%%%%%%%%
%%%                        ACKNOWLEDGMENTS                                  %%%
%%%%%%%%%%%%%%%%%%%%%%%%%%%%%%%%%%%%%%%%%%%%%%%%%%%%%%%%%%%%%%%%%%%%%%%%%%%%%%%
%%%%%%%%%%%%%%%%%%%%%%%%%%%%%%%%%%%%%%%%%%%%%%%%%%%%%%%%%%%%%%%%%%%%%%%%%%%%%%%
	\medskip
	\noindent\emph{Acknowledgements.}
	The authors are very obliged to the anonymous referees for a list 
	of suggested improvements of the paper.
This work was supported by the Slovak Research and Development Agency 
under contract No.~APVV-15-0439 and by VEGA grant 1/0158/20.

%%%%%%%%%%%%%%%%%%%%%%%%%%%%%%%%%%%%%%%%%%%%%%%%%%%%%%%%%%%%%%%%%%%%%%%%%%%%
%%%%%%%%%%%%%%%%%%%%%%%%%%%%%%%%%%%%%%%%%%%%%%%%%%%%%%%%%%%%%%%%%%%%%%%%%%%%
%%%%%%%%%%%%%%%%%%%%%%%%%%%%%%%%%%%%%%%%%%%%%%%%%%%%%%%%%%%%%%%%%%%%%%%%%%%%
%%%%%%%%%%%%%%%%%%%%%%%%%%%%%%%%%%%%%%%%%%%%%%%%%%%%%%%%%%%%%%%%%%%%%%%%%%%%
%%%%%%%%%%%%%%%%%%%%%%%%%%%%%%%%%%%%%%%%%%%%%%%%%%%%%%%%%%%%%%%%%%%%%%%%%%%%

\end{large}
\end{document}